\DeclareMathAlphabet{\mathpzc}{OT1}{pzc}{m}{it}
\newcommand{\ORD}{{\rm ORD}}
\newtheorem{theorem}{Theorem}[section]
\newtheorem{lemma}[theorem]{Lemma}
\newtheorem{proposition}[theorem]{Proposition}
\newtheorem{corollary}[theorem]{Corollary}
\newtheorem{definition}[theorem]{Definition}
\newtheorem{notation}[theorem]{Notation}
\newtheorem{claim}[theorem]{Claim}
\numberwithin{equation}{section}
\newtheorem{convention}[theorem]{Convention}
\newcommand{\cof}{{\rm cof}}
\newcommand{\dom}{{\rm dom}}
\newcommand{\M}{\mathcal{M}}
\newcommand{\rest}{\! \upharpoonright \!}
\newcommand\axiom{\mathrm}
\newcommand\ZFC{\axiom{ZFC}}
\newcommand{\forces}{\Vdash}
\theoremstyle{remark}
\newtheorem{remark}[theorem]{Remark}
\let\qed@empty\openbox 
\def\@begintheorem#1#2[#3]{%
  \deferred@thm@head{%
    \the\thm@headfont\thm@indent
    \@ifempty{#1}
      {\let\thmname\@gobble}
      {\let\thmname\@iden}%
    \@ifempty{#2}
      {\let\thmnumber\@gobble\global\let\qed@current\qed@empty}
      {\let\thmnumber\@iden\xdef\qed@current{#2}}%
    \@ifempty{#3}
      {\let\thmnote\@gobble}
      {\let\thmnote\@iden}%
    \thm@swap\swappedhead
    \thmhead{#1}{#2}{#3}%
    \the\thm@headpunct\thmheadnl\hskip\thm@headsep
  }\ignorespaces
}
\renewcommand{\qedsymbol}{%
  \ifx\qed@thiscurrent\qed@empty
    \qed@empty
  \else
    \fbox{\scriptsize\qed@thiscurrent}%
  \fi
}
\renewcommand{\proofname}{%
  Proof%
  \ifx\qed@thiscurrent\qed@empty
  \else
    \ of \qed@thiscurrent
  \fi
}
\xpretocmd{\proof}{\let\qed@thiscurrent\qed@current}{}{}
\newenvironment{proof*}[1]
  {\def\qed@thiscurrent{\ref{#1}}\proof}
  {\endproof}
\begin{document}

\nocite{*}

\title{Guessing models and the approachability ideal} 
\address{Institut de Math\'ematiques de Jussieu - Paris Rive Gauche,
Universit\'e Paris Diderot,  8 Place Aur\'elie Nemours, 75205 Paris Cedex 13, France}

\author{Rahman Mohammadpour}
\email{rahmanmohammadpour@gmail.com}
\urladdr{https://sites.google.com/site/rahmanmohammadpour}

\author[Boban Veli\v{c}kovi\'{c}]{Boban Veli\v{c}kovi\'{c}}

\email{boban@math.univ-paris-diderot.fr}
\urladdr{http://www.logique.jussieu.fr/~ boban}

\keywords{ approachability ideal, guessing models, supercompact cardinals}
\subjclass[2010]{03E35, 03E55, 03E05}

\begin{abstract} 
Starting with two supercompact cardinals we produce a generic extension of the universe in which a principle
that we call ${\rm GM}^+(\omega_3,\omega_1)$ holds. 
This principle implies ${\rm ISP}(\omega_2)$ and ${\rm ISP}(\omega_3)$, and hence 
the tree property at $\omega_2$ and $\omega_3$, the Singular Cardinal Hypothesis, and 
the failure of the weak square principle $\square(\omega_2,\lambda)$, for all regular $\lambda \geq \omega_2$. 
In addition, it implies that the restriction of the approachability
ideal  $I[\omega_2]$ to the set of  ordinals of cofinality $\omega_1$ is the non stationary ideal on this set.
The consistency of this last statement was previously shown by Mitchell. 
\end{abstract}

\maketitle

\section*{Introduction}

In \cite{W2012} C. Wei\ss~ formulated some combinatorial principles that capture the essence of some large cardinal properties, but can hold at small cardinals. 
These principles usually have two parameters, a regular uncountable cardinal $\kappa$ and a cardinal $\lambda \geq \kappa$. 
Among them there are, in increasing strength, the principles ${\rm TP}(\kappa,\lambda)$, ${\rm ITP}(\kappa, \lambda)$, and ${\rm ISP}(\kappa,\lambda)$.
We will write ${\rm P}(\kappa)$, if the property ${\rm P}(\kappa,\lambda)$ holds, for all $\lambda \geq \kappa$.
The study of these principles was continued by M. Viale and C. Wei\ss~ in \cite{VW2011}. 
Using them they obtained  a striking result  saying that any standard forcing construction of a model of the Proper Forcing Axiom $({\rm PFA})$ 
requires at least a strongly compact cardinal. 
One important concept that emerged from this work is that  of a guessing model. 
These models have generated considerable interest and have a number of interesting applications, see for instance  
\cite{V2012}, \cite{CK}, \cite{CK2017},  and \cite{T2016}. 

Given the interest of these principles, it is natural to ask if they can hold simultaneously at several successive regular cardinals. 
 In this direction, L. Fontanella \cite{Fontanella2012} extended the previous work of U. Abraham \cite{Abraham1983} 
 to obtain, modulo two supercompact cardinals, a model  of $\ZFC$ in which ${\rm ITP}(\omega_2)$ and ${\rm ITP}(\omega_3)$ hold simultaneously. 
Now, it was shown in \cite{W2012} that ${\rm ISP}(\omega_2)$ is strictly stronger that ${\rm ITP}(\omega_2)$. In fact, in the model constructed
by B. K\"onig in \cite{Konig2007}, the principle ${\rm ITP}(\omega_2)$ holds, but ${\rm ISP}(\omega_2)$ fails. 
One can then ask if ${\rm ISP}(\omega_2)$ and ${\rm ISP}(\omega_3)$ can hold simultaneously. 
Let us point out that in \cite{T2016} Trang showed the consistency of ${\rm ISP}(\omega_3)$. 
However, in his model $\rm CH$ holds, and therefore the principle ${\rm ISP}(\omega_2)$ fails. 

One concept closely related to the above principles is that of the approachability property on a regular uncountable cardinal $\lambda$
and the associated ideal $I[\lambda]$.
These notions were introduced by Shelah implicitly in \cite{Shelah1979}, and studied by him  extensively over the past 40 years. 
For instance, in \cite{Shelah1991} he showed that if $\lambda$ is a regular cardinal then $S_{\lambda^+}^{<\lambda} \in I[\lambda^+]$,
and in \cite{Shelah1993} he showed that if $\kappa$ is regular and $\kappa^+ < \lambda$ then $I[\lambda]$ contains
a stationary subset of $S_{\lambda}^\kappa$. 
Shelah then asked in \cite{Shelah1991} if  it is consistent to have a regular $\lambda$ such that $I[\lambda^+]\rest S_{\lambda^+}^\lambda$
 is the non stationary ideal on $S_{\lambda^+}^\lambda$. This major question was finally answered by W. Mitchell \cite{MI2009}.
He started with a cardinal $\kappa$ that is $\kappa^+$-Mahlo, and built an involved forcing construction 
yielding a model in which $I[\omega_2]\rest S_{\omega_2}^{\omega_1}$ is the non stationary ideal on $S_{\omega_2}^{\omega_1}$.
One feature of this construction is that it uses $\square_\kappa$ in the ground model, and so  $\omega_3\in I[\omega_3]$ in the extension.  
It is therefore unclear if Mitchell's method can be adapted to obtain a model in which both $I[\omega_2]\rest S_{\omega_2}^{\omega_1}$
and $I[\omega_3]\rest S_{\omega_3}^{\omega_2}$  contain only non stationary sets. 
The connection with the principles introduced by Wei\ss \, is the following. 
If $\kappa$ is a regular uncountable cardinal then ${\rm ISP}(\kappa^+)$ implies that there is a stationary subset of $S_{\kappa^+}^\kappa$
that is not in $I[\kappa^+]$, but it does not imply Mitchell's result. 
The main purpose of this paper is to formulate and show the relative consistency of a principle that we call ${\rm GM}^+(\omega_3,\omega_1)$.
This statement implies  ${\rm ISP}(\omega_2)$ and ${\rm ISP}(\omega_3)$ and hence the tree property 
at $\omega_2$ and $\omega_3$. It also implies Mitchell's result, namely that 
$I[\omega_2]\rest S_{\omega_2}^{\omega_1}= {\rm NS}_{\omega_2}\rest S_{\omega_2}^{\omega_1}$. 
Starting with a model with two supercompact cardinals $\kappa < \lambda$ we produce a generic
extension in which $\kappa=\omega_2$, $\lambda = \omega_3$, and ${\rm GM}^+(\omega_3,\omega_1)$ holds. 
In fact for Mitchell's result we do not need the full strength of the principle ${\rm GM}^+(\omega_3,\omega_1)$, 
a weaker principle ${\rm FS}(\omega_2,\omega_1)$ suffices. In order to obtain ${\rm FS}(\omega_2,\omega_1)$ it is enough
to assume that $\lambda$ is just inaccessible.

The origin of this paper is as follows. Inspired by Mitchell's breakthrough, I. Neeman \cite{NE2014} 
introduced a method for iterating proper forcing notions using finite chains of elementary submodels as side conditions. 
This allowed him to give a new proof of the consistency of PFA using this type of iteration. 
More importantly, this opened a possibility of iterating forcing while preserving two successive cardinals
and potentially getting strong forcing axioms at $\omega_2$ and higher cardinals. 
The second author then extended Neeman style iteration to more general classes of forcing.
This lead to the notion of a {\em virtual model}. Using this type of models as side conditions allows us
not only to  generalize Neeman's iteration theory to semiproper forcing, but also to formulate and prove iteration
theorems for  large classes of forcing notions preserving two uncountable cardinals, such as $\omega_1$ and $\omega_2$. 
This theory is presented in \cite{Velisemiproper} and \cite{Veliproper}. 
In fact, our main poset is an adaptation of the pure side condition forcing from \cite{Veliproper} to two types
of models, but replacing models of size $\omega_1$ by  models having a strong closure property
that we call Magidor models. 
  
The paper is organized as follows. In \S 1 we present the preliminaries and fix some notations. 
In \S 2 we review the theory of {\rm virtual models} from \cite{Velisemiproper} and \cite{Veliproper}
and adapt it to the context of Magidor models. 
In \S 3, we introduce our main forcing notion $\mathbb P^\kappa_\lambda$ and establish some of its properties. 
Finally, in \S 4  we study guessing models in the generic extension by $\mathbb P^\kappa_\lambda$,
and show that if $\kappa$ is supercompact and $\lambda> \kappa$ is inaccessible then 
${\rm ISP}(\omega_2)$ and ${\rm FS}(\omega_2,\omega_1)$ hold in the generic extension. 
If $\lambda$ is also supercompact we show that ${\rm GM}^+(\omega_3,\omega_1)$ holds as well.

\section{Preliminaries}
  
Throughout this paper by a model $M$ we mean a set or a class such that 
$(M,\in)$ satisfies a sufficient fragment of ${\rm ZFC}$. 
For a model $M$, we let $\overline{M}$ denote its transitive collapse and we let $\pi_M$ be the collapsing map. 
For a set $X$  and an uncountable regular cardinal $\kappa$, we let $\mathcal P_\kappa(X)$ denote the set of all subsets of $X$ of size less than $\kappa$.
We say that a subset $\mathcal S$ of $\mathcal P_{\kappa}(X)$ is {\em stationary} if, for every function $F:[X]^{<\omega}\rightarrow X$, 
there exists $A\in \mathcal S$ such that $A\cap \kappa \in \kappa$ and $A$ is closed under $F$. 
For regular cardinals $\kappa < \lambda$, we let $S_\lambda^\kappa$ denote the set $\{ \alpha < \lambda : \cof(\alpha)=\kappa \}$. 
For a cardinal $\theta$ we let $H_\theta$ denote  the collection of all sets whose transitive closure has size less than $\theta$.

We now recall some relevant definitions from \cite{V2012}.  For a set or class $M$ we say that a set $x\subseteq M$ is
{\em bounded in} $M$ if there is $y\in M$ such that $x\subseteq y$. 

\begin{definition}\label{guessing-def} Let $\gamma$ be a regular cardinal. 
A set or a class $M$ is said to be $\gamma$-{\em guessing} if for any $x\subseteq M$ which is bounded in $M$,
if $x$ is $\gamma$-{\em approximated by} $M$, i.e. $x\cap a \in M$, for all $a\in M\cap {\mathcal P}_\gamma (M)$,
then $x$ is $M$-{\em guessed}, i.e. there is $g\in M$ such that $x= g\cap M$.
\end{definition} 

We say that a transitive model $R$ is a {\em powerful} if it is closed under taking subsets, i.e.
if $x\in R$ and $y\subseteq x$ then $y\in R$. 
We are mainly interested in the case $R= V_\alpha$, for some ordinal $\alpha$, or $R= H_\theta$, for some uncountable regular cardinal $\theta$.  
For a powerful model $R$, a regular cardinal $\gamma$ and a cardinal $\kappa$, we let 

\[
\mathfrak G_{\kappa,\gamma}(R) = \{ M\in {\mathcal P}_\kappa(R): M\prec R  \mbox{ and } M \mbox{ is $\gamma$-guessing} \}. 
\]

\begin{definition}\label{GM}
For a powerful model $R$, ${\rm GM}(\kappa,\gamma,R)$ is the statement that  
${\mathfrak G}_{\kappa,\gamma}(R)$ is stationary in ${\mathcal P}_{\kappa}(R)$.
${\rm GM}(\kappa,\gamma)$ is the statement that ${\rm GM}(\kappa,\gamma, H_\theta)$ holds,
 for all sufficiently large regular $\theta$. 
\end{definition}

\begin{remark} Notice that if $M$ is $\gamma$-guessing and $\gamma\leq \gamma'$ then $M$ is $\gamma'$-guessing.
Therefore  ${\rm GM}(\kappa, \gamma)$ implies ${\rm GM}(\kappa, \gamma')$.
\end{remark}

The statement ${\rm GM}(\kappa,\omega_1)$ is a reformulation of the principle ${\rm ISP}(\kappa)$ introduced
by  C. Wei\ss \, in \cite{W2010} and further studied in \cite{V2012},  \cite{VW2011} and \cite{W2012}.
It was shown in \cite{W2010} and  \cite{W2012} that ${\rm ISP}(\kappa)$ implies 
the two cardinal tree property ${\rm TP}(\kappa, \lambda)$ and the failure of the weak square principle 
$\square (\kappa,\lambda)$, for all regular $\lambda \geq \kappa$. 
The equivalence between ${\rm GM}(\kappa,\omega_1)$ and  ${\rm ISP}(\kappa)$ was established
in \cite{VW2011}, where it was also proved  that ${\rm ISP}(\omega_2)$ follows from the Proper Forcing Axiom. 
${\rm ISP}(\kappa)$ has strong influence on cardinal arithmetic. 
By using the results of Viale \cite{V2012},  Krueger \cite{Kru2019} showed that, for a regular cardinal $\kappa$,
 the principle ${\rm ISP}(\kappa)$, and hence ${\rm GM}(\kappa,\omega_1)$, implies the Singular Cardinal Hypothesis holds above $\kappa$. 
Let us also mention that in \cite{T2016} Trang showed the consistency of ${\rm GM}(\omega_3,\omega_2)$
assuming the existence of a supercompact cardinal.  In his model the Continuum Hypothesis holds.

We also recall the related notion of the  $\gamma$-approximation property, introduced by Hamkins in \cite{Hamkins2003}.
     
\begin{definition}\label{gamma-approximation-property}
Let $\gamma$ be an uncountable regular cardinal. Suppose $M$ and $N$ are transitive models (sets or classes), $M\subseteq N$ and $\gamma \in M$. 
We say the pair $(M,N)$ satisfies the $\gamma$-{\em approximation property}, if whenever $x\in N$ is a bounded subset of $M$
such that $x\cap a\in M$, for all $a\in M$ with $|a|^M <\gamma$, then $x\in M$. 
\end{definition}

\begin{remark} Suppose  $R$ is a powerful model and let $\gamma$ be a regular cardinal with $\gamma \in R$.  
Note that $M\prec R$ is a $\gamma$-guessing model if and only if the pair $(\overline{M}, V)$ satisfies the $\gamma$-approximation property. 
\end{remark}

Our plan is to strengthen the principle ${\rm GM}(\omega_2,\omega_1)$ in order to control the approachability ideal on $\omega_2$. 
Let us first recall the relevant definitions from \cite{Shelah1991}. 

\begin{definition}\label{appseq} Let $\lambda$ be a regular cardinal. A $\lambda$-{\em approaching sequence} is
a sequence of bounded subsets of $\lambda$. If $\bar a = (a_\xi: \xi < \lambda)$ is a $\lambda$-approaching sequence, 
we let $B(\bar a)$ denote the set of all $\delta < \lambda$ such that 
there is a cofinal subset $c\subseteq \delta$ such that:
\begin{enumerate}
\item ${\rm otp}(c) < \delta$, in particular $\delta$ is singular, 
\item for all $\gamma < \delta$, there exists $\eta < \delta$ such that $c \cap \gamma = a_\eta$.
\end{enumerate}
\end{definition}

\begin{definition}\label{AppId}
Suppose $\lambda$ is a regular cardinal. Let $I[\lambda]$ be the ideal generated by the sets $B(\bar a)$, 
for all $\lambda$-approaching sequences $\bar a$,
and the non stationary ideal ${\rm NS}_\lambda$.  
\end{definition}

\begin{remark} It is straightforward to check that $I[\lambda]$ is a normal ideal on $\lambda$, but it may be non proper.
$I[\lambda]$ is called the {\em approachability ideal} on $\lambda$. 
\end{remark}

Shelah asked if it is consistent that $I[\kappa^+]\restriction S_{\kappa^+}^{\kappa}= {\rm NS}_{\kappa^+}\rest S_{\kappa^+}^{\kappa}$,
for a regular cardinal $\kappa$.   Mitchell in \cite{MI2009} answered this question affirmatively by showing the following. 

\begin{theorem}[Mitchell, \cite{MI2009}]
Assume that $\kappa$ is a $\kappa^+$-Mahlo cardinal. Then there is a generic extension in which 
$I[\omega_2]\restriction S_{\omega_2}^{\omega_1}= {\rm NS}_{\omega_2}\rest S_{\omega_2}^{\omega_1}$.
\end{theorem}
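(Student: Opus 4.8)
The plan is to realize $\kappa$ as $\omega_2$ by a Mitchell-style collapse that simultaneously prevents any approaching sequence from capturing a stationary set of points of cofinality $\omega_1$. Since $I[\omega_2]$ contains ${\rm NS}_{\omega_2}$ by definition, only one inclusion requires work: I must show that for every $\omega_2$-approaching sequence $\bar a$ in the extension, the set $B(\bar a)\cap S_{\omega_2}^{\omega_1}$ is nonstationary. Because ${\rm NS}_{\omega_2}$ is normal and would then contain each generator of $I[\omega_2]$ intersected with $S_{\omega_2}^{\omega_1}$, closure under diagonal unions yields $I[\omega_2]\rest S_{\omega_2}^{\omega_1}\subseteq {\rm NS}_{\omega_2}\rest S_{\omega_2}^{\omega_1}$, which is the nontrivial direction.

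First I would prepare the ground model so that ${\rm GCH}$ and $\square_\kappa$ hold while preserving the $\kappa^+$-Mahloness; the square sequence supplies enough coherence to build the collapse so that $\kappa^+$ (which becomes $\omega_3$) is preserved even though the structure at $\kappa$ is tightly controlled. I would then define a forcing $\mathbb{M}$ of Mitchell type whose conditions combine a ``generic'' part adding subsets of $\omega_1$ with an $\omega_1$-closed iteration of collapses ${\rm Coll}(\omega_1,\alpha)$ for $\alpha<\kappa$, arranged so that $\mathbb{M}$ is proper, preserves $\omega_1$ and ${\rm CH}$, satisfies a $\kappa$-Knaster-type chain condition (using the Mahloness), and collapses every cardinal in the interval $(\omega_1,\kappa)$, so that $\kappa=\omega_2$ in $V^{\mathbb{M}}$.

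The engine is a factorization at each inaccessible $\alpha<\kappa$ as $\mathbb{M}=\mathbb{M}_\alpha * \dot{\mathbb{M}}^\alpha$, where $|\mathbb{M}_\alpha|<\kappa$ and the tail $\dot{\mathbb{M}}^\alpha$ is forced to be $\omega_1$-closed, so that the pair $(V^{\mathbb{M}_\alpha},V^{\mathbb{M}})$ has the $\omega_1$-approximation property in the sense of \autoref{gamma-approximation-property}. The heart of the argument is then a reflection. Suppose toward a contradiction that some condition forces a name $\dot{\bar a}$ for an approaching sequence together with a name $\dot S$ for a stationary subset of $S_{\omega_2}^{\omega_1}$ with $\dot S\subseteq B(\dot{\bar a})$ modulo clubs. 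Using that $\kappa$ is $\kappa^+$-Mahlo, I would reflect this configuration to a stationary set of inaccessibles $\alpha$ at which $\mathbb{M}_\alpha$ forces $\alpha\in\dot S$ and $\cof(\alpha)=\omega_1$, and at which $\dot{\bar a}\rest\alpha$ is computed already in $V^{\mathbb{M}_\alpha}$. A witness $c$ to $\alpha\in B(\bar a)$ is a cofinal subset of $\alpha$ of order type $\omega_1$ all of whose proper initial segments occur among $\{a_\eta:\eta<\alpha\}$; by the $\omega_1$-approximation property of the tail such a $c$ must already lie in $V^{\mathbb{M}_\alpha}$. But there the cofinal $\omega_1$-sequence through $\alpha$ is produced generically by the collapse, and a density argument shows that its initial segments cannot all be enumerated by $\bar a\rest\alpha$ before stage $\alpha$, contradicting approachability. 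Hence $B(\bar a)\cap S_{\omega_2}^{\omega_1}$ is nonstationary for every $\bar a$.

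The main obstacle is precisely this last step: balancing closure against the chain condition. To invoke the approximation property one needs the tail sufficiently closed, but to preserve the stationarity of $\dot S$ and to keep $\kappa$ as $\omega_2$ one needs a strong chain condition, and these pull in opposite directions. Mitchell's resolution — and the reason $\square_\kappa$ enters — is to interleave the collapses with the generic part so that the tail retains the relevant $\omega_1$-closure while the whole poset stays $\kappa$-cc; the square sequence provides the coherent scaffolding on which the genericity argument for the non-enumerability of the initial segments of $c$ runs uniformly across the reflecting $\alpha$. Verifying that this scaffolding defeats \emph{every} potential approaching sequence, and not merely a single named one, is where the real work lies.
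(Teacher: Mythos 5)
Your construction cannot work as written, and the obstruction is concrete: you arrange your forcing to be countably closed (countable conditions for the subsets of $\omega_1$, $\omega_1$-closed collapses) and you explicitly require that $\CH$ be preserved. But $\CH$ in the extension already implies $S_{\omega_2}^{\omega_1}\in I[\omega_2]$, which is the exact opposite of the theorem. Indeed, under $\CH$ there are only $\omega_2$ many countable bounded subsets of $\omega_2$ (each $\gamma<\omega_2$ has at most $\omega_1^\omega=\omega_1$ countable subsets), so one can let $\bar a=(a_\eta:\eta<\omega_2)$ enumerate all of them; by regularity of $\omega_2$, for club many $\alpha$ every countable bounded subset of $\alpha$ occurs as some $a_\eta$ with $\eta<\alpha$, and then for any such $\alpha$ of cofinality $\omega_1$ an arbitrary cofinal $c\subseteq\alpha$ of order type $\omega_1$ witnesses $\alpha\in B(\bar a)$, since all its proper initial segments are countable. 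Thus in your model $I[\omega_2]\rest S_{\omega_2}^{\omega_1}$ contains a set that is club in $S_{\omega_2}^{\omega_1}$, not merely nonstationary sets. This is precisely why every known model of the theorem destroys $\CH$: Mitchell's forcing in \cite{MI2009} interleaves Cohen reals, and in the model produced in this paper the continuum is at least $\omega_3$.

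The step at which this error enters your argument is the claim that an $\omega_1$-closed tail yields the $\omega_1$-approximation property for the pair $(V^{\mathbb M_\alpha},V^{\mathbb M})$. That implication is false in the strongest possible way: since a countably closed forcing adds no new countable sequences of ground model elements, \emph{every} new set it adds --- in particular the generic cofinal $\omega_1$-sequence through $\alpha$ added by ${\rm Coll}(\omega_1,\alpha)$ --- has all of its countable approximations in the ground model, which is exactly a counterexample to $\omega_1$-approximation in the sense of \cref{gamma-approximation-property}. So, far from being pulled back into $V^{\mathbb M_\alpha}$ and defeated by a density argument, these generic collapse sequences are precisely the witnesses that make the former inaccessibles approachable. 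In Mitchell-style arguments the approximation property comes either from interleaving Cohen forcing (a closure-point/gap argument in the sense of Hamkins) or, as in \cite{MI2009} and in the present paper, from strongly generic conditions for a stationary family of models (cf.\ \cref{guessingbystronglyproper}); Mitchell's actual proof of this theorem is a finite-condition, strongly proper construction, not a countably closed collapse, and the present paper does not reprove it at all but cites it, deriving the same conclusion instead from ${\rm FS}(\omega_2,\omega_1)$, which it forces from a supercompact. A final sanity check that your outline is underpowered: you consume only one degree of Mahloness (reflection to a stationary set of inaccessibles), whereas by the theorem of Shelah cited in the remark immediately following this theorem the full $\kappa^+$-Mahlo hypothesis is necessary, so no argument of the shape you describe can succeed.
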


\begin{remark}
In his paper \cite{MI2009}, Mitchell mentions that the large cardinal assumption used in his result is necessary
 by an unpublished theorem of Shelah; a proof can be found in \cite{MI2004}, Theorem 13. 
 It is also mentioned at the end of  \cite{MI2009} that one can prove the same result for $\lambda^{++}$ 
 where $\lambda$ is a regular cardinal, more precisely let $\lambda$ regular be given, 
 then under the same assumption (with $\kappa>\lambda$) of his theorem there is a generic extension of the universe 
 satisfying $I[\lambda^{++}]\rest S_{\lambda^{++}}^{\lambda^+} = {\rm NS}_{\lambda^{++}}\rest S_{\lambda^{++}}^{\lambda^+}$.
\end{remark}

We now formulate a principle that implies Mitchell's result. 

\begin{definition}[$\rm FS(\kappa^+,\gamma)$]\label{FS} Suppose $\gamma \leq \kappa$ are regular uncountable cardinals. 
The principle ${\rm FS}(\kappa^+,\gamma)$ asserts that, 
for every $X\in H_{\kappa^+}$, there is a collection $\mathcal G$ of $\gamma$-guessing models of cardinality $\kappa$ all containing $X$
such that $\{ M\cap \kappa^+: M\in \mathcal G \}$ is $\kappa$-closed and unbounded in $\kappa^+$. 
\end{definition}

\begin{proposition}
Suppose $\kappa$ is a regular uncountable cardinal and ${\rm FS}(\kappa^+,\kappa)$ holds. 
Then we have that $I[\kappa^+]\rest S_{\kappa^+}^\kappa= {\rm NS}_{\kappa^+}\rest S_{\kappa^+}^\kappa$. 
\end{proposition}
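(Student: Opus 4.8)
The plan is to establish the nontrivial inclusion $I[\kappa^+]\rest S_{\kappa^+}^\kappa\subseteq {\rm NS}_{\kappa^+}\rest S_{\kappa^+}^\kappa$, the reverse being immediate from $\rm NS_{\kappa^+}\subseteq I[\kappa^+]$. Since $I[\kappa^+]$ is generated, as an ideal, by the sets $B(\bar a)$ together with ${\rm NS}_{\kappa^+}$, every element of $I[\kappa^+]$ contained in $S_{\kappa^+}^\kappa$ lies in a finite union $B(\bar a_1)\cup\dots\cup B(\bar a_n)$ modulo a non-stationary set; as a finite union of non-stationary sets is non-stationary, it suffices to prove that $B(\bar a)\cap S_{\kappa^+}^\kappa$ is non-stationary for every single $\kappa^+$-approaching sequence $\bar a$.

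So fix $\bar a$ and a large regular $\theta$. First I would apply ${\rm FS}(\kappa^+,\kappa)$ to obtain a family $\mathcal G$ of $\kappa$-guessing models of size $\kappa$, taken to be elementary submodels of $H_\theta$ containing $\bar a$ (as well as $\kappa$ and $\kappa^+$), such that $E=\{M\cap\kappa^+:M\in\mathcal G\}$ is closed and unbounded in $\kappa^+$. Because $E$ is a club, it is enough to show that $\delta\notin B(\bar a)$ for every $\delta\in E\cap S_{\kappa^+}^\kappa$; then $B(\bar a)\cap S_{\kappa^+}^\kappa$ is disjoint from $E$ and hence non-stationary. Fix such a $\delta$ together with $M\in\mathcal G$ with $M\cap\kappa^+=\delta$, and note that $M\cap\kappa^+=\delta$ forces every ordinal below $\delta$ to lie in $M$.

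Suppose toward a contradiction that $\delta\in B(\bar a)$, witnessed by a cofinal $c\subseteq\delta$ with ${\rm otp}(c)<\delta$ all of whose initial segments $c\cap\gamma$ (for $\gamma<\delta$) are of the form $a_\eta$ with $\eta<\delta$. The heart of the argument is to verify that $c$ is $\kappa$-approximated by $M$. Given $a\in M\cap\mathcal P_\kappa(M)$, let $a^\ast$ be the set of ordinals of $a$ below $\kappa^+$; then $a^\ast\in M$, $a^\ast\subseteq\delta$, and $|a^\ast|<\kappa={\rm cof}(\delta)$, so $a^\ast$ is bounded in $\delta$, say below $\gamma<\delta$. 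Writing $c\cap\gamma=a_\eta$ with $\eta<\delta$, we get $c\cap a=c\cap a^\ast=a_\eta\cap a^\ast$, which lies in $M$ because $\bar a\in M$ and $\eta,a^\ast\in M$. Thus $c$ is $\kappa$-approximated by $M$, and it is bounded in $M$ since $c\subseteq\kappa^+\in M$; as $M$ is $\kappa$-guessing there is $g\in M$ with $c=g\cap M$, and after replacing $g$ by its ordinals below $\kappa^+$ we may assume $g\subseteq\kappa^+$, so that $c=g\cap\delta$. To finish, set $\beta={\rm otp}(c)<\delta$, so $\beta\in M$. Since $c=g\cap\delta$ is an initial segment of $g$, it consists of the first $\beta$ members of $g$ in its increasing enumeration $e_g$; as $c$ is cofinal in $\delta$ this forces $e_g(\beta)\ge\sup c=\delta$ (and $e_g(\beta)$ is defined, for otherwise $g=c\in M$ and then $\delta=\sup c\in M$, which is absurd). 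But $e_g\in M$ and $\beta\in M$ give $e_g(\beta)\in M\cap\kappa^+=\delta$, whence $e_g(\beta)<\delta$, a contradiction. Therefore $\delta\notin B(\bar a)$.

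I expect the crux to be the approximation step, where one uses precisely the clause of \cref{appseq} that places every proper initial segment of the witness $c$ into the range of $\bar a$ below $\delta$: this is exactly what certifies, relative to the model $M$ containing $\bar a$, that $c$ is $\kappa$-approximated and hence guessed. The accompanying delicate point is the bookkeeping in the application of ${\rm FS}(\kappa^+,\kappa)$, namely arranging simultaneously that the club $E$ realizes each of its points of cofinality $\kappa$ as a trace $M\cap\kappa^+$ of a genuine $\kappa$-guessing model and that this $M$ contains the full sequence $\bar a$ (which forces one to take the models elementary in $H_\theta$, rather than merely containing a prescribed element of $H_{\kappa^+}$). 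Once these are in place, the order-type contradiction in the last step is routine.
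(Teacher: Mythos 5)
Your proof is correct and follows essentially the same route as the paper's: apply ${\rm FS}(\kappa^+,\kappa)$ to a family of guessing models containing $\bar a$, use clause (2) of \cref{appseq} together with $\bar a \in M$ and $\delta = M\cap\kappa^+$ to verify that any witness $c$ for $\delta \in B(\bar a)$ is $\kappa$-approximated by $M$, and then turn the guess $g\in M$ into a contradiction via the order type of $c$ (your step with $e_g(\beta)$ is just a rephrasing of the paper's step with the $\mu$-th element $\rho$ of $d$, and you in fact handle the case $g=c$ more explicitly than the paper does). The one cosmetic slip is that ${\rm FS}(\kappa^+,\kappa)$ only gives a $\kappa$-closed unbounded trace $\{M\cap\kappa^+ : M\in\mathcal G\}$, not a genuine club, but since you only use points of cofinality $\kappa$ this is harmless: the cofinality-$\kappa$ points of the topological closure of a $\kappa$-closed unbounded set already belong to it.
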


\begin{proof}
Suppose that ${\rm FS}(\kappa^+),\kappa$ holds. Let $\bar a=(a_\xi : \xi <\kappa^+)$ be a $\kappa^+$-approaching sequence. 
Let $\mathcal G$ be the family of $\kappa$-guessing models  all containing $\bar a$ whose existence is guaranteed by ${\rm FS}(\kappa^+)$. 
We show that  $M\cap \kappa^+ \notin B(\bar a)$, for any $M\in \mathcal G$ such that $\cof(M\cap \kappa^+)=\kappa$. 
Fix one such $M\in \mathcal G$. Let $\delta = M\cap \kappa^+$ and suppose $c\subseteq \delta$ satisfies 
(1) and (2) of \cref{appseq}. Let $\mu = {\rm otp}(c)$. Note that $\mu< \delta$, hence $\mu \in M$. 
Since $\bar a\in M$, we have that $c\cap \gamma \in M$, for all $\gamma <\delta$, and hence $c\cap Z\in M$, for all $Z\in M$ with $|Z|<\kappa$.
Since $M$ is a $\kappa$-guessing model,  there must be $d\in M$ such that $c\cap \delta = d\cap \delta$. We may assume $d\subseteq \kappa^+$.
Then $c$ is an initial segment of $d$, so if $\rho$ is the $\mu$-th element of $d$ then $d\cap \rho = c$.
Since $\mu, d \in M$, we have $\rho\in M$ as well, and hence $c=d \cap \rho \in M$. 
But then $\delta= \sup (c)$ belongs to $M$, a contradiction. 
\end{proof}

${\rm FS}(\omega_2,\omega_1)$  is a local principle, i.e. it refers only to $H_{\omega_3}$, therefore it cannot imply ${\rm GM}(\omega_2,\omega_1)$. 
We now formulate a principle that implies both ${\rm FS}(\omega_2,\omega_1)$ and ${\rm GM}(\omega_2,\omega_1)$.
We state it  for any pair of  uncountable regular cardinals $\gamma \leq \kappa$.

\begin{definition} Let   $\gamma \leq \kappa$  be regular cardinal cardinals. A model  $M$ of cardinality $\kappa^+$
is {\em strongly $\gamma$-guessing} if it is the union of an increasing chain $(M_\xi : \xi <\kappa^+)$
of $\gamma$-guessing models of cardinality $\kappa$ and $M_\xi= \bigcup \{ M_\eta : \eta < \xi\}$, for every $\xi$ of cofinality $\kappa$. 
\end{definition}

\begin{remark} We only define the notion of strongly $\gamma$-guessing for sets $M$ whose
cardinality is a successor of a regular cardinal $\kappa$. We have not explored other variations of this concept. 
Note that if $M$ is strongly $\gamma$-guessing then $M$ is $\gamma$-guessing. 
\end{remark}

For a powerful model $R$ and regular cardinals $\gamma$ and $\kappa$, we let 

\[
\mathfrak G^+_{\kappa^{++},\gamma}(R) = \{ M\in {\mathcal P}_{\kappa^{++}}(R): M\prec R  \mbox{ and } M \mbox{ is strongly $\gamma$-guessing} \}. 
\]

\begin{definition}\label{GM+}
For a powerful model $R$, ${\rm GM}^+(\kappa^{++},\gamma,R)$ states that  
${\mathfrak G}^+_{\kappa^{++},\gamma}(R)$ is stationary in ${\mathcal P}_{\kappa^{++}}(R)$.
${\rm GM}^+(\kappa^{++},\gamma)$ is the statement that ${\rm GM}^+(\kappa^{++},\gamma, H_\theta)$ holds,
 for all sufficiently large regular $\theta$. 
\end{definition}

Clearly ${\rm GM}^+(\kappa^{++},\gamma)$ implies both 
${\rm FS}(\kappa^+,\gamma)$ and ${\rm GM}(\kappa^+,\gamma)$.
The main result of this paper is the following.

\begin{theorem}\label{main theorem}
Suppose that $\kappa$ is supercompact and $\lambda > \kappa$ is inaccessible. Then there is a forcing notion such that 
in the generic extension  ${\rm GM}(\omega_2,\omega_1)$ and ${\rm FS}(\omega_2,\omega_1)$ hold. 
If, in addition, $\lambda$ is supercompact, then ${\rm GM}^+(\omega_3,\omega_1)$ holds as well. 
\end{theorem}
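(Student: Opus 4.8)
The plan is to run the analysis of the main poset $\mathbb P^\kappa_\lambda$ introduced in §3, whose relevant properties I take as established there: it preserves $\omega_1$, forces $\kappa=\omega_2$ and $\lambda=\omega_3$, and its generic filter $G$ adds an increasing chain of virtual models of the two types used as side conditions — the Magidor models of size $\omega_1$ and the large models of size below $\lambda$ — which is continuous at the appropriate cofinalities. The models occurring in $G$ are exactly the objects I shall recognise as guessing models in $V[G]$. The whole argument is then a reflection argument powered by the supercompactness of $\kappa$ (and of $\lambda$ for the last clause), with the finite-chain side-condition structure supplying the requisite master conditions.

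First I would isolate the single technical lemma on which everything rests: if $M$ is a Magidor model occurring in a condition $p$, then below $p$ the poset factors as $(\mathbb P^\kappa_\lambda\cap M)*\dot{\mathbb Q}$, where the quotient $\dot{\mathbb Q}$ adds no bounded subset of $M$ that is $\omega_1$-approximated but not already in the extension by $\mathbb P^\kappa_\lambda\cap M$. By the Remark following \cref{gamma-approximation-property}, applied to the transitive collapse $\overline M$, this says precisely that $M$ is $\omega_1$-guessing in $V[G]$. This is where the Magidor closure property and the capturing machinery of §2–§3 do their work, and it is the step I expect to be the main obstacle: one must show that no $\omega_1$-approximated bounded subset of $\overline M$ can be added by the quotient, which requires a careful analysis of how names are captured inside the models of a condition.

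Granting that lemma, ${\rm GM}(\omega_2,\omega_1)$, i.e. ${\rm ISP}(\omega_2)$, follows by the standard stationarity-via-supercompactness argument. Fixing a large regular $\theta$ and a name $\dot F$ for a function $[H_\theta]^{<\omega}\to H_\theta$, I would use a supercompact embedding $j\colon V\to N$ with critical point $\kappa$ and $j(\kappa)>\theta$, together with a master condition forcing $j[H_\theta]$ into the generic chain, to produce densely often a condition carrying a Magidor model $M\prec H_\theta$ closed under $\dot F$ with $M\cap\kappa\in\kappa$; by the lemma such $M$ is $\omega_1$-guessing, so $\mathfrak G_{\omega_2,\omega_1}(H_\theta)$ is stationary. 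For ${\rm FS}(\omega_2,\omega_1)$ I would instead read off, from a sufficiently generic condition and a parameter $X\in H_{\omega_2}$, the subchain of the generic chain consisting of the Magidor models that contain $X$: unboundedness of $\{M\cap\omega_2:M\in\mathcal G\}$ in $\omega_2=\kappa$ comes from the cofinality of the generic chain, its $\omega_1$-closure from the continuity of the chain at points of cofinality $\omega_1$ guaranteed by the Magidor property, and each $M$ is $\omega_1$-guessing again by the lemma. Feeding this family into the Proposition already proved recovers Mitchell's equality $I[\omega_2]\rest S_{\omega_2}^{\omega_1}={\rm NS}_{\omega_2}\rest S_{\omega_2}^{\omega_1}$.

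Finally, assuming $\lambda$ supercompact, I would establish ${\rm GM}^+(\omega_3,\omega_1)$ by the same scheme one level up, now using a supercompact embedding with critical point $\lambda$ and the large side-condition models. Here the large models $M$ occurring in the generic chain have size $\omega_2$ and are $\omega_1$-guessing by the large-model version of the lemma; the clause that genuinely needs the two-type structure is that each such $M$ is the union, continuous at cofinalities $\omega_1$, of the Magidor models lying below it in the chain, each of which is an $\omega_1$-guessing model of size $\omega_1$. This is exactly the strongly $\omega_1$-guessing structure of \cref{GM+}, and a master-condition argument with the $\lambda$-supercompact embedding makes $\mathfrak G^+_{\omega_3,\omega_1}(H_\theta)$ stationary. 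Since ${\rm GM}^+(\omega_3,\omega_1)$ implies both ${\rm FS}(\omega_2,\omega_1)$ and ${\rm GM}(\omega_2,\omega_1)$, this clause subsumes the first; the weaker conclusions are stated separately precisely because they need only $\lambda$ inaccessible, whereas reflecting the strongly-guessing chain structure genuinely uses the supercompactness of $\lambda$.
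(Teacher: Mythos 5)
Your architecture for the first two principles is essentially the paper's: the crux is indeed that for a Magidor model $N$ occurring in a condition, $\mathbb P_N=\mathbb P^\kappa_\lambda\cap N$ embeds completely below that condition and the quotient adds no $\omega_1$-approximated bounded sets; the paper proves exactly this (\cref{stationaryquotient}, \cref{stronglyproperquotient}) and converts it via \cref{guessingbystronglyproper} into the statement that $N[G_N]$ --- note: the guessing model is $N[G_N]$ (equivalently $N[G_\alpha]$), not $N$ itself --- is $\omega_1$-guessing in $V[G]$ (\cref{Magidor-guessing}). For stationarity the paper lifts no embedding: it uses Magidor's characterization (\cref{factMagidor}) only in $V$ to get stationarily many $\kappa$-Magidor models (\cref{Magidor-stationary}), and strong properness does the rest; your master-condition route is a workable variant, and your FS argument matches the paper's use of \cref{addingclub}. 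However, your description of the forcing is wrong in a way that matters: the two types of side conditions are \emph{countable} models and $\kappa$-Magidor models of size $<\kappa$; there are no ``large models of size below $\lambda$'' in any condition. The countable models you have erased are precisely the mechanism behind the key lemma you flag as the main obstacle (the quotient is strongly proper for a stationary family of \emph{countable} models, and \cref{guessingbystronglyproper} with $\gamma=\omega_1$ turns that into the approximation property), as well as behind the preservation of $\omega_1$.

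This misreading becomes a genuine gap in your third clause. Your proof of ${\rm GM}^+(\omega_3,\omega_1)$ requires size-$\omega_2$ models occurring in the generic chain, together with a ``large-model version of the lemma''; neither exists, since $\M_G$ consists only of countable and $<\kappa$-sized models, so the argument fails at its first step. What the paper does instead: working in $V$, take a $\lambda$-Magidor model $N\prec V_\mu$ --- it is here, via \cref{Magidor-stationary} applied to $\lambda$, that supercompactness of $\lambda$ enters. Setting $\bar\lambda=N\cap\lambda$, the transitive collapse of $N[G]$ is $V_{\bar\gamma}[G_{\bar\lambda}]$; it is $\omega_1$-guessing because the pair $(V[G_{\bar\lambda}],V[G])$ has the $\omega_1$-approximation property (\cref{approx-alpha}, resting on \cref{alpha-complete-suborder} and on strong properness of the quotient for countable models, \cref{quotient-alpha-sp}); and it is \emph{strongly} guessing because it is the union, continuous at $\omega_1$-limits, of the models $(M\cap V_{\bar\gamma})[G_{\bar\lambda}]$, where $M$ ranges over the $\kappa$-Magidor models of $\M_G^\delta$ containing $\bar\lambda$ --- each of these is guessing by \cref{Magidor-guessing}, and continuity is \cref{continuity-Magidor} (see \cref{lambda-Magidor-guessing}). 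So your guiding picture --- a size-$\omega_2$ model realized as a continuous union of $\omega_1$-guessing Magidor models --- is the right one, but it must be implemented with $\lambda$-Magidor models of the ground model and the intermediate extensions $V[G_{\bar\lambda}]$, not with side-condition models that the forcing does not contain.
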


\begin{remark} By what we have said above the principle ${\rm GM}^+(\omega_3,\omega_1)$ implies the following: 
$I[\omega_2]\rest S_{\omega_2}^{\omega_1}= {\rm NS}_{\omega_2}\rest S_{\omega_2}^{\omega_1}$,
the principles ${\rm ISP}(\omega_2)$ and ${\rm ISP}(\omega_3)$, and hence
the tree tree property at $\omega_2$ and $\omega_3$, the Singular Cardinal Hypothesis, and the failure of the weak square
principle $\square (\omega_2,\lambda)$, for all regular $\lambda \geq \omega_2$. 
It is also not hard to see that it implies the negation of the weak Kurepa Hypothesis at $\omega_1$
and that the continuum is at least $\omega_3$. 
\end{remark}

The notion of strong properness, introduced by Mitchell in \cite{MI2005}, plays the key role in our construction. 
Let us recall the following definition. 

\begin{definition}\label{strong-generic} Let $\mathbb P$ be a forcing notion and $A$ a set. We say that $p\in \mathbb P$
is $(A,\mathbb P)$-{\em strongly generic} if for all $q\leq p$ there is a condition $q\rest A\in A$ such that any 
$r\leq  q\rest A$ with $r\in A$ is compatible with $q$. 
\end{definition}

\begin{definition}[Strong properness]
Let $\mathbb P$ be a forcing notion, and $\mathcal S$ a collection of sets. 
We say that $\mathbb P$ is $\mathcal S$-{\em strongly proper} if, for every $A\in \mathcal S$ and $p\in A\cap \mathbb P$,
there is $q\leq p$ that is $(A,\mathbb P)$-strongly generic. 
\end{definition}

The following  proposition connects the approximation property with strong properness.

\begin{proposition}\label{guessingbystronglyproper}
Let $\mathbb P$ be a forcing notion and $\kappa$ an uncountable regular cardinal. 
Suppose $\mathbb P$ is $\mathcal S$-strongly proper, for some stationary subset $\mathcal S$
of ${\mathcal P}_{\kappa}(\mathbb P)$. If $G$ is $V$-generic over $\mathbb P$, then $(V,V[G])$
has the $\kappa$-approximation property. 
\end{proposition}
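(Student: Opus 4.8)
The plan is to show that conditions which are strongly generic for suitable models are dense in $\mathbb P$, and that each such condition forces the given bounded, approximated set into the ground model; by density this yields the $\kappa$-approximation property for $(V,V[G])$. So let $\dot x$ be a $\mathbb P$-name and, after passing below a condition, assume $\forces_{\mathbb P}$ ``$\dot x\subseteq\check\mu$ and $\dot x$ is $\kappa$-approximated by $\check V$'' in the sense of \cref{gamma-approximation-property}, where $\mu$ is an ordinal; it suffices to prove $\forces_{\mathbb P}\dot x\in\check V$. Fixing $p\in\mathbb P$, I would use the stationarity of $\mathcal S$ in $\mathcal P_\kappa(\mathbb P)$ to choose $A\in\mathcal S$ of the form $A=N\cap\mathbb P$ for some $N\prec H_\theta$ (with $\theta$ large regular) such that $\mathbb P,\dot x,\mu,p\in N$ and $N\cap\kappa\in\kappa$; the closure required by stationarity is exactly what guarantees that the dense sets and maximal antichains definable from $\dot x$ inside $N$ have their traces on $A$ again dense, respectively predense, in $A$. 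Since $p\in A\cap\mathbb P$ and $\mathbb P$ is $\mathcal S$-strongly proper, \cref{strong-generic} provides $q\le p$ that is $(A,\mathbb P)$-strongly generic. As such conditions are dense below every element of $\mathbb P$, it is enough to show $q\forces\dot x\in\check V$.

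The second step is to localise the name to $A$. The defining clause of strong genericity says that for every $q'\le q$ the condition $q'\rest A\in A$ has the property that every $r\le q'\rest A$ with $r\in A$ is compatible with $q'$. From this a standard density argument shows that $q$ forces $\dot G\cap A$ to meet every dense subset of $\mathbb P$ lying in $N$; in particular $g:=\dot G\cap N$ is forced to be $V$-generic for the small poset $\mathbb b:=N\cap\mathbb P$, which has cardinality $|N|<\kappa$. Letting $\dot x^{*}$ be the $\mathbb b$-name $\{(\check\beta,r): \beta<\mu,\ r\in\mathbb b,\ r\forces_{\mathbb P}\check\beta\in\dot x\}\in N$, the genericity of $g$ together with the correctness of the decisions made inside $\dot G$ yields $q\forces\dot x=\dot x^{*}$. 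Thus $q$ forces $\dot x$ to be the interpretation, by the $V$-generic $g$, of a name for the forcing $\mathbb b$ of size below $\kappa$; in particular $q\forces\dot x\in\check V[\dot g]$.

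The final step passes from $V[g]$ to $V$. Working in $V[G]$ with $q\in G$ and $x=\dot x^{G}=(\dot x^{*})^{g}$, the set $x$ is $\kappa$-approximated by $V$, a property absolute between $V[g]$ and $V[G]$. It then remains to establish the following lemma about the small forcing $\mathbb b$: if $|\mathbb b|<\kappa$ and $\mathbb b$ forces a subset of $\check\mu$ to be $\kappa$-approximated by $\check V$, then it forces that subset into $\check V$. I would prove this by contradiction: below a condition forcing $\dot x^{*}\notin\check V$ one builds a binary splitting tree of conditions of $\mathbb b$ together with a countable set $a=\{\beta_s : s\in 2^{<\omega}\}$ of splitting ordinals, so that distinct branches disagree on membership of the $\beta_s$; a $V$-generic branch through this tree then produces $x\cap a\notin V$, while $|a|=\aleph_0<\kappa$, contradicting approximation. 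Applying the lemma gives $x\in V$, and since $G\ni q$ was arbitrary, $q\forces\dot x\in\check V$, which completes the argument.

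I expect the main obstacle to be the localisation step, namely verifying that strong genericity, rather than mere genericity, forces $\dot x$ to reduce to a name for $\mathbb b=N\cap\mathbb P$ (equivalently, that $x\in V[\dot G\cap N]$): this is precisely the point where the restriction map $q'\mapsto q'\rest A$ and the compatibility clause of \cref{strong-generic} are indispensable, and where the hypothesis of strong properness is genuinely consumed. The concluding small-forcing lemma is then a routine splitting argument, using only that a forcing of size $<\kappa$ cannot add a $\kappa$-approximated set outside the ground model.
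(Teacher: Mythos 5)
Your decomposition — localise $\dot x$ to a name for the small poset $\mathbb b=N\cap\mathbb P$, then invoke a small-forcing approximation lemma — has genuine gaps at both steps, and the first one is fatal to the strategy. The localisation claim $q\forces\dot x=\dot x^{*}$ does \emph{not} follow from the genericity of $g=\dot G\cap N$: genericity of $g$ only gives information about dense subsets of $N\cap\mathbb P$ lying in $V$, and for an ordinal $\beta\notin N$ there need not exist even a single condition of $N$ deciding $\check\beta\in\dot x$, let alone densely many. (Take $\mathbb P=\mathrm{Add}(\omega,\lambda)$, which is strongly proper for every countable $N\prec H_\theta$, and let $\dot x$ name $\{\beta<\lambda:\exists r\in\dot G\ r(\beta,0)=1\}$: no condition of $N$ decides membership of any coordinate outside $N$, and indeed $\dot x^{G}\notin V[G\cap N]$.) Of course this $\dot x$ violates the approximation hypothesis, but that is exactly the point: localisation is not a consequence of strong genericity alone; it can only be obtained by consuming the approximation hypothesis, and the only visible way to do that is the paper's argument. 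The paper strengthens the strongly generic $q$ so that it \emph{decides} $\dot x\cap\check N$ (legitimate since $|N|<\kappa$, so the hypothesis on $\dot x$ applies with $Z=N$; extensions of strongly generic conditions remain strongly generic), arranges that $q\rest(N\cap\mathbb P)$ extends $p$ inside $N$, and then shows that this single condition of $N$ decides every $\beta<\mu$: otherwise, by elementarity of $N$, there are $\beta\in N$ and $r_0,r_1\in N$ below $q\rest(N\cap\mathbb P)$ deciding $\beta\in\dot x$ oppositely; by \cref{strong-generic} both are compatible with $q$, and the two common extensions contradict the fact that $q$ decides $\dot x\cap\check N$ (as $\beta\in N$). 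This already produces a condition below $p$ forcing $\dot x\in\check V$, so once you have done the work needed for localisation there is nothing left to prove, and the detour through $V[g]$ buys nothing.

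The second step also fails as written, although the lemma itself is true. A binary splitting tree with a countable set $a$ of splitting ordinals cannot yield a contradiction: first, the generic filter need not thread the tree, since the levels are in general not predense below the root; second, and more fundamentally, even when the generic does follow a branch, that branch can lie in $V$. Concretely, assume ${\rm CH}$ and let $\mathbb b=\mathrm{Add}(\omega_1,1)$, of size $\omega_1<\omega_2=\kappa$: this forcing is countably closed, so its generic set $X\subseteq\omega_1$ satisfies $X\cap a\in V$ for \emph{every} countable $a\in V$, yet $X\notin V$. Your argument only ever inspects countable traces of $\dot x$, so it would ``prove'' $X\in V$; in this example the tree can be built and the generic does follow a branch, but the branch is a ground-model real (countably closed forcing adds no reals), so $X\cap a\in V$ and no contradiction appears. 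The correct proof of the lemma must diagonalise against \emph{all} conditions, using a test set of size $|\mathbb b|$: for each $r\leq r_0$ let $\beta_r$ be the least ordinal whose membership in $\dot x$ is undecided by $r$ (it exists since $r\forces\dot x\notin\check V$), and put $Z=\{\beta_r:r\leq r_0\}$, so $Z\in V$ and $|Z|\leq|\mathbb b|<\kappa$; any $r_1\leq r_0$ forcing $\dot x\cap\check Z=\check y$ with $y\in V$ then decides $\beta_{r_1}\in\dot x$, a contradiction. This distinction matters in your own setting, where $N\cap\mathbb P\in\mathcal P_\kappa(\mathbb P)$ is typically uncountable.
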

\begin{proof}
Work in $V$. Let $\alpha$ be an ordinal, $\dot X$ a $\mathbb P$-name, and suppose some condition $p\in \mathbb P$
forces that $\dot X\subseteq \alpha$ and  $\dot X\cap \check Z\in V$, for all  $Z\in V$ with $|Z|^V< \kappa$.
Fix a sufficiently large regular cardinal $\theta$. By the stationarity of $\mathcal S$, we can find $M\prec H_{\theta}$
containing $p,\mathbb P, \dot A$, and such that $M\cap \mathbb P\in \mathcal S$. 
Let $q\leq p$ be $(M\cap \mathbb P)$-strongly generic. Since $M\cap \mathbb P$ is of size $<\kappa$, 
by strengthening $q$ if necessary, we may assume that $q$ decides $\dot X \cap M$. 
Since $q\rest (M\cap \mathbb P)$ and $p$ are compatible, and $M$ is elementary, they are compatible in $M$. 
Therefore, by replacing $q\rest (M\cap \mathbb P)$ by a stronger condition in $M$,  we may assume that it extends $p$. 
We now argue that $q\rest (M\cap \mathbb P)$ decides $\dot X$. Otherwise, by elementary of $M$, we
can find $\xi \in \alpha \cap M$ and $r_0,r_1\in M$ with $r_0, r_1\leq q\rest (M\cap \mathbb P)$ such that $r_0$ forces $\xi \in \dot X$
and $r_1$ forces $\xi \notin \dot X$. Now, by strong genericity of $q$, we have that $r_0$ and $r_1$ are both compatible
with $q$. Let $s_0$ be a common extension of $q$ and $r_0$, and $s_1$ a common extension of $q$ and $r_1$. 
Then $s_0, s_1\leq q$  and force contradictory information about $\xi \in \dot X$. 
This contradicts the fact that $q$ decides $\dot X \cap M$.
 \end{proof}

We will also need the following well-known theorem due to Magidor.

\begin{theorem}[Magidor, \cite{MA1971}]\label{factMagidor}
The following are equivalent for a regular cardinal $\kappa$.
\begin{enumerate}
\item $\kappa$ is supercompact.
\item For every $\gamma>\kappa$ and $x\in V_\gamma$ there exist $\bar\kappa < \bar\gamma<\kappa$, 
and an elementary embedding $j:V_{\bar\gamma}\rightarrow V_{\gamma}$ with critical point $\bar\kappa$ such 
that $j(\bar\kappa)=\kappa$ and $x\in j[V_{\bar\gamma}]$. 
\end{enumerate}
\end{theorem}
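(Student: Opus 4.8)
The statement is a biconditional, so the plan is to prove the two directions separately; morally, I want to show that the full supercompactness of $\kappa$ is equivalent to a \emph{local reflection} property witnessed by elementary embeddings between rank initial segments.

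For $(1)\Rightarrow(2)$, fix $\gamma>\kappa$ and $x\in V_\gamma$, and set $\lambda=|V_\gamma|$. Using supercompactness I would fix an elementary embedding $i\colon V\to M$ with critical point $\kappa$, with $i(\kappa)>\gamma$, and with ${}^{\lambda}M\subseteq M$. The closure of $M$ gives $V_{\gamma+1}\subseteq M$ (so $V_\gamma^M=V_\gamma$) and, crucially, $i\upharpoonright V_\gamma\in M$, since it is a function of size $\le\lambda$. Inside $M$ the map $e=i\upharpoonright V_\gamma\colon V_\gamma\to V_{i(\gamma)}^M$ is elementary, has critical point $\kappa$, satisfies $e(\kappa)=i(\kappa)$ and $i(x)=e(x)\in e[V_\gamma]$, while $\kappa<\gamma<i(\kappa)$. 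Hence $M$ satisfies the existential statement ``there are $\bar\kappa<\bar\gamma<i(\kappa)$ and an elementary $e\colon V_{\bar\gamma}\to V_{i(\gamma)}$ with critical point $\bar\kappa$, $e(\bar\kappa)=i(\kappa)$ and $i(x)\in e[V_{\bar\gamma}]$,'' witnessed by $\kappa,\gamma$ and $e$. Since ``$e$ is elementary'' is first order over the sets $V_{\bar\gamma},V_{i(\gamma)}$, this is a genuine first-order assertion about the parameters $i(\kappa),i(\gamma),i(x)$; pulling it back through the elementarity of $i$ yields exactly $(2)$ in $V$.

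For $(2)\Rightarrow(1)$ I would argue by contradiction. Suppose $\kappa$ is not supercompact and let $\lambda$ be least such that $\kappa$ is not $\lambda$-supercompact. Choose a limit ordinal $\gamma$ well above $\lambda$, large enough that $V_\gamma$ correctly computes the relevant statements, so $V_\gamma\models$ ``$\lambda$ is least with $\kappa$ not $\lambda$-supercompact.'' Applying $(2)$ to $\gamma$ and $x=\lambda$ produces $\bar\kappa<\bar\gamma<\kappa$ and $j\colon V_{\bar\gamma}\to V_\gamma$ with critical point $\bar\kappa$, $j(\bar\kappa)=\kappa$, and $\lambda=j(\bar\lambda)$ for some $\bar\lambda<\bar\gamma$; by elementarity $V_{\bar\gamma}\models$ ``$\bar\kappa$ is not $\bar\lambda$-supercompact.'' The heart of the argument is to manufacture from $j$ a normal fine $\bar\kappa$-complete ultrafilter on $\mathcal P_{\bar\kappa}(\bar\lambda)$, contradicting this. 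Put $s=j[\bar\lambda]$; since $\bar\lambda<\kappa$ we have $|s|<\kappa$ and $s\subseteq\lambda$, so $s\in\mathcal P_\kappa(\lambda)$, and set $W=\{Y\subseteq\mathcal P_{\bar\kappa}(\bar\lambda):s\in j(Y)\}$. I would then verify the usual clauses: $W$ is an ultrafilter because $s\in\cdot$ is a point and $j$ commutes with complements; $\bar\kappa$-completeness follows from $\operatorname{crit}(j)=\bar\kappa$; fineness holds because $j(\alpha)\in s$ for every $\alpha<\bar\lambda$; and normality follows by evaluating $j(f)$ at $s$ for regressive $f$, using $j(f)(s)\in s=j[\bar\lambda]$.

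The decisive point—and the step I expect to be the main obstacle—is to see that $W$ actually lies in $V_{\bar\gamma}$, not merely in $V$: although $W$ is defined using $j\notin V_{\bar\gamma}$, as a set $W$ is a family of subsets of $\mathcal P_{\bar\kappa}(\bar\lambda)$ and hence has rank below $\bar\lambda+\omega<\bar\gamma$ (here one uses that $\gamma$, and therefore $\bar\gamma$, was taken to be a limit well above $\lambda$, resp.\ $\bar\lambda$). Consequently $V_{\bar\gamma}\models$ ``$\bar\kappa$ is $\bar\lambda$-supercompact,'' contradicting the previous paragraph. Two technical caveats to watch throughout are the closure demand ${}^{|V_\gamma|}M\subseteq M$ in the first direction, which is precisely what places $i\upharpoonright V_\gamma$ into $M$, and the absoluteness between $V$, $V_\gamma$ and $V_{\bar\gamma}$ of the assertion ``there is a normal fine ultrafilter on $\mathcal P_{\bar\kappa}(\bar\lambda)$,'' which holds because all witnesses have bounded rank once $\gamma$ is chosen as a sufficiently closed limit.
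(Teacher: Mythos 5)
The paper itself does not prove this theorem: it is quoted as a known result of Magidor (the cited 1971 paper), and the proof environment in the source is deliberately left empty, so there is no argument of the paper to compare yours against. Judged on its own, your proof is correct, and it is essentially Magidor's original argument. In $(1)\Rightarrow(2)$ you take a $|V_\gamma|$-supercompactness embedding $i\colon V\to M$, use the closure ${}^{|V_\gamma|}M\subseteq M$ to get $V_{\gamma+1}\subseteq M$ and $i\upharpoonright V_\gamma\in M$, observe that ``$e=i\upharpoonright V_\gamma$ is an elementary map $V_\gamma\to V^M_{i(\gamma)}$'' is a first-order fact inside $M$ (satisfaction for set structures is absolute), and pull the resulting existential statement back through $i$; this is the standard argument and each step you flag is sound. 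In $(2)\Rightarrow(1)$ you correctly isolate the two points on which everything turns: first, that $s=j[\bar\lambda]$ is a legitimate point of $j(\mathcal P_{\bar\kappa}(\bar\lambda))=\mathcal P_\kappa(\lambda)$ precisely because $\bar\lambda<\bar\gamma<\kappa$ forces $|s|<\kappa$; and second, that the derived normal fine $\bar\kappa$-complete ultrafilter $W$, though defined from $j\notin V_{\bar\gamma}$, has rank below $\bar\lambda+\omega$ and hence lies in $V_{\bar\gamma}$, with its defining properties absolute to $V_{\bar\gamma}$ since $\bar\gamma$ is a limit above $\bar\lambda$ --- contradicting $V_{\bar\gamma}\models$ ``$\bar\kappa$ is not $\bar\lambda$-supercompact,'' which elementarity of $j$ yields from the choice of $\gamma$. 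Two cosmetic remarks: the minimality of $\lambda$ plays no role (any $\lambda$ witnessing non-supercompactness gives the same contradiction), and you should note explicitly that every $Y\subseteq\mathcal P_{\bar\kappa}(\bar\lambda)$ has rank below $\bar\gamma$ and so lies in the domain of $j$, which is what makes $W$ defined on the full power set; both points are implicit in your closing rank discussion.
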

\begin{proof}[\nopunct] 
 \end{proof}

\section{Virtual Models}

In this section we review the notion of virtual models introduced in  \cite{Velisemiproper} and \cite{Veliproper}. 
In \cite{Veliproper} we  used virtual models of two types: countable and internally club (I.C.) models of size $\aleph_1$.
In the current situation we replace the I.C. models by models that have a much stronger closure property that we
call Magidor models. 
We shall consider the language $\mathcal L$ obtained by adding a single constant symbol $c$ to the standard language $\mathcal L_\epsilon$ of set theory.
Let us say that a structure $\mathcal A$ of the form $(A,\in, \kappa)$ is \emph{suitable}
if $A$ is a transitive set satisfying $\ZFC$ in the expanded language, 
when $\kappa$, the interpretation of the constant symbol $c$, is inaccessible in $A$. 
We shall often abuse notation and refer to the structure $(A,\in,\kappa)$ simply by $A$.
Suppose $\mathcal A$ is a suitable structure. If $\alpha$ is an ordinal in $A$,
we let $A_\alpha$ denote $A\cap V_\alpha$. Finally, we let
\[
E_{\mathcal A}=\{ \alpha \in A : A_\alpha \prec A \}. 
\]
Note that $E_A$ is a closed, possibly empty, subset of $\ORD^A$. It is not definable in $A$, but $E_A\cap \alpha$
is uniformly definable in $A$ with parameter $\alpha$, for each $\alpha \in E_A$. 
If $\alpha \in E_A$  we let ${\rm next}_{A}(\alpha)$ be the least ordinal in $E_A$ above $\alpha$,
if such an ordinal exists. Otherwise, we leave ${\rm next}_A(\alpha)$ undefined.
We start with a simple technical lemma.

\begin{lemma}\label{alphasharpinE}  
Suppose $M$ is an elementary submodel of a suitable structure $A$. Then
\begin{enumerate}
\item
If $\alpha \in E_A$ and $(M\cap \ORD^A)\setminus\alpha\neq\varnothing$, then ${\rm min}(M\cap\ORD^A\setminus\alpha)\in E_A$.
\item
$\sup ( E_{A}\cap M) = {\rm sup} (E_{A}\cap \sup (M\cap \ORD^A))$.
\label{1.1}
\end{enumerate}
\end{lemma}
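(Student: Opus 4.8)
The plan is to prove part (1) first and deduce part (2) from it. Recall that $A_\xi = A\cap V_\xi$, so $\xi\in E_A$ means exactly $A_\xi\prec A$. Writing $\beta=\min(M\cap\ORD^A\setminus\alpha)$, the goal of (1) is $A_\beta\prec A$. Since the predicate ``$A_\xi\prec A$'' is not expressible by a single formula inside $A$, I will check the Tarski--Vaught criterion one formula at a time. For each formula $\psi(x,\bar y)$ of the language of $A$, set
\[
C_\psi=\big\{\xi\in\ORD^A:\ \forall\,\bar a\in A_\xi\ \big(A\models\exists x\,\psi(x,\bar a)\ \rightarrow\ \exists x\in A_\xi\ A\models\psi(x,\bar a)\big)\big\}.
\]
For a fixed $\psi$ this is a class definable in $A$ without parameters, and it is closed: if $\xi$ is a limit of points of $C_\psi$, then any finite tuple $\bar a\in A_\xi=\bigcup_{\eta<\xi}A_\eta$ already lies in some $A_\eta$ with $\eta\in C_\psi\cap\xi$, which supplies a witness in $A_\eta\subseteq A_\xi$. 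By the Tarski--Vaught test, $A_\beta\prec A$ holds if and only if $\beta\in C_\psi$ for every $\psi$; and since $\alpha\in E_A$ gives $A_\alpha\prec A$, we have $\alpha\in C_\psi$ for every $\psi$.

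Now fix $\psi$ and suppose toward a contradiction that $\beta\notin C_\psi$. As $\alpha\in C_\psi$ and $\alpha\le\beta$, we in fact have $\alpha<\beta$ and $\alpha\in C_\psi\cap\beta\neq\varnothing$. Put $\beta^{*}=\sup(C_\psi\cap\beta)$. Closedness of $C_\psi$ forces $\beta^{*}<\beta$ (otherwise $\beta$ would be a limit point of $C_\psi$, hence in $C_\psi$) and also $\beta^{*}\in C_\psi$, with $\alpha\le\beta^{*}<\beta$. The crucial point is that $\beta^{*}$ is definable in $A$ from the single parameter $\beta$, since $C_\psi$ is parameter-free definable and $C_\psi\cap\beta$ is a set by Separation. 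Because $\beta\in M\prec A$, this yields $\beta^{*}\in M$, so $\beta^{*}$ is an element of $M\cap\ORD^A$ lying in $[\alpha,\beta)$. This contradicts $\beta=\min(M\cap\ORD^A\setminus\alpha)$, which says precisely that $M\cap\ORD^A\cap[\alpha,\beta)=\varnothing$. Hence $\beta\in C_\psi$ for every $\psi$, and therefore $\beta\in E_A$.

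For part (2), let $\delta=\sup(M\cap\ORD^A)$. Since $M\prec A$ is closed under the successor operation, $M\cap\ORD^A$ has no maximum, so every ordinal of $M$ is strictly below $\delta$; consequently $E_A\cap M\subseteq E_A\cap\delta$ and the inequality $\sup(E_A\cap M)\le\sup(E_A\cap\delta)$ is immediate. For the reverse inequality, fix any $\gamma\in E_A$ with $\gamma<\delta$. As $\gamma<\sup(M\cap\ORD^A)$, there is an ordinal of $M$ above $\gamma$, so $M\cap\ORD^A\setminus\gamma\neq\varnothing$; applying part (1) with $\alpha=\gamma$ gives $\min(M\cap\ORD^A\setminus\gamma)\in E_A\cap M$, and this ordinal is $\ge\gamma$. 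Thus $\sup(E_A\cap M)\ge\gamma$, and letting $\gamma$ range over $E_A\cap\delta$ yields $\sup(E_A\cap M)\ge\sup(E_A\cap\delta)$, hence equality.

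I expect the main obstacle to lie entirely in part (1). The naive strategy of reflecting Tarski--Vaught witnesses through $M$ fails, because parameters in $A_\beta$ of rank in $[\alpha,\beta)$ need not belong to $M$ and cannot be pushed below $\alpha$. The device that circumvents this is to replace the non-definable predicate ``$A_\xi\prec A$'' by the genuinely definable closure classes $C_\psi$; once this is done, the whole weight of the argument rests on the single clean observation that $\sup(C_\psi\cap\beta)$ is definable from $\beta$ and therefore, by elementarity of $M$, cannot land in the forbidden gap $[\alpha,\beta)$.
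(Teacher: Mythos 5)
Your proof is correct, and part (1) takes a genuinely different route from the paper's. The paper runs precisely the ``naive'' witness-reflection argument that your closing remark dismisses: assuming Tarski--Vaught fails at $\beta$ for some fixed formula $\varphi$, the existence of a bad tuple is expressible in $A$ with the single parameter $\beta\in M$, so elementarity of $M$ yields a bad tuple $\bar x\in A_\beta\cap M$; since each coordinate of $\bar x$ lies in $M$, its rank is an ordinal of $M$ below $\beta$, hence below $\alpha$ by minimality of $\beta$, so in fact $\bar x\in A_\alpha$; now $A_\alpha\prec A$ supplies a witness $y'\in A_\alpha\subseteq A_\beta$, a contradiction. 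Thus your objection that ``parameters of rank in $[\alpha,\beta)$ need not belong to $M$'' overlooks that the bad parameters are first reflected \emph{into} $M$, after which their ranks automatically drop below $\alpha$ --- the same minimality of $\beta$ that powers your argument powers this one. What you do instead is isolate a clean, reusable fact: for any closed class $C$ of ordinals that is parameter-free definable in $A$, if $\alpha\in C$ and $\beta=\min(M\cap\ORD^A\setminus\alpha)$, then $\beta\in C$, since otherwise $\sup(C\cap\beta)$ would be an ordinal of $M$ (being definable from $\beta\in M$) trapped in the forbidden gap $[\alpha,\beta)$; applying this to the classes $C_\psi$, whose common closure points are exactly $E_A$, gives (1). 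Both proofs rest on the same two pillars --- elementarity of $M$ at the parameter $\beta$, and emptiness of $M\cap[\alpha,\beta)$ --- but you reflect a single definable ordinal where the paper reflects a witnessing tuple. Your version is more modular and in passing explains the paper's own remark that $E_A\cap\alpha$ is uniformly definable from $\alpha\in E_A$ (it is $\bigcap_\psi C_\psi\cap\alpha$, and below a closure point each $C_\psi$ computes correctly), while the paper's version is shorter and needs no auxiliary classes. Your part (2) is the same deduction the paper has in mind when it says the second item follows from the first.
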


\begin{proof} The second item follows from the first one, so we only give the proof of (1). 
Let $\beta$ be the least ordinal in $M\setminus \alpha$.
We need to show that $A_\beta$ is an elementary submodel of $A$. 
Suppose otherwise, then by the Tarski-Vaught criterion,
 there is a tuple $\bar{x}\in A_\beta$
and a formula $\varphi (y,\bar{x})$ such that $A \models \exists y \varphi(y,\bar{x})$,
but there is no $y\in A_\beta$ such that $A \models \exists y \varphi(y,\bar{x})$.
Since $\beta \in M$ and $M$ is an elementary submodel of $A$,
there is such a tuple $\bar{x}\in A_\beta\cap M$.
Now, $\beta$ is the least ordinal in $M$ above $\alpha$,
therefore $\bar{x}\in M\cap A_\alpha$.
Since $A_\alpha$ is an elementary submodel of $A$,
there is $y'\in A_\alpha$ witnessing that $A_\alpha \models \varphi(y',\bar{x})$
and so $A \models \varphi(y',\bar{x})$.
Since $\alpha \leq \beta$, it follows that $y'\in A_\beta$, a contradiction.
\end{proof}

\begin{definition}\label{M(X)-definition} Suppose $M$ is a submodel of a suitable
structure $A$ and $X$ is a subset of $A$. Let
$$
{\rm Hull}(M,X) =\{ f(\bar{x}) : f\in M, \bar{x}\in X^{<\omega}, f \mbox{ is a function, and } \bar{x}\in \dom (f)\}.
$$
\end{definition}

The main reason we have defined the $\rm Hull$ operation in this way is that it allows us to define
the Skolem hull of $M$ and $X$ without referring explicitly to the ambient model $A$.

\begin{lemma}\label{hull-elementary} Suppose $A$ is a suitable structure,
$M$ is an elementary submodel of $A$ and $X$ is a subset of $A$.
Let $\delta$ be $\sup (M\cap \ORD^A)$, and suppose $X\cap A_\delta$ is nonempty.
Then ${\rm Hull}(M,X)$ is the least elementary submodel of $A$ containing $M$ and $X\cap A_\delta$.
\end{lemma}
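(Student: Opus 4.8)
The goal is to show that $\mathrm{Hull}(M,X)$ is the least elementary submodel of $A$ containing both $M$ and $X\cap A_\delta$, where $\delta=\sup(M\cap\ORD^A)$. Let me denote $H=\mathrm{Hull}(M,X)$.

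The plan has three parts: (i) show $H$ contains $M$ and $X\cap A_\delta$; (ii) show $H\prec A$; (iii) show $H$ is least among such elementary submodels.

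For part (i), every $m\in M$ equals $f(\bar x)$ where $f$ is the constant function with value $m$ (which lies in $M$ since $M$ is elementary, or at least closed under this) and $\bar x$ is any element of $X^{<\omega}$ — but I must be careful that $X$ is nonempty; the hypothesis $X\cap A_\delta\neq\varnothing$ guarantees there is something to plug in. Actually, to get $M\subseteq H$ cleanly, note that for $m\in M$ the constant function $c_m\colon A\to A$ with value $m$ is definable with parameter $m$, hence lies in $M$, and $c_m(\bar x)=m$ for any $\bar x$ in its domain. Similarly each $x\in X\cap A_\delta$ equals $\mathrm{id}(x)$ where the identity function is in $M$.

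For part (ii), I would verify the Tarski–Vaught criterion. Suppose $A\models\exists y\,\varphi(y,\bar a)$ where $\bar a\in H^{<\omega}$. Each entry of $\bar a$ has the form $f_i(\bar x_i)$ with $f_i\in M$ and $\bar x_i\in X^{<\omega}$. The key point is that all the relevant parameters $x$ from $X$ appearing here lie in $A_\delta$, so by elementarity of $M$ in $A$ and the fact that $\delta=\sup(M\cap\ORD^A)$, one can find inside $M$ a Skolem function $g$ that produces a witness $y$ as a function of the $x$'s. Concretely, let $F$ be the $M$-definable function sending a tuple $\bar z$ to some $y$ with $A\models\varphi(y,f_1(\bar z),\ldots)$ whenever such $y$ exists (using a well-ordering in $A$ available in $M$ by elementarity). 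Then $F\in M$, and $y=F(\bar x)\in H$ is the desired witness. This shows $H\prec A$.

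For part (iii), if $N\prec A$ with $M\subseteq N$ and $X\cap A_\delta\subseteq N$, then for any $f\in M\subseteq N$ and $\bar x\in (X\cap A_\delta)^{<\omega}\subseteq N^{<\omega}$ with $\bar x\in\dom(f)$, the value $f(\bar x)$ lies in $N$ since $N$ is closed under application of its functions (by elementarity). Hence $H\subseteq N$, giving minimality. The main obstacle is part (ii): I must ensure the witnessing Skolem functions genuinely lie in $M$ and that the parameters from $X$ are confined to $A_\delta$ so that elementarity of $M$ can be invoked — the role of the hypothesis $X\cap A_\delta\neq\varnothing$ and the precise definition of $\delta$ is exactly to make this step go through.
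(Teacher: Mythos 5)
Your proposal follows the same route as the paper's own proof: containment of $M$ and $X\cap A_\delta$ via constant and identity functions, elementarity via the Tarski--Vaught criterion with Skolem functions pulled into $M$ by elementarity, and minimality read off from the definition of the hull. However, several steps fail as literally written, all for the same set/class reason. The constant function $c_m\colon A\to A$ and ``the identity function'' on $A$ are not elements of $A$ at all: since $A$ is transitive and models $\ZFC$, no element of $A$ can have all of $A$ as its domain, so these functions cannot lie in $M$, and ``definable with parameter $m$'' does not rescue this (definable class functions of $A$ are not elements of $A$). The paper avoids this by using the constant function $c_z$ and the identity ${\rm id}_\gamma$ \emph{restricted to} $A_\gamma$ for suitable $\gamma\in M$, and this is exactly where the hypotheses enter: since $\delta=\sup(M\cap\ORD^A)$ and $X\cap A_\delta\neq\varnothing$, there is $\gamma\in M$ with $X\cap A_\gamma\neq\varnothing$ (so $c_z\restriction A_\gamma$ has something from $X$ to be applied to, giving $M\subseteq {\rm Hull}(M,X)$), and for each $x\in X\cap A_\delta$ there is $\gamma\in M$ with $x\in A_\gamma$ (so $x={\rm id}_\gamma(x)\in {\rm Hull}(M,X)$). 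The same issue infects your Tarski--Vaught step: there is no ``well-ordering in $A$'' as an object of $A$, and a Skolem function defined on all tuples is again a proper class of $A$. The correct move, as in the paper, is to define $g$ on the \emph{set} $D_1\times\cdots\times D_n$, where $D_i=\dom(f_i)\in M$, choosing witnesses by collection/rank considerations together with the axiom of choice inside $A$, and then to invoke elementarity of $M$ (the existence of such a $g$ is a statement with parameters $D_1,\ldots,D_n,f_1,\ldots,f_n\in M$) to find $g\in M$.

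One further point: your minimality argument only treats generators $f(\bar x)$ with $\bar x\in (X\cap A_\delta)^{<\omega}$, whereas the hull is defined using arbitrary $\bar x\in X^{<\omega}$ with $\bar x\in\dom(f)$. This is harmless, but it needs the observation (which you assert in passing but never justify) that such $\bar x$ automatically lie in $A_\delta$: since $\dom(f)\in M$, its rank is some $\rho\in M\cap\ORD^A$, and $M\cap\ORD^A$ has no largest element, so $\rho<\delta$; every element of $\dom(f)$ then has rank below $\rho$ and hence lies in $A_\delta$. With these repairs your argument becomes the paper's proof.
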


\begin{proof} For each $\gamma \in A$, let ${\rm id}_\gamma$ be the identity function
on $A_\gamma$. Clearly, if $\gamma \in M$ then ${\rm id}_\gamma \in M$. Therefore,
$X\cap A_\delta$ is a subset of ${\rm Hull}(M,X)$. Let $\gamma \in M$ be such that $X\cap A_\gamma$ is nonempty.
For each $z\in M$, the constant function $c_z$ defined on $A_\gamma$ is in $M$,
therefore $M$ is a subset of ${\rm Hull}(M,X)$. The minimality of ${\rm Hull}(M,X)$ is clear from the definition.
It remains to show that ${\rm Hull}(M,X)$ is an elementary submodel of $A$.
We check the Tarski-Vaught criterion for ${\rm Hull}(M,X)$ and $A$. Let $\varphi$ be a formula and
$a_1,\ldots,a_n \in {\rm Hull}(M,X)$ such that $A\models \exists u \varphi(u,a_1,\ldots,a_n)$.
Then we can find functions $f_1,\ldots, f_n\in M$ and tuples $\bar{x}_1,\ldots,\bar{x}_n \in X^{<\omega}$
such that $a_i=f_i(\bar{x}_i)$, for all $i$. If $D_i$ is the domain of $f_i$, this implies that $\bar{x}_i\in D_i$.
By regularity and the axiom of choice in $A$ we can find a function $g$ defined on $D_1 \times \ldots \times D_n$
such that for every $\bar{y}_1\in D_1,\ldots,\bar{y}_n \in D_n$, if there is $u$ such that
$A\models \varphi (u, f_1(\bar{y}_1),\ldots,f_n(\bar{y}_n))$ then
$g(\bar{y}_1,\ldots,\bar{y}_n)$ is such a $u$. Moreover, by elementarity of $M$,
we may assume that $g\in M$. Let $a =g(\bar{x}_1,\ldots,\bar{x}_n)$.
It follows that $a\in {\rm Hull}(M,X)$ and $A \models \varphi (a,a_1,\ldots,a_n)$.
Therefore, ${\rm Hull}(M,X)$ is an elementary submodel of $A$.
\end{proof}

Now, let us fix an inaccessible cardinal  $\kappa$, a cardinal $\lambda >\kappa$ such that 
$V_\lambda$ satisfies $\ZFC$.   We shall write
 $E$ instead of $E_{V_\lambda}$ and ${\rm next}(\alpha)$ instead of ${\rm next}_{V_\lambda}(\alpha)$.
For each $\alpha \in E$, we shall define certain families $\mathscr F_\alpha\in V_\lambda$, 
as well as relations $R_\alpha$ and operations $O_\alpha$ on $V_\lambda$. 
Being a member of $\mathscr F_\alpha$ will be expressed by a $\Sigma_1$-formula with parameter $V_\alpha$
and similarly for $R_\alpha$ and $O_\alpha$.
If $A$ is another suitable structure we can interpret these formulas in $A$ and obtain
families $\mathscr F_\alpha^A$, relations $R_\alpha^A$ and operations $O_\alpha^A$. In this section we shall only consider
suitable $A$ such that the interpretation of the constant symbol $c$ is $\kappa$ and $A\subseteq V_\lambda$. 
Note that if we have such an $A$ and $\alpha \in E_A \cap E$ with $A_\alpha = V_\alpha$ then $\mathscr F_\alpha^A \subseteq \mathscr F_\alpha$.
Similarly, if $x,y\in A$ are such that $xR_\alpha^A y$ then $xR_\alpha y$, and if $x\in A$ then $O_\alpha^A(x)=O_\alpha(x)$. 
If $M$ is an elementary submodel of a suitable $A$ and $\alpha\in E_A \cap M$ we shall write 
$\mathscr F_\alpha^M$ for $\mathscr F_\alpha^A\cap M$, and $R_\alpha^M$ and $O_\alpha^M$  for restrictions of  $R_\alpha^A$ and $O_\alpha^A$ to $M$.

\begin{definition}\label{A-def} Suppose $\alpha \in E$. We let ${\mathscr A}_\alpha$ denote the set of all
transitive $A$ that are elementary extensions of $V_\alpha$ and have the same cardinality as $V_\alpha$.
\end{definition}

Note that if $A \in \mathscr A_\alpha$ and $\alpha \in A$ then $E_A\cap \alpha=E\cap \alpha$.
If $A\in \mathscr A_\alpha$ we will refer to $V_\alpha$ as the {\em standard part} of $A$.
Note that if $A$ has nonstandard elements then $\alpha \in E_A$. 


\begin{definition}
Suppose $\alpha \in E$. We let $\mathscr{V}_\alpha$ denote the collection of all substructures $M$ of $V_\lambda$ of
size less than $\kappa$ such that,
 if we let $A={\rm Hull}(M,V_\alpha)$, then $A\in \mathscr A_\alpha$ and $M$ is an elementary 
submodel of $ A$.
\end{definition}

\begin{definition} We refer to the members of $\mathscr{V}_\alpha$ as the $\alpha$-{\em models}. 
 We write $\mathscr{V}_{< \alpha}$ for $\bigcup \{\mathscr{V}_\gamma : \gamma \in E \cap \alpha\}$.
 Collections $\mathscr{V}_{\leq \alpha}$ and $\mathscr{V}_{\geq \alpha}$ are defined in the obvious way. 
 We will write $\mathscr V$ for $\mathscr{V}_{< \lambda}$ . 
 If $M\in \mathscr V$, we then write  $\eta(M)$ for the unique ordinal $\alpha$ such that $M\in \mathscr V_\alpha$.
\end{definition}

\begin{remark} Note that if $M \in \mathscr V_\alpha$ then $\sup(M \cap \ORD)\geq \alpha$.
In general, $M$ is not elementary in $V_\lambda$, in fact, this only happens if $M \subseteq V_\alpha$.
In this case we will say that $M$ is a {\em standard $\alpha$-model}.
\end{remark}

\begin{convention} We refer to members of $\mathscr V$ as
 {\em virtual models}. We also refer to members of $\mathscr V^A$, for some
suitable $A$ with $A\subseteq V_\lambda$, as {\em general virtual models}.
\end{convention}

\begin{definition}\label{alpha-equiv-def} Suppose $M,N\in \mathscr V$ and $\alpha \in E$.
We say that an isomorphism $\sigma:M\rightarrow N$ is an $\alpha$-{\em isomorphism}
if it has an extension to an isomorphism $\bar{\sigma}:{\rm Hull}(M,V_\alpha)\rightarrow {\rm Hull}(N,V_\alpha)$.
We say that $M$ and $N$ are $\alpha$-{\em isomorphic} and write $M\cong_{\alpha}N$ if
there is an $\alpha$-isomorphism between them. Note that if  $\sigma$ and $\bar\sigma$ exist, they are unique.
\end{definition}

Clearly, $\cong_{\alpha}$ is an equivalence relation, for every $\alpha\in E$. Note that if $M \in \mathscr V_\gamma$,
for some $\gamma < \alpha$, then the only model $\alpha$-isomorphic to $M$ is $M$ itself.
Suppose  $\alpha,\beta \in E$ and $\alpha \leq \beta$.
It is easy to see that, if $M,N\in \mathscr V$ are $\beta$-isomorphic,
then they are $\alpha$-isomorphic.  We will now see that, if $\alpha <\beta$, then
for every $\beta$-model $M$ there is a canonical representative of the $\cong_\alpha$-equivalence
class of $M$ which is an $\alpha$-model.

\begin{definition}\label{projection-models-def} Suppose $\alpha$ and $\beta$ are members of  $E$
and $M$ is a $\beta$-model. Let $\overline{{\rm Hull}(M,V_\alpha)}$ be the transitive collapse of
${\rm Hull}(M,V_\alpha)$, and let $\pi$ be the collapsing map. We define  $M \! \restriction \! \alpha$ to be $\pi[M]$,
i.e. the image of $M$ under the collapsing map of ${\rm Hull}(M,V_\alpha)$.
\end{definition}

\begin{remark} Note that if $\beta < \alpha$ then $M\restriction \alpha = M$.
If $\beta \geq \alpha$, then $\overline{{\rm Hull}(M,V_\alpha)}$ belongs to $\mathscr A_\alpha$, so $M\! \restriction \! \alpha$ is
an $\alpha$-model which is $\alpha$-isomorphic to $M$. Note also that if $\beta=\alpha$, then
$M\! \restriction \! \alpha =M$ since ${\rm Hull}(M,V_\alpha)$ is already transitive.

Note also that if $A\in \mathscr A_\alpha$ then $\mathscr V_\alpha^A \subseteq \mathscr V_\alpha$. 
Therefore, if $A,B\in \mathscr A_\alpha$, $M \in \mathscr V^{A}$, and $N \in \mathscr V^B$, we can 
still write $M \cong_\alpha N$ if $M \restriction \alpha = N \restriction \alpha$. This is 
of course equivalent to the existence of an $\alpha$-isomorphism between $M$ and $N$.
\end{remark}

The following is straightforward.

\begin{proposition}\label{transitivity-projections} Suppose $\alpha,\beta \in E$ and
$\alpha \leq \beta$. Let $M\in \mathscr V$. Then
$(M \! \restriction \! \beta)\! \restriction \! \alpha = M\! \restriction \! \alpha$.
\end{proposition}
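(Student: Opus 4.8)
The plan is to reduce the identity to the \emph{uniqueness} of the $\alpha$-model inside a fixed $\cong_\alpha$-equivalence class, and then to check that both sides of the asserted equation are $\alpha$-models lying in the $\cong_\alpha$-class of $M$. First I would dispose of the degenerate cases. Writing $\gamma=\eta(M)$, if $\gamma\le\beta$ then by the remark following \cref{projection-models-def} we have $M\restriction\beta=M$, so $(M\restriction\beta)\restriction\alpha=M\restriction\alpha$ holds trivially. Thus I may assume $\alpha\le\beta\le\gamma$, all in $E$. Next I would isolate the uniqueness fact: if $N_1,N_2$ are $\alpha$-models with $N_1\cong_\alpha N_2$, then $N_1=N_2$. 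Indeed, by \cref{alpha-equiv-def} there is a unique $\alpha$-isomorphism $\sigma\colon N_1\to N_2$ extending to $\bar\sigma\colon{\rm Hull}(N_1,V_\alpha)\to{\rm Hull}(N_2,V_\alpha)$; since each $N_i$ is an $\alpha$-model, both hulls lie in $\mathscr A_\alpha$ and are therefore transitive, and an isomorphism between transitive structures is the identity. Hence $\bar\sigma=\mathrm{id}$, and so $N_1=N_2$.

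The second half of the argument shows both sides are $\alpha$-models $\alpha$-isomorphic to $M$. Applying the remark after \cref{projection-models-def} to $M$ itself, the projection $M\restriction\alpha$ is an $\alpha$-model with $M\restriction\alpha\cong_\alpha M$, the isomorphism being induced by the collapsing map of ${\rm Hull}(M,V_\alpha)$, which fixes $V_\alpha$ pointwise and conjugates the $\rm Hull$ operation. For the other side, set $N=M\restriction\beta$. The same remark (applied with exponent $\beta$) gives that $N$ is a $\beta$-model with $N\cong_\beta M$. Using that $\cong$ is an equivalence relation and that, for $\alpha\le\beta$, $\beta$-isomorphic models are $\alpha$-isomorphic (both observations recorded just before \cref{projection-models-def}), I conclude $N\cong_\alpha M$. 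Since $\eta(N)=\beta\ge\alpha$, the projection $N\restriction\alpha$ is then an $\alpha$-model with $N\restriction\alpha\cong_\alpha N\cong_\alpha M$; that is, $(M\restriction\beta)\restriction\alpha$ is an $\alpha$-model $\alpha$-isomorphic to $M$. Both $M\restriction\alpha$ and $(M\restriction\beta)\restriction\alpha$ now lie in the $\cong_\alpha$-class of $M$, so by the uniqueness step they coincide.

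The step requiring the most care is confirming that $N=M\restriction\beta$ is a \emph{genuine} member of $\mathscr V_\beta$, i.e. a $\beta$-model inside $V_\lambda$, rather than merely a model living in the collapsed hull $\overline{{\rm Hull}(M,V_\beta)}$; this is what guarantees that the subsequent projection $N\restriction\alpha$ is computed exactly as in \cref{projection-models-def}. Here I would invoke $\overline{{\rm Hull}(M,V_\beta)}\in\mathscr A_\beta$ together with the earlier remark that $\mathscr V_\beta^{A}\subseteq\mathscr V_\beta$ whenever $A\in\mathscr A_\beta$. Once this identification is in place, the remaining content is routine bookkeeping with the collapsing maps and the uniqueness of transitive collapses, and no genuinely new estimate is needed.
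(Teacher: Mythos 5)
Your proof is correct, and every step is backed by facts the paper actually records (the remark after \cref{projection-models-def}, the downward monotonicity of $\cong_\beta$ to $\cong_\alpha$, transitivity of $\cong_\alpha$, and $\mathscr V_\beta^A\subseteq\mathscr V_\beta$ for $A\in\mathscr A_\beta$). Note, however, that the paper supplies no proof at all for this proposition: it is introduced with ``The following is straightforward'' and the proof environment is empty, so there is no argument of the authors to compare yours against. The presumably intended direct verification would chase collapsing maps: let $\pi_\beta$ be the collapse of ${\rm Hull}(M,V_\beta)$ and $\pi'$ the collapse of ${\rm Hull}(\pi_\beta[M],V_\alpha)=\pi_\beta[{\rm Hull}(M,V_\alpha)]$; then $\pi'\circ\pi_\beta$ maps ${\rm Hull}(M,V_\alpha)$ isomorphically onto a transitive set, so by uniqueness of the Mostowski collapse it coincides with the collapse $\pi_\alpha$ of ${\rm Hull}(M,V_\alpha)$, giving $(M\restriction\beta)\restriction\alpha=\pi'[\pi_\beta[M]]=\pi_\alpha[M]=M\restriction\alpha$. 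Your route instead isolates a uniqueness lemma --- each $\cong_\alpha$-class contains at most one genuine $\alpha$-model, since an $\alpha$-isomorphism between $\alpha$-models extends to an $\in$-isomorphism between their hulls, which lie in $\mathscr A_\alpha$, hence are transitive and rigid --- and then merely checks that both sides of the identity are $\alpha$-models $\alpha$-isomorphic to $M$. Both arguments ultimately rest on the same rigidity of transitive $\in$-structures, but yours buys a reusable fact: the uniqueness of the $\alpha$-model representative is exactly what underlies the paper's talk of ``canonical representatives'' before \cref{projection-models-def} and the uniqueness claims made later (e.g.\ for $N^*$ in \cref{restriction-countable}), so stating it explicitly is a genuine improvement rather than overhead. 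Your final paragraph also correctly identifies the one point that deserves checking, namely that $M\restriction\beta$ is a member of $\mathscr V_\beta$ in the sense of $V_\lambda$, so that the second projection is computed exactly as in \cref{projection-models-def}; the paper's remark asserts this, and your justification via $\mathscr A_\beta$ is the right one.
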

\begin{proof}[\nopunct] 
 \end{proof}

We also need to define a version of the membership relation, for every $\alpha$ in $E$.

\begin{definition}\label{membership-alpha-def} Suppose $M,N\in \mathscr V$  and $\alpha \in E$. 
We write $M\in_\alpha N$ if there is $M'\in N$ with $M' \in \mathscr V^N$ such that $M'\cong_\alpha M$. 
If this happens, we say that $M$ is $\alpha$-in $N$.
\end{definition}

Note that if $M\subseteq V_\alpha$, this simply means that $M\in N$. However, in general,
we may have $M \in_\alpha N$ even if the rank of $M$ is higher than the rank of $N$.
We shall often use the following simple facts without mentioning them. 

\begin{proposition}\label{trans} 
Suppose $M, N \in \mathscr V$ with $M\in N$. Let $\alpha \in E$, and suppose 
$N'\in \mathscr V^A$, for some $A\in \mathscr A_\alpha$,  and $\sigma : N \rightarrow N'$ is an $\alpha$-isomorphism. 
Then $M$ and $\sigma(M)$ are $\alpha$-isomorphic. 
\end{proposition}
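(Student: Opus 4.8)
The plan is to obtain the required $\alpha$-isomorphism by restricting the extension of $\sigma$ to the hulls. Since $\sigma\colon N\to N'$ is an $\alpha$-isomorphism, by \cref{alpha-equiv-def} it extends to an isomorphism $\bar\sigma\colon{\rm Hull}(N,V_\alpha)\to{\rm Hull}(N',V_\alpha)$. Because $N\subseteq{\rm Hull}(N,V_\alpha)$ by \cref{hull-elementary} and $M\in N$, we have $M\in{\rm Hull}(N,V_\alpha)$ and $\bar\sigma(M)=\sigma(M)$. I would then set $\bar\tau:=\bar\sigma\restriction{\rm Hull}(M,V_\alpha)$ and verify that it is an isomorphism of ${\rm Hull}(M,V_\alpha)$ onto ${\rm Hull}(\sigma(M),V_\alpha)$ carrying $M$ onto $\sigma(M)$; its restriction to $M$ is then exactly an $\alpha$-isomorphism between $M$ and $\sigma(M)$.

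The step that makes the restriction legitimate is the containment ${\rm Hull}(M,V_\alpha)\subseteq{\rm Hull}(N,V_\alpha)$, and the key point is that $\alpha>\kappa$. Indeed, since $\alpha\in E$ we have $V_\alpha\prec V_\lambda$ in the language carrying the constant for $\kappa$, so $\kappa\in V_\alpha$ and hence $\kappa<\alpha$. Now $M$ has cardinality less than $\kappa$ and $M\in{\rm Hull}(N,V_\alpha)$, so by elementarity one may fix inside ${\rm Hull}(N,V_\alpha)$ a surjection $e\colon|M|\to M$; every index $\xi<|M|<\kappa<\alpha$ lies in $V_\alpha\subseteq{\rm Hull}(N,V_\alpha)$, whence $e(\xi)\in{\rm Hull}(N,V_\alpha)$ and therefore $M\subseteq{\rm Hull}(N,V_\alpha)$. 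Since ${\rm Hull}(N,V_\alpha)$ is closed under application of its own functions and contains $V_\alpha$, the explicit description of the hull in \cref{M(X)-definition} gives ${\rm Hull}(M,V_\alpha)\subseteq{\rm Hull}(N,V_\alpha)$, so $\bar\tau$ is well defined.

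It then remains to identify the image. An isomorphism between the two hulls carries the standard part onto the standard part, so $\bar\sigma$ maps $V_\alpha$ bijectively onto $V_\alpha$; moreover, as $M\subseteq\dom(\bar\sigma)$ and $\bar\sigma(M)=\sigma(M)$, the map $\bar\sigma$ sends $M$ pointwise onto $\sigma(M)$. Applying $\bar\sigma$ to a typical element $f(\bar x)$ of ${\rm Hull}(M,V_\alpha)$, with $f\in M$ and $\bar x\in V_\alpha^{<\omega}$, yields $\bar\sigma(f)(\bar\sigma(\bar x))$, where $\bar\sigma(f)$ ranges over $\sigma(M)$ and $\bar\sigma(\bar x)$ ranges over $V_\alpha^{<\omega}$; hence $\bar\sigma[{\rm Hull}(M,V_\alpha)]={\rm Hull}(\sigma(M),V_\alpha)$. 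Thus $\bar\tau$ is an isomorphism of ${\rm Hull}(M,V_\alpha)$ onto ${\rm Hull}(\sigma(M),V_\alpha)$ extending $\bar\tau\restriction M\colon M\to\sigma(M)$, which by \cref{alpha-equiv-def} witnesses $M\cong_\alpha\sigma(M)$. The one point that needs care — and the main obstacle to a purely formal restriction argument — is precisely the containment ${\rm Hull}(M,V_\alpha)\subseteq{\rm Hull}(N,V_\alpha)$: it would genuinely fail for submodels of size at least $\alpha$, and it is only the inaccessibility of $\kappa$, giving $|M|<\kappa<\alpha$, that secures it.
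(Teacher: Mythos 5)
Your proof is correct and follows essentially the same route as the paper's: both establish $M\subseteq{\rm Hull}(N,V_\alpha)$ from $|M|<\kappa<\alpha$ (the paper cites $|M|<\kappa<|V_\alpha|$), deduce ${\rm Hull}(M,V_\alpha)\subseteq{\rm Hull}(N,V_\alpha)$, and restrict the canonical extension $\bar\sigma$ of $\sigma$ to obtain the $\alpha$-isomorphism between $M$ and $\sigma(M)$. Your write-up simply spells out the details (the surjection argument and the image computation) that the paper leaves implicit.
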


\begin{proof} Since $| M|  < \kappa < | V_\alpha|$, we conclude that $M\subseteq {\rm Hull}(N,V_\alpha)$, 
and hence ${\rm Hull}(M,V_\alpha) \subseteq {\rm Hull}(N,V_\alpha)$. Let $\bar\sigma$ be the extension 
of $\sigma$ to ${\rm Hull}(N,V_\alpha)$. 
 It follows that $\bar\sigma \restriction {\rm Hull}(M,V_\alpha)$ is
an isomorphism between ${\rm Hull}(M,V_\alpha)$ and ${\rm Hull}(\sigma(M),V_\alpha)$.
Hence $\bar\sigma \restriction M$ is an $\alpha$-isomorphism between $M$ and $\sigma(M)$.
\end{proof}

\begin{proposition}\label{projection-membership} Let $\alpha,\beta \in E$ with $\alpha \leq \beta$.
Suppose $M,N \in \mathscr V_{\geq \beta}$ and $M\in_\beta N$. 
Then $M\! \restriction \! \alpha \in_\alpha N\! \restriction \! \alpha$.
\end{proposition}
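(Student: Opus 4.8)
The plan is to project the internal witness for $M\in_\beta N$ down to $N\restriction\alpha$ by means of the collapsing map of $\mathrm{Hull}(N,V_\alpha)$, and then to check that this projected object witnesses $M\restriction\alpha\in_\alpha N\restriction\alpha$. First I would unpack the hypothesis: since $M\in_\beta N$, there is some $M'\in N$ with $M'\in\mathscr V^N$ and $M'\cong_\beta M$. I set $H=\mathrm{Hull}(N,V_\alpha)$, let $\pi$ be its collapsing map, and recall that $N\restriction\alpha=\pi[N]$. Because $V_\alpha$ is transitive and contained in $H$, and $\overline H\in\mathscr A_\alpha$ has standard part $V_\alpha$, the map $\pi$ is the identity on $V_\alpha$; in particular it fixes $\kappa$ (note $\kappa<\alpha$, since $V_\alpha\prec V_\lambda$ in the language with constant $c=\kappa$ forces $\kappa\in V_\alpha$) together with every parameter lying in $V_\alpha$. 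Moreover $\pi\restriction N$ is an isomorphism of $N$ onto $N\restriction\alpha$ whose extension $\pi$ carries $H=\mathrm{Hull}(N,V_\alpha)$ to $\overline H=\mathrm{Hull}(N\restriction\alpha,V_\alpha)$, so $\pi\restriction N$ is an $\alpha$-isomorphism in the sense of \cref{alpha-equiv-def}.

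I then claim that $P:=\pi(M')$ is the required witness. That $P\in N\restriction\alpha$ is immediate from $M'\in N$ and $N\restriction\alpha=\pi[N]$. That $P\in\mathscr V^{N\restriction\alpha}$ follows because $\pi\restriction N$ is an isomorphism onto $N\restriction\alpha$ fixing $\kappa$ and the parameters in $V_\alpha$, while membership in the family of general virtual models is expressed by the relevant formulas in the language; hence $M'\in\mathscr V^N$ transfers to $P\in\mathscr V^{N\restriction\alpha}$. Finally, to obtain $P\cong_\alpha M\restriction\alpha$ I would show $P\cong_\alpha M$ and use that $M\restriction\alpha$ is already an $\alpha$-model, so $(M\restriction\alpha)\restriction\alpha=M\restriction\alpha$. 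For $P\cong_\alpha M$ I combine three facts: from $\alpha\le\beta$ and $M'\cong_\beta M$ one gets $M'\cong_\alpha M$; applying \cref{trans} to the $\alpha$-isomorphism $\pi\restriction N\colon N\to N\restriction\alpha$ and the model $M'\in N$ gives $M'\cong_\alpha\pi(M')=P$; and transitivity of $\cong_\alpha$ yields $P\cong_\alpha M$. Equivalently, in terms of projections, $P\restriction\alpha=M\restriction\alpha=(M\restriction\alpha)\restriction\alpha$, which is precisely $P\cong_\alpha M\restriction\alpha$. This establishes $M\restriction\alpha\in_\alpha N\restriction\alpha$.

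The step I expect to be delicate, and the one I would write out carefully, is the transfer in the middle: that both ``belonging to $\mathscr V^{(\cdot)}$'' and the relation $\cong_\alpha$ survive when $M'$ is carried across the collapse $\pi\restriction N$. This is exactly where one uses that $\pi$ fixes $\kappa$ and is the identity on $V_\alpha$, and that the relevant notions are defined by the $\Sigma_1$-formulas with parameter $V_\alpha$ introduced earlier, so that \cref{trans} applies to the internal model $M'\in\mathscr V^N$ and not merely to genuine members of $\mathscr V$. Everything else is routine bookkeeping with the elementary behaviour of the projection operation (as in \cref{transitivity-projections}).
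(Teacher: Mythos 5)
Your proposal is correct and follows essentially the same route as the paper: fix the internal witness $M'$ for $M\in_\beta N$, note $M'\cong_\alpha M$, push $M'$ through the collapse $\pi$ of ${\rm Hull}(N,V_\alpha)$, and verify that $\pi(M')\cong_\alpha M'$ (the paper carries out inline exactly the hull-containment computation $\pi[M']=\pi(M')$ that you delegate to \cref{trans}, precisely because that proposition's proof works verbatim for $M'\in\mathscr V^N$), then conclude by transitivity of $\cong_\alpha$. Your extra care in checking $\pi(M')\in\mathscr V^{N\restriction\alpha}$ is a point the paper leaves implicit, but it is the same argument.
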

\begin{proof}
Fix some $M'\in N$ such that $M\cong_\beta M'$. Since $\alpha \leq \beta$, we have that $M\cong_\alpha M'$. 
If $\pi$  is the Mostowski collapse map of ${\rm Hull}(N,V_\alpha)$, then $\pi(M')\in N\restriction\alpha$. 
On the other hand, since $| M | < \kappa <| V_\alpha|$, we have that ${\rm Hull}(M',V_\alpha) \subseteq {\rm Hull}(N,V_\alpha)$ and
$\pi [M']= \pi (M')$. It follows that $\pi \restriction {\rm Hull}(M',V_\alpha)$ is an isomorphism between ${\rm Hull}(M',V_\alpha)$
and ${\rm Hull}(\pi(M'),V_\alpha)$. 
Therefore, $M\cong_\alpha\pi(M')\in N\restriction\alpha$.
 \end{proof}

We refer to the following proposition as the continuity of the $\alpha$-isomorphism.
\begin{proposition}\label{isocont}
Let $\alpha$ be a limit point of $ E$. Suppose $N,M\in \mathscr V$ and $M\cong_\gamma N$ for unboundedly many $\gamma$ below $\alpha$.
Then $M\cong_\alpha N$.
\end{proposition}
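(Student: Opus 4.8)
The plan is to build the required $\alpha$-isomorphism as the union of the $\gamma$-isomorphisms witnessing $M\cong_\gamma N$, exploiting the facts that $V_\alpha=\bigcup_{\gamma<\alpha}V_\gamma$ and that each hull $\mathrm{Hull}(M,V_\alpha)$ is the increasing union of the hulls $\mathrm{Hull}(M,V_\gamma)$ for $\gamma<\alpha$. Let $S=\{\gamma\in E\cap\alpha: M\cong_\gamma N\}$; by hypothesis $S$ is cofinal in $\alpha$, and since $E$ is closed and $\alpha$ is a limit point of $E$ we have $\alpha\in E$ and $V_\alpha=\bigcup_{\gamma\in S}V_\gamma$.

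First I would dispose of a degenerate case. If $\eta(M)<\alpha$, choose $\gamma\in S$ with $\gamma>\eta(M)$; then, as noted after \cref{alpha-equiv-def}, the only model $\gamma$-isomorphic to $M$ is $M$ itself, so $N=M$ and $M\cong_\alpha N$ holds trivially, and symmetrically if $\eta(N)<\alpha$. Thus I may assume $\eta(M),\eta(N)\ge\alpha$, so that $\sup(M\cap\ORD)\ge\alpha$ and $\sup(N\cap\ORD)\ge\alpha$. By \cref{hull-elementary} this guarantees $V_\gamma\subseteq\mathrm{Hull}(M,V_\gamma)$ and $V_\gamma\subseteq\mathrm{Hull}(N,V_\gamma)$ for every $\gamma<\alpha$, and that each such hull is an elementary submodel of $V_\lambda$.

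Next I isolate the underlying map. Since $(M,\in)$ and $(N,\in)$ are well-founded and extensional, there is a unique isomorphism $\sigma\colon M\to N$, namely $\sigma=\pi_N^{-1}\circ\pi_M$; consequently, for every $\gamma\in S$, the $\gamma$-isomorphism supplied by \cref{alpha-equiv-def} restricts on $M$ to this same $\sigma$, and I denote its canonical extension by $\bar\sigma_\gamma\colon\mathrm{Hull}(M,V_\gamma)\to\mathrm{Hull}(N,V_\gamma)$. A short $\in$-induction, using that both hulls are elementary in $V_\lambda$ (hence extensional) and contain the transitive set $V_\gamma$, shows that $\bar\sigma_\gamma\restriction V_\gamma$ is the identity: if $x\in V_\gamma$ and $\bar\sigma_\gamma$ fixes every member of $x$, then $\bar\sigma_\gamma(x)$ and $x$ have the same predecessors in $\mathrm{Hull}(N,V_\gamma)$, so extensionality forces $\bar\sigma_\gamma(x)=x$.

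Then comes the key coherence step. For $\gamma\le\gamma'$ in $S$, the maps $\bar\sigma_{\gamma'}\restriction\mathrm{Hull}(M,V_\gamma)$ and $\bar\sigma_\gamma$ are both isomorphisms onto $\mathrm{Hull}(N,V_\gamma)$ which agree on the generating set $M\cup V_\gamma$ (each sends $f\in M$ to $\sigma(f)$ and fixes $V_\gamma$ pointwise); since every element of $\mathrm{Hull}(M,V_\gamma)$ has the form $f(\bar x)$ with $f\in M$ and $\bar x\in V_\gamma^{<\omega}$, elementarity forces the two maps to coincide, so $\bar\sigma_\gamma\subseteq\bar\sigma_{\gamma'}$. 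Hence $\{\bar\sigma_\gamma:\gamma\in S\}$ is an increasing chain of isomorphisms, and I set $\bar\sigma=\bigcup_{\gamma\in S}\bar\sigma_\gamma$. Because $V_\alpha=\bigcup_{\gamma\in S}V_\gamma$ and the hull operation is continuous in its second argument, the domain of $\bar\sigma$ is exactly $\mathrm{Hull}(M,V_\alpha)$ and its range is $\mathrm{Hull}(N,V_\alpha)$, so $\bar\sigma$ is an isomorphism extending $\sigma$ and witnesses $M\cong_\alpha N$. The main obstacle I anticipate is precisely this coherence step, i.e. verifying that the witnessing isomorphisms line up into a chain; this is exactly where the uniqueness of $\sigma$ and the fact that each $\bar\sigma_\gamma$ fixes $V_\gamma$ are indispensable.
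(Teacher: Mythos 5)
Your proof is correct and takes essentially the same route as the paper: the paper's proof also forms, for each relevant $\gamma<\alpha$, the unique $\gamma$-isomorphism $\sigma_\gamma$ between ${\rm Hull}(M,V_\gamma)$ and ${\rm Hull}(N,V_\gamma)$ with $\sigma_\gamma[M]=N$, observes the coherence $\sigma_{\gamma'}\restriction {\rm Hull}(M,V_\gamma)=\sigma_\gamma$ for $\gamma<\gamma'$, and takes the union -- you are simply supplying the uniqueness, fixed-point, and coherence verifications that the paper leaves implicit. One harmless slip: ${\rm Hull}(M,V_\gamma)$ is an elementary submodel of the ambient suitable structure ${\rm Hull}(M,V_{\eta(M)})$, not of $V_\lambda$ (a virtual model need not be standard), but since all you actually use is extensionality of the hulls and absoluteness of function application over a transitive ZFC model, the argument is unaffected.
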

\begin{proof}
For each $\gamma \in E \cap \alpha$, let $\sigma_\gamma$ be the unique isomorphism between ${\rm Hull}(M,V_\gamma)$ and ${\rm Hull}(N,V_\gamma)$
such that $\sigma_{\gamma}[M]=N$. If $\gamma < \gamma'$, we have that $\sigma_{\gamma'}\restriction {\rm Hull}(M,V_\gamma)= \sigma_{\gamma}$.
Let $\sigma = \bigcup \{\sigma_\gamma : \gamma \in E\cap \alpha \}$. Then $\sigma$ witnesses that $M$ and $N$ are $\alpha$-isomorphic. 
\end{proof}

\begin{proposition}\label{incontcount}
Let $\alpha$ be a limit point of $E$ of uncountable cofinality. Assume that $M,N\in \mathscr V$ and $N$ is countable. 
Suppose that $M\in_\gamma N$ for unboundedly many $\gamma<\alpha$. Then $M\in_\alpha N$.
\end{proposition}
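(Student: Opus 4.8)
The plan is to isolate a single witness from the many given by the hypothesis, using a pigeonhole argument that exploits both that $N$ is countable and that $\alpha$ has uncountable cofinality, and then to invoke the continuity of the $\alpha$-isomorphism, \cref{isocont}.

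First I would unwind \cref{membership-alpha-def}. The assumption that $M\in_\gamma N$ for unboundedly many $\gamma<\alpha$ gives a set $C\subseteq E\cap\alpha$, cofinal in $\alpha$, such that for each $\gamma\in C$ one may fix a witness $M'_\gamma\in N$ with $M'_\gamma\in\mathscr V^N$ and $M'_\gamma\cong_\gamma M$. This assigns to each $\gamma\in C$ an element $M'_\gamma$ of $N$.

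The main step is then the pigeonhole. Since $N$ is countable, the assignment $\gamma\mapsto M'_\gamma$ takes only countably many values, so $C$ is the union of countably many sets $C_{M'}=\{\gamma\in C: M'_\gamma=M'\}$, for $M'$ ranging over the relevant elements of $N$. If each $C_{M'}$ were bounded in $\alpha$, then, since $\cof(\alpha)>\omega$, the supremum of these countably many bounds would still be below $\alpha$, contradicting the cofinality of $C$ in $\alpha$. Hence some single $M'\in N$ (with $M'\in\mathscr V^N$) satisfies $M'_\gamma=M'$ for a cofinal set of $\gamma\in C$, so that $M'\cong_\gamma M$ for unboundedly many $\gamma<\alpha$. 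This is the only step with genuine content: it is precisely where both the countability of $N$ and the uncountable cofinality of $\alpha$ are needed, to pass from witnesses varying with $\gamma$ to one witness working cofinally.

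Finally, since $\alpha$ is a limit point of $E$, I would apply \cref{isocont} to $M'$ and $M$ to conclude $M'\cong_\alpha M$. As $M'\in N$ and $M'\in\mathscr V^N$, this is exactly the assertion that $M\in_\alpha N$, completing the proof. Once the single $M'$ has been isolated, this last step is immediate and requires no further work.
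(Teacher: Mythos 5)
Your proposal is correct and follows essentially the same route as the paper: isolate a single witness $M'\in N$ working for unboundedly many $\gamma$ (the paper compresses this pigeonhole step, which uses countability of $N$ and uncountable cofinality of $\alpha$, into one sentence), then apply \cref{isocont} to get $M\cong_\alpha M'$ and hence $M\in_\alpha N$. Your write-up simply makes the pigeonhole argument explicit, which is a faithful elaboration rather than a different proof.
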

\begin{proof}
Since $N$ is countable and $\alpha$ is of uncountable cofinality, there is $M'\in N$ with $M'\in \mathscr V^N$ such that $M\cong_\gamma M'$, for unboundedly many $\gamma \in E\cap \alpha$. 
By  \cref{isocont} we have that $M \cong_\alpha M'$, and hence $M\in_\alpha N$. 
\end{proof}

In our forcing we will use two types of virtual models, the countable ones and  some nice models of size less than $\kappa$ defined below.

\begin{definition}
For $\alpha \in E$, we let $\mathscr C_\alpha$ denote the collection of countable models in  $\mathscr V_\alpha$. 
We define similarly $\mathscr C_{<\alpha}$, $\mathscr C_{\leq\alpha}$ and $\mathscr C_{\geq \alpha}$. We write $\mathscr C$ for $\mathscr C_{<\lambda}$, and
 $\mathscr C_{\rm st}$ for the collection of standard models in $\mathscr C$.
\end{definition}


\begin{proposition}\label{cstationaryinV}
Suppose $\lambda$ is of uncountable cofinality. Then $\mathscr C_{\rm st}$ contains a club in ${\mathcal P}_{\omega_1}(V_\lambda)$.
\end{proposition}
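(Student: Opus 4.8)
The plan is to first pin down exactly which countable models lie in $\mathscr{C}_{\rm st}$, and then show that this set contains the intersection of two clubs. I claim that a countable $M$ belongs to $\mathscr{C}_{\rm st}$ if and only if $M \prec V_\lambda$ and $\delta_M := \sup(M\cap\ORD) \in E$, in which case $\eta(M)=\delta_M$. Indeed, if $M\prec V_\lambda$ is countable then ${\rm rank}(x)\in M$ for every $x\in M$, so ${\rm rank}(x)<\delta_M$ and hence $M\subseteq V_{\delta_M}$; moreover $\delta_M$ is a limit ordinal and $\delta_M\notin M$. If in addition $\delta_M\in E$, then $V_{\delta_M}\prec V_\lambda$, so by \cref{hull-elementary} we get ${\rm Hull}(M,V_{\delta_M}) = V_{\delta_M}\in\mathscr{A}_{\delta_M}$ and $M\prec V_{\delta_M}$; since $|M|<\kappa$, this shows $M\in\mathscr{V}_{\delta_M}$, and as $M\subseteq V_{\delta_M}$ it is a standard $\delta_M$-model. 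Thus the problem reduces to showing that
\[
\mathscr{C}_{\rm st} \supseteq \{\, M\in\mathcal P_{\omega_1}(V_\lambda) : M\prec V_\lambda \text{ and } \sup(M\cap\ORD)\in E \,\}
\]
contains a club.

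First I would record the two ingredients. The set $C_0=\{M\in\mathcal P_{\omega_1}(V_\lambda): M\prec V_\lambda\}$ is a club in $\mathcal P_{\omega_1}(V_\lambda)$, being closed under unions of countable $\subseteq$-chains (Tarski--Vaught) and cofinal (Löwenheim--Skolem). Next, I would verify that $E$ is club in $\lambda$: it is closed by the remark following the definition of $E_{\mathcal A}$, and it is unbounded because, by the reflection theorem, each $E_n=\{\alpha<\lambda: V_\alpha\prec_n V_\lambda\}$ is club in $\lambda$, and since $\cof(\lambda)>\omega$ the countable intersection $E=\bigcap_{n<\omega}E_n$ is again club. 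This is the one place the hypothesis of uncountable cofinality is essential, and I expect it to be the crux of the argument; the rest is bookkeeping.

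With $E$ unbounded I can define, \emph{working in $V$} so as to sidestep the non-definability of $E$ inside $V_\lambda$, a function $F:V_\lambda\to\lambda$ by $F(x)=\min\bigl(E\setminus({\rm rank}(x)+1)\bigr)$. Let $C_1$ be the (club) set of countable $M$ closed under $F$. For $M\in C_0\cap C_1$ and any ordinal $\beta\in M$ we have $F(\beta)\in E\cap M$ with $F(\beta)>\beta$; as the ordinals of $M$ are cofinal in $\delta_M=\sup(M\cap\ORD)$, the set $E\cap M$ is cofinal in $\delta_M$ (consistent with \cref{alphasharpinE}(2), which gives $\sup(E\cap M)=\sup(E\cap\delta_M)$). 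Since $\delta_M\notin M$ is a proper limit of points of $E$ and $E$ is closed, we conclude $\delta_M\in E$, whence $M\in\mathscr{C}_{\rm st}$ by the characterization above. Therefore $C_0\cap C_1\subseteq\mathscr{C}_{\rm st}$, and as an intersection of two clubs $C_0\cap C_1$ is a club in $\mathcal P_{\omega_1}(V_\lambda)$, which proves the proposition.
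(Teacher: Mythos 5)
Your proof is correct and takes essentially the same route as the paper: both rest on showing $E$ is club in $\lambda$ (using $\cof(\lambda)>\omega$) and then arranging, on a club of countable elementary submodels $M$, that $E\cap M$ is cofinal in $\sup(M\cap\ORD)$, so that closedness of $E$ places this supremum in $E$ and makes $M$ a standard model. The only cosmetic difference is that the paper gets this cofinality by taking $M\prec(V_\lambda,\in,E)$ with $E$ as a predicate, while you use closure under the function $F(x)=\min\bigl(E\setminus(\mathrm{rank}(x)+1)\bigr)$; these are interchangeable, and your explicit verification that such $M$ lie in $\mathscr C_{\rm st}$ is detail the paper leaves implicit.
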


\begin{proof} First note that since $\lambda$ is of uncountable cofinality $E$ is unbounded  and thus club in $\lambda$.
Suppose $M$ is a countable elementary submodel of $(V_\lambda, \in, E)$. Let $\alpha = \sup (M \cap E)$. 
Note that $M\cap {\rm ORD}$ is unbounded in $\alpha$. Hence $M$ is a standard $\alpha$-model.
\end{proof}

The following definition is motivated by Magidor's reformulation of supercompactness, see \cref{factMagidor}.

\begin{definition}\label{Magidor models}
We say that  a model $M$  is a $\kappa$-{\em Magidor model} if, letting $\overline{M}$ be the transitive collapse of $M$ and $\pi$ the collapsing map, 
$\overline{M}=V_{\bar\gamma}$, for some $\bar\gamma <\kappa$ with ${\rm cof}(\bar\gamma) \geq \pi (\kappa)$, and $V_{\pi(\kappa)}\subseteq M$. 
If $\kappa$ is clear from the context we omit it. 
\end{definition}

\begin{proposition}\label{Magidor-stationary} Suppose $\kappa$ is supercompact and $\mu >\kappa$ with ${\rm cof}(\mu) \geq \kappa$. 
Then the set of $\kappa$-Magidor models is stationary in $\mathcal P_{\kappa}(V_\mu)$.
\end{proposition}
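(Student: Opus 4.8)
The plan is to verify stationarity directly from the definition of stationary subset of $\mathcal P_\kappa(V_\mu)$: given a function $F:[V_\mu]^{<\omega}\to V_\mu$, I must produce a $\kappa$-Magidor model $M\in\mathcal P_\kappa(V_\mu)$ that is closed under $F$ and satisfies $M\cap\kappa\in\kappa$. The natural candidate is the pointwise image of an initial segment of the cumulative hierarchy under an elementary embedding supplied by Magidor's characterization of supercompactness, \cref{factMagidor}.

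First I would fix a sufficiently large ordinal $\gamma>\mu$ with $F,\mu\in V_\gamma$ and apply \cref{factMagidor} to $\gamma$ and the parameter $x=\langle F,\mu\rangle$. This yields $\bar\kappa<\bar\gamma<\kappa$ and an elementary embedding $j:V_{\bar\gamma}\to V_\gamma$ with critical point $\bar\kappa$, $j(\bar\kappa)=\kappa$, and $x\in j[V_{\bar\gamma}]$; in particular, setting $\bar F=j^{-1}(F)$ and $\bar\mu=j^{-1}(\mu)$, both $F$ and $\mu$ lie in $\ran(j)$. Since $\mu>\kappa=j(\bar\kappa)$ and $j$ is order preserving, $\bar\kappa<\bar\mu<\bar\gamma<\kappa$. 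I then set $M:=j[V_{\bar\mu}]$.

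The bulk of the argument is checking that $M$ has the required properties. Because $j$ preserves ranks and $j(\bar\mu)=\mu$, every $j(a)$ with $a\in V_{\bar\mu}$ has rank $<\mu$, so $M\subseteq V_\mu$; and $|M|=|V_{\bar\mu}|<\kappa$ by inaccessibility of $\kappa$, so $M\in\mathcal P_\kappa(V_\mu)$. The map $j\restriction V_{\bar\mu}$ is an $\in$-isomorphism of the transitive set $V_{\bar\mu}$ onto $M$, whence $\overline M=V_{\bar\mu}$ and the collapsing map $\pi$ equals $j^{-1}\restriction M$, so $\pi(\kappa)=\bar\kappa$. Since $\bar\kappa=\mathrm{crit}(j)$, $j$ fixes $V_{\bar\kappa}$ pointwise, which gives $V_{\pi(\kappa)}=V_{\bar\kappa}\subseteq M$ and, computing the ordinals of $M$ below $\kappa$, $M\cap\kappa=\bar\kappa\in\kappa$. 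Closure under $F$ then follows from elementarity: for a finite $s=\{j(a_1),\ldots,j(a_n)\}\subseteq M$ we have $s=j(\{a_1,\ldots,a_n\})$ and $F(s)=j(\bar F)(s)=j(\bar F(\{a_1,\ldots,a_n\}))\in j[V_{\bar\mu}]=M$.

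The one step that genuinely uses the hypothesis $\cof(\mu)\geq\kappa$, and which I expect to be the crux of the proof, is verifying the cofinality requirement $\cof(\bar\mu)\geq\pi(\kappa)=\bar\kappa$ from \cref{Magidor models}. Here I would express ``$\cof(\mu)\geq\kappa$'' as the first-order statement over $V_\gamma$ asserting that no map from an ordinal below $\kappa$ into $\mu$ is cofinal; this is computed correctly in $V_\gamma$ because all relevant maps lie in $V_{\mu+1}\subseteq V_\gamma$. Pulling this statement back along $j$, using $j(\bar\mu)=\mu$ and $j(\bar\kappa)=\kappa$, yields $V_{\bar\gamma}\models\cof(\bar\mu)\geq\bar\kappa$, and since $\bar\gamma>\bar\mu$ this is the true cofinality of $\bar\mu$. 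This completes the verification that $M$ is a $\kappa$-Magidor model closed under $F$ with $M\cap\kappa\in\kappa$, and hence that the $\kappa$-Magidor models are stationary in $\mathcal P_\kappa(V_\mu)$.
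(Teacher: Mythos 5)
Your proof is correct and takes essentially the same route as the paper's: apply Magidor's characterization of supercompactness to obtain $j:V_{\bar\gamma}\rightarrow V_\gamma$ with $F,\mu\in j[V_{\bar\gamma}]$, take $M=j[V_{\bar\mu}]$ where $j(\bar\mu)=\mu$, and pull back ${\rm cof}(\mu)\geq\kappa$ by elementarity to get ${\rm cof}(\bar\mu)\geq\bar\kappa$. The only difference is that you spell out details the paper leaves implicit (the computation $M\cap\kappa=\bar\kappa$, the identification of the collapsing map with $j^{-1}\restriction M$, and the first-order expressibility of the cofinality statement in $V_\gamma$), all of which are verified correctly.
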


\begin{proof} 
Fix a function $F: [V_\mu]^{< \omega}\rightarrow V_\mu$. We have to find a $\kappa$-Magidor model closed under $F$. 
Let $\gamma>\mu$ be such that $V_\gamma$ satisfies a sufficient fragment of $\rm ZFC$. 
Since $\kappa$ is supercompact, by \cref{factMagidor} we can find $\bar\kappa < \bar\gamma <\kappa$ and
an elementary embedding $j: V_{\bar\gamma}\rightarrow V_\gamma$ with critical point $\bar\kappa$ such that $j(\bar\kappa)=\kappa$
and such that $F\restriction [V_\mu]^{<\omega} \in j [V_{\bar\gamma}]$. Note that this implies that $V_\mu \in  j [V_{\bar\gamma}]$.
Let $\bar\mu$ be such that $j(\bar\mu)= \mu$.
Since ${\rm cof}(\mu) \geq \kappa$, by elementarity we must have that ${\rm cof}(\bar\mu)\geq \bar\kappa$. 
Let $N = j [V_{\bar\mu}]$. Then $N$ is a $\kappa$-Magidor elementary submodel of $V_\mu$ that is closed under $F$, as required. 
\end{proof} 

\begin{definition} Let $\mathscr U^\kappa_{\alpha}$ be the collection of all $M\in \mathscr V_{\alpha}$ that are $\kappa$-Magidor models. 
We define $\mathscr U^\kappa_{<\alpha}$, $\mathscr U^\kappa_{\leq\alpha}$, and $\mathscr U^\kappa_{\geq \alpha}$ in the obvious way. 
We write $\mathscr U^\kappa$ for $\mathscr U^\kappa_{< \lambda}$. When $\kappa$ is clear from the context, we omit it.
We also write $\mathscr U_{\rm st}$ for the standard models in $\mathscr U$.
\end{definition}

\begin{remark} Suppose $M$ is a $\kappa$-Magidor $\alpha$-model. Let $V_{\bar\gamma}$ be its transitive collapse, and let $j$ be the inverse of the collapsing map $\pi$.  
Let $\bar{\kappa}= \pi (\kappa)$, and let $A= {\rm Hull}(M,V_\alpha)$. Note that $j:V_{\bar\gamma}\rightarrow A$ is an elementary embedding with critical point $\bar\kappa$
and $j(\bar\kappa)= \kappa$. 
\end{remark}

By \cref{Magidor-stationary} we have the following immediate corollary. 

\begin{proposition}\label{ustationaryinV}
Suppose $\kappa$ is supercompact and $\lambda$ is inaccessible. Then $\mathscr U_{\rm st}$ is stationary in $\mathcal P_{\kappa}(V_\lambda)$.
\end{proposition}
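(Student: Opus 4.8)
The plan is to derive this directly from \cref{Magidor-stationary}, together with the observation (implicit in the proof of \cref{cstationaryinV}) that a $\kappa$-Magidor model which is elementary in the expanded structure $(V_\lambda,\in,E)$ is automatically a \emph{standard} model.

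First I would note that, since $\lambda$ is inaccessible, $\cof(\lambda)=\lambda\ge\kappa$, so \cref{Magidor-stationary} applies with $\mu=\lambda$ and yields that the collection $\mathfrak M$ of $\kappa$-Magidor submodels of $V_\lambda$ is stationary in $\mathcal P_\kappa(V_\lambda)$. Separately, since $\cof(\lambda)$ is uncountable, $E$ is club in $\lambda$ (as in the proof of \cref{cstationaryinV}), and the collection $C$ of $M\in\mathcal P_\kappa(V_\lambda)$ with $M\prec(V_\lambda,\in,E)$ is club, being the set of models closed under a single function coding the Skolem functions of this structure.

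The heart of the argument is the containment $\mathfrak M\cap C\subseteq\mathscr U_{\rm st}$. Fixing $M\in\mathfrak M\cap C$ and putting $\alpha=\sup(M\cap E)$, I would argue as follows. Since $E$ is closed and $M\cap E$ is cofinal in $\alpha$, we have $\alpha\in E$. Since $M\prec(V_\lambda,\in,E)$ and $E$ is unbounded, every ordinal of $M$ lies below some element of $E\cap M$; hence $M\cap\ORD\subseteq\alpha$, that is $M\subseteq V_\alpha$, while $M\cap\ORD$ remains cofinal in $\alpha$. From the cofinality of $M\cap\ORD$ in $\alpha$ together with $V_\alpha\prec V_\lambda$ one checks that ${\rm Hull}(M,V_\alpha)=V_\alpha\in\mathscr A_\alpha$, so $M$ is a standard $\alpha$-model and therefore lies in $\mathscr V_\alpha$; being $\kappa$-Magidor, it belongs to $\mathscr U_{\rm st}$. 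Since the intersection of a stationary set with a club is stationary, $\mathscr U_{\rm st}\supseteq\mathfrak M\cap C$ is then stationary.

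The only mildly delicate point, and the step I would treat most carefully, is checking that the abstract $\kappa$-Magidor models produced by \cref{Magidor-stationary} genuinely become standard $\alpha$-models once elementarity in $(V_\lambda,\in,E)$ is imposed---in particular that ${\rm Hull}(M,V_\alpha)$ collapses to exactly $V_\alpha$, which rests on $M\cap\ORD$ being cofinal in $\alpha$. Everything else is routine ``stationary meets club'' bookkeeping in $\mathcal P_\kappa(V_\lambda)$.
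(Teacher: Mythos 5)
Your proof is correct and follows the paper's (implicit) route: the paper states this proposition as an immediate corollary of \cref{Magidor-stationary}, and you derive it from that same proposition, adding the intersection with the club of submodels elementary in $(V_\lambda,\in,E)$ to guarantee standardness, which is exactly the device the paper uses for countable models in the proof of \cref{cstationaryinV}. Since the paper leaves the proof empty, your explicit verification of the standardness point --- that $\alpha=\sup(M\cap E)\in E$, that $M\subseteq V_\alpha$ with $M\cap\ORD$ cofinal in $\alpha$, and hence that ${\rm Hull}(M,V_\alpha)=V_\alpha\in\mathscr A_\alpha$ --- is a faithful completion of what the authors treat as immediate.
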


\begin{proof}[\nopunct] 
 \end{proof}
 

Note that  both classes $\mathscr C$ and $\mathscr U$ of virtual models are closed under projections. 
We shall study some particular finite collections of these two types of models. 
We start by establishing the following easy fact.

\begin{proposition}\label{intermediate}
Let $\alpha \in E$. Suppose $M,N,P\in \mathscr V$ and $M\in_\alpha N\in_\alpha P$. 
If either $N$ is countable or $P$ is a Magidor model then $M\in_\alpha P$. 
\end{proposition}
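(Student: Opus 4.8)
The plan is to build an explicit witness for $M \in_\alpha P$ by pushing the witness for $M \in_\alpha N$ forward along the $\alpha$-isomorphism supplied by $N \in_\alpha P$, and then to use the two hypotheses to guarantee that this pushforward actually lands inside $P$.

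First I would unwind \cref{membership-alpha-def}. Fix $M' \in N$ with $M' \in \mathscr V^N$ and $M' \cong_\alpha M$, and fix $N' \in P$ with $N' \in \mathscr V^P$ and $N' \cong_\alpha N$; let $\sigma \colon N \to N'$ be the associated isomorphism, so that its extension $\bar\sigma \colon {\rm Hull}(N, V_\alpha) \to {\rm Hull}(N', V_\alpha)$ witnesses $N \cong_\alpha N'$. Since $M' \in N$ and $M' \in \mathscr V^N$, the isomorphism $\sigma$ sends $M'$ to an element $\sigma(M') \in N'$, and preserves being a virtual model, so $\sigma(M') \in \mathscr V^{N'}$. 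The argument of \cref{trans}, applied to $M' \in N$ and the $\alpha$-isomorphism $\sigma$ (using $|M'| < \kappa < |V_\alpha|$), gives $M' \cong_\alpha \sigma(M')$. As $\cong_\alpha$ is an equivalence relation, $\sigma(M') \cong_\alpha M$. Thus $\sigma(M')$ is the candidate witness for $M \in_\alpha P$, and it remains only to verify $\sigma(M') \in P$ and $\sigma(M') \in \mathscr V^P$.

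For the membership $\sigma(M') \in P$ I would prove the stronger statement $N' \subseteq P$, and this is exactly where the dichotomy enters. If $N$ is countable, then $N' \cong_\alpha N$ is countable as well, and since $N' \in P$ and $\omega \subseteq P$, elementarity of $P$ in ${\rm Hull}(P, V_{\eta(P)})$ yields a surjection $f \colon \omega \to N'$ in $P$, whence $N' = \{ f(n) : n \in \omega \} \subseteq P$. If instead $P$ is a Magidor model, let $j \colon V_{\bar\gamma} \to {\rm Hull}(P, V_{\eta(P)})$ be the elementary embedding with critical point $\bar\kappa = \pi(\kappa)$ and $j(\bar\kappa) = \kappa$ from the remark following \cref{Magidor models}, so that $P = j[V_{\bar\gamma}]$. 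Writing $N' = j(\bar N)$ with $\bar N \in V_{\bar\gamma}$, the fact that $N' \in \mathscr V^P$ means $N'$ has size less than $\kappa$ in ${\rm Hull}(P, V_{\eta(P)})$; pulling this back along $j$ gives $|\bar N| < \bar\kappa = {\rm crit}(j)$, so $j$ is continuous at $\bar N$ and $N' = j(\bar N) = j[\bar N] \subseteq j[V_{\bar\gamma}] = P$. In either case $\sigma(M') \in N' \subseteq P$.

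Finally, for $\sigma(M') \in \mathscr V^P$ I would appeal to the absoluteness of being a virtual model: membership in $\mathscr V$ is given by a $\Sigma_1$ formula whose interpretation in a suitable structure agrees with $\mathscr V$ on the standard part, as recorded earlier in the facts of the form $\mathscr V^A_\alpha \subseteq \mathscr V_\alpha$. Since $\sigma(M') \in \mathscr V^{N'}$ with $N' \in \mathscr V^P$, this transfers the statement ``$\sigma(M')$ is a virtual model'' from $N'$ up to ${\rm Hull}(P, V_{\eta(P)})$, and as $\sigma(M') \in P$ we conclude $\sigma(M') \in \mathscr V^P$. Together with $\sigma(M') \cong_\alpha M$ this exhibits $\sigma(M')$ as the required witness. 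I expect the only genuinely delicate point to be the Magidor case of the middle step: the whole argument turns on the bound $|\bar N| < {\rm crit}(j)$, which is what makes $j$ continuous at $\bar N$ and forces $N' \subseteq P$; without either the strong closure of a Magidor model or the countability of $N$, there is no reason for the pushforward $\sigma(M')$ to belong to $P$.
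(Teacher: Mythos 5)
Your proposal is correct and takes essentially the same route as the paper's proof: both arguments fix witnesses $N'\in\mathscr V^P$ and $M'\in\mathscr V^N$, reduce everything to establishing $N'\subseteq P$, and then push $M'$ through the $\alpha$-isomorphism $\sigma$ using \cref{trans} together with transitivity of $\cong_\alpha$. Your Magidor-case argument (continuity of $j$ at sets of size below the critical point) is just a rephrasing of the paper's transitive-collapse argument (that $\overline{N'}\in V_\kappa\cap P$, which is transitive), and your additional verifications --- the counting-function argument in the countable case and the $\Sigma_1$-absoluteness check that $\sigma(M')\in\mathscr V^P$ --- are details the paper leaves implicit.
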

\begin{proof}
Pick $N'\in P$ with $N'\in \mathscr V^P$ which is $\alpha$-isomorphic to $N$. We first establish that $N'\subseteq P$. 
If $N$ is countable this is immediate. Suppose both $N$ and $P$ are Magidor models.
Let $\overline{N'}$ be the transitive collapse of $N'$, and let $\pi$ be the collapsing map. 
 Then $\overline{N'}\in V_\kappa \cap P$ since $| N '| < \kappa$. Since $P$ is a Magidor model,
we know that $V_\kappa \cap P$ is transitive, and hence $\overline{N'}\subseteq P$, but then also $N'\subseteq P$. 
Let $\sigma$ be an $\alpha$-isomorphism between $N$ and $N'$, and let $M'\in  N$ with $M'\in \mathscr V^N$
be a model that is $\alpha$-isomorphic to $M$. By \cref{trans} we know that $\sigma(M')$
is $\alpha$-isomorphic to $M'$, and also to $M$ by the transitivity of $\cong_\alpha$.
On the other hand $\sigma(M')\in N' \subseteq P$ and thus $M\in_\alpha P$, as desired. 
\end{proof}

Our next goal is to define when a virtual model $M$ is active at some $\alpha \in E$. 

\begin{definition}\label{def-active}
Let $M\in \mathscr V$. We say that $M$ is {\em active} at $\alpha \in E$ if 
$\eta (M)\geq \alpha$ and ${\rm Hull} (M,V_{\kappa_M})\cap E \cap \alpha$ is unbounded in $E\cap \alpha$,
where $\kappa_M = \sup (M\cap \kappa)$. We say that $M$ is {\em strongly active} at $\alpha$ if $\eta(M)\geq \alpha$
and $M\cap E\cap \alpha$ is unbounded in $E\cap \alpha$. 
\end{definition}

\begin{remark} We are primarily interested in the case $M \in \mathscr C \cup \mathscr U$.
First note that if $M$ is a Magidor model, then $V_{\kappa_M}\subseteq M$, hence $M$ is active at some $\alpha \in E$
if and only if it is strongly active at $\alpha$. 
The situation is quite different for countable models. 
If $M$ is countable, then the set of $\alpha \in E$ at which $M$ is strongly active is at most countable,
while the set of $\alpha \in E$ at which $M$ is active can be of size $|V_{\kappa_M}|$. 
One feature of our definition is that if $N\in_\alpha M$, then for all $\gamma \in E\cap \alpha$,
if $N$ is active at $\gamma$ then so is $M$.

Let us also remark what happens at levels $\alpha$ that are successor points of $E$.
Suppose  $\alpha = {\rm next}(\beta)$, for some $\beta \in E$, and $M$ is active at $\alpha$.
 We must have $\beta \in M$ as $\beta = \max (E\cap \alpha)$. 
we must also have $\sup (M\cap {\rm ORD})\geq \alpha$ since $\eta(M)\geq \alpha$,. If $\sup (M\cap {\rm ORD})= \alpha$
then $M$ is a countable standard model. If $\sup (M\cap {\rm ORD})>  \alpha$, let 
$\gamma = \min (M \cap {\rm ORD}\setminus \alpha)$, and let $A={\rm Hull}(M,V_\gamma)$. 
Then by \cref{alphasharpinE} $\gamma \in E_A$. 
Since $\gamma \in M$, we have that $E_A\cap (\gamma +1)\in M$ and therefore
we can compute $\alpha$ in $M$ as the the next element of $E_A\cap (\gamma +1)$ above $\beta$. 
Thus, in this case we have $\alpha \in M$. 
\end{remark}

It will be convenient to also have the following definition. 

\begin{definition}\label{a(M)} Suppose $M\in \mathscr V$. Let $a(M)= \{ \alpha \in E : M \mbox{ is active at } \alpha \}$ and $\alpha (M)=\max a(M)$.
\end{definition}

Note that $a(M)$ is a closed subset of $E$ of size at most $| {\rm Hull}(M,V_{\kappa_M})|$.

\begin{proposition}\label{active-Magidor} Let $M\in \mathscr V$ and $N\in \mathscr U$. Suppose $\alpha \in E$, $M$ and $N$ are active at $\alpha$,
and $M\in_\alpha N$. Then $\alpha \in N$.
\end{proposition}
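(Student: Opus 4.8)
The plan is to assume $\alpha\notin N$ and derive that $\alpha$ is nonetheless definable inside $N$, a contradiction. Write $A={\rm Hull}(N,V_{\eta(N)})$, so that $N\prec A$ and $A\in\mathscr A_{\eta(N)}$, and let $\pi:N\to V_{\bar\gamma}$ be the transitive collapse, with $\bar\kappa=\pi(\kappa)$. Since $N$ is Magidor we have $V_{\bar\kappa}\subseteq N$, and as $\pi$ is order preserving with $\pi(\kappa)=\bar\kappa$ one sees that $N\cap\kappa=\bar\kappa$; in particular $\bar\kappa$ is regular and $N$ contains no ordinal in $[\bar\kappa,\kappa)$. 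Because $N$ is active at $\alpha$ we have $\eta(N)\geq\alpha$, and since $\alpha\in E$ while $E_A\cap\eta(N)=E\cap\eta(N)$ with $\eta(N)\in E_A$, this gives $\alpha\in E_A$. Finally fix, as witness to $M\in_\alpha N$, a model $M'\in N$ with $M'\in\mathscr V^N$ and $M'\cong_\alpha M$.

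The first task is to bring $M'$ and a suitable hull of it inside $N$. As $M'\in\mathscr V^N$ we have $N\models|M'|<\kappa$, so some ordinal below $\bar\kappa$ surjects onto $M'$ inside $N$; since $V_{\bar\kappa}\subseteq N$ this forces $M'\subseteq N$. Hence $M'\cap\kappa$ is a subset of $N\cap\kappa=\bar\kappa$ of size $<\bar\kappa$, and regularity of $\bar\kappa$ yields $\kappa_{M'}:=\sup(M'\cap\kappa)<\bar\kappa$, so $V_{\kappa_{M'}}\in V_{\bar\kappa}\subseteq N$. Consequently $H:={\rm Hull}(M',V_{\kappa_{M'}})$ is an element of $N$, being definable from the parameters $M',V_{\kappa_{M'}}\in N$, and moreover $H\subseteq N$, since each generator $f(\bar x)$ of $H$ has $f\in M'\subseteq N$ and $\bar x\in V_{\kappa_{M'}}\subseteq N$. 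Because $M\cong_\alpha M'$ means $M\restriction\alpha=M'\restriction\alpha$, and activity at $\alpha$ is determined by this projection (the relevant hull lives over $V_\alpha$, which the $\alpha$-isomorphism fixes), $M'$ is again active at $\alpha$; thus $\eta(M')\geq\alpha$ and $H\cap E\cap\alpha$ is unbounded in $E\cap\alpha$.

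Next I produce the successor of $\alpha$ inside $N$. Were $\sup(N\cap\ORD)=\alpha$, then since $\alpha\notin N$ all of $N$ would lie in $V_\alpha$; but then $M'\in V_\alpha$, forcing $\eta(M')<\alpha$ and contradicting the previous paragraph. Hence $\gamma_0:=\min(N\cap\ORD\setminus\alpha)$ exists, and $\gamma_0>\alpha$ because $\alpha\notin N$. By \cref{alphasharpinE}(1), applied to $N\prec A$ and $\alpha\in E_A$, we get $\gamma_0\in E_A$; and since $\gamma_0\in N$ while $E_A\cap(\gamma_0+1)$ is uniformly definable in $A$ from $\gamma_0$, we conclude $E_A\cap(\gamma_0+1)\in N$.

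It remains to locate $\alpha$. Since $N$ meets no ordinal in $[\alpha,\gamma_0)$ and $H\subseteq N$, we have $H\cap E_A\cap\gamma_0=H\cap E_A\cap\alpha=H\cap E\cap\alpha$, using $E_A\cap\alpha=E\cap\alpha$. This set equals $H\cap(E_A\cap(\gamma_0+1))\cap\gamma_0$, an intersection of members of $N$, so it belongs to $N$, and therefore $\delta^*:=\sup(H\cap E_A\cap\gamma_0)\in N$. By activity of $M'$, $\delta^*=\sup(H\cap E\cap\alpha)=\sup(E\cap\alpha)$. If $\alpha$ is a limit point of $E$, then $\delta^*=\alpha$, so $\alpha\in N$; if $\alpha={\rm next}(\beta)$, then $\delta^*=\beta\in N$ and $\alpha$ is the least element of $E_A\cap(\gamma_0+1)$ above $\beta$, again an element of $N$. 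Either way $\alpha\in N$, contrary to assumption. The crux is this last paragraph: getting cofinally many points of $E\cap\alpha$ into $N$ is automatic from $N$ being active, and the real difficulty is capturing $\alpha$ itself, which hinges on the combination of $\gamma_0\in E_A$, the gap $[\alpha,\gamma_0)\cap N=\varnothing$ that collapses the two intersections, and the fact that the witnessing hull $H$ is small enough ($\kappa_{M'}<\bar\kappa$, by Magidor closure and regularity of $\bar\kappa$) to be simultaneously a subset and an element of $N$.
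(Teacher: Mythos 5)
Your proof is correct, but it reaches $\alpha \in N$ by a genuinely different mechanism than the paper. The paper's argument is a short capture: after projecting to $\alpha$-models and fixing the witness $M^*\in N$ with $M^*\cong_\alpha M$, it observes that the activity set $a^A(M^*)$, computed in $A={\rm Hull}(N,V_\alpha)$, is a single element of $N$ of size $<\kappa_N$ that contains $\alpha$; the Magidor closure property (an element of $N$ of $N$-cardinality $<\kappa$ is a subset of $N$) then gives $\alpha\in a^A(M^*)\subseteq N$ at once. You use the same two ingredients --- transfer of activity along the $\alpha$-isomorphism and the Magidor closure property --- but you apply the closure property to the witness $M'$ and its hull $H={\rm Hull}(M',V_{\kappa_{M'}})$ rather than to the activity set, and you then reconstruct $\alpha$ definably inside $N$: \cref{alphasharpinE} puts $\gamma_0=\min(N\cap{\rm ORD}\setminus\alpha)$ into $E_A$, the bounded segment $E_A\cap(\gamma_0+1)$ is definable in $A$ from $\gamma_0$ and hence lies in $N$, the gap $N\cap[\alpha,\gamma_0)=\varnothing$ collapses $H\cap E_A\cap\gamma_0$ to $H\cap E\cap\alpha$, and activity forces its supremum to equal $\sup(E\cap\alpha)$, from which $\alpha$ is recovered with a limit/successor case split. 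What the paper's route buys is brevity and uniformity in $\alpha$ (no case split, no appeal to \cref{alphasharpinE}); what yours buys is that you never need to handle $a^A(\cdot)$ as a first-order definable object of $A$ whose cardinality must be bounded below $\kappa_N$ --- you only use definability of bounded pieces of $E_A$, which the paper states explicitly. One presentational caveat: your opening claim that $\eta(N)\in E_A$ (needed for $\alpha\in E_A$ when $\alpha=\eta(N)$) tacitly assumes $A$ has nonstandard elements, i.e.\ $N\not\subseteq V_\alpha$; this is harmless, since your argument excluding $\sup(N\cap{\rm ORD})=\alpha$ does not use $\alpha\in E_A$ and so disposes of the degenerate case independently, but that exclusion should logically come first.
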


\begin{proof} We may assume that $M$ and $N$ are $\alpha$-models. Let $A= {\rm Hull}(N,V_\alpha)$. Then $A\in \mathscr A_\alpha$. 
Fix $M^*\in N$ with $M^*\in \mathscr V^A$ which is $\alpha$-isomorphic to $M$.
Since $M^*$ is $\alpha$-equivalent to $M$, we have that $\alpha \in a^A(M^*)$. 
On the other hand, $a^A(M^*)\in N$ and has size $<\kappa_N$, hence $a^A(M^*)\subseteq N$.
It follows that $\alpha \in N$. 
\end{proof}

\begin{proposition}\label{incont} Let $M\in \mathscr V$ and $N\in \mathscr U$. 
Suppose $\alpha \in a(M)$ is a limit point of $E$ and $M \in_\gamma N$, for all $\gamma \in E \cap \alpha$. 
Then $\alpha \in N$ and $M \in_\alpha N$. 
\end{proposition}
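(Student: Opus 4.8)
The plan is to first reduce the statement to the assertion that $N$ is active at $\alpha$, and then to manufacture a single witness for $M\in_\alpha N$ by transporting the coherent sequence of projections of $M$ into the transitive collapse of $N$, exploiting that this collapse is a rank initial segment of $V$.

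First I would show that $a(M)\cap\alpha$ is unbounded in $\alpha$. Since $\alpha$ is a limit point of $E$, the set $E\cap\alpha$ is unbounded in $\alpha$, so $\mathrm{Hull}(M,V_{\kappa_M})\cap E\cap\alpha$ is unbounded in $\alpha$ by activity of $M$ at $\alpha$. Given any $\beta$ in this set, $\mathrm{next}(\beta)<\alpha$ and $\beta=\max(E\cap\mathrm{next}(\beta))$ lies in $\mathrm{Hull}(M,V_{\kappa_M})$, so by \cref{def-active} $M$ is active at $\mathrm{next}(\beta)$; as $\beta$ ranges cofinally in $\alpha$ so does $\mathrm{next}(\beta)$, giving $a(M)\cap\alpha$ unbounded (note that this argument does not need any cofinality hypothesis on $\alpha$). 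Now for each $\delta\in a(M)\cap\alpha$ I pick $\gamma\in(\delta,\alpha)\cap E$; from $M\in_\gamma N$ and the monotonicity of activity recorded in the remark following \cref{def-active}, $N$ is active at $\delta$. Hence $a(N)\cap\alpha$ is unbounded, and since $a(N)$ is closed we obtain $\alpha\in a(N)$, i.e. $N$ is active at $\alpha$. Because $N$ is a Magidor model we have $V_{\kappa_N}\subseteq N$ and activity coincides with strong activity, so $N\cap E\cap\alpha$ is unbounded in $E\cap\alpha$; one also checks $\eta(N)\geq\alpha$, using that the witnesses to $M\in_\gamma N$ sit at levels below $\eta(N)$.

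Next I would pass to the collapse. For every $\gamma\in N\cap E\cap\alpha$ fix a witness $M'_\gamma\in N\cap\mathscr V^N$ with $M'_\gamma\cong_\gamma M$; since $\gamma\in N$ the projection is computed inside $N$, so $M\rest\gamma=M'_\gamma\rest\gamma\in N$. Let $\pi_N\colon N\to \overline N=V_{\bar\gamma}$ be the collapse and $j=\pi_N^{-1}$, and set $\bar M_{\bar\gamma_0}=\pi_N(M\rest\gamma)$ where $\gamma=j(\bar\gamma_0)$. By \cref{transitivity-projections} these cohere, so $\langle\bar M_{\bar\gamma_0}:\bar\gamma_0\in E_{\overline N}\cap\bar\alpha\rangle$ is a coherent sequence of models in $\overline N$, cofinal in $\bar\alpha:=\sup\pi_N[N\cap\alpha]$, a limit point of $E_{\overline N}$. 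Each $\bar M_{\bar\gamma_0}$ has rank below the collapse of $\mathrm{Hull}(M'_\gamma,V_\gamma)$, and since $\overline N=V_{\bar\gamma}\models\ZFC$ these ranks are bounded below $\bar\gamma$; thus the whole sequence is a set of rank $<\bar\gamma$ and, as $V_{\bar\gamma}$ is a rank initial segment of $V$, it belongs to $\overline N$. This closure under coherent sequences of bounded rank, rather than merely finite ones, is exactly what the Magidor hypothesis provides and is what frees the statement from the cofinality restriction on $\alpha$ present in \cref{incontcount}.

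Finally, working in $V_{\bar\gamma}$, I would form the limit model $\bar M^*$ of this coherent cofinal sequence, namely the $\bar\alpha$-model with $\bar M^*\rest\bar\gamma_0=\bar M_{\bar\gamma_0}$ for all $\bar\gamma_0$, and set $M^*=j(\bar M^*)\in N\cap\mathscr V^N$. Since projection commutes with the elementary map $j$ and $j(\pi_N(x))=x$ for $x\in N$, we get $M^*\rest\gamma=M\rest\gamma$, i.e. $M^*\cong_\gamma M$, for cofinally many $\gamma<\alpha$; the continuity of $\alpha$-isomorphism (\cref{isocont}) then yields $M^*\cong_\alpha M$, so $M\in_\alpha N$. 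Once this is in hand, $M$ and $N$ are both active at $\alpha$ and $M\in_\alpha N$, so \cref{active-Magidor} gives $\alpha\in N$, which completes the proof. I expect the main obstacle to be precisely the construction of the limit model $\bar M^*$ inside $\overline N$ together with the verification that it projects correctly: the existence of a genuine model realizing a coherent cofinal sequence is the one point that goes beyond the uniqueness content of \cref{isocont}, and it must be carried out internally to $V_{\bar\gamma}$ and then transported back through the collapse.
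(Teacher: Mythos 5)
Your overall architecture does run parallel to the paper's: both arguments use the Magidor property of $N$ to assemble the coherent system of projections $M\rest\gamma\in N$ (for $\gamma$ in a cofinal set below $\alpha$) and then take a limit of that system to manufacture the witness for $M\in_\alpha N$. But you reverse the order of the two conclusions (witness first, then $\alpha\in N$ via \cref{active-Magidor}; the paper proves $\alpha\in N$ first and builds the limit inside $N$), and this reversal produces a genuine gap at exactly the point where the Magidor hypothesis must be used quantitatively. The flawed step is the claim that the coherent sequence $\langle\bar M_{\bar\gamma_0}\rangle$ is an element of $\overline N$ because ``since $\overline N=V_{\bar\gamma}\models\ZFC$ these ranks are bounded below $\bar\gamma$.'' This is circular: replacement in $\overline N$ bounds the range only of functions that are already elements of (or definable over) $\overline N$, which is precisely what you are trying to prove. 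Externally, ${\rm rank}(\bar M_{\bar\gamma_0})\geq\bar\gamma_0$, so the ranks are bounded below $\ORD^{\overline N}$ if and only if $\bar\alpha=\sup\pi_N[N\cap\alpha]<\ORD^{\overline N}$, i.e.\ if and only if $N$ contains an ordinal $\geq\alpha$ --- which is essentially the conclusion $\alpha\in N$ that you have postponed. Nothing you establish before this point excludes, say, $N$ being a standard $\alpha$-model (your step that $N$ is active at $\alpha$ is compatible with that), and in that configuration $\pi_N[N\cap\alpha]$ is cofinal in $\ORD^{\overline N}$ and the sequence genuinely has rank $\ORD^{\overline N}$, so it is not in $\overline N$. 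Nor does the closure of $N$ under ${<}\kappa_N$-sequences rescue the step as written, because your index set $N\cap E\cap\alpha$ can have cardinality $\geq\kappa_N$.

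The paper avoids this by indexing the system by $a=a(M)\cap N\cap\alpha$ instead: this set is unbounded in $\alpha$ (your unboundedness argument for $a(M)\cap\alpha$, plus the fact that each witness $M'_\gamma\in N$ has $a^{A_N}(M'_\gamma)\in N$ of size $<\kappa_N$, hence $a(M)\cap\gamma\subseteq N$), it is a subset of $N$, and, crucially, it has size $<\kappa_N$, since $\kappa_M<\kappa_N$ and $|M|<\kappa_N$ bound $|a(M)|$. The ${<}\kappa_N$-closure of $N$ then puts $a$ itself into $N$, so $\alpha=\sup a\in N$ at once; after that the directed system $(A_\gamma,\sigma_{\gamma,\delta}:\gamma\leq\delta\in a)$, again of size $<\kappa_N$, lies in $N$, and its direct limit is identified externally with ${\rm Hull}(M,V_\alpha)$, yielding $M\rest\alpha\in N$. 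Your route can in fact be repaired in the same spirit: from $|a(M)|<\kappa_N$ and the unboundedness of $a(M)\cap\alpha$ one gets $\cof(\alpha)<\kappa_N\leq\cof(\ORD^{\overline N})$, hence $\bar\alpha<\ORD^{\overline N}$, which is exactly the bound your rank argument needs. But this cofinality comparison (or the paper's first step) is a necessary ingredient and not a formality; as written, the decisive step of your proof is unjustified, and the rest of your argument (forming the limit model, pulling back by $j$, applying \cref{isocont} and \cref{active-Magidor}) is sound only once it is supplied.
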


\begin{proof}
Let $a = a(M)\cap N \cap \alpha$. Note that $A$ is unbounded in $\alpha$ and has size $< \kappa_N$. Since $N$ is closed under $<\kappa_N$-sequences,
it follows that $a\in N$, and hence $\alpha = \sup (a) \in N$. For $\gamma < \alpha$, let $M_\gamma = M\restriction \gamma$.
For $\gamma \in a$, we have that  $M \in_{\gamma}N$, and hence $M_\gamma \in N$. 
Let $A_\gamma = {\rm Hull}(M_\gamma,V_\gamma)$. For $\gamma, \delta \in a$ with $\gamma < \delta$, we have
that $M_\delta \restriction \gamma = M_\gamma$. In other words, $A_\gamma$ is the transitive collapse of ${\rm Hull}(M_\delta,V_\gamma)$
, and if $\sigma_{\gamma,\delta}$ is the inverse of the collapsing map, we have $\sigma_{\gamma,\delta}[M_\gamma]=M_\delta$. 
Each of the maps $\sigma_{\gamma,\delta}$ is definable from $M_\delta$ and $\gamma$, and hence it belongs to $N$.
Now, $N$ is closed under $<\kappa_N$-sequences and therefore the whole system $(A_\gamma, \sigma_{\gamma,\delta} : \gamma \leq \delta \in a)$ belongs to $N$.
Let $A$ be the direct limit of this system, and let $\sigma_\gamma$ be the canonical embedding of $A_\gamma$ to $A$.
If we let $\pi_\gamma$ be the collapsing map of ${\rm Hull}(M,V_\gamma)$ to $A_\gamma$, we then have that, for every $\gamma < \delta$, 
the following diagram commutes:

\[\begin{tikzcd}
{\rm Hull}(M,V_\gamma) \arrow{r}{{\rm id}} \arrow[swap]{d}{\pi_\gamma} & {\rm Hull}(M,V_\delta) \arrow{d}{\pi_\delta}  \\
A_\gamma  \arrow{r}{\sigma_{\gamma,\delta}} & A_\delta 
\end{tikzcd}
\]

\noindent Since ${\rm Hull}(M,V_\alpha) = \bigcup \{ {\rm Hull}(M,V_\gamma): \gamma \in a \}$, we have 
that $A$ is isomorphic to ${\rm Hull}(M,V_\alpha)$. 
Therefore, its transitive collapse is $A_\alpha={\rm Hull}(M\restriction \alpha,V_\alpha)$, and if we let $\pi$ be the collapsing map, 
$\pi [M]= M\restriction \alpha$. We can therefore identify $A$ with $A_\alpha$, and we get that $\sigma_\gamma [M_\gamma]=M\restriction \alpha$, 
for any $\gamma \in a$. Thus $M\restriction \alpha \in N$, as required. 
\end{proof}

We now  define an operation that will play the role of intersection for virtual models. 
We call it the {\em meet}. We only define the meet of two models of different types. 
Suppose $N\in \mathscr U$ and $M\in \mathscr C$. Let $\overline{N}$ be the transitive collapse of $N$,
and let $\pi$ be the collapsing map. Note that if $\overline{N}\in M$, then $\overline{N}\cap M$ is a countable elementary submodel 
of $\overline{N}$. Then $\overline{N}\cap M \in \overline{N}$ since $\overline{N}$ is closed under countable sequence.
Note that $\pi^{-1}(\overline{N}\cap M)= \pi^{-1}[\overline{N}\cap M]$, and this model is elementary in $N$.

\begin{definition} Suppose $N\in \mathscr U$ and $M \in \mathscr C$. Let $\alpha = \max (a(N)\cap a(M))$. 
We will define $N \land M$ if $N\in_\alpha M$. Let $\overline{N}$ be the transitive collapse of $N$, and let $\pi$
be the collapsing map. 
Set
\[ 
\eta = \sup (\sup (\pi^{-1}[\overline{N}\cap M]\cap {\rm ORD})\cap E\cap (\alpha +1)).
\]
We define the meet of $N$ and $M$ to be $N\land M=\pi^{-1}[\overline{N}\cap M]\restriction\eta$.

\end{definition}
To make sense of the above definition, we need to prove the following.

\begin{proposition}
Under the assumptions of the above definition, $N\land M\in \mathscr C_\eta$.
\end{proposition}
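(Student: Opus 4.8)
The goal is to verify that the object defined as $N \land M$ is genuinely a countable virtual model living at level $\eta$, i.e. $N\land M \in \mathscr C_\eta$. My plan is to unwind the definition in three stages: first check that $\pi^{-1}[\overline N \cap M]$ is a legitimate countable substructure of $N$, then check that $\eta$ is a well-defined element of $E$, and finally check that applying the projection $\restriction\eta$ lands us in $\mathscr C_\eta$. The assumption $N \in_\alpha M$ (where $\alpha = \max(a(N)\cap a(M))$) is what makes everything coherent, so I would extract its consequences early.

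First I would argue that $\overline N \in M$. Since $N \in \mathscr U$ is a Magidor model, $\overline N = V_{\bar\gamma}$ for some $\bar\gamma<\kappa$, and $|\overline N| < \kappa$. From $N \in_\alpha M$ we get an isomorphic copy of $N$ that is $\in M$ (in the sense of $\mathscr V^M$); since $M$ is countable and $\overline N$ is the transitive collapse, elementarity of $M$ together with $\alpha \in a(M)\cap a(N)$ should give $\overline N \in M$. Once $\overline N \in M$, the paragraph preceding the definition already records that $\overline N \cap M$ is a countable elementary submodel of $\overline N$, that $\overline N \cap M \in \overline N$ (using that $\overline N = V_{\bar\gamma}$ is closed under countable sequences because $\operatorname{cof}(\bar\gamma)\geq \pi(\kappa) > \omega$), and that $\pi^{-1}[\overline N \cap M]$ is elementary in $N$. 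So $P := \pi^{-1}[\overline N \cap M]$ is a countable submodel of $N$, and since $N \in \mathscr V_{\eta(N)}$ with $N$ elementary in $\operatorname{Hull}(N, V_{\eta(N)})$, $P$ inherits the structure needed to be a (countable) virtual model in $\mathscr V$.

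Next I would verify that $\eta$ is well-defined and lies in $E$. By definition $\eta = \sup\big((\sup(P\cap \ORD)) \cap E \cap (\alpha+1)\big)$; the inner set is $E \cap \big(\sup(P\cap\ORD)\big) \cap (\alpha+1)$, a subset of the closed set $E$, so its supremum is in $E \cup\{0\}$ provided it is nonempty. Nonemptiness is where the active hypotheses pay off: because $\alpha \in a(N)\cap a(M)$ and $P$ is essentially the countable trace of $N$ inside $M$, the set $P \cap E \cap \alpha$ should be cofinal enough below $\alpha$ to make the supremum a genuine limit point of $E$ at or below $\alpha$; here I would invoke \cref{alphasharpinE}(2), which says $\sup(E \cap M) = \sup(E \cap \sup(M\cap\ORD^A))$, to convert the supremum over $P\cap\ORD$ into a clean statement about $E \cap \sup(P\cap\ORD)$. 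Since $E$ is closed, $\eta \in E$.

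Finally, with $\eta \in E$ established and $P$ a countable virtual model with $\sup(P\cap\ORD)\geq \eta$, the projection $P \restriction \eta$ is by \cref{projection-models-def} and the remark following it an $\eta$-model that is $\eta$-isomorphic to $P$, and it remains countable since $P$ is. Thus $N \land M = P\restriction\eta \in \mathscr C_\eta$. The main obstacle I anticipate is the middle step: showing that the inner supremum defining $\eta$ is nonempty and that $\eta$ genuinely belongs to $E$ rather than merely to $\overline E$, which requires carefully combining the activity of $N$ and $M$ at $\alpha$ with the closure of $\overline N = V_{\bar\gamma}$ and the elementarity of $P$ in $N$. The other steps are bookkeeping around transitive collapses and projections that the earlier propositions (\cref{trans}, \cref{projection-membership}) have already made routine.
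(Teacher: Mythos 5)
You have a genuine gap, and it sits exactly where the real content of the proposition lies. Your middle claim — that $P := \pi^{-1}[\overline N\cap M]$ ``inherits the structure needed to be a (countable) virtual model in $\mathscr V$'' — is asserted without proof, and it is false in general. Membership in $\mathscr V_\beta$ requires that ${\rm Hull}(P,V_\beta)$ be \emph{transitive}, i.e.\ a member of $\mathscr A_\beta$; but $P$ is a countable elementary submodel of $N$ that typically contains elements (e.g.\ ordinals) of rank and cardinality far above any candidate level $\beta$, while ${\rm Hull}(P,V_\beta)$ has cardinality only $|V_\beta|$, so it cannot be transitive once $P$ contains an ordinal of cardinality exceeding $|V_\beta|$. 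Consequently your final step, which invokes \cref{projection-models-def} and the remark following it, is circular: that definition and remark are stated only for $\beta$-models, i.e.\ members of $\mathscr V$, and $P$ is not one. The proposition is precisely the statement that the collapse-of-the-hull construction, applied to the generally \emph{non}-virtual model $P$, nevertheless lands in $\mathscr C_\eta$; this cannot be discharged by citing the projection machinery, which presupposes it.

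The missing argument is the paper's key step, which you never perform: work inside the suitable structure $A={\rm Hull}(N,V_\alpha)\in\mathscr A_\alpha$ (available since $\eta(N)\geq\alpha$), observe that $P\prec N\prec A$ and that $V_\eta\prec V_\alpha\prec A$ because $\eta\in E\cap(\alpha+1)$, and then apply \cref{hull-elementary} with $X=V_\eta$, using $\sup(P\cap\ORD)\geq\eta$, to get that ${\rm Hull}(P,V_\eta)\prec A$ and $V_\eta\subseteq{\rm Hull}(P,V_\eta)$. Only from this does it follow that the transitive collapse of ${\rm Hull}(P,V_\eta)$ is a transitive elementary extension of $V_\eta$ of the same cardinality as $V_\eta$, hence a member of $\mathscr A_\eta$, and therefore that the image of $P$ under the collapsing map — which is exactly $N\land M$ — belongs to $\mathscr C_\eta$. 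Your opening step ($\overline N\in M$ via $N\in_\alpha M$, hence $\overline N\cap M\prec\overline N$ and $P\prec N$ countable) matches the paper's setup, and your attention to the well-definedness of $\eta$ (nonemptiness of the set whose supremum defines it, closedness of $E$) is a legitimate point the paper glosses over; but without the \cref{hull-elementary} step the proof does not go through.
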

\begin{proof}
Since $\eta(N)\geq \alpha$ we can form the model $A={\rm Hull}(N,V_\alpha)$ and,  we therefore have $N \prec A$
and $V_\alpha \prec A$. 
 Since $\overline{N}\in M$, we have that $\overline{N}\cap M \prec \overline{N}$.
Therefore, we have $\pi^{-1}[\overline{N}\cap M] \prec N$. 
Now, $\eta \in E \cap (\alpha +1)$ and so $V_\eta \prec V_\alpha\prec A$. 
Moreover, $\sup (\pi^{-1}[\overline{N}\cap M]\cap {\rm ORD}) \geq \eta$. 
By \cref{hull-elementary} we have that ${\rm Hull}(\pi^{-1}[\overline{N}\cap M],V_\eta)\prec A$ and 
$V_\eta \subseteq {\rm Hull}(\pi^{-1}[\overline{N}\cap M],V_\eta)$.
It follows that the transitive collapse of ${\rm Hull}(\pi^{-1}[\overline{N}\cap M],V_\eta)$ belongs to $\mathscr A_\eta$
and thus the image of $\pi^{-1}[\overline{N}\cap M]$ under the collapsing map belongs to $\mathscr C_\eta$. 
\end{proof}

\begin{proposition}\label{meet-trace} Let $N\in \mathscr U$ and $M\in \mathscr C$. Suppose $\alpha \in E$ and the meet $N\land M$ is defined and active at $\alpha$.
Then $(N \land M)\cap V_\alpha = N\cap M \cap V_\alpha$. 
\end{proposition}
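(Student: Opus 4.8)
The plan is to unwind the definition of the meet and reduce the claim to a statement about the transitive collapse of $N$. Write $\alpha^\ast = \max(a(N)\cap a(M))$ for the ordinal appearing in the definition of $N\land M$, let $\pi\colon N\to\overline N$ be the collapsing map, and set $P=\pi^{-1}[\overline N\cap M]$, so that $N\land M=P\restriction\eta$ with $\eta$ as in that definition. Since $N\land M\in\mathscr C_\eta$ and $N\land M$ is active at $\alpha$, we have $\eta(N\land M)=\eta\geq\alpha$; and because $\eta$ is a supremum of elements of $E\cap(\alpha^\ast+1)$ and $\alpha^\ast\in E$, we have $\eta\leq\alpha^\ast$. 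Thus $\alpha\leq\eta\leq\alpha^\ast$. The collapsing map of ${\rm Hull}(P,V_\eta)$ fixes $V_\eta$ pointwise and, being injective, carries nothing from outside $V_\eta$ into $V_\eta$; hence $(P\restriction\eta)\cap V_\eta=P\cap V_\eta$, and in particular $(N\land M)\cap V_\alpha=P\cap V_\alpha$. It therefore suffices to prove $P\cap V_\alpha=N\cap M\cap V_\alpha$.

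The heart of the argument is the equivalence, for $z\in N\cap V_\alpha$, between $z\in M$ and $\pi(z)\in M$; granting this, since $\pi(z)\in\overline N$ automatically, we obtain $P\cap V_\alpha=\{z\in N\cap V_\alpha:\pi(z)\in\overline N\cap M\}=\{z\in N\cap V_\alpha:z\in M\}=N\cap M\cap V_\alpha$. To establish the equivalence I would exploit that the meet is defined, so $N\in_{\alpha^\ast}M$, which produces a model $N'\in M$ with $N'\in\mathscr V^M$ and $N'\cong_{\alpha^\ast}N$. Its transitive collapse $\overline{N'}$ equals $\overline N$, and its collapsing map $\pi'$ is definable from $N'$, so both $\overline N$ and $\pi'$ lie in $M$. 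By \cref{alpha-equiv-def} the $\alpha^\ast$-isomorphism $\sigma\colon N\to N'$ extends to an isomorphism $\bar\sigma\colon{\rm Hull}(N,V_{\alpha^\ast})\to{\rm Hull}(N',V_{\alpha^\ast})$; since $\bar\sigma$ carries the transitive set $V_{\alpha^\ast}$ onto itself, it is the identity there by $\in$-induction. Consequently $N\cap V_{\alpha^\ast}=N'\cap V_{\alpha^\ast}$, and from $\pi=\pi'\circ\sigma$ I would read off that $\pi$ and $\pi'$ agree on $N\cap V_{\alpha^\ast}$.

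With these agreements in hand the equivalence follows from the elementarity of $M$: for $z\in N\cap V_\alpha\subseteq N\cap V_{\alpha^\ast}=N'\cap V_{\alpha^\ast}$ we have $z\in\dom(\pi')$ and $\pi(z)=\pi'(z)$, so applying $\pi'\in M$ gives $z\in M\Rightarrow\pi(z)\in M$, while applying $\pi'^{-1}\in M$ gives $\pi(z)\in M\Rightarrow z=\pi'^{-1}(\pi'(z))\in M$. Combining this with the reduction of the first paragraph completes the proof. I expect the main obstacle to be exactly this passage through the $\alpha^\ast$-isomorphic copy $N'\in M$: one must verify carefully that $N$ and $N'$, together with their collapsing maps, genuinely coincide below $V_{\alpha^\ast}$, for only then can the membership question for $z$ be transferred into $M$, where $N$ itself is unavailable as an element but $N'$ is.
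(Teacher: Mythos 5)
Your proof is correct and takes essentially the same route as the paper's: both pass to the copy $N'\in M$ provided by $N\in_{\alpha^\ast}M$ (the paper calls $\alpha^\ast$ there $\beta$), use that the $\alpha^\ast$-isomorphism $\sigma$ fixes $N\cap V_{\alpha^\ast}$ pointwise so that the collapses satisfy $\pi=\pi'\circ\sigma$ and agree on $N\cap V_{\alpha^\ast}$, and then transfer membership in $M$ through $\pi',\pi'^{-1}\in M$. The only difference is that you spell out two steps the paper leaves implicit — that the final projection $\restriction\eta$ in the definition of the meet does not disturb the part below $V_\alpha$, and that $M$ is closed under $\pi'$ and its inverse because $N'\in M$ — which is a faithful filling-in rather than a different argument.
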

\begin{proof} Let $\beta = \max (a(N)\cap a(M))$. Since the meet of $N$ and $M$ is defined we must have $N \in_\beta M$. 
Since $N\land M$ is active at $\alpha$, we must have $\alpha \leq \beta$. 
Let  $N'\in \mathscr V^M$ be such that $N'\cong_\beta N$. Let $\sigma$ be the $\beta$-isomorphism between 
$N$ and $N'$. Notice that $\sigma$ is the identity on $N\cap V_\beta$ and thus also on $N\cap V_\alpha$.
 Let $\overline{N}$ denote the common transitive collapse of $N$ and $N'$, and let $\pi$ and $\pi'$ be the collapsing maps. 
Then the following diagram commutes.

\[
\begin{tikzcd}[row sep=2.5em]
N \arrow{dr}{\pi} \arrow{rr}{\sigma} &&N' \arrow[swap]{dl}{\pi'} \\
 & \overline{N} 
\end{tikzcd}
\]

Note that $(N\land M)\cap V_\alpha = \pi^{-1}[\overline{N}\cap M]\cap V_\alpha = \sigma^{-1}[N\cap M\cap V_\alpha]$.
Since $\sigma$ is the identity on $N \cap V_\alpha$, it follows that $(N \land M)\cap V_\alpha = N\cap M \cap V_\alpha$. 
\end{proof}

\begin{proposition}\label{meet-strongly-active} Let $\alpha \in E$. Suppose $N\in \mathscr U$ and $\M\in \mathscr C$, the meet $N\land M$ is defined, 
 and $N$ and $M$ strongly active at $\alpha$. Then $N\land M$ is strongly active at $\alpha$. 
\end{proposition}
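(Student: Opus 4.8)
The plan is to reduce everything, via the trace computation already carried out for \cref{meet-trace}, to a single unboundedness statement about $N\cap M$, and then to prove that statement by an interleaving argument exploiting strong activity of both models together with the witness to $N\in_\beta M$.

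First I would set $\beta=\max(a(N)\cap a(M))$. Since $N$ and $M$ are active at $\alpha$ we have $\alpha\in a(N)\cap a(M)$, hence $\alpha\le\beta$, and since $N\land M$ is defined we have $N\in_\beta M$. Fix a witness $N'\in M$ with $N'\in\mathscr V^M$ and a $\beta$-isomorphism $\sigma\colon N\to N'$; as in the proof of \cref{meet-trace}, $\sigma$ is the identity on $N\cap V_\beta$, so $N'\cap V_\beta=N\cap V_\beta$. Writing $P=\pi^{-1}[\overline N\cap M]$, so that $N\land M=P\restriction\eta$ and $P$ is a countable elementary submodel of $N$, the part of the computation in the proof of \cref{meet-trace} that does not mention $\eta$ shows $P\cap V_\alpha=\sigma^{-1}[N\cap M\cap V_\alpha]=N\cap M\cap V_\alpha$, and therefore $P\cap E\cap\alpha=N\cap M\cap E\cap\alpha$. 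Thus the whole proposition reduces to showing that $N\cap M\cap E\cap\alpha$ is unbounded in $E\cap\alpha$.

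The heart of the matter, and the step I expect to be the main obstacle, is this unboundedness. I would treat the case where $\alpha$ is a limit point of $E$, so $\sup(E\cap\alpha)=\alpha$; the successor case is immediate from the remark following \cref{def-active}, which places $\alpha$, hence $\max(E\cap\alpha)$, in both $M$ and $N$. Fix $\gamma_0\in E\cap\alpha$. Using strong activity of $M$, pick $\delta_0\in M\cap E\cap\alpha$ with $\delta_0>\gamma_0$; using strong activity of $N$, pick $\nu\in N\cap E\cap\alpha$ with $\nu>\delta_0$; using strong activity of $M$ once more, pick $\delta_1\in M\cap E\cap\alpha$ with $\delta_1>\nu$. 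The decisive point is to feed the witness $N'$, which genuinely belongs to $M$ whereas $N$ need not, into definable operations of $M$: since $\delta_1\in M$ and $E\cap\delta_1$ is definable in $V_\lambda$ from the parameter $\delta_1$, the set $X=N'\cap(E\cap\delta_1)$ belongs to $M$. Because $\delta_1\le\beta$ and $\sigma$ fixes $V_{\delta_1}$, we have $N'\cap V_{\delta_1}=N\cap V_{\delta_1}$, so $X=N\cap E\cap\delta_1$ and in particular $\nu\in X$. Now $\gamma:=\min(X\setminus(\delta_0+1))$ is definable from $X,\delta_0\in M$, hence $\gamma\in M$; it lies in $X\subseteq N'\cap E$ with $\gamma<\delta_1\le\beta$, hence $\gamma\in N\cap E$; and $\delta_0<\gamma\le\nu<\alpha$. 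So $\gamma\in N\cap M\cap E\cap\alpha$ lies above $\gamma_0$, which proves the unboundedness.

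Finally I would close the loop. From the unboundedness just established together with $P\cap V_\alpha=N\cap M\cap V_\alpha$, the set $P\cap E\cap\alpha$ is unbounded in $E\cap\alpha$, so $\sup(P\cap\ORD)\ge\sup(E\cap\alpha)=\alpha$; since every such ordinal is below $\alpha\le\beta$, the defining formula for $\eta$ yields $\eta\ge\sup(E\cap\alpha)=\alpha$. Thus $\eta(N\land M)=\eta\ge\alpha$, which is the first clause of strong activity. Moreover, as $\alpha\le\eta$, the collapsing map defining $P\restriction\eta$ (see \cref{projection-models-def}) is the identity on $P\cap V_\alpha$, so $(N\land M)\cap V_\alpha=P\cap V_\alpha$ and hence $(N\land M)\cap E\cap\alpha=P\cap E\cap\alpha=N\cap M\cap E\cap\alpha$ is unbounded in $E\cap\alpha$, which is the second clause. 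Therefore $N\land M$ is strongly active at $\alpha$. The truly delicate points lie in the interleaving argument: arranging the alternating choices $\delta_0<\nu<\delta_1$ in the correct order so that two separate unboundedness hypotheses combine into unboundedness of the intersection, and using $N'$ rather than $N$ as the parameter so that $M$ can actually compute $X$.
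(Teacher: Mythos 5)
Your proof follows exactly the paper's route: fix $\beta=\max(a(N)\cap a(M))$, pass to the witness $N'\in M$ with the $\beta$-isomorphism $\sigma$ fixing $N\cap V_\beta$, reduce the whole statement to the unboundedness of $N\cap M\cap E\cap\alpha$ in $E\cap\alpha$, and finish with the trace identity. The paper's proof consists of these same steps, except that the unboundedness is stated there as a bare ``Note that $N'\cap M\cap E\cap\alpha$ is unbounded in $E\cap\alpha$''; your interleaving argument (pick $\delta_0\in M$, then $\nu\in N$ above it, then $\delta_1\in M$ above that, and take $\min\bigl((N'\cap E\cap\delta_1)\setminus(\delta_0+1)\bigr)$, which lies in $N\cap M$) is precisely the missing justification, and it is correct. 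You are also right, and more careful than the paper, to rederive the trace identity for $P=\pi^{-1}[\overline{N}\cap M]$ directly rather than cite \cref{meet-trace}, whose hypotheses formally include activity of $N\land M$ at $\alpha$ --- part of what is being proved. One small repair: $X=N'\cap E\cap\delta_1$ lies in $M$ not because $E\cap\delta_1$ is definable in $V_\lambda$ (a virtual model $M$ need not be elementary in $V_\lambda$), but because $E\cap\delta_1=E_A\cap\delta_1$ is uniformly definable in $A={\rm Hull}(M,V_{\eta(M)})$ from the parameter $\delta_1\in E_A$, and $M\prec A$ contains $N'$ and $\delta_1$.

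The genuine gap is the successor case, which you dismiss as immediate. Your closing computation of $\eta$ uses $\sup(E\cap\alpha)=\alpha$, which holds only when $\alpha$ is a limit point of $E$. At $\alpha={\rm next}(\mu)$, strong activity of $N$ and $M$ does put $\mu=\max(E\cap\alpha)$ into $N\cap M$ at once, so the unboundedness clause is fine; but that argument only gives $\eta\ge\mu$, whereas strong activity demands $\eta(N\land M)\ge\alpha$. For that you would need $\sup(P\cap\ORD)>\alpha$, e.g.\ $\alpha\in P$, which requires $\alpha\in M\cap N$ and $\sigma(\alpha)\in M$ --- arguments your write-up does not contain. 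Worse, if $N$ is a \emph{standard} Magidor $\alpha$-model (so $N\subseteq V_\alpha$ and $\alpha\notin N$, which is compatible with strong activity at $\alpha$, since unboundedness in $E\cap\alpha$ there just means $\mu\in N$), then $\sup(P\cap\ORD)\le\alpha$, the formula for $\eta$ yields $\eta\le\mu<\alpha$, and the stated conclusion fails outright. So the successor case is not immediate: the proposition should be read at limit points of $E$, or its successor instance needs a separate treatment. To be fair, the paper's own proof has the same blind spot, since it silently invokes \cref{meet-trace} whose hypothesis is activity of the meet at $\alpha$; your limit-case argument is, if anything, the more complete of the two.
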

\begin{proof} Let $\beta= \max (a(N)\cap a(M))$. Since both $N$ and $M$ are active at $\alpha$, we must have $\alpha \leq \beta$. 
Let $N'\in M$  with $N'\in \mathscr V^M$ be such that $N'\cong_\beta N$. 
Let $\sigma$ be the $\beta$-isomorphism between $N'$ and $N$.  Then $\sigma \rest N'\cap V_\beta$ is the identity. 
Note that $N'\cap M\cap E \cap \alpha$ is unbounded in $E\cap \alpha$.
Since $N'\cap V_\alpha = N\cap V_\alpha$,  we must have that $N\cap M\cap E \cap \alpha$ is also unbounded in $E\cap \alpha$. 
By \cref{meet-trace}, $N\land M$ is strongly active at $\alpha$. 
\end{proof}

The next proposition states the meet operation commutes with projections. 

\begin{proposition}\label{MPI}
Let $N\in \mathscr U$ and $M\in \mathscr C$. Suppose $\alpha \in E$ and the meet $N\land M$ is defined and active at $\alpha$.
Then $(N \land M)\restriction \alpha = N \restriction \alpha \land M \restriction \alpha$. 
\end{proposition}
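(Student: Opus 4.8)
The plan is to push the entire meet construction through the collapsing map $\rho$ of ${\rm Hull}(N,V_\alpha)$, using that this single map simultaneously realizes the projections $N\restriction\alpha$ and commutes with the transitive collapse of $N$. Write $\beta=\max(a(N)\cap a(M))$; since the meet is defined we have $N\in_\beta M$, and, letting $\pi$ be the collapsing map of $N$ and $\overline N$ its transitive collapse, $N\land M=P\restriction\eta$ where $P=\pi^{-1}[\overline N\cap M]$ and $\eta\in E\cap(\beta+1)$ is the ordinal specified in the definition of the meet. Because $N\land M\in\mathscr C_\eta$ is active at $\alpha$ we have $\alpha\le\eta\le\beta$, so by \cref{transitivity-projections} the left-hand side simplifies to $(N\land M)\restriction\alpha=(P\restriction\eta)\restriction\alpha=P\restriction\alpha$.

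For the right-hand side I first record two identities. Since $N\restriction\alpha\cong_\alpha N$ and $M\restriction\alpha\cong_\alpha M$, the transitive collapse of $N\restriction\alpha$ is again $\overline N=V_{\bar\gamma}$ with $\bar\gamma<\kappa\le\alpha$, while the collapse of ${\rm Hull}(M,V_\alpha)$ fixes $V_\alpha$, so that $M\cap V_\alpha=(M\restriction\alpha)\cap V_\alpha$; as $\overline N\subseteq V_\alpha$ this yields $\overline N\cap M=\overline N\cap(M\restriction\alpha)$. Next, writing $\pi'$ for the collapsing map of $N\restriction\alpha$, the uniqueness of transitive collapses gives $\pi=\pi'\circ(\rho\restriction N)$, hence $(\pi')^{-1}[\overline N\cap M]=\rho[P]$; and since $|P|<\kappa<|V_\alpha|$, the restriction-of-hull argument used in the proof of \cref{trans} shows that $\rho\restriction{\rm Hull}(P,V_\alpha)$ is precisely the collapsing map of ${\rm Hull}(P,V_\alpha)$, so $\rho[P]=P\restriction\alpha$. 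Thus the model underlying the right-hand meet is the same set $P\restriction\alpha$ that appears on the left.

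It then remains to check that the right-hand meet is formed at level $\alpha$ and that its truncation ordinal is exactly $\alpha$, so that no further cutting takes place. Here I would argue that both $N\restriction\alpha$ and $M\restriction\alpha$ are active at $\alpha$: from \cref{meet-trace} the points of $N\land M$ below $V_\alpha$ lie in $N\cap M$, activity is inherited upward from $N\land M$ to $N$ and to $M$ via \cref{intermediate} and the remark following \cref{def-active}, and projecting to $\alpha$ preserves the activity pattern below $\alpha$; together these give $\max(a(N\restriction\alpha)\cap a(M\restriction\alpha))=\alpha$. By \cref{projection-membership}, $N\in_\beta M$ yields $N\restriction\alpha\in_\alpha M\restriction\alpha$, so $(N\restriction\alpha)\land(M\restriction\alpha)$ is defined, and since $P\restriction\alpha$ is an $\alpha$-model the truncation ordinal it produces is $\alpha$. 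Hence $(N\restriction\alpha)\land(M\restriction\alpha)=P\restriction\alpha=(N\land M)\restriction\alpha$.

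I expect the bookkeeping of activity levels to be the main obstacle: the truncation of a meet is defined through the clause $\sup(\sup(P\cap{\rm ORD})\cap E\cap(\beta+1))$, and verifying that the corresponding ordinal on the projected side is exactly $\alpha$, rather than some smaller point of $E$, requires knowing that $N$, $M$, and therefore their projections are genuinely active at $\alpha$, together with a careful treatment of the boundary case where $\alpha$ is a successor point of $E$. Everything else reduces to a routine chase through the collapsing maps $\pi$, $\pi'$, and $\rho$.
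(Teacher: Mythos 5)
Your overall route is the same as the paper's: you factor the collapsing map of $N$ through the collapse $\rho$ of ${\rm Hull}(N,V_\alpha)$, i.e.\ $\pi=\pi'\circ(\rho\restriction N)$, observe that $\overline{N}\cap M=\overline{N}\cap(M\restriction\alpha)$, and thereby transport $P=\pi^{-1}[\overline{N}\cap M]$ to the model underlying the right-hand meet; your opening reduction of the left side to $P\restriction\alpha$ and your closing bookkeeping (that $\alpha=\max\bigl(a(N\restriction\alpha)\cap a(M\restriction\alpha)\bigr)$ and that the right-hand meet is defined, via \cref{projection-membership}) correspond exactly to the first and last lines of the paper's proof.

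There is, however, one step that is false as stated: the claim that $\rho\restriction{\rm Hull}(P,V_\alpha)$ is \emph{precisely} the collapsing map of ${\rm Hull}(P,V_\alpha)$, hence that $\rho[P]=P\restriction\alpha$ as sets. What the argument of \cref{trans} actually gives is that $\rho\restriction{\rm Hull}(P,V_\alpha)$ is an isomorphism of ${\rm Hull}(P,V_\alpha)$ onto ${\rm Hull}(\rho[P],V_\alpha)$, i.e.\ only that $P\cong_\alpha\rho[P]$. For this restriction to be the Mostowski collapse, its image ${\rm Hull}(\rho[P],V_\alpha)$ would have to be transitive, and in general it is not: typically $P$ contains an ordinal $\xi\geq\alpha$ while ${\rm Hull}(N,V_\alpha)$ contains ordinals in $[\alpha,\xi)$ that are not definable from $P\cup V_\alpha$, so after collapsing, ${\rm Hull}(\rho[P],V_\alpha)$ omits ordinals below $\rho(\xi)$. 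This is exactly why the paper's proof concludes only that $\sigma\restriction\pi^{-1}[\overline{N}\cap M]$ is an $\alpha$-\emph{isomorphism} onto $\pi'^{-1}[\overline{N}\cap(M\restriction\alpha)]$, rather than an identity. The error also propagates into your last step, where you justify that the truncation ordinal is $\alpha$ ``since $P\restriction\alpha$ is an $\alpha$-model'': that ordinal is computed from $\rho[P]$, not from $P\restriction\alpha$. Both defects are repairable with tools you already invoke: by the remark following \cref{projection-models-def}, $\alpha$-isomorphic models have the same projection to $\alpha$, so the right-hand meet equals $\rho[P]\restriction\eta'=\rho[P]\restriction\alpha=P\restriction\alpha$ once $\eta'=\alpha$ is known; and $\eta'=\alpha$ follows because $\rho$ fixes $V_\alpha$ pointwise and sends ordinals $\geq\alpha$ to ordinals $\geq\alpha$, while activity of $N\land M$ at $\alpha$ forces $\sup(P\cap{\rm ORD})\geq\alpha$, with equality possible only when $\alpha$ is a limit point of $E$, in which case $\eta'=\sup(E\cap\alpha)=\alpha$ anyway.
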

\begin{proof}
First note that if $N\land M$ is active at $\alpha$, then $\alpha \in a(N) \cap a(M)$. It follows that $\alpha$ is the maximum of 
$a(N\restriction \alpha) \cap a(M\restriction \alpha)$.
Then note that $N\land M$ depends only on $\max (a(N)\cap a(M))$, $N$, and $M\cap \overline{N}$, where $\overline{N}$ is the transitive collapse of $N$. 
Now, $\overline{N}$ is also the transitive collapse of $N\restriction \alpha$. In fact, if $\sigma$ is the $\alpha$-isomorphism between $N$ and $N\restriction \alpha$,
and $\pi$ and $\pi'$ are the collapsing maps of $N$ and $N'$ respectively, then $\pi = \pi ' \circ \sigma$. Therefore, 
$\sigma\restriction \pi^{-1}[\overline{N}\cap M]$ is an an $\alpha$-isomorphism between $\pi^{-1}[\overline{N}\cap M]$
 and $\pi'^{-1}[\overline{N}\cap M\restriction \alpha]$. It follows that $(N \land M)\restriction \alpha = N \restriction \alpha \land M \restriction \alpha$. 
\end{proof}

\begin{proposition}\label{meetactiveness}
Let $\alpha \in E$. Suppose $N\in \mathscr U$, $M\in \mathscr C$, both are active at $\alpha$ and $N\in_\alpha M$.
Let $P$ be another virtual model also active at $\alpha$. Then $P \in_\alpha N \land M$ if only if $P\in_\alpha N$ and $P\in_\alpha M$.
\end{proposition}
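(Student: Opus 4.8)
The plan is to reduce the biconditional to ordinary membership between the projections of the three models to level $\alpha$, and then exploit that the collapsing map of $N$ is an element of $M$. First I would establish the following characterization, valid for any $X\in\mathscr V_{\geq\alpha}$ and any $P$ with $\eta(P)\geq\alpha$:
\[
P\in_\alpha X \iff P\restriction\alpha \in X\restriction\alpha.
\]
For the forward direction, if $P'\in X$ is $\alpha$-isomorphic to $P$ then, since $|P'|<\kappa<|V_\alpha|$, the collapsing map $\pi_X$ of ${\rm Hull}(X,V_\alpha)$ sends $P'$ to an $\alpha$-model $\pi_X(P')\cong_\alpha P'\cong_\alpha P$ lying in $X\restriction\alpha$, exactly as in the proofs of \cref{trans} and \cref{projection-membership}; since two $\alpha$-isomorphic $\alpha$-models are equal, $\pi_X(P')=P\restriction\alpha\in X\restriction\alpha$, and the converse is obtained by applying $\pi_X^{-1}$. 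Granting this, and since $N,M,P$ are active at $\alpha$ (so all three projections are defined), I may replace them by $\hat N=N\restriction\alpha$, $\hat M=M\restriction\alpha$ and $\hat P=P\restriction\alpha$; by \cref{MPI} we have $(N\land M)\restriction\alpha=\hat N\land\hat M$, so the claim becomes the assertion that, for $\alpha$-models, $\hat P\in\hat N\land\hat M$ iff $\hat P\in\hat N$ and $\hat P\in\hat M$. Moreover $N\in_\alpha M$ yields $\hat N\in_\alpha\hat M$, i.e.\ $\hat N\in\hat M$; writing $\overline N$ for the transitive collapse of $\hat N$ and $\pi\colon\hat N\to\overline N$ for the collapsing map, both $\overline N$ and $\pi$, hence $\pi^{-1}$, belong to $\hat M$ by elementarity.

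Next I would use the explicit form of the meet. Since $\hat N\land\hat M$ is active at $\alpha$ its level is exactly $\alpha$ (the defining supremum is taken in $E\cap(\alpha+1)$ and $\alpha=\max(a(\hat N)\cap a(\hat M))$), and by definition $\hat N\land\hat M=Q\restriction\alpha$ with $Q=\pi^{-1}[\overline N\cap\hat M]\subseteq\hat N$. The crucial point is that for $q\in\hat N$ one has $\pi(q)\in\hat M$ iff $q\in\hat M$, because $\pi,\pi^{-1}\in\hat M$. For the backward direction, if $\hat P\in\hat N$ and $\hat P\in\hat M$ then $\pi(\hat P)\in\overline N$ and $\pi(\hat P)\in\hat M$, so $\pi(\hat P)\in\overline N\cap\hat M$ and therefore $\hat P=\pi^{-1}(\pi(\hat P))\in Q$; projecting to $\alpha$ by \cref{projection-membership} gives $\hat P\in_\alpha Q\restriction\alpha=\hat N\land\hat M$, i.e.\ $\hat P\in\hat N\land\hat M$. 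For the forward direction, if $\hat P\in\hat N\land\hat M$, write $\hat P$ as the image of some $q\in Q$ under the hull-collapse implicit in $Q\restriction\alpha$; as above $q\cong_\alpha\hat P$, and $q\in Q$ unpacks to $q\in\hat N$ together with $\pi(q)\in\hat M$, whence $q=\pi^{-1}(\pi(q))\in\hat M$. Thus $q$ witnesses simultaneously $\hat P\in_\alpha\hat N$ and $\hat P\in_\alpha\hat M$, that is $\hat P\in\hat N$ and $\hat P\in\hat M$.

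The main obstacle is the bookkeeping around the two collapses present in the meet, namely the collapse $\pi$ of $\hat N$ and the auxiliary hull-collapse implicit in $Q\restriction\alpha$; the device that dissolves it is precisely that $\alpha$-isomorphic $\alpha$-models coincide, so passing to $\alpha$-isomorphism classes makes the auxiliary collapse irrelevant to membership. A slightly softer alternative for the forward direction would first show $\hat N\land\hat M\in_\alpha\hat N$ from $Q\in\hat N$ and then invoke the transitivity result \cref{intermediate} with the countable model $\hat N\land\hat M$ in the middle; but extracting $\hat P\in_\alpha\hat M$ still seems to need the elementarity observation $\pi^{-1}\in\hat M$, so I would run both directions uniformly as above, recording explicitly that ``active at $\alpha$'' guarantees $\eta(\,\cdot\,)\geq\alpha$ so that every projection to level $\alpha$ used in the argument is defined.
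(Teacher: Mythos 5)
Your argument pivots on the claimed equivalence $P\in_\alpha X \iff P\restriction\alpha\in X\restriction\alpha$, and this equivalence is false; moreover it fails exactly in the case that carries all the difficulty of the proposition, namely when $X$ is the countable model and $\alpha\notin M\restriction\alpha$. Write $\hat M=M\restriction\alpha$ and $A={\rm Hull}(\hat M,V_\alpha)$, which is transitive. If some $Q\in\hat M$ were an $\alpha$-model, then (in the standard case every element of $\hat M$ has rank $<\alpha$, so this is impossible; in the nonstandard case) being an $\alpha$-model is absolute between $A$ and $V$, so $A\models Q\in\mathscr V_\alpha$, and by $\hat M\prec A$ the ordinal $\eta(Q)=\alpha$ would lie in $\hat M$. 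Hence when $\alpha\notin\hat M$ the model $\hat M$ contains \emph{no} $\alpha$-models at all, even though $P\in_\alpha M$ and $N\in_\alpha M$ can perfectly well hold: their witnesses sit in $\hat M$ at the level $\alpha^*=\min(\hat M\cap{\rm ORD}\setminus\alpha)>\alpha$. The precise error in your forward direction is the phrase ``$\pi_X$ sends $P'$ to an $\alpha$-model'': \cref{trans} and \cref{projection-membership} only yield that $\pi_X(P')$ is $\alpha$-\emph{isomorphic} to an $\alpha$-model, not that its hull with $V_\alpha$ is transitive, so the rule ``$\alpha$-isomorphic $\alpha$-models are equal'' does not apply. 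The same conflation propagates: from $N\in_\alpha M$ you may conclude $\overline N\in\hat M$ (the transitive collapse of the witness is definable in $A$), but not $\hat N\in\hat M$, and certainly not $\pi,\pi^{-1}\in\hat M$ --- if the collapsing map of $\hat N$ belonged to $\hat M$, so would its domain $\hat N$, which we just saw can fail. (The reduction is salvageable on the Magidor side, since \cref{active-Magidor} gives $\alpha\in N$, but not on the countable side, which is where it is needed.)

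What your argument actually establishes is the easy subcase $\alpha\in M$ of the paper's proof, where indeed $N,P\in M$, $N\land M=N\cap M$, and everything reduces to literal membership. The case $\alpha\notin M$ requires a genuinely different mechanism, and this is where the paper invests its effort: it works inside $A={\rm Hull}(M,V_\alpha)$, replaces $N$ and $P$ by their unique lifts $N^*,P^*\in M$ at level $\alpha^*$, notes that $M\models P^*\in_\delta N^*$ for all $\delta\in E_A\cap\alpha^*$ and that $\alpha^*$ is a limit point of $E_A$ of uncountable cofinality in $A$, and then applies \cref{incont} \emph{inside} $A$ --- using that $N^*$ is a Magidor model in the sense of $A$ and hence closed there under short sequences --- to upgrade the level-by-level relations to actual membership $\alpha^*\in N^*$ and $P^*\in N^*$; finally the $\alpha$-isomorphism $\sigma$ with $\sigma[N^*\cap M]=N\land M$ carries $P^*$ into $N\land M$. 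Without a substitute for this continuity-plus-closure step, your backward direction (and likewise your forward direction, which leans on $\pi^{-1}\in\hat M$) does not go through.
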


\begin{proof}
By \cref{MPI} we may assume that $N,M$ and $P$ are all $\alpha$-models. 
Assume first that $P \in_\alpha N\land M$. In particular this means that $N \land M$ is active at $\alpha$. 
In particular we have that $N\land M \subseteq N$, and hence $P \in_\alpha N$. 
Fix $N' \in \mathscr V^M$ which is $\alpha$-isomorphic to $N$. Let $\overline{N}$ be the transitive collapse 
of both $N$ and $N'$ and let $\pi$ and $\pi'$ be the respective collapsing maps. 
Note that $\sigma=\pi ' \circ \pi^{-1}$ is the $\alpha$-isomorphism between $N$ and $N'$. 
Then $\sigma [N \land M] = N'\cap M$.  Pick also $P'\in N \land M$ which is $\alpha$-isomorphic to $P$. 
By \cref{trans} $P'$ and $\sigma (P')$ are also $\alpha$-isomorphic. 
Since $\sigma (P')\in M$, by the transitivity of $\cong_\alpha$ we get that $P$ is $\alpha$-isomorphic to  $\sigma(P')$.
This implies that $P\in_\alpha M$.

Now assume $P\in_\alpha N$ and $P\in_\alpha M$. By \cref{active-Magidor} we know that $\alpha \in N$.
Since $P$ is an $\alpha$-model, we conclude that $P\in N$.
If also $\alpha \in M$, we have that $N, P \in M$ and $N\land M = N\cap M$. Therefore, $P\in N \land M$. 
Assume now that $\alpha \notin M$ and let $\alpha^*= \min (M \cap \lambda \setminus \alpha)$.
Let $A= {\rm Hull}(M,V_\alpha)$. Since we assumed that $M$ is an $\alpha$-model, we have that $A\in \mathscr A_\alpha$
and $\alpha \in E_A$.
By \cref{alphasharpinE}   we also have that $\alpha^*\in E_A$. Fix $P^*, N^* \in M$ that are $\alpha$-isomorphic to $P$ and $N$ respectively.
By projecting them to $\alpha^*$ if necessary, we may assume $P^*, N^*\in \mathscr V_{\alpha^*}^A$.
Moreover, $N^*$ is a Magidor model from the point of view of $A$. 
Since $P^*\in_\alpha N^*$ and $\alpha^*$ is the least ordinal in $M$ above $\alpha$ we have 
\[
M\models \forall\delta\in E_A\cap \alpha^* P^*\in_\delta N^*.
\]
Moreover, $M\models "P^* \text{ is active at } \alpha^*"$. Since $\alpha^*$ is a limit point of $E_A$,
we can apply \cref{incont} in $A$ and conclude that  $\alpha^*\in N^*$ and $P^* \in N^*$.
Hence $P^*\in N^* \cap M$. Let $\sigma$ be the $\alpha$-isomorphism between $N^*$ and $N$.
Then $\sigma [N^* \cap M]= N\land M$. Hence $\sigma (P^*)\in N \land M$ and is $\alpha$-isomorphic to $P$.
It follows that $P\in_\alpha N \land M$.
\end{proof}

One feature of the meet  is the following absorption property.

\begin{proposition}\label{doublemeet}
Suppose $N\in \mathscr U$, $M\in \mathscr C$, and the meet $N\land M$ is defined. 
Let $\alpha \in E$, and suppose $P$ is a Magidor $\alpha$-model active at $\alpha$ such that $P\in_\alpha N \land M$. 
Then $P\land M = P \land (N \land M)$. 
\end{proposition}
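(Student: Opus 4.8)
The plan is to prove the absorption property $P\land M = P\land(N\land M)$ by unwinding the definition of the meet in both cases and checking that the two constructions produce the same countable model. The key observation, which I would establish first, is that $N\land M$ is active at $\alpha$: since $P\in_\alpha N\land M$ and $P$ is active at $\alpha$, the general principle (noted in the remark following \cref{def-active}) that activeness is inherited upward along $\in_\alpha$ forces $N\land M$ to be active at $\alpha$ as well. In particular $\alpha\in a(N)\cap a(M)$, and by the argument in \cref{meet-strongly-active} we may reduce everything to $\alpha$-models using \cref{MPI}, since both meets commute with projection to $\alpha$.

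The main computation is to identify the two meets with intersections traced through $V_\alpha$. By \cref{meetactiveness}, the condition $P\in_\alpha N\land M$ is equivalent to $P\in_\alpha N$ and $P\in_\alpha M$; in particular $P\in_\alpha M$, so the meet $P\land M$ is defined. By \cref{meet-trace}, since $P\land M$ is active at $\alpha$, we have $(P\land M)\cap V_\alpha = P\cap M\cap V_\alpha$. Similarly, writing $Q=N\land M$, the meet $P\land Q$ is defined (as $P\in_\alpha Q$) and, assuming it too is active at $\alpha$, \cref{meet-trace} gives $(P\land Q)\cap V_\alpha = P\cap Q\cap V_\alpha$. Now I would apply \cref{meet-trace} to $Q=N\land M$ itself, which being active at $\alpha$ satisfies $Q\cap V_\alpha = N\cap M\cap V_\alpha$. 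Intersecting with $P$ yields $P\cap Q\cap V_\alpha = P\cap N\cap M\cap V_\alpha$, and since $P\in_\alpha N$ forces the relevant part of $P$ below $V_\alpha$ to already lie inside $N$ (here I would use that $P$ is a Magidor $\alpha$-model, so $V_{\kappa_P}\subseteq P$, together with \cref{active-Magidor} giving $\alpha\in N$), this collapses to $P\cap M\cap V_\alpha$. Thus both $P\land M$ and $P\land(N\land M)$ have the same trace $P\cap M\cap V_\alpha$ on $V_\alpha$.

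Having matched the traces on $V_\alpha$, the final step is to argue that a meet of the relevant form is determined by its trace on $V_\alpha$ together with the level $\eta$ at which it lives. Here I would check that both $P\land M$ and $P\land(N\land M)$ are countable $\eta$-models for the \emph{same} $\eta$: the defining supremum $\eta = \sup(\sup(\pi^{-1}[\overline P\cap(\,\cdot\,)]\cap\ORD)\cap E\cap(\alpha+1))$ depends only on the set of ordinals appearing in $P\cap(\,\cdot\,)$ below $\alpha$, which we have shown to coincide. Since a countable elementary submodel of $A={\rm Hull}(P,V_\alpha)$ is recovered from its intersection with $V_\alpha$ via the Hull operation (applying \cref{hull-elementary} inside the common transitive collapse $\overline P$), the equality of traces propagates to equality of the full models after projecting to $\eta$.

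The main obstacle I anticipate is the bookkeeping in the third step: one must be careful that $P\land(N\land M)$ is genuinely active at $\alpha$ before invoking \cref{meet-trace} on it, and that the two supremum computations defining $\eta$ really see the same ordinals. The subtlety is that $N\land M$ is a projected model living at a level $\eta_0\le\alpha$ that may be strictly below $\alpha$, so one must verify that passing to $P\land(N\land M)$ does not lose information below $\alpha$ — precisely the content of \cref{meet-trace} applied with the activeness hypothesis, which is why establishing activeness of the intermediate meet at the outset is essential. Once the traces and the level $\eta$ are shown to agree, the conclusion $P\land M = P\land(N\land M)$ is immediate.
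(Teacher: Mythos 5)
Your strategy has two genuine gaps, both created by routing the argument through traces of the \emph{outer} meets on $V_\alpha$. First, your applications of \cref{meet-trace} to $P\land M$ and to $P\land(N\land M)$ require those two meets to be active at $\alpha$, and this is never established. You flag it yourself as the main obstacle, but your step 1 only gives activeness of $N\land M$ at $\alpha$, which does nothing for the outer meets: activeness of a meet $P\land Q$ at $\alpha$ requires in particular $\eta(P\land Q)=\alpha$, i.e.\ that $\pi_P^{-1}[\overline{P}\cap Q]$ contain ordinals reaching up to $\alpha$ (here $\pi_P$ is the collapsing map of $P$), and nothing in the hypotheses supplies this. Second, and more seriously, the recovery step is false: a meet is \emph{not} determined by its trace on $V_\alpha$ together with its level. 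By definition $P\land Q$ is computed from the set $\overline{P}\cap Q$, and $\overline{P}=V_{\bar\gamma_P}$ contains many sets of rank in the interval $[\kappa_P,\bar\gamma_P)$ that are \emph{not} elements of $P$; the part of $\overline{P}\cap Q$ consisting of such sets (which produces the nonstandard part of the meet) is invisible to $P\cap Q\cap V_\alpha$. The appeal to \cref{hull-elementary} does not repair this, since $\overline{P}\cap M$ and $\overline{P}\cap(N\land M)$ are not hulls of their traces.

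The fix is to compare the defining sets directly, which is what the paper does and which makes the activeness of the outer meets irrelevant. From $P\in_\alpha N\land M$ you get a copy of $P$ inside $N\land M$, whose transitive collapse is again $\overline{P}$; hence $\overline{P}\in N\cap V_\kappa$, and since $N$ is a Magidor model, $N\cap V_\kappa$ is transitive, so $\overline{P}\subseteq N$. Also $\overline{P}\subseteq V_\kappa\subseteq V_\alpha$. Using \cref{meet-trace} only for $N\land M$ (which is active at $\alpha$ by your step 1),
\[
\overline{P}\cap(N\land M)\;=\;\overline{P}\cap(N\land M)\cap V_\alpha\;=\;\overline{P}\cap N\cap M\cap V_\alpha\;=\;\overline{P}\cap M .
\]
Since $P$ is an $\alpha$-model active at $\alpha$, we have $\max\bigl(a(P)\cap a(M)\bigr)=\max\bigl(a(P)\cap a(N\land M)\bigr)=\alpha$, so both $P\land M$ and $P\land(N\land M)$ are obtained by applying $\pi_P^{-1}$ and then projecting, at the same level, to the very same set. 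The equality $P\land M=P\land(N\land M)$ follows outright, with no claim about traces determining models and no activeness of the outer meets.
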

\begin{proof}
Since $P\in_\alpha N \land M$ and $P$ is active at $\alpha$, so is $N\land M$, and hence both $N$ and $M$ are active at $\alpha$ as well. 
Let $\overline{P}$ be the transitive collapse of $P$.
Then $\overline{P}\in N\cap V_\kappa$, and since $N\cap V_\kappa$ is transitive, we have $\overline{P}\subseteq N$. 
Hence $\overline{P}\cap (N \land M)= \overline{P}\cap M$. It follows that $P\land M = P \land (N \land M)$.
\end{proof}

\begin{proposition}\label{meetin} Let $\alpha \in E$. Suppose $N\in \mathscr U$, $M\in \mathscr C$ and the meet $N\land M$ is defined
and active at $\alpha$. Suppose $P\in \mathscr V$ and $N,M\in_\alpha P$. Then $N\land M\in_\alpha P$.
\end{proposition}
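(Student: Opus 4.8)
The plan is to build, directly inside $P$, an explicit witness $R$ for $N\land M\in_\alpha P$ by applying the definition of the meet to copies of $N$ and $M$ that already live in $P$.

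First I would reduce to the case in which $N$ and $M$ are $\alpha$-models. Since $\in_\alpha$ depends on its first argument only through its $\cong_\alpha$-class, and since \cref{MPI} gives $(N\land M)\restriction\alpha=(N\restriction\alpha)\land(M\restriction\alpha)$, it is enough to prove the statement for $N\restriction\alpha$ and $M\restriction\alpha$; here I use that $\mathscr U$ and $\mathscr C$ are closed under projections and that $N\restriction\alpha\cong_\alpha N$ and $M\restriction\alpha\cong_\alpha M$, so the hypotheses $N,M\in_\alpha P$ are unaffected. After this reduction $\max(a(N)\cap a(M))=\alpha$, so the meet is computed at level $\alpha$; and because $N\land M$ is active at $\alpha$ its defining ordinal $\eta$ must equal $\alpha$. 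Writing $\pi_0\colon N\to\overline N$ for the collapsing map of $N$ and $K=M\cap\overline N$, the meet is then $N\land M=\pi_0^{-1}[K]\restriction\alpha$, and since the meet is defined we have $N\in_\alpha M$ and hence $\overline N\in M$.

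Next, using $N\in_\alpha P$ and $M\in_\alpha P$, fix $N^*,M^*\in P$ with $N^*,M^*\in\mathscr V^P$, $N^*\cong_\alpha N$ and $M^*\cong_\alpha M$, and let $\sigma\colon N\to N^*$ and $\tau\colon M\to M^*$ be the corresponding $\alpha$-isomorphisms. Since $N^*\cong_\alpha N$, the two models share the same transitive collapse $\overline N$; let $\pi\colon N^*\to\overline N$ be the collapsing map and put $R=\pi^{-1}[M^*\cap\overline N]$. The first key point is that $M^*\cap\overline N=K$: indeed $\overline N\subseteq V_\kappa\subseteq V_\alpha$, so $\tau$ is the identity on $M\cap\overline N$, whence $M^*\cap\overline N=\tau[M]\cap\overline N=\tau[M\cap\overline N]=M\cap\overline N=K$ (in particular $\overline N\in M^*$, so $M^*\cap\overline N\prec\overline N$ and $R$ is a genuine elementary submodel of $N^*$). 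The second key point is that $R\in P$: the set $R=\pi^{-1}[M^*\cap\overline N]$ is absolutely definable from the parameters $N^*,M^*$, so fixing a suitable structure $A$ with $P\prec A$ (for instance $A={\rm Hull}(P,V_{\eta(P)})$) and using $N^*,M^*\in P$, elementarity yields $R\in P$ and $R\in\mathscr V^P$. Note that, crucially, neither $\alpha$ nor $V_\alpha$ enters as a parameter in the definition of $R$, which is exactly what allows $R$ to remain inside $P$.

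It remains to verify that $R\cong_\alpha N\land M$. Both $\pi\circ\sigma$ and $\pi_0$ are isomorphisms of $N$ onto the transitive set $\overline N$, so they coincide; therefore $R=\pi^{-1}[K]=\sigma[\pi_0^{-1}[K]]$. Since $\sigma$ is an $\alpha$-isomorphism, its extension to ${\rm Hull}(N,V_\alpha)$ carries the submodel $\pi_0^{-1}[K]$ to $R$ and witnesses $R\cong_\alpha\pi_0^{-1}[K]$ (as in the proof of \cref{trans}); and since $\eta=\alpha$ we have $\pi_0^{-1}[K]\cong_\alpha\pi_0^{-1}[K]\restriction\alpha=N\land M$. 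Combining the two $\alpha$-isomorphisms gives $R\cong_\alpha N\land M$. As $R\in P\cap\mathscr V^P$, this is precisely a witness for $N\land M\in_\alpha P$ in the sense of \cref{membership-alpha-def}, completing the argument. I expect the main obstacle to be this last step, namely checking that the meet commutes with the $\alpha$-isomorphisms --- the identity $M^*\cap\overline N=K$ together with $R\cong_\alpha N\land M$ --- which is the exact analogue of \cref{MPI} and rests on the transitive collapse $\overline N$ lying in the standard part $V_\alpha$.
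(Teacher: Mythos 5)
Your argument is correct up to its last step, but it breaks down exactly there, and the failure is not cosmetic. By \cref{membership-alpha-def}, a witness to $N\land M\in_\alpha P$ must be an element of $P$ that lies in $\mathscr V^P$, i.e.\ a general virtual model in the sense of the ambient structure $A={\rm Hull}(P,V_{\eta(P)})$. Your candidate $R=\pi^{-1}[M^*\cap\overline N]$ is the \emph{unprojected} preimage of $K$ under the collapse of $N^*$: it is an elementary submodel of $N^*$, but in general it is not a virtual model at any level. This is precisely why the paper's definition of the meet ends with the projection $\restriction\eta$, and why an unnumbered proposition is needed right after that definition to verify $N\land M\in\mathscr C_\eta$: the set $\pi_0^{-1}[K]$ typically contains elements of $N$ of rank above $\eta$, so ${\rm Hull}(\pi_0^{-1}[K],V_\gamma)$ fails to be transitive for every candidate level $\gamma$, and hence $\pi_0^{-1}[K]$ belongs to no $\mathscr V_\gamma$; the same applies to $R=\sigma[\pi_0^{-1}[K]]$ inside $A$. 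Elementarity of $P$ in $A$ does give $R\in P$ (your observation that neither $\alpha$ nor $V_\alpha$ is needed as a parameter is correct and genuinely useful), but it cannot give $R\in\mathscr V^P$, because ``being a virtual model'' is a substantive property that the definable operation $(N^*,M^*)\mapsto R$ simply does not produce. Everything you verify before that point --- the reduction to $\alpha$-models, the identity $M^*\cap\overline N=M\cap\overline N$ (valid since $\alpha$-isomorphisms fix $V_\alpha$ pointwise and $\overline N\subseteq V_\kappa\subseteq V_\alpha$), and $R=\sigma[\pi_0^{-1}[K]]$ --- is sound, and is the same mechanism that drives \cref{MPI}; but it does not hand you a legitimate witness.

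Repairing this means replacing $R$ by a projection computed \emph{inside} $A$, at a level of $E_A$ that is at least $\alpha$, and checking inside $A$ that the result is a virtual model; in other words, you must let $A$ form the meet of $N^*$ and $M^*$ and then invoke \cref{MPI} in $A$ to see that its projection to $\alpha$ is $N\land M$. Arranging that $A$ can do this is where the actual content of the paper's proof lies, and it is not automatic. When $\alpha\in P$ it is easy ($N,M\in P$ and $P$ computes the meet itself). When $\alpha\notin P$, the relevant level is $\alpha^*=\min(P\cap{\rm ORD}\setminus\alpha)$, and before $A$ can form the meet there one needs $N^*\in_{\alpha^*}M^*$ to hold in $A$; the paper obtains this from the minimality of $\alpha^*$ (so that, by elementarity of $P$ in $A$, there is no $\gamma\in E_A\cap\alpha^*$ with $N^*\notin_\gamma M^*$), from $\alpha^*\in E_A$ via \cref{alphasharpinE}, from the uncountable cofinality of $\alpha^*$ in $A$, and from \cref{incontcount} applied inside $A$. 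Your proof bypasses all of this machinery, which is why it looks shorter; the step you flag as the ``main obstacle'' (commutation of the meet with the isomorphisms) is in fact the easy part, while the step you dispatch with ``elementarity yields $R\in\mathscr V^P$'' is the real one.
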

\begin{proof} We may assume $M,N$ and $P$ are all $\alpha$-models. If $\alpha \in P$ then $N,M\in P$, and hence also $N\land M\in P$.
Suppose now $\alpha \notin P$.  Let $A={\rm Hull}(P,V_\alpha)$ and let  $\alpha^*=\min (P\cap {\rm ORD} \setminus \alpha)$. 
Note that $\alpha^*$ has uncountable cofinality in $A$. By \cref{alphasharpinE}  we have $\alpha^*\in E_A$. We can find $N^*,M^*\in P$ 
such that $N^*\cong_\alpha N$ and $M^*\cong_\alpha M$. We may assume that $N^*\in \mathscr U_{\alpha^*}^A$ and $M^*\in \mathscr C_{\alpha^*}^A$. 
Work for a moment in $A$. Since $N^*\in_\alpha M^*$, $\alpha^*$ is the least ordinal of $P$ above $\alpha$, and $N^*,M^*\in P$, we have
\[
A\models \forall \gamma \in E_A\cap \alpha^* N^* \in_\gamma M^*.
\]
By applying \cref{incontcount} inside $A$ we have that $N^*\! \in_{\alpha^*} \!  M^*$, and hence $A$ can compute the meet, say $Q$, of $N^*$ and $M^*$.
Then $Q\in P$, and by applying \cref{MPI} inside $A$, we get  $Q\rest \alpha = N^*\rest \alpha \land M^*\rest \alpha$. 
Hence $Q\cong_\alpha N\land M$.
\end{proof}

\begin{definition}
Let $\alpha \in E$ and let  $\mathcal M$ be a set of virtual models.
We let $\mathcal M\restriction\alpha=\{M\restriction\alpha:M\in\mathcal M\}$ and 
$\mathcal M^\alpha=\{M\restriction\alpha:~M\in \mathcal M \text{ is active at } \alpha\}$.
\end{definition}

We can now define what we mean by an  $\alpha$-chain.

\begin{definition}
Let $\alpha \in E$ and let $\mathcal M$ be a subset of $\mathscr U \cup \mathscr C$. 
We say $\mathcal M$ is an $\alpha$-{\em chain} if for all distinct  $M,N\in\mathcal M$, either $M\in_\alpha N$ or $N\in_\alpha M$,
or there is a $P\in\mathcal M$ such that either $M\in_\alpha P\in_\alpha N$  or $N\in_\alpha P\in_\alpha  M$.
\end{definition}

\begin{proposition}\label{finite-chain}
Suppose $\alpha \in E$ and $\mathcal M$ is a finite subset of  $\mathscr U \cup \mathscr C$.
Then $\mathcal M$ is an $\alpha$-chain if and only if there is an enumeration 
$\{ M_i : i <n \}$ of $\mathcal M$ such that $M_0\in_\alpha M_1\in_\alpha \cdots \in_\alpha M_{n-1}$. 
\end{proposition}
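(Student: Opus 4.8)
The plan is to prove both directions of the equivalence, with the nontrivial direction being that an $\alpha$-chain can be linearly ordered by $\in_\alpha$. The reverse direction is immediate from the definition of $\alpha$-chain, since if $M_0\in_\alpha M_1 \in_\alpha\cdots\in_\alpha M_{n-1}$ then for any two distinct elements $M_i, M_j$ with $i<j$ we can take $P$ to be any intermediate model (or directly use $M_i\in_\alpha M_j$ when they are adjacent), so the chain condition holds. The work is entirely in the forward direction.

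For the forward direction I would proceed by induction on $n=|\mathcal M|$. The key tool is \cref{intermediate}, which gives transitivity of $\in_\alpha$ whenever the ``middle'' model is countable or the ``outer'' model is Magidor. First I would argue that from the $\alpha$-chain hypothesis one can extract a genuine linear order. The natural approach is to show that the relation ``$M\in_\alpha N$ or $M=N$'' can be refined into a linear order on $\mathcal M$. Concretely, I would first establish that on a finite $\alpha$-chain the relation $\in_\alpha$ behaves transitively: given $M\in_\alpha N$ and $N\in_\alpha P$ with $M,N,P\in\mathcal M$, I want to conclude $M\in_\alpha P$. This is exactly where \cref{intermediate} applies, but it requires that either the middle model $N$ is countable or the outer model $P$ is Magidor. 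Since our models all come from $\mathscr U\cup\mathscr C$, every model is either countable (in $\mathscr C$) or Magidor (in $\mathscr U$), so in the problematic case ($N$ Magidor and $P$ countable) I must argue separately.

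The main obstacle is precisely this mixed-type transitivity: when $N$ is a Magidor model and $P$ is countable, \cref{intermediate} does not directly apply. I would handle this by observing that if $N\in_\alpha P$ with $N$ Magidor and $P$ countable, this is a strong constraint, and in fact one should rule out certain configurations or use the structure of the chain definition (which allows an intermediate witness $P'\in\mathcal M$) to reroute the argument. The cleanest strategy is to define a tournament-style relation and show it is in fact a strict linear order: I would check irreflexivity (no $M\in_\alpha M$, since $M\cong_\alpha M$ would force $M\in M$, impossible by foundation), then check that for distinct $M,N$ exactly one of $M\in_\alpha N$, $N\in_\alpha M$ holds (totality follows from the chain condition after establishing transitivity, antisymmetry from irreflexivity plus transitivity), and finally that transitivity holds by a careful case analysis on the types of the three models, invoking \cref{intermediate} in each case and using the witness $P\in\mathcal M$ from the chain definition to bridge the one case the lemma does not cover.

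Once the strict linear order on $\mathcal M$ is established, the enumeration $\{M_i : i<n\}$ is obtained simply by listing the elements in increasing order, and adjacency in this order gives $M_i\in_\alpha M_{i+1}$ directly, completing the proof. I expect the bookkeeping of the case analysis for transitivity to be the delicate part, but each case reduces to an application of \cref{intermediate} together with the definition of $\alpha$-chain.
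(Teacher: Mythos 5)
Both directions of your proposal contain genuine gaps, and the forward one is fatal to your strategy. You aim to show that $\in_\alpha$ restricted to a finite $\alpha$-chain is a strict linear order (irreflexive, transitive, total) and then enumerate in increasing order. But $\in_\alpha$ is genuinely \emph{not} transitive on $\alpha$-chains: the mixed case you flag ($N$ Magidor in the middle, outer model countable) is not merely a case where \cref{intermediate} fails to apply; it is a case where transitivity actually fails, and the witness clause in the definition of $\alpha$-chain exists precisely to accommodate it. The paper's own \cref{gapmeet} exhibits this structurally: in a condition containing a Magidor model $N$ and a countable model $M$ with $N\in_\delta M$, the models $P$ lying in the interval $[N\land M, N)$ of the chain satisfy $P\in_\delta N\in_\delta M$ but $P\notin_\delta M$. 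For such a pair $P,M$ neither $P\in_\delta M$ nor $M\in_\delta P$ holds (the latter would produce a cycle, contradicting that $Q\in_\alpha R$ implies $\kappa_Q<\kappa_R$), so totality fails as well. There is no way to ``reroute'' around this: the statement you are trying to prove is false, whereas the proposition only asserts that \emph{consecutive} models of the enumeration are $\in_\alpha$-related, not arbitrary pairs. The paper's proof sidesteps all of this by ordering $\mathcal M$ by $\kappa_M=\sup(M\cap\kappa)$: since $M\in_\alpha N$ implies $\kappa_M<\kappa_N$, the chain condition (in any of its four cases) forces this ordering to be total; and for $<$-consecutive $M_i, M_{i+1}$, any witness $P$ with $M_i\in_\alpha P\in_\alpha M_{i+1}$ would satisfy $\kappa_{M_i}<\kappa_P<\kappa_{M_{i+1}}$, impossible by consecutiveness, so the direct relation $M_i\in_\alpha M_{i+1}$ must hold.

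Your reverse direction is also not ``immediate.'' Given $M_0\in_\alpha\cdots\in_\alpha M_{n-1}$ and $j>i+2$, taking $P$ to be ``any intermediate model'' $M_k$ requires \emph{both} $M_i\in_\alpha M_k$ and $M_k\in_\alpha M_j$, and neither is given by adjacency alone; precisely because $\in_\alpha$ is not transitive, this needs an argument. The paper does the work with \cref{intermediate}: if $M_j$ is a Magidor model, or if no Magidor model lies strictly between $M_i$ and $M_j$, then repeated application of \cref{intermediate} (outer model Magidor, respectively middle models countable) yields $M_i\in_\alpha M_j$ outright; otherwise, letting $M_k$ be the $\in_\alpha$-largest Magidor model below $M_j$, the same two observations give $M_i\in_\alpha M_k$ and $M_k\in_\alpha M_j$, so $M_k$ serves as the single required witness.
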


\begin{proof} Suppose first $\mathcal M$ is an $\alpha$-chain. Define the relation $<$ on $\mathcal M$ by letting $M < N$ iff $\kappa_M < \kappa_N$. 
It is straightforward to see that $<$ is a total ordering on $\mathcal M$. We can then let $\{ M_i: i <n \}$ be the $<$-increasing enumeration of $\mathcal M$. 
Conversely, suppose $\mathcal M = \{ M_i : i < n \}$ is the enumeration such that $M_0\in_\alpha M_1\in_\alpha \cdots \in_\alpha M_{n-1}$. 
Let $i<j <n$. If $j= i+1$ then $M_i\in_\alpha M_j$. Suppose $j > i+1$. If $M_j$ is a Magidor model or if there are no Magidor models between $M_i$ and $M_j$
by \cref{intermediate} we conclude that $M_i\in_\alpha M_j$. Otherwise let $k < j$ be the largest such that $M_k$ is a Magidor model. 
Then again by \cref{intermediate}, we conclude that $M_i\in_\alpha M_k\in_\alpha M_j$. 
\end{proof}

Let $\alpha \in E$ and let $\mathcal M$ be an $\alpha$-chain. Let $\in_\alpha^*$ be the transitive closure of $\in_\alpha$. 
Then $\in_\alpha^*$ is a total ordering on $\mathcal M$.
For $M,N\in\mathcal M$, we say $M$ is $\alpha$-{\em below} $N$ in $\mathcal M$, or equivalently $N$ is $\alpha$-{\em above}  $M$ in $\mathcal M$, if 
$M\in_\alpha^* N$ in $\mathcal M$. Now using the transitivity of $\in_\alpha^*$ we can form intervals in $\mathcal M$. 
Let 
\[
(M,N)^\alpha_{\mathcal M}=\{ P\in{\mathcal M} : M\in_\alpha^* P\in_\alpha^* N\}.
\]
Similarly we can define $[M,N]^\alpha_{\mathcal M}$, $[M,N)^\alpha_\mathcal M$, etc. 
For convenience we also allow that the endpoints of the intervals to be $\emptyset$ or $V_\lambda$; let $(\emptyset, N)^\alpha_{\mathcal M}$ 
be $\{P\in\mathcal M^\alpha :P\in_\alpha^*N\}$  in the first case, and let $(N,V_\lambda)_{\mathcal M}^\alpha$ be $\{P\in\mathcal M :~N\in^*_\alpha P\}$ in the second.

\section{Main Forcing}

We  fix an inaccessible cardinal $\kappa$ and a cardinal $\lambda > \kappa$ with 
$\cof(\lambda)\geq \kappa$ such that $(V_\lambda,\in,\kappa)$ is suitable. 
We start by defining the forcing notions $\mathbb M^\kappa_\alpha$, for all $\alpha \in E\cup \{ \lambda\}$. 

\begin{definition}\label{MF}
Suppose $\alpha\in E$. We say that $p=\M_p$ belongs to $\mathbb M^\kappa_\alpha$ if:
\begin{enumerate}
\item $\M_p$ is a finite subset of $\mathscr C_{\leq \alpha}\cup \mathscr U^\kappa_{\leq \alpha}$ that is closed under meets, 
\item $\M_p^\delta$ is a $\delta$-chain, for all $\delta \in  E\cap (\alpha +1)$.
\end{enumerate}
We let $\M_q\leq \M_p$ if for all $M\in \M_p$ there is $N\in \M_q$ such that  $N \! \restriction \! \eta(M)=M$.
Finally, let $\mathbb M^{\kappa}_\lambda=\bigcup \{ \mathbb M^{\kappa}_\alpha: \alpha \in E\}$ with the same ordering.
\end{definition}

\begin{remark} Conditions (1) and (2) can be merged to a single condition. Let us say that a $\delta$-chain $\M$ consisting of models active at $\delta$
is {\em closed under meets} if for every $M,N\in \M$, if the meet $M\land N$ is defined and active at $\delta$ then $M\land N\in \M$. 
Thus we can simply say that $\M_p^\delta$ is a $\delta$-chain closed under meets, for all $\delta \in E\cap (\alpha +1)$. 
The order is natural since if $N\rest \eta(M)= M$, then $N$ carries all the information that $M$ does. 
\end{remark}

If $\kappa < \lambda$ are supercompact cardinals then $\mathbb M^\kappa_\lambda$ forces the principles ${\rm GM}^+(\omega_3,\omega_1)$.
Let us explain what happens. Suppose $G$ is generic over $\mathbb M^\kappa_\lambda$. 
Then $\omega_1$ is preserved, but $\kappa$ becomes $\omega_2$ and $\lambda$ becomes $\omega_3$ in $V[G]$.
Let  $\mathcal M_G= \bigcup G$, and let $G_\alpha = G\cap \mathbb M^\kappa_\alpha$, for $\alpha \in E$. 
One can show that for every  $\alpha \in E$ and $\beta > \alpha$, $V_{\beta}[G_\alpha]$ is a strong $\omega_1$-guessing model in $V[G]$. 
To see this  fix some $\delta \in E \setminus \beta$ with ${\rm cof}(\delta)<\kappa$. 
One shows that if $M$ is a Magidor model in $\M_G^\delta$ then $M[G_\alpha]$ is an $\omega_1$-guessing model in $V[G]$. 
Moreover, if $M$ is a Magidor model which is a limit of Magidor models in the $\delta$-chain $\M_G^\delta$ then 
$M\cap V_\delta$ is covered by the union of the previous models in $\M_G^\delta$.
Therefore, if we let $\mathcal G = \{ (M\cap V_\beta)[G_\alpha]: M \in \M_G^\delta \cap \mathcal U^\kappa_\delta \}$,
then $\mathcal G$ is an increasing sequence of $\omega_1$-guessing models which is continuous at uncountable limits 
and the union of this sequence is $V_\beta[G_\alpha]$. 
We will actually present a proof not for the forcing $\mathbb M^\kappa_\lambda$, but for a slight variation 
$\mathbb P^\kappa_\lambda$. We would like to arrange that in addition the set 
$\{ \sup (M\cap \kappa) : M\in \M_G^\delta\}$ be a club in $\kappa$, for all $\delta \in E$ with ${\rm cof}(\delta)<\kappa$. 
In order to achieve this we will add {\em decorations} to the conditions of $\mathbb M^\kappa_\lambda$. 
This device,  introduced by Neeman  \cite{NE2014}, consists of attaching to each model $M$ of an $\in$-chain a finite set $d_p(M)$
which belongs to all models $N$ of the chain such that $M\in N$. In a stronger condition this finite set is allowed to increase.
The main point is that $d_p(M)$ controls what models can be added $\in$-above $M$ in stronger conditions.
In our situation there are some  complications. First, we have not one chain, but a $\delta$-chain, for each $\delta \in E$. 
It is therefore reasonable to have decorations for each level $\delta \in E$. Now, models from a higher level project to lower levels
at which they are active, but also in order to arrange strong properness for countable models, some models from lower levels
will be {\em lifted} to higher levels and put on the chain. This imposes a subtle interplay between the decorations on different levels. 
In order to describe this precisely, we need to make some preliminary definitions. 

\begin{definition}\label{L(p)} Suppose $\M_p\in \mathbb M^\kappa_\lambda$. 
Let $\mathcal L(\M_p)= \{ M\restriction \alpha: M\in \M_p \text{ and } \alpha \in a(M)\}$. 
\end{definition}

\begin{definition}\label{free} Suppose $\M_p\in \mathbb M^\kappa_\lambda$. We say that $M\in \mathcal L(\M_p)$ is $\M_p$-{\em free}
if  every $N\in \M_p$ with $M\in_{\eta(M)}N$  is strongly active at $\eta(M)$. 
Let $\mathcal F(\M_p)$ denote the set of all $M\in \mathcal L(\M_p)$ that are $\M_p$-free. 
\end{definition}
 
Note that if $\M_q\leq \M_p$ then $\mathcal L(\M_p)\subseteq \mathcal L(\M_q)$ and $\mathcal F (\M_q)\cap \mathcal L(\M_p)\subseteq \mathcal F(\M_p)$. 
In other words, a node $M\in \mathcal L(\M_p)$ that is not  $\M_p$-free is not $\M_q$-free, for any  $\mathcal M_q\leq \mathcal M_p$.
We are now ready to define our main forcing notion. 
 
\begin{definition}\label{PF} Suppose $\alpha\in E\cup \{ \lambda\}$. We say that a pair $p=(\M_p,d_p)$ belongs to $\mathbb P^\kappa_\alpha$ if 
 $\M_p\in \mathbb M^\kappa_\alpha$, $d_p$ is a finite partial function from $\mathcal F(\M_p)$ to ${\mathcal P}_{\omega}(V_\kappa)$, and 
\medskip
\begin{itemize}
\item[$(*)$]  \hspace{5mm} if $M \in \dom (d_p)$, $N\in \M_p$, and $M\in_{\eta(M)} N$, then $d_p(M)\in N$. 
\end{itemize}
\medskip
We say that $q\leq p$ if $\M_q \leq \M_p$, and for every $M\in \dom(d_p)$ there is $\gamma \in E\cap (\eta(M)+1)$
such that $M\restriction \gamma \in \dom(d_q)$ and $d_p(M)\subseteq d_q(M\restriction \gamma)$. 
\end{definition}

\begin{remark} We refer to $d_p$ as the decoration of $p$. The point is that if $M\in \dom (d_p)$ is a $\delta$-model then 
$d_p(M)$ constraints what models $N$ with $M\in_\delta N$ can be put on  $\M_q^\delta$, for any $q\leq p$.
In general, $M$ may not be $\M_q$-free, in which case $M\notin \dom(d_q)$, but then we have some $\gamma \leq \delta$ 
 such that $M\restriction \gamma$ is $\M_q$-free and $d_p(M)\subseteq d_q(M\restriction \gamma)$. 
 Note that then we must have $d_p(M)\in N$, for any $N\in \M_q$ such that $M\in_\delta N$.

The ordering on $\mathbb P^\kappa_\lambda$ is clearly transitive. We will say that $q$ is {\em stronger} than $p$
if $q$ forces that $p$ belongs to the generic filter, in order words, any $r\leq q$ is compatible with $p$.
We write $p\sim q$ if each of $p$ and $q$ is stronger than the other. We identify equivalent conditions, often without saying it. 
Our forcing does not have meets, but if $p$ and $q$ do have a weakest lower bound we will denote it by $p\land q$. 
To be precise we should refer to $p\land q$ as the $\sim$-equivalence class of a weakest lower bound, 
but we ignore this point since it should not cause any confusion. 
Note that if $p\in \mathbb P^\kappa_\alpha$ and $M\in \M_p$ is a $\delta$-model that is not active at $\delta$,
we may replace $M$ by $M\restriction \alpha(M)$ and we get an equivalent condition. 
Thus, if $\alpha \in E$ and $\cof(\alpha)\geq \kappa$, then $\mathbb P^\kappa_\alpha$ is forcing equivalent to 
$\bigcup \{ \mathbb P^\kappa_\gamma : \gamma \in E\cap \alpha\}$. 
\end{remark}

\begin{convention} Suppose $p\in \mathbb P^\kappa_\lambda$ and $\delta \in E$. If $M,N\in \M_p^\delta$ with $M\in_\delta^* N$,
we will write $(M,N)_p^\delta$ for the interval $(M,N)_{\M_p}^\delta$, and similarly, for $[M,N)_p^\delta$, $(M,N]_p^\delta$, etc. 
\end{convention}

Suppose $\alpha, \beta \in E$ and  $\alpha \leq \beta$. For  every $p\in\mathbb P^{\kappa}_\beta$, 
we let $\M_{p\restriction \alpha} = \M_p\rest \alpha$ and $d_{p\restriction \alpha} =d_p\rest \mathcal F(\M_p\rest \alpha)$. 
It is easily seen that $p= (\M_{p\restriction \alpha},d_{p \restriction \alpha})\in \mathbb P^{\kappa}_\alpha$. 
The following  is straightforward.

\begin{lemma}\label{CS}
Suppose $\alpha,\beta \in E$ with $\alpha \leq \beta$. Let $p\in\mathbb P^{\kappa}_\beta$ 
and let $q\in\mathbb P^{\kappa}_\alpha$ be such that $q \leq p\restriction \alpha$.
Then there exists $r\in\mathbb P^{\kappa}_\beta$ such that $r\leq p,q$.
\end{lemma}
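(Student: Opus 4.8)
The plan is to take $r=(\M_r,d_r)$ with $\M_r=\M_p\cup\M_q$ and an explicitly prescribed decoration $d_r$, and then to check that $r\in\mathbb P^\kappa_\beta$ and $r\leq p,q$. The engine of the whole argument is the observation that, because $q\leq p\restriction\alpha$, the models of $q$ already capture those of $p$ at every level below $\alpha$: for each $\delta\in E\cap(\alpha+1)$ one has $\M_p^\delta\subseteq\M_q^\delta$. Indeed, given $M\in\M_p$ active at $\delta$, the model $M\restriction\alpha$ lies in $\M_p\restriction\alpha$, so $q\leq p\restriction\alpha$ furnishes $N\in\M_q$ with $N\restriction\eta(M\restriction\alpha)=M\restriction\alpha$; projecting further down by \cref{transitivity-projections} gives $N\restriction\delta=M\restriction\delta$, and since activeness at $\delta$ is reflected by the projection, $N$ is active at $\delta$ and $M\restriction\delta=N\restriction\delta\in\M_q^\delta$.

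Next I would verify $\M_r\in\mathbb M^\kappa_\beta$. For the chain condition, note that for $\delta>\alpha$ no model of $\M_q$ is active at $\delta$ (all of them have $\eta\leq\alpha$), so $\M_r^\delta=\M_p^\delta$, while for $\delta\leq\alpha$ the fact above gives $\M_r^\delta=\M_q^\delta$; in either case $\M_r^\delta$ is a $\delta$-chain because $p$, respectively $q$, is a condition. Closure under meets is handled the same way: meets within $\M_p$ or within $\M_q$ stay there, and for a mixed meet $N\land M$ the level $\gamma=\max(a(N)\cap a(M))$ satisfies $\gamma\leq\alpha$, since one of the two factors lies in $\M_q$ and therefore contributes activeness levels $\leq\alpha$. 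Replacing the $\M_p$-factor by the $\M_q$-model agreeing with it below $\alpha$ leaves the meet unchanged — this is where \cref{MPI}, the commutation of meet with projection, is used — so the meet coincides with a meet of two $\M_q$-models and hence lies in $\M_q$. In the merged formulation of \cref{MF} this says exactly that each $\M_r^\delta$ is a $\delta$-chain closed under meets, because it equals $\M_q^\delta$ or $\M_p^\delta$.

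For the decoration I would set $d_r=d_q\cup\{(M,d_p(M)):M\in\dom(d_p),\ \eta(M)>\alpha\}$, the two pieces having disjoint domains by the level split at $\alpha$. The key point is that freeness is preserved in the passage to $\M_r$: a node $M\in\F(\M_q)$ stays $\M_r$-free, since any would-be witness $N\in\M_p$ to non-freeness (with $M\in_{\eta(M)}N$, $N$ not strongly active at $\eta(M)$, where $\eta(M)\leq\alpha$) can be pulled back to its $\M_q$-representative $N^*$ with $N^*\restriction\eta(M)=N\restriction\eta(M)$, and since strong activeness below $\alpha$ is preserved under projection, $N^*$ would already witness $M\notin\F(\M_q)$; and a high node $M$ of $\dom(d_p)$ with $\eta(M)>\alpha$ stays free because $M\in_{\eta(M)}N$ forces $\eta(N)\geq\eta(M)>\alpha$, so such an $N$ lies in $\M_p$. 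The same dichotomy yields property $(*)$ for $d_r$: for a low node, the required $d_r(M)=d_q(M)\in N$ follows from $(*)$ for $q$ together with the fact that $d_q(M)\in V_\kappa$ is fixed by the $\alpha$-isomorphism identifying $N$ with its $\M_q$-representative (it is the identity on $V_\alpha\supseteq V_\kappa$), and for a high node it is immediate from $(*)$ for $p$.

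Finally, $\M_r\supseteq\M_p,\M_q$ gives $\M_r\leq\M_p$ and $\M_r\leq\M_q$ at once, each witnessed by the models themselves. The decoration clause of $r\leq q$ holds by construction since $d_r\restriction\dom(d_q)=d_q$; the clause of $r\leq p$ splits by the level of $M\in\dom(d_p)$: high nodes lie in $\dom(d_r)$ with $d_r(M)=d_p(M)$, while for a low node $M$ one checks $M\in\dom(d_{p\restriction\alpha})$ and then invokes $q\leq p\restriction\alpha$ to obtain $\gamma\in E\cap(\eta(M)+1)$ with $M\restriction\gamma\in\dom(d_q)\subseteq\dom(d_r)$ and $d_p(M)\subseteq d_q(M\restriction\gamma)=d_r(M\restriction\gamma)$. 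I expect the main obstacle to be the decoration bookkeeping rather than the models: one must show simultaneously that freeness is not destroyed when the high part of $\M_p$ is glued onto $\M_q$ and that every decorated low node of $p$ is correctly absorbed through the relation $q\leq p\restriction\alpha$, all while keeping $(*)$ intact. The meet-closure check for the mixed pairs, which rests on \cref{MPI}, is the other point requiring genuine care.
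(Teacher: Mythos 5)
Your proposal is correct and follows exactly the paper's own construction: $\M_r=\M_p\cup\M_q$ with $d_r$ given by $d_q$ on its domain and by $d_p$ on the nodes of level above $\alpha$; the paper simply declares the verification "straightforward," whereas you carry it out via the level dichotomy ($\M_r^\delta=\M_q^\delta$ for $\delta\le\alpha$ because $q\leq p\restriction\alpha$ forces $\M_p^\delta\subseteq\M_q^\delta$ there, and $\M_r^\delta=\M_p^\delta$ for $\delta>\alpha$). So this is the same proof as in the paper, with the omitted bookkeeping for chains, meets, freeness, and property $(*)$ filled in.
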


\begin{proof} We let $\M_r = \M_p \cup \M_q$. Note that $\M_r$ is closed under meets. 
 We define $d_r$  by letting  $d_r(M)=d_q(M)$ if $M\in \dom(d_q)$, and $d_r(M)= d_p(M)$ if $M\in \dom(d_p)$ with $\eta(M) > \alpha$. 
It is straightforward that $r$ is as required. 
\end{proof}

\begin{remark} The condition $r$ from the previous lemma is the greatest lower bound of $p$ and $q$, so we will write $r = p \land q$. 
\end{remark}

\begin{corollary}\label{alpha-complete-suborder} 
Suppose $\alpha \in E$. Then $\mathbb P^\kappa_\alpha$ is a complete suborder of $\mathbb P^\kappa_\lambda$. 
\end{corollary}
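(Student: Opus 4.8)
The plan is to show that the restriction map $p\mapsto p\restriction\alpha$ is a projection of $\mathbb P^\kappa_\lambda$ onto $\mathbb P^\kappa_\alpha$. To conclude completeness it suffices to verify three things: that $\mathbb P^\kappa_\alpha$ is a suborder carrying the inherited ordering, that restriction is order preserving and is the identity on $\mathbb P^\kappa_\alpha$, and that $p\restriction\alpha$ is a reduction of $p$ for every $p\in\mathbb P^\kappa_\lambda$. The first point is immediate, since the relation $q\le p$ of \cref{PF} is defined solely through $\M_q\le\M_p$ and the decoration clause, with no reference to the level $\alpha$ versus $\lambda$. For the identity claim, if $p\in\mathbb P^\kappa_\alpha$ then every $M\in\M_p$ satisfies $\eta(M)\le\alpha$, so $M\restriction\alpha=M$ by the remark after \cref{projection-models-def}, whence $\M_p\restriction\alpha=\M_p$; and as $\dom(d_p)\subseteq\mathcal F(\M_p)$ we get $d_{p\restriction\alpha}=d_p$, i.e. $p\restriction\alpha=p$.

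The reduction property is precisely \cref{CS}. Given $p\in\mathbb P^\kappa_\lambda$, fix $\beta\in E$ with $p\in\mathbb P^\kappa_\beta$; we may assume $\beta\ge\alpha$, as otherwise $p\in\mathbb P^\kappa_\alpha$ already. Then $p\restriction\alpha\in\mathbb P^\kappa_\alpha$, and for any $q\in\mathbb P^\kappa_\alpha$ with $q\le p\restriction\alpha$, \cref{CS} yields $r\in\mathbb P^\kappa_\beta$ with $r\le p,q$, so $q$ is compatible with $p$ in $\mathbb P^\kappa_\lambda$. Thus $p\restriction\alpha$ reduces $p$.

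The remaining and principal point is that restriction is order preserving: if $r\le p$ in $\mathbb P^\kappa_\beta$ then $r\restriction\alpha\le p\restriction\alpha$ in $\mathbb P^\kappa_\alpha$. For the side conditions, given $M\in\M_p$ I would choose $N\in\M_r$ with $N\restriction\eta(M)=M$; a short case split on whether $\eta(M)\ge\alpha$, together with \cref{transitivity-projections}, shows that $N\restriction\alpha\in\M_r\restriction\alpha$ witnesses the requirement for $M\restriction\alpha$, giving $\M_r\restriction\alpha\le\M_p\restriction\alpha$. The decoration clause is the delicate part and is where I expect the main obstacle. Take $M\in\dom(d_{p\restriction\alpha})$, so $\eta(M)\le\alpha$ and $M\in\dom(d_p)$, and let $\gamma\in E\cap(\eta(M)+1)$ be the witness furnished by $r\le p$, with $M\restriction\gamma\in\dom(d_r)$ and $d_p(M)\subseteq d_r(M\restriction\gamma)$. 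Since $\gamma\le\alpha$ the projection to $\alpha$ leaves $M\restriction\gamma$ unchanged, and the crux is to verify $M\restriction\gamma\in\mathcal F(\M_r\restriction\alpha)$, so that $M\restriction\gamma\in\dom(d_{r\restriction\alpha})$. This rests on the fact that for $\gamma\le\alpha$ both the relation $M\restriction\gamma\in_\gamma N'$ and the strong activity of a model $N'$ at $\gamma$ depend only on the $\gamma$-projections, and the transitive collapse defining $(\,\cdot\,)\restriction\alpha$ is the identity on ordinals below $\alpha$; hence the family of models lying $\gamma$-above $M\restriction\gamma$, and their strong activity, are the same whether computed in $\M_r$ or in $\M_r\restriction\alpha$. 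Granting this, $d_{r\restriction\alpha}(M\restriction\gamma)=d_r(M\restriction\gamma)\supseteq d_p(M)=d_{p\restriction\alpha}(M)$ completes the check.

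Finally, these facts combine in the standard way. If $p_1,p_2\in\mathbb P^\kappa_\alpha$ admit a common lower bound $r$ in $\mathbb P^\kappa_\lambda$, then $r\restriction\alpha\le p_1\restriction\alpha=p_1$ and $r\restriction\alpha\le p_2$ by order preservation, so $p_1$ and $p_2$ are compatible in $\mathbb P^\kappa_\alpha$; hence incompatibility is preserved. If $A$ is a maximal antichain of $\mathbb P^\kappa_\alpha$ and $p\in\mathbb P^\kappa_\lambda$, then some $a\in A$ is compatible in $\mathbb P^\kappa_\alpha$ with the reduction $p\restriction\alpha$, say via $q\le a,p\restriction\alpha$; since $q\le p\restriction\alpha$, the reduction property makes $q$, and hence $a$, compatible with $p$. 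So $A$ remains maximal in $\mathbb P^\kappa_\lambda$, and therefore $\mathbb P^\kappa_\alpha$ is a complete suborder of $\mathbb P^\kappa_\lambda$.
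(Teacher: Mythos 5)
Your proposal is correct and takes essentially the same approach as the paper: the paper derives this corollary immediately from the restriction map $p\mapsto p\restriction\alpha$ (defined just before \cref{CS}) together with \cref{CS}, which is precisely your reduction property, treating the remaining order-theoretic checks as straightforward. Your write-up simply makes explicit the verifications the paper leaves implicit (that restriction is the identity on $\mathbb P^\kappa_\alpha$, that it is order preserving, and the resulting preservation of incompatibility and of maximal antichains), and those verifications are carried out correctly.
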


\begin{proof}[\nopunct] 
 \end{proof}

Our goal is to prove that our poset $\mathbb P^\kappa_\lambda$ is strongly proper for an appropriate class of models. 
We start by showing that if a condition $p$ belongs to a model $M$ we can always add $M$ to $\M_p$ and form a new condition. 

\begin{lemma}\label{topM} Let $p\in \mathbb P^{\kappa}_\lambda$ and $M\in \mathscr C \cup \mathscr U$ be such that $p\in M$.
Then there is a weakest condition $p^M\leq p$ with $M\in \M_{p^M}$.
\end{lemma}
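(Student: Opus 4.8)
The plan is to build $p^M$ by placing $M$ on top of the chains in $\M_p$ and then closing under meets. Since $p\in M$, both $\M_p$ and $d_p$ belong to $M$, and as $\M_p$ is finite we get $N\in M$ for every $N\in\M_p$; consequently $N\in_\delta M$ for every $\delta\in E$ at which $M$ is active, so $M$ is $\delta$-above all of $\M_p$. I would therefore set
\[
\M_{p^M} = \M_p \cup \{M\} \cup \{\, N \land M : N \in \M_p \text{ of the opposite type to } M,\ N \land M \text{ defined and active where needed}\,\},
\]
where by \emph{opposite type} I mean a Magidor $N$ if $M\in\mathscr C$ and a countable $N$ if $M\in\mathscr U$. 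The key point that this set is already closed under meets is the absorption property (\cref{doublemeet}): any further meet $P\land(N\land M)$ with $P\in\M_p$ either reduces to the meet of $P$ with $M$ (already listed) or is undefined because both arguments have the same type. Hence $\M_{p^M}$ is finite.

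Next I would verify $\M_{p^M}\in\mathbb M^\kappa_\lambda$, i.e. that $\M_{p^M}^\delta$ is a $\delta$-chain closed under meets for every $\delta\in E$. Closure is built in, so the work is the chain condition. The old models of $\M_p$ already form a $\delta$-chain, $M\restriction\delta$ sits $\delta$-above all of them wherever $M$ is active, and each added meet $N\land M$ is placed correctly by \cref{meetactiveness}: for any $P$ active at $\delta$ one has $P\in_\delta N\land M$ iff $P\in_\delta N$ and $P\in_\delta M$, which is exactly the lattice behaviour needed to keep $\M_{p^M}^\delta$ linearly $\in_\delta^*$-ordered. At limit levels $\delta$ I would invoke the continuity results (\cref{isocont,incont}) to pass activity and the $\in_\delta$-relations to the limit, and compatibility of the whole picture with projection is \cref{MPI}.

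For the decoration I distinguish two cases. If $M\in\mathscr U$ is Magidor, then $M$ is strongly active at every $\alpha\in a(M)$ (for Magidor models active equals strongly active), so adding $M$ destroys no freeness and I simply take $d_{p^M}=d_p$. Condition $(*)$ for a pair $(M',M)$ holds because $d_p(M')\in M$ (as $d_p\in M$), and for a pair $(M',N\land M)$ it holds because $d_p(M')\in N\cap M$ together with \cref{meet-trace} yields $d_p(M')\in N\land M$, using $d_p(M')\in V_\kappa$ and the trace identity $(N\land M)\cap V_\alpha=N\cap M\cap V_\alpha$. If instead $M\in\mathscr C$ is countable, adding it may cause some $M'\in\dom(d_p)$ to lose freeness, precisely when $M'\in_{\eta(M')}M$ while $M$ is not strongly active at $\eta(M')$. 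For each such $M'$ I would project it to the largest $\gamma\in E\cap(\eta(M')+1)$ at which $M'\restriction\gamma$ is again $\M_{p^M}$-free and set $d_{p^M}(M'\restriction\gamma)=d_p(M')$, leaving the still-free models untouched; this is exactly the bookkeeping the ordering of $\mathbb P^\kappa_\lambda$ anticipates.

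Finally, $p^M\leq p$ is immediate: every model of $\M_p$ reappears so $\M_{p^M}\leq\M_p$, and the decoration clause holds by the projection recipe above, while $M\in\M_{p^M}$ by construction. For \emph{weakest}, let $q\leq p$ with $M\in\M_q$; then $\M_q$ refines $\M_p$, contains $M$, and is closed under meets, so it contains refinements of each $N\land M$, whence $\M_q\leq\M_{p^M}$, and the decoration part of $q\leq p^M$ follows from $q\leq p$ together with the fact that $d_{p^M}$ differs from $d_p$ only by the forced projections. Thus $q\leq p^M$. I expect the main obstacle to be the countable case of the decoration: checking that the projection level $\gamma$ always exists, that $(*)$ survives the projection, and that the projected decoration genuinely realizes $d_p$ in the sense demanded by the ordering. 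This is where the interplay between freeness, activity, and the meet-trace identity (\cref{meet-trace,meetactiveness}) has to be handled with care.
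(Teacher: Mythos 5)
Your construction of $\M_{p^M}$ --- closing $\M_p\cup\{M\}$ under meets --- is exactly the paper's, and your Magidor case is correct and agrees with the paper (in fact, when $M\in\mathscr U$ no meets can arise at all: a meet is only defined when the Magidor model is $\in_\alpha$-inside the countable one, and since every model of $\M_p$ lies in $M$ this direction is impossible; so your clause about pairs $(M',N\land M)$ in that case is vacuous). The genuine gaps are both in the countable case. First, the chain verification is incomplete: \cref{meetactiveness} only characterizes which models lie $\in_\delta$-\emph{below} a meet $N\land M$; it does not tell you where $N\land M$ sits relative to $N$ itself and to the models above $N$. For that you need $N\land M\in_\delta N$, which the paper extracts from the closure of Magidor models under countable sequences: $\overline{N}\cap M\in\overline{N}$, hence $N\land M\in N$. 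Only then does the chain argument close: the meet sits immediately below $N$, and its $\in_\delta$-predecessor $P$ in $\M_p^\delta$ satisfies $P\in_\delta N\land M$ by \cref{meetactiveness}, using $P\in_\delta M$ (which holds because $P\in M$). Without $N\land M\in_\delta N$, linearity of $\M_{p^M}^\delta$ is simply not established.

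Second, and more seriously, the decoration. You propose to project each $M'\in\dom(d_p)$ that loses freeness to ``the largest $\gamma\in E\cap(\eta(M')+1)$ at which $M'\restriction\gamma$ is again $\M_{p^M}$-free,'' but you do not show that such a $\gamma$ exists, nor that condition $(*)$ of \cref{PF} survives, nor that the new decoration realizes $d_p$ in the sense of the ordering --- and you flag this yourself as the main obstacle. That deferred step is precisely where the paper's proof does its work. Writing $\delta=\eta(M')$, when $M$ is not strongly active at $\delta$ the ordinal $\delta$ has uncountable cofinality in $M$, and the paper takes $\gamma=\bar\delta:=\sup(M\cap\delta)$, a limit point of $E$. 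Both required verifications then go through \cref{incontcount} applied \emph{inside} $M$: if some $P\in\M_{p^M}$ with $M'\in_{\bar\delta}P$ were not strongly active at $\bar\delta$, then $P\in M$, $\eta(P)\geq\delta$, and $M'\in_\gamma P$ for unboundedly many $\gamma\in E\cap\delta\cap M$, whence $M'\in_\delta P$, contradicting the $\M_p$-freeness of $M'$; the same reflection shows that every $P\in\M_p$ with $M'\in_{\bar\delta}P$ already satisfies $M'\in_\delta P$, so $d_p(M')\in P$, which is exactly what $(*)$ demands of the projected model. Without pinning the projection level down to $\sup(M\cap\delta)$ --- so that $\in_{\bar\delta}$-facts reflect to unboundedly many levels of $E\cap\delta$ lying in $M$ --- neither the existence of your $\gamma$ nor condition $(*)$ can be verified, so the proposal as it stands does not produce a condition, let alone the weakest one.
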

\begin{proof}
Suppose first that $M$ is a Magidor model. Then we let $\mathcal M_{p^M}= \M_p$
and $d_{p^M}=d_p$. It is straightforward that $p^M=(\M_{p^M},d_{p^M})$ is as required. 

 Now assume that $M$ is countable. We let $\M_{p^M}$ be the closure of $\M_p\cup \{ M\}$ under meets. 
  Fix $\delta\in E$. We  show that $\mathcal M_p^\delta$ is an $\in_\delta$-chain. 
 We may assume that  $M$ is active at $\delta$ since otherwise $\mathcal M_{p^M}^\delta=\emptyset$.
 By \cref{doublemeet} we know that the only models added to $\mathcal M_p^\delta$ in order to form $\mathcal M_{p^M}^\delta$ are $M\restriction \delta$ 
 and $N\land M$ for $N\in \mathcal M_p^\delta$ such that $N\land M$ is active at $\delta$.  
 Suppose $N\in \M_p^\delta$ is such a model, and let $P$ be the $\in_\delta$-predecessor of $N$ in $\M_p^\delta$, if it exists. 
First note that $N\cap M\in N$ since $N$ is closed under countable sequence. 
Therefore, $N\land M\in N$. Moreover, if $P$ exists by \cref{meetactiveness} we have that $P\in_\delta N\land M$. 
This establishes that $\M_{p^M}^\delta$ is a $\delta$-chain. 
 
 Let us now define the decoration $d_{p^M}$. Suppose $N\in \dom(d_p)$ is a $\delta$-model. Then $\delta\in M$. 
 If $M$ is strongly active at $\delta$, then by \cref{meet-trace}, for every Magidor model $P\in \M_p$
 if  $P\land M$ is  active at $\delta$ then it is strongly active at $\delta$. Hence $N$ is $\M_{p^M}$-free.
 We  then keep $N$ in $\dom(p^M)$ and let $d_{p^M}(N)=d_p(N)$. 
 Now, suppose $M$ is not strongly active at $\delta$. This means that $\delta$ has uncountable cofinality in $M$.
 Let $\bar\delta = \sup (M\cap \delta)$ and note that $\bar\delta$ is a limit point of $E$. 
 We claim that $N\rest \bar\delta$ is $\M_{p^M}$-free. Indeed, if there is $P\in \M_{p^M}$ such that 
 $N\in_{\bar\delta} P$ and $P$ is  not strongly active at $\bar\delta$, then $P\in M$, and hence $\eta(P)\geq \delta$. 
 Moreover, $P$ is active but not strongly active at $\delta$ as well. 
 Since $N\in_{\bar\delta}P$ and $N,P\in M$ it follows that $N\in_{\gamma} P$, for unboundedly many $\gamma \in E\cap \delta\cap M$. 
 But then by \cref{incontcount} applied in $M$ we conclude that $N\in_\delta P$, and hence $N$ is not $\M_p$-free, a contradiction. 
 Notice also that if $P\in \M_p$ and $N\in_{\bar\delta} P$ then by  \cref{incontcount} again we must have
 that $N\in_\delta P$ and thus $d_p(N)\in P$. Therefore, we can replace $N$ by $N\rest \bar\delta$ and 
 let $d_{p^M}(N\rest \bar\delta)=d_p(N)$. 
 It is straightforward to check that $p^M$ is a weakest extension of $p$ such that $M\in \M_{p^M}$.
\end{proof}

If $N,M$ are virtual models it will be convenient to set $\alpha (N,M)= \max (a(N)\cap a(M))$. 

\begin{definition}\label{restriction-magidor} Suppose $p\in \mathbb P^\kappa_\lambda$ and $M\in \mathcal L(\M_p)$ is a Magidor model. 
For $N\in \M_p$ we let $N\restriction M = N \restriction \alpha (N,M)$ if $\kappa_N < \kappa_M$, 
otherwise $N\restriction M$ is undefined. 
Let  
\[
\M_{p \restriction M} = \{ N \rest M : N \in \M_p\}.
\]
Let $d_{p \restriction M}= d_p \rest (\dom(d_p)\cap M)$, and let $p \restriction M = (\M_{p \restriction M}, d_{p \restriction M})$.
\end{definition}

\begin{lemma}\label{rest-magidor-prop} Suppose $p\in \mathbb P^\kappa_\lambda$ and $M\in \mathcal L(\M_p)$ is a Magidor model. 
Then $p\restriction M \in \mathbb P^\kappa_\lambda \cap M$ and $p\leq p\restriction M$. 
\end{lemma}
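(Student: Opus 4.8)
The plan is to verify the three assertions in turn: that $p\leq p\restriction M$, that $p\restriction M$ is a legitimate condition, and finally the substantive point that $p\restriction M\in M$.

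The inequality $p\leq p\restriction M$ is immediate from the definitions. Every element of $\M_{p\restriction M}$ has the form $N\restriction M=N\restriction\alpha(N,M)$ for some $N\in\M_p$ with $\kappa_N<\kappa_M$, and since $\eta(N\restriction M)=\alpha(N,M)$ we have $N\restriction\eta(N\restriction M)=N\restriction M$; thus $\M_p\leq\M_{p\restriction M}$. As $d_{p\restriction M}=d_p\restriction(\dom(d_p)\cap M)$, for each $N\in\dom(d_{p\restriction M})$ we may take $\gamma=\eta(N)$ in the ordering of \cref{PF}, whence $N\restriction\gamma=N\in\dom(d_p)$ and $d_{p\restriction M}(N)=d_p(N)$, as required.

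To see that $p\restriction M$ is a condition, note first that $\M_{p\restriction M}$ is finite and, since both $\mathscr C$ and $\mathscr U$ are closed under projections, consists of models of the correct two types. For each $\delta\in E$ one checks using \cref{transitivity-projections} that $\M_{p\restriction M}^\delta$ is, as a set of $\delta$-models, a subcollection of $\M_p^\delta$: if $N\restriction\alpha(N,M)$ is active at $\delta$ then $\delta\leq\alpha(N,M)$ and $(N\restriction\alpha(N,M))\restriction\delta=N\restriction\delta\in\M_p^\delta$. Hence $\M_{p\restriction M}^\delta$ inherits the structure of a $\delta$-chain from $\M_p^\delta$. Closure under meets follows from \cref{MPI}: if $N_1\restriction\delta,N_2\restriction\delta\in\M_{p\restriction M}^\delta$ have a meet active at $\delta$, that meet equals $(N_1\land N_2)\restriction\delta$, and since $N_1\land N_2\in\M_p$ has $\kappa$-value below $\kappa_M$, its restriction to $M$ lies in $\M_{p\restriction M}$ and projects to the desired meet at level $\delta$. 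Finally, condition $(*)$ for $p\restriction M$ is inherited from $(*)$ for $p$ together with the fact that each decoration value is a finite subset of $V_\kappa$ and is therefore fixed by the relevant collapsing maps.

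The main work is to show $p\restriction M\in M$; since $M$ satisfies enough of $\ZFC$, it suffices to show that each $N\restriction M$ belongs to $M$ and that $d_{p\restriction M}\in M$. Fix $N\in\M_p$ with $\kappa_N<\kappa_M$, set $\beta=\alpha(N,M)$, and write $M=M'\restriction\alpha$ with $M'\in\M_p$ and $\alpha=\eta(M)\in a(M')$. Because $\beta\in a(M)$ and $\beta\leq\alpha$, the model $M'$ is active at $\beta$, so $M'\restriction\beta=M\restriction\beta$ lies in $\M_p^\beta$, as does $N\restriction\beta$. Since $\kappa_N<\kappa_M$, in the $\beta$-chain $\M_p^\beta$ we have $N\restriction\beta\in_\beta^* M\restriction\beta$, and as $M\restriction\beta$ is a Magidor model, \cref{intermediate} (together with \cref{finite-chain}) yields $N\restriction\beta\in_\beta M\restriction\beta$. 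Now $M$ and $M\restriction\beta$ are $\beta$-isomorphic by the remark following \cref{projection-models-def}, so by \cref{trans} the witnessing model transfers and $N\restriction\beta\in_\beta M$. Both $N\restriction\beta$ and $M$ are active at $\beta$, so \cref{active-Magidor} gives $\beta\in M$; choosing a witness $N''\in M$ with $N''\cong_\beta N\restriction\beta$, the projection $N''\restriction\beta$ is computed inside $M$ and equals the canonical representative $N\restriction\beta$, whence $N\restriction M=N\restriction\beta\in M$. For the decoration, each $N\in\dom(d_p)\cap M$ satisfies $N\in_{\eta(N)}M'$ (taking $N$ itself, or its preimage under the collapse $M'\to M$, as witness), so $(*)$ for $p$ gives $d_p(N)\in M'$, and since $d_p(N)\subseteq V_\kappa$ lies below the standard part fixed by the collapse, $d_p(N)\in M$; hence $d_{p\restriction M}\in M$.

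The crux, and the step I expect to be the main obstacle, is the transfer of the $\in_\beta$-relation from the projection $M\restriction\beta$ back to $M$ itself, together with the passage from $\in_\beta$ to genuine membership $N\restriction M\in M$. This is precisely where the Magidor closure of $M$, the continuity of the $\beta$-isomorphism under projection, and \cref{active-Magidor} must be combined with care; one must also keep track of the rank of the decorated models in $\dom(d_p)\cap M$ to ensure the collapse fixes their decoration values.
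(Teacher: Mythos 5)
Your core argument is the same as the paper's: you order the $\beta$-chain by the sups $\kappa_N$, use \cref{finite-chain} and \cref{intermediate} to upgrade $N\restriction\beta\in_\beta^* M\restriction\beta$ to genuine $\in_\beta$-membership, apply \cref{active-Magidor} to get $\alpha(N,M)\in M$, and then let $M$ compute the projection internally to conclude $N\restriction M\in M$; meets are handled by \cref{MPI} and the decoration by $(*)$ together with the fact that finite subsets of $V_\kappa$ are fixed by the collapses. All of that matches the paper's proof.

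There is, however, a genuine gap in your verification that $\M_{p\restriction M}^\delta$ is a $\delta$-chain. You argue that it is a subcollection of $\M_p^\delta$ and therefore ``inherits the structure of a $\delta$-chain.'' That inference is invalid: the $\delta$-chain property is not hereditary to subcollections, because $\in_\delta$ is not transitive and the definition permits two models to be related only through an intermediate witness $P$ \emph{belonging to the collection}. Concretely, if $A\in_\delta P\in_\delta B$ with $P$ a Magidor model and $A,B$ countable, one can have $A\notin_\delta B$ (this is exactly the case not covered by \cref{intermediate}), so $\{A,B\}$ is a subcollection of the $\delta$-chain $\{A,P,B\}$ which is not itself a $\delta$-chain. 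What saves the lemma is that $\M_{p\restriction M}^\delta$ is not an arbitrary subcollection. When $M$ is active at $\delta$, membership in $\M_{p\restriction M}^\delta$ is decided by the condition $\kappa_N<\kappa_M$, so by the $\kappa$-sup ordering of \cref{finite-chain} the collection is precisely the initial segment $(\emptyset,M\restriction\delta)_p^\delta$, and an $\in_\delta^*$-initial segment of a $\delta$-chain \emph{is} a $\delta$-chain, since any required intermediate witness lies $\delta$-below $M\restriction\delta$ and hence stays in the segment. When $M$ is not active at $\delta$, one must show the collection is empty: if $N\restriction M$ were active at $\delta$, then $\delta<\alpha(N,M)$, and since $N\in_{\alpha(N,M)}M$ (which you establish later in your own proof), the remark following \cref{membership-alpha-def} — activity transfers upward along $\in_\alpha$ — would force $M$ to be active at $\delta$, a contradiction. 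This case analysis is exactly what the paper's proof records, and your appeal to inheritance cannot replace it. A smaller omission of the same kind: you check $(*)$ for $d_{p\restriction M}$ but never note that its domain consists of $\M_{p\restriction M}$-free models, which is part of the definition of a condition in \cref{PF}.
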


\begin{proof}
Since $p$ is a condition, we have that if $N\in \M_p$ and $\kappa_N < \kappa_M$, then $N\in_\gamma^* M$, for 
all $\gamma \in a(N)\cap a(M)$. By \cref{intermediate} we then conclude that $N\in_\gamma M$, for all such  $\gamma$.
By \cref{active-Magidor} we have that $\alpha(N,M)\in M$, and hence $N \restriction \alpha(N,M)\in M$. 
We also have that $d_{p \restriction M}\in M$, thus $p \restriction M \in M$. 
Let us check that $\M_{p\restriction M} \in \mathbb M^\kappa_\lambda$. Suppose $\delta \in E$.
If $M$ is not active at $\delta$ then  $\M_{p\restriction M}^\delta$ is empty, otherwise it is equal to 
$(\emptyset,M\! \restriction \! \delta)_{p}^\delta$, which is obviously  a $\delta$-chain.
To check that $\M_{p\restriction M}$ is closed under meets, suppose $N\restriction M,P\restriction M\in \M_{p\restriction M}$ 
and their meet is defined. 
Note that then $N \land P$ is also defined and, by \cref{MPI} $(N\land P) \restriction M = N\restriction M \land P\restriction M$. 
It is straightforward to check that every $N\in \dom(d_{p\restriction M})$ is $\M_{p\restriction M}$-free, 
and $(*)$ from \cref{PF} holds. 
Finally, the fact that $p\leq p\restriction M$ follows from the definition. 
\end{proof}

\begin{lemma}\label{Magidor-compatible}
Suppose $p\in\mathbb P^\kappa_\lambda$ and $M\in \mathcal L(\mathcal M_p)$ is a Magidor model. 
Suppose $q\in M\cap\mathbb P^\kappa_\lambda$ extends $p\restriction M$.
Then $q$ is compatible with $p$ and the meet $p\land q$  exists. 
\end{lemma}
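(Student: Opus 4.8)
The plan is to exhibit the greatest lower bound explicitly by amalgamating the two conditions. I would set $\mathcal{M}_r$ to be the closure of $\mathcal{M}_p \cup \mathcal{M}_q$ under the meet operation, and define the decoration $d_r$ by using $d_q$ on the models coming from $q$ and $d_p$ on the models $N \in \dom(d_p)$ lying outside $M$; on the overlap, namely on $\dom(d_p) \cap M = \dom(d_{p \restriction M})$, no new information is needed because $q \leq p \restriction M$ already records the values of $d_p$ there. The claim is that $r = (\mathcal{M}_r, d_r)$ is a condition, that $r \leq p$ and $r \leq q$, and that it is the weakest such, so that $p \land q = r$. That $\mathcal{M}_r \leq \mathcal{M}_p$ and $\mathcal{M}_r \leq \mathcal{M}_q$ is immediate since $\mathcal{M}_p, \mathcal{M}_q \subseteq \mathcal{M}_r$ and each model witnesses the ordering through itself; the content lies in verifying that $\mathcal{M}_r$ is a legitimate element of $\mathbb{M}^\kappa_\lambda$ and that the decoration is coherent.

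The heart of the argument, and the step I expect to be the main obstacle, is checking that $\mathcal{M}_r^\delta$ is a $\delta$-chain for every $\delta \in E$; by \cref{finite-chain} this amounts to showing that any two models active at $\delta$ are $\in_\delta$-comparable, and the only genuinely new comparisons are between a model $N \in \mathcal{M}_p$ and a model $N' \in \mathcal{M}_q$. The decisive structural fact is that every member of $\mathcal{M}_q$ is an element of $M$, hence has $\kappa$-parameter below $\kappa_M$, whereas $\mathcal{M}_{p \restriction M}$ is precisely the initial segment of $\mathcal{M}_p$ consisting of the models with $\kappa$-parameter below $\kappa_M$, projected to $M$ (as computed in the proof of \cref{rest-magidor-prop}). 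Thus, when $M$ is active at $\delta$, the model $M \restriction \delta$ separates $\mathcal{M}_r^\delta$ into the part $\delta$-below it, which is governed by $\mathcal{M}_q^\delta$ since $q$ refines the corresponding initial segment of $p \restriction M$, and the part at or $\delta$-above it, which lies entirely in $\mathcal{M}_p^\delta$ and is already a chain. For the crucial comparison of an $N'$ with $\kappa_{N'} < \kappa_M$ against an $N \in \mathcal{M}_p$ with $\kappa_N \geq \kappa_M$, one shows $N' \in_\delta M \restriction \delta$ (using $N' \in M$ and \cref{projection-membership}) and then passes through the Magidor model $M \restriction \delta$ by \cref{intermediate} to obtain $N' \in_\delta^* N$; the case $\kappa_N < \kappa_M$ is absorbed by the coherence between $\mathcal{M}_q$ and $\mathcal{M}_{p \restriction M}$. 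Closure under meets adds nothing essentially new, since a meet of a high Magidor model of $p$ with a countable model of $q$ coincides, by \cref{doublemeet}, with a meet already present in $\mathcal{M}_q$, and \cref{meetin,meetactiveness} guarantee these meets respect the chain. When $M$ is not active at $\delta$ one argues similarly, noting that any $N' \in \mathcal{M}_q$ active at $\delta$ forces $\delta \in M$, so that $M$ still mediates the comparison.

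It remains to verify the decoration, which proceeds much as in \cref{topM}. For property $(*)$ of \cref{PF} one checks, for $N \in \dom(d_r)$ and $N'' \in \mathcal{M}_r$ with $N \in_{\eta(N)} N''$, that $d_r(N) \in N''$: when $N, N''$ both come from $p$, or both from $q$, this is immediate from $p$, resp.\ $q$, being a condition, and in the mixed case one uses that the decoration values are finite subsets of $V_\kappa$ lying in $V_{\kappa_M} = M \cap V_\kappa$, which is contained in every Magidor model $\delta$-above $M$, together with the Magidor closure of such models. The one subtlety is that adding the models of $\mathcal{M}_p$ above a decorated model $N' \in \dom(d_q)$ may destroy the $\mathcal{M}_r$-freeness of $N'$; exactly as in \cref{topM} one then replaces $N'$ by its projection $N' \restriction \gamma$ to the largest $\gamma$ at which it remains free, assigning it the value $d_q(N')$, and \cref{incontcount} guarantees that the constraint recorded by $d_q(N')$ is preserved. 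Finally, that $r \leq p$ and $r \leq q$ follows by inspection of the decorations, using $q \leq p \restriction M$ for the models of $p$ inside $M$; and $r$ is weakest because any common lower bound of $p$ and $q$ must contain $\mathcal{M}_p \cup \mathcal{M}_q$ together with its meets and must refine both decorations.
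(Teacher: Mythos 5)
Your amalgam is the paper's: you take $\mathcal M_r=\mathcal M_p\cup\mathcal M_q$ (closed under meets; the paper shows the union is already closed), the same decoration $d_r=d_q\cup d_p\restriction(\dom(d_p)\setminus M)$, and your chain analysis --- $\mathcal M_q^\delta$ sitting as the initial segment below $M\restriction\delta$, the interval $[M\restriction\delta,V_\lambda)^\delta_p$ sitting above, with \cref{intermediate} and \cref{active-Magidor} mediating the mixed comparisons --- is exactly how the paper proves $\mathcal M_r^\delta$ is a $\delta$-chain. Two secondary inaccuracies: the genuinely new meets are of the form $Q\land P$ with $Q\in\mathcal M_q$ Magidor and $P\in\mathcal M_p$ countable \emph{above} $M$, not ``a high Magidor model of $p$ with a countable model of $q$'' (a Magidor model with $\kappa$-parameter $\geq\kappa_M$ cannot be $\in_\delta$ a countable element of $M$), and showing these meets already lie in $\mathcal M_q^\delta$ requires the paper's induction on the number of Magidor models in $[M\restriction\delta,P)^\delta_p$, not a single application of \cref{doublemeet}.

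The genuine gap is in the decoration, which is where the real work of this lemma lies. The paper proves outright that every $N\in\dom(d_q)$ \emph{remains} $\mathcal M_r$-free and satisfies $(*)$ of \cref{PF} against the new models: if $P\in\mathcal M_p$ is countable, lies $\eta$-above $M$, and $N\in_\eta P$ with $\eta=\eta(N)$, then $M\land P\in\mathcal M_p$, $N\in_\eta M\land P$ by \cref{meetactiveness}, and $(M\land P)\restriction M$ is a projection of a model of $\mathcal M_q$; since $N$ is $\mathcal M_q$-free, that model is strongly active at $\eta$ and contains $d_q(N)$, and then \cref{meet-trace} transfers both facts to $P$. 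You supply neither half of this. Your verification of $(*)$ --- ``$d_q(N)\in V_{\kappa_M}=M\cap V_\kappa$, which is contained in every Magidor model above $M$'' --- handles only Magidor $P$; a countable $P\in\mathcal M_p$ above $M$ does not contain $V_{\kappa_M}$, so for such $P$ you have proved nothing. For freeness you concede it ``may be destroyed'' and fall back on a \cref{topM}-style projection. That fallback does not transfer: in \cref{topM} the only obstruction is the single countable model being added, which determines a canonical projection level $\sup(M\cap\delta)$, whereas here the would-be obstructions come from $\mathcal M_p$. Worse, the fallback is incompatible with your conclusion: if a projection of some $N\in\dom(d_q)$ to a level $\gamma<\eta(N)$ were ever performed, the resulting condition would be \emph{strictly} stronger than the paper's $r$ (a common lower bound of $p$ and $q$ need only decorate $N\restriction\gamma'$ for \emph{some} $\gamma'\leq\eta(N)$, not for $\gamma'\leq\gamma$), so it could not be the weakest common extension, and the claim that $p\land q$ exists would be lost. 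The lemma thus rests precisely on the verification you skipped: no projection is ever needed.
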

\begin{proof}
We define $r\in \mathbb P^\kappa_\lambda$ and check that it is a weakest condition extending $p$ and $q$. 
Let $\mathcal M_r=\mathcal M_p\cup\mathcal M_q$. 
We check that if $\delta \in E$, then $\M_r^\delta$ is a $\delta$-chain closed under meets, meaning
if $P,Q\in \M_r^\delta$ and the meet $P\land Q$ is defined and active at $\delta$ then $P\land Q \in \M_r^\delta$. 
Fix such $\delta \in E$. If $M$ is not active at $\delta$, then $\M_r^\delta = \M_p^\delta$ and thus has
the required property since $p$ is a condition. Now, suppose $M$ is active at $\delta$. 
If $R \in \M_r^\delta$ and $R\in_\delta^*M$, then by \cref{intermediate} we know that $R \in_\delta M$, 
and by \cref{active-Magidor} we get that $\delta \in M$. Hence $R\in M$ and therefore $R\in \M_q^\delta$. 
Therefore, $\M_r^\delta$ is the union of $\M_q^\delta$ and $[M\restriction \delta, V_\lambda)_{p}^\delta$,
and hence is a $\delta$-chain. Now suppose $P,Q\in \M_r^\delta$ and their meet is defined
and active at $\delta$. We need to check that $Q\land P \in \M_r^\delta$. 
If both $P$ and $Q$ belong either to $\M_q^\delta$ or $\M_p^\delta$, this follows from the fact
that $p$ and $q$ are conditions.  Since $Q\in_\delta P$ and $\M_q^\delta$ is an $\in_\delta^*$-initial segment of $\M_r^\delta$,
we may assume $Q\in \M_q^\delta$ and $P\in \M_p^\delta\setminus \M_q^\delta$. 
The proof goes by induction on the number of Magidor models on the $\delta$-chain 
$[M\restriction \delta, P)_{p}^\delta$. 
If $M\in_\delta P$ then $M\land P \in \M_p^\delta$ and is $\delta$-below $M\restriction \delta$, hence
belongs to $\M_q^\delta$. On the other hand, by  \cref{doublemeet} we have 
$Q\land P= Q \land (M \land P)$, and since $\M_q^\delta$ is closed under meets we get 
that $Q\land P\in \M_q^\delta$. 
In general, if $N$ is the $\in_\delta^*$-largest Magidor model in $[M\restriction \delta, P)_{p}^\delta$,
by \cref{intermediate}, we have that $Q\in_\delta N \in_\delta P$. In particular, $N\land P$ is defined
and by \cref{doublemeet} we have that $Q \land P = Q \land (N\land P)$. 
Now, we are done if $N\land P \in \M_q^\delta$ as $q$ is a condition.
Otherwise, it belongs to the interval $[M\restriction \delta, P)_{p}^\delta$.
Then there are fewer Magidor models in $[M\rest \delta, N\land P)^\delta_p$ and thus we can use
the induction hypothesis. 

Let $d_r = d_q \cup d_p \rest (\dom(d_p)\setminus M)$. 
Let us check that every $N\in \dom(d_r)$ is $\M_r$-free. For simplicity, let $\eta=\eta (N)$. 
If $N\in \dom(d_p)\setminus M$, then 
there is no $P\in \M_q$ such that $N\in_\eta P$, and hence the conclusion follows from the fact that $p$ is a condition.
Suppose now $N\in \dom(d_q)$ and $P\in \M_r$ is such that $N\in_\eta P$. We have to check 
that $P$ is strongly active at $\eta$. We may assume that $P$ is a countable model. 
If $P\rest \eta$ is $\eta$-below $M$, then $P\rest M$  is defined and $P\rest M\cong_{\eta} P$, 
therefore, the conclusion follows from the fact that $q$ is a condition. 
If $P\rest \eta$ is $\eta$-above $M$, then $M\land P$ is defined and belongs to $\M_p$.
Moreover,  by \cref{meetactiveness}, $N\in_{\eta} M\land P$. 
Now $(M\land P)\rest M$ is defined, and belongs to $\mathcal \M_q^{\alpha(M\land P,M)}$,  and is strongly active at $\eta$ since $N$ is $\M_q$-free. 
Therefore, $P$ is also strongly active at $\eta$. 
The fact that $d_r$ satisfies condition $(*$) from \cref{PF} is straightforward. 
Finally, the fact that $r$ is the weakest common extension of $p$ and $q$ follows readily from the definition. 
 \end{proof}

By \cref{topM} and  \cref{Magidor-compatible} we immediately get the following. 

 \begin{theorem}\label{Magidor-strongly-proper}
   The forcing $\mathbb P^\kappa_\lambda$ is $\mathscr U$-strongly proper.
   \end{theorem}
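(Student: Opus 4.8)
The plan is to unwind the definition of $\mathscr U$-strong properness and verify it directly from the two preceding lemmas, with no extra combinatorics. So fix a Magidor model $M \in \mathscr U$ and a condition $p \in M \cap \mathbb P^\kappa_\lambda$; I must produce some $q \leq p$ that is $(M,\mathbb P^\kappa_\lambda)$-strongly generic in the sense of \cref{strong-generic}. The natural candidate is the condition supplied by \cref{topM}: since $p \in M$, there is a weakest $q = p^M \leq p$ with $M \in \M_q$. As $M \in \M_q$ is a Magidor model placed on its own level of the chain, it is active at $\eta(M)$, and hence $M = M \rest \eta(M) \in \mathcal L(\M_q)$.

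Next I would verify the strong genericity of this $q$. Fix an arbitrary $r \leq q$. By the monotonicity of $\mathcal L$ noted after \cref{free} (if $\M_r \leq \M_q$ then $\mathcal L(\M_q) \subseteq \mathcal L(\M_r)$), we still have $M \in \mathcal L(\M_r)$, and $M$ is a Magidor model. Therefore \cref{restriction-magidor} applies and I may form the reduct $r \rest M$; by \cref{rest-magidor-prop} it satisfies $r \rest M \in \mathbb P^\kappa_\lambda \cap M$ and $r \leq r \rest M$. This reduct is exactly the object ``$r \rest M \in M$'' required by \cref{strong-generic}.

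It then remains to check the compatibility clause. Let $s \leq r \rest M$ be any condition with $s \in M$. Applying \cref{Magidor-compatible} with $r$ in place of $p$ and $s$ in place of $q$ — legitimate because $M \in \mathcal L(\M_r)$ is a Magidor model and $s \in M \cap \mathbb P^\kappa_\lambda$ extends $r \rest M$ — I conclude that $s$ is compatible with $r$, indeed that the meet $r \land s$ exists. Since $r \leq q$ was arbitrary, this shows that $q$ is $(M,\mathbb P^\kappa_\lambda)$-strongly generic, and since $M \in \mathscr U$ and $p \in M \cap \mathbb P^\kappa_\lambda$ were arbitrary, $\mathbb P^\kappa_\lambda$ is $\mathscr U$-strongly proper.

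I expect no serious obstacle, as the argument is purely a bookkeeping assembly of \cref{topM}, \cref{rest-magidor-prop}, and \cref{Magidor-compatible}; all the real work lives in those lemmas. The single point deserving attention is confirming that the reduct $r \rest M$ is actually available at \emph{every} $r \leq q$, i.e.\ that $M$ persists as a Magidor member of $\mathcal L(\M_r)$ along the ordering — this is precisely the monotonicity of $\mathcal L$ together with the fact that the Magidor model $M$ remains active at its level once it has been placed on the chain by \cref{topM}.
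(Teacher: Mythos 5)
Your proposal is correct and is essentially identical to the paper's proof: the paper disposes of this theorem in a single line, deriving it immediately from \cref{topM} and \cref{Magidor-compatible}, and your argument is exactly that assembly, with \cref{rest-magidor-prop} supplying the reduct $r\rest M$ required by \cref{strong-generic}, just as the paper implicitly intends. The one delicate point you flag --- that $M$ persists as a Magidor member of $\mathcal{L}(\M_r)$ for every $r\leq q$, which rests on $M$ being active at $\eta(M)$ --- is passed over in silence by the paper as well, so your write-up is, if anything, more explicit than the original.
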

\begin{proof}[\nopunct] 
 \end{proof}
  
We now proceed to define an analogue of $p\rest M$ for  countable models $M\in \mathcal L(\M_p)$.
The situation here is more subtle since $p\rest M$ may not belong to the original forcing, only its version as defined in $M$.
We first analyze the part  involving $\M_p$. It will be useful to make the following definition. 

\begin{definition}\label{countable-restriction} Let $\M$ be a subset of $\mathscr C \cup \mathscr U$ and $M\in \mathscr C$. 
For $\delta \in E$, we let $(\M \restriction M)^\delta = \{ N \in \M^\delta : N \in_\delta M\}$. 
\end{definition}

\begin{lemma}\label{gapmeet}
Let  $\M_p\in\mathbb M^\kappa_\lambda$ and $\delta \in E$. Suppose $M\in\mathcal M_p^\delta$ is  countable. 
Then $(\M_p\rest M)^\delta$ is a $\delta$-chain closed under meets and
\[
(\M_p\restriction M)^\delta = (\emptyset,M\restriction \delta)_p^\delta \setminus 
\bigcup\{ [N\land M,N)^\delta_{p}:N\in (\mathcal M_p\restriction M)^\delta  \text{ and is a Magidor model}\}.
\]
Here, if $N\land M$ is defined and not active at $\delta$, by $[N\land M,N)_p^\delta$ we mean $(\emptyset, N)_p^\delta$. 
\end{lemma}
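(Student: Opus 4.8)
Write $\mathcal M=\M_p^\delta$ for the ambient $\delta$-chain, recall that $\in_\delta^*$ linearly orders $\mathcal M$, and write $Q\prec_\delta Q'$ for $Q\in_\delta^* Q'$ and $Q\preceq_\delta Q'$ for $Q\in_\delta^*Q'$ or $Q=Q'$. Set $L=(\M_p\restriction M)^\delta=\{N\in\mathcal M:N\in_\delta M\}$ and let $R$ be the right-hand side of the displayed equation. The whole argument rests on the fact, isolated in \cref{intermediate}, that $\in_\delta$ can fail to be transitive only across Magidor models. First I would record two collapsing principles, both obtained by iterating \cref{intermediate} along a finite $\in_\delta$-chain: (i) if $Q\prec_\delta Q'$ and $Q'$ is a Magidor model, then $Q\in_\delta Q'$; and (ii) if $Q\prec_\delta Q'$, the model $Q'$ is countable, and every model strictly $\delta$-between $Q$ and $Q'$ is countable, then $Q\in_\delta Q'$. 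I also use freely that, by \cref{meetactiveness}, for models active at $\delta$ one has $P\in_\delta N\land M$ if and only if $P\in_\delta N$ and $P\in_\delta M$, together with the fact (from the remark after \cref{def-active}) that a model which $\in_\delta$-contains a model active at $\delta$ is itself active at $\delta$.

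The displayed equality I would prove by two inclusions; note first that $L\subseteq(\emptyset,M\restriction\delta)_p^\delta$ trivially. For $L\subseteq R$, suppose $P\in L$ lies in a removed interval $[N\land M,N)_p^\delta$ with $N\in L$ a Magidor model. Since $P\prec_\delta N$, principle (i) gives $P\in_\delta N$; with $P\in_\delta M$ this yields $P\in_\delta N\land M$ by \cref{meetactiveness}, forcing $N\land M$ to be active at $\delta$ and $P\prec_\delta N\land M$, which contradicts $N\land M\preceq_\delta P$. The same computation rules out the inactive-meet convention, since there $P\in_\delta N\land M$ contradicts inactivity of $N\land M$. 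For $R\subseteq L$ I argue the contrapositive: if $P\prec_\delta M$ but $P\notin_\delta M$, then $P$ lies in one of the removed intervals, proceeding by induction on the number of Magidor models strictly $\delta$-between $P$ and $M$. By (ii) this number is positive, so there is a topmost Magidor model $N$ strictly $\delta$-below $M$; since every model $\delta$-between $N$ and $M$ is countable, (ii) gives $N\in_\delta M$, i.e.\ $N\in L$. A further use of (ii) excludes $N\prec_\delta P$, and $P\neq N$ because $P\notin_\delta M$, so $P\prec_\delta N$. If $N\land M$ is inactive at $\delta$, the removed interval is $(\emptyset,N)_p^\delta\ni P$ and we are done; if it is active, then either $N\land M\preceq_\delta P$, placing $P$ in $[N\land M,N)_p^\delta$, or $P\prec_\delta N\land M$. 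In the last case $P\notin_\delta N\land M$ (otherwise $P\in_\delta M$ by \cref{meetactiveness}), and the inductive hypothesis, applied to the countable model $N\land M$, produces a Magidor $\tilde N\in_\delta N\land M$ with $P\in[\tilde N\land(N\land M),\tilde N)_p^\delta$. Finally \cref{doublemeet} gives $\tilde N\land(N\land M)=\tilde N\land M$, while $\tilde N\in_\delta M$ by \cref{meetactiveness}, so $P$ lies in the removed interval $[\tilde N\land M,\tilde N)_p^\delta$ with $\tilde N\in L$.

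It remains to check that $L$ is a $\delta$-chain closed under meets. For closure, suppose $P,Q\in L$ and their meet is defined and active at $\delta$; being a meet of mixed type it is some $P\land Q$ with $P\in\mathscr U$ and $Q\in\mathscr C$, and $P,Q\in_\delta M$. Then \cref{meetin}, applied with top model $M$, gives $P\land Q\in_\delta M$, while $P\land Q\in\mathcal M$ because $\M_p^\delta$ is closed under meets; hence $P\land Q\in L$. For the chain property I invoke \cref{finite-chain}: listing $L$ in $\in_\delta^*$-increasing order $N_0,\dots,N_{m-1}$, it suffices to prove $N_i\in_\delta N_{i+1}$. If $N_{i+1}$ is Magidor this is (i). If $N_{i+1}$ is countable, the already-proved equality shows that no Magidor model can lie strictly $\delta$-between $N_i$ and $N_{i+1}$: such a model would be outside $L$, hence in some gap $[\tilde N\land M,\tilde N)_p^\delta$, and a short case analysis on the position of $\tilde N$ forces one of $N_i,N_{i+1}$ into a removed gap, contradicting $N_i,N_{i+1}\in L$. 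Thus every model strictly between $N_i$ and $N_{i+1}$ is countable and (ii) applies.

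The main obstacle is the inclusion $R\subseteq L$: this is where the non-transitivity of $\in_\delta$ across Magidor models, the indexing of the gaps by the meets $N\land M$, and the inactive-meet convention all interact. The device that makes the induction close is the absorption identity $\tilde N\land(N\land M)=\tilde N\land M$ of \cref{doublemeet}, which lets a gap computed relative to the auxiliary countable model $N\land M$ be re-expressed as a gap relative to $M$ itself.
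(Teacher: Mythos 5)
Your proof is correct, and its skeleton (the two inclusions, then the chain property, then closure under meets via \cref{meetin}) matches the paper's; the inclusion of $(\M_p\restriction M)^\delta$ into the right-hand side and the meet-closure step are essentially identical to the paper's. Where you genuinely diverge is the converse inclusion, which is the heart of the lemma. The paper argues directly, with no induction: given $P$ below $M\restriction\delta$ with $P\notin_\delta M$, it extracts (by iterating \cref{intermediate}, exactly your principles (i) and (ii)) a Magidor model $N$ with $P\in_\delta N\in_\delta M$, takes the $\in_\delta^*$-\emph{least} such $N$, and shows $P\in[N\land M,N)_p^\delta$ outright: if $P$ were $\in_\delta^*$-below $N\land M$, any Magidor $Q$ with $P\in_\delta Q\in_\delta N\land M$ would satisfy $Q\in_\delta M$ by \cref{meetactiveness} and lie $\in_\delta^*$-below $N$, contradicting minimality; hence no Magidor model separates $P$ from $N\land M$, so $P\in_\delta N\land M$ and then $P\in_\delta M$, a contradiction. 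You instead take the \emph{topmost} Magidor model below $M$, run an induction on the number of Magidor models between $P$ and $M$, and use the absorption identity $\tilde N\land(N\land M)=\tilde N\land M$ of \cref{doublemeet} to convert gaps computed relative to $N\land M$ into gaps relative to $M$. Both routes are sound. The paper's minimality trick is shorter and never needs \cref{doublemeet} in this lemma; your induction-with-absorption is precisely the technique the paper itself uses in \cref{Magidor-compatible}, and it has the side benefit of making the chain property of $(\M_p\restriction M)^\delta$ (which the paper dismisses with ``follows from the above analysis'') fully explicit via \cref{finite-chain}. One small point to tighten: when your inductive hypothesis returns a gap of the inactive-meet form $(\emptyset,\tilde N)$ relative to $N\land M$, you should say that \cref{doublemeet} transfers inactivity as well, so the removed interval relative to $M$ is again $(\emptyset,\tilde N)$; your text covers this case only implicitly.
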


\begin{proof}
It is clear that  $(\mathcal M_p\restriction M)^\delta\subseteq (\emptyset,M)_p^\delta$.
Suppose $P\in \M_p^\delta$ and $P\in_\delta M$. 
Then, for any Magidor model $N\in (P,M)_p^\delta$, we have $P\in_\delta N$ by \cref{intermediate}.
Then by   \cref{meetactiveness} we have that $P\in_\delta N\land M$. 
Conversely, suppose $P$ is in  $(\emptyset,M)^\delta_{p}$, but not in $(\mathcal M_p\restriction M)^\delta$.
Then, by \cref{intermediate} again, there must be a Magidor model $N\in \M_p^\delta$ such that $P\in_\delta N \in_\delta M$. 
Let $N$ be the $\in_\delta^*$-least such model. If $N\land M$ is not active at $\delta$, then $P\in (\emptyset,N)_p^\delta$.
Suppose $N\land M$ is active at $\delta$. We have to show that either $P=N\land M$ or $N\land M \in_\delta^*P$. 
Indeed, otherwise we have $P\in_\delta^* N\land M$. Note that there cannot be a Magidor model $Q\in \M_p^\delta$ 
with $P\in_\delta Q \in_\delta N\land M$ since then we would have $Q\in_\delta M$ as well, and this contradicts the minimality of $N$.
Since $\M_p^\delta$ is a $\delta$-chain, by \cref{intermediate} we conclude that $P\in_\delta N\land M$, but then also $P\in_\delta M$, 
a contradiction. The fact that $(\M_p\rest M)^\delta$ is a $\delta$-chain follows from the above analysis. 
By \cref{meetin} it is also closed under meets. 
\end{proof}

\begin{lemma}\label{full-countable} Let $p\in \mathbb P^\kappa_\lambda$ and $M,N\in \M_p$.
If there is $\gamma \in a(M)\cap a(N)$ such that $N\in_\gamma M$, then $N\in_\delta M$, for all $\delta \in a(M)\cap a(N)$. 
\end{lemma}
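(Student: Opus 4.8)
The plan is to split on the position of $\delta$ relative to $\gamma$ and to push everything through the structure of the finite $\delta$-chain $\M_p^\delta$. The case $\delta\le\gamma$ is immediate: since $M,N$ are active at $\gamma$ they lie in $\mathscr V_{\ge\gamma}$, so from $N\in_\gamma M$ and $\delta\le\gamma$ \cref{projection-membership} gives $N\restriction\delta\in_\delta M\restriction\delta$, which is just $N\in_\delta M$ (the relation $\in_\delta$ depends only on the $\cong_\delta$-classes). So the whole content is the case $\delta>\gamma$, which I would treat as follows.

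First I would pin down the order of $N$ and $M$ in the $\delta$-chain $\M_p^\delta$ (this is a $\delta$-chain because $\delta\in E\cap(\eta(M)+1)$). By \cref{finite-chain} the transitive closure $\in_\delta^*$ totally orders $\M_p^\delta$, so either $N\in_\delta^* M$ or $M\in_\delta^* N$. If the latter held I would project the witnessing $\in_\delta$-chain running from $M$ up to $N$ down to level $\gamma$: each intermediate model is $\delta$-above $M$, hence active at $\gamma$ by the remark following \cref{def-active} (activeness is inherited upward along $\in_\delta$, starting from $M$ which is active at $\gamma$), so \cref{projection-membership} applies at every step and yields $M\in_\gamma^* N$. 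Together with the hypothesis $N\in_\gamma M$ and antisymmetry of the total order $\in_\gamma^*$ this forces $M\cong_\gamma N$, contradicting the irreflexivity of $N\in_\gamma M$. Hence $N\in_\delta^* M$.

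With the order fixed, if $M$ is a Magidor model I would simply collapse the consecutive $\in_\delta$-steps from $N$ to $M$ (as enumerated by \cref{finite-chain}) using \cref{intermediate}, whose top model $M$ is Magidor at each application, obtaining $N\in_\delta M$. The real work is when $M$ is countable, and here I would argue by induction on the size of the finite interval $[N,M]^\delta_p$. If no Magidor model lies strictly $\delta$-between $N$ and $M$, then \cref{intermediate} collapses the countable intermediaries and gives $N\in_\delta M$. Otherwise let $R$ be the $\in_\delta^*$-largest Magidor model with $R\in_\delta M$; then $N\in_\delta R$ since $R$ is Magidor and $N\in_\delta^* R$. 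Applying the lemma inductively to the pair $(R,M)$ (whose interval is a proper subset of $[N,M]^\delta_p$, as $N$ lies in the latter but not the former) yields $R\in_{\alpha(R,M)}M$, so the meet $Q=R\land M$ is defined. Because $N\in_\delta R$ and $N$ is active at $\gamma$, $R$ is active at $\gamma$, so \cref{projection-membership} gives $N\in_\gamma R$ and $R\in_\gamma M$; combining these with $N\in_\gamma M$ via \cref{meetactiveness} (and \cref{MPI} to commute the meet with $\restriction\gamma$) produces $N\in_\gamma Q$. Since $[N,Q]^\delta_p$ is again a proper subinterval, the induction hypothesis applied to $(N,Q)$ gives $N\in_\delta Q$, and then \cref{meetactiveness} at level $\delta$, together with $N\in_\delta R$, delivers precisely $N\in_\delta M$.

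The step I expect to be the main obstacle is verifying that $Q=R\land M$ is genuinely active at $\delta$, so that $Q\in\M_p^\delta$ and $[N,Q]^\delta_p$ is a legitimate strictly smaller interval on which to run the induction; a secondary subtlety is that the inductive appeals to $(R,M)$ and $(N,Q)$ must be organized so that these pairs are resolved at the relevant levels before they are used, since meet-definedness for $(R,M)$ refers to the level $\alpha(R,M)$, which may exceed $\delta$. When $R$ and $M$ are strongly active at $\delta$ the activeness of $Q$ is exactly \cref{meet-strongly-active}; in general I would first reduce to that case, using that a countable $M$ that is active but not strongly active at $\delta$ forces $\delta$ to have uncountable cofinality in $M$, and passing to $\bar\delta=\sup(M\cap\delta)$ exactly as in the proof of \cref{topM}. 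Once the activeness of $Q$ and the level-bookkeeping are secured, the remaining verifications are routine.
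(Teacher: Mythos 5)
Your preliminary steps are fine: the case $\delta\le\gamma$ via \cref{projection-membership}, the argument that $N\in_\delta^* M$ (a cycle at level $\gamma$ is impossible), and the Magidor-$M$ case by collapsing the chain with \cref{intermediate}. But in the countable case there is a genuine gap, and it sits exactly where you say "the main obstacle" is: the activity of $Q=R\land M$ at $\delta$. This is not a side verification that can be discharged by general facts about meets, because it is essentially equivalent to the conclusion being proved. By \cref{gapmeet} at level $\delta$, if the level-$\delta$ meet of $R$ and $M$ is \emph{not} active at $\delta$, then the gap associated to the Magidor model $R$ is all of $(\emptyset,R)^\delta_p$, which contains $N\rest\delta$, and hence $N\notin_\delta M$; the convention built into the statement of \cref{gapmeet} exists precisely because this degenerate configuration can occur, and nothing in the structural assumptions your patch uses ($R\in_\delta M$, $R$ Magidor, $M$ countable and active at $\delta$, meet defined) rules it out. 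So any proof that $Q$ is active at $\delta$ must use the hypothesis $N\in_\gamma M$; your proposed patch (\cref{meet-strongly-active} plus a reduction to $\bar\delta=\sup(M\cap\delta)$ as in \cref{topM}) never invokes that hypothesis, and in any case transferring the resulting information from level $\bar\delta$ back up to $\delta$ is itself an instance of the upward transfer you are in the middle of proving, so the reduction is circular. The same missing activity also blocks your uses of \cref{MPI} in steps producing $N\in_\gamma Q$ and $N\in_\delta M$. A secondary problem is the induction: your measure is $|[N,M]^\delta_p|$, but the inductive appeal to the pair $(R,M)$ goes from level $\delta$ up to level $\alpha(R,M)$, so the relevant interval for that instance is $[R,M]^{\alpha(R,M)}_p$, not $[R,M]^\delta_p$; since activity at $\alpha(R,M)$ does not imply activity at $\delta$, there is no inclusion or size comparison between these intervals, and the measure does not obviously decrease.

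The paper avoids all of this by never climbing upward. It sets $\alpha=\alpha(M,N)=\max(a(M)\cap a(N))$ and proves a dichotomy at the top level: if $N\in_\alpha M$, then \cref{projection-membership} immediately gives $N\in_\delta M$ for every $\delta\in a(M)\cap a(N)$; if $N\notin_\alpha M$ (with $M$ countable; the Magidor case is handled by comparing $\kappa_M$ and $\kappa_N$), then \cref{gapmeet} places $N\rest\alpha$ in a gap $[P\land M,P)^\alpha_p$, and this gap projects coherently to every lower level, so \cref{gapmeet} applied at level $\gamma$ yields $N\notin_\gamma M$ for every $\gamma$ --- that is, the contrapositive of the statement. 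The available lemmas give downward coherence both of membership and of the gap structure; the upward implication is obtained only by contraposition, never directly. If you try to repair your argument, the step "$Q$ is active at $\delta$ and $N$ lies $\in_\delta^*$-below $Q$" will have to be extracted from \cref{gapmeet} applied at the top level and projected down, at which point you will have reproduced the paper's proof rather than given an alternative one.
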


\begin{proof} Let $\alpha = \alpha(M,N)$. If $N\in_\alpha M$ then $N\in_\gamma M$, for all $\gamma \in a(M)\cap a(N)$, by 
\cref{projection-membership}. Now suppose $N\notin_\alpha M$. If $M$ is a Magidor model we have $\kappa_M\leq \kappa_N$,
and hence there is no $\gamma$ such that $N\in_\gamma M$. 
Assume now that $M$ is countable. Then by \cref{gapmeet} there is a Magidor model $P\in \M_p^\delta$ with $P\in_\delta M$
such that $N\restriction \delta$ is in the interval $[P\land M, P)_p^\delta$. 
Now, $P\land M$ is active at all $\gamma \in a(N)\cap \alpha$ and $N\restriction \gamma$ is in the interval 
$[(P\land M)\restriction \gamma, P\restriction \gamma)_p^\gamma$,
for all such $\gamma$. But then, by \cref{gapmeet} again, $N \notin_\gamma M$, for all $\gamma \in a(M)\cap a(N)$.
\end{proof}

It would be useful to introduce some notation. 

\begin{definition} Suppose $M\in \mathscr V$ and $\M$ is a finite subset of $\mathscr V$. Let $\alpha \in E$. 
We write $\M \in_\alpha M$ if $N\in_\alpha M$, for all $N\in \M$. 
\end{definition}

\begin{lemma}\label{level-merging} Suppose $\M_p\in \mathbb M^\kappa_\lambda$ and $\delta \in E$. 
Suppose $M\in \M_p^\delta$ is a countable model,  $\M\in_\delta M$ is a finite $\delta$-chain closed under meets,
and  $(\M_p\rest M)^\delta \subseteq \M$. Then the closure of $\M_p^\delta \cup \M$ under meets that are active at $\delta$
is a $\delta$-chain. 
\end{lemma}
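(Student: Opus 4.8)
The plan is to verify directly that the closure $\mathcal N$ of $\M_p^\delta \cup \M$ under meets active at $\delta$ is a $\delta$-chain by showing that $\in_\delta^*$ totally orders it and then exhibiting an increasing enumeration, invoking \cref{finite-chain}. The guiding picture is that $\M$ replaces the part of $\M_p^\delta$ lying $\delta$-below $M$, while leaving the part $\delta$-above $M$ untouched.

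First I would decompose $\M_p^\delta$ using $M$ as a pivot into the three pieces $A=(M,V_\lambda)_p^\delta$, the singleton $\{M\}$, and $(\emptyset,M)_p^\delta$, and then split the last piece, via \cref{gapmeet}, into $B=(\M_p\restriction M)^\delta$ (the models directly $\delta$-in $M$) and the union of gaps $C=\bigcup\{[N\land M,N)_p^\delta : N\in B \text{ a Magidor model}\}$. Since by hypothesis $B\subseteq\M$ and $\M\in_\delta M$, every model of $\M$ is $\delta$-below $M$; hence $A\cup\{M\}$ forms the top segment of the chain, with $M\in_\delta^* P$ for every $P\in A$ and $Q\in_\delta^* M$ for every $Q\in\M\cup C$ (the latter because each gap model lies $\delta$-below some $N\in B$ and $N\in_\delta M$). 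This disposes of every pair with one member in $A\cup\{M\}$.

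The crux is the bottom part: merging $\M$ with the gap models $C$. For a fixed Magidor model $N\in B\subseteq\M$ (so $N\in\mathcal N$) I would prove the separation property that every $Q\in\M$ with $Q\in_\delta^* N$ in fact satisfies $Q\in_\delta^* N\land M$. To see this, pass to the immediate $\M$-predecessor $Q^*$ of $N$, so $Q^*\in_\delta N$ and, since $Q^*\in\M$, also $Q^*\in_\delta M$; then \cref{meetactiveness} gives $Q^*\in_\delta N\land M$, and $Q\in_\delta^* Q^*$ completes the claim. As the gap models of $N$ lie in $[N\land M,N)_p^\delta$, i.e. $\delta$-above $N\land M$, the model $N\land M\in\M_p^\delta\subseteq\mathcal N$ separates the $\M$-models below $N$ from the gap models of $N$, so below $N$ the two families stack linearly as $[\M\text{-models}]\prec N\land M\prec[\text{gap models}]\prec N$ with no interleaving. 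Running this over all Magidor models of $B$ (and using that each such $N$ is itself in $\M$, so the same argument compares gaps of distinct Magidor models through their meets) shows $\M\cup C$ is linearly ordered by $\in_\delta^*$. The degenerate case where $N\land M$ is inactive at $\delta$ is handled by the convention in \cref{gapmeet}: by the activity-propagation remark following \cref{def-active}, no active $Q$ can then be $\delta$-in $N\land M$, so there is no $\M$-model strictly $\delta$-below $N$ and nothing to merge.

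Finally I would close under meets. The only genuinely new elements are meets $R\land S$ active at $\delta$ with one factor in $\M$ and the other in $\M_p^\delta$; each such meet is $\in_\delta^*$-below both factors, and \cref{meetactiveness} characterizes, for any third model $T$, exactly when $T\in_\delta R\land S$, inserting $R\land S$ consistently into the linear order already obtained, while \cref{doublemeet} and \cref{meetin} guarantee that iterating the process produces nothing outside that order. Having shown $\in_\delta^*$ totally orders $\mathcal N$, I obtain the required increasing enumeration and fill in the direct $\in_\delta$-links between consecutive models exactly as in the proof of \cref{finite-chain}, using \cref{intermediate}. The main obstacle is the separation step: making \cref{meetactiveness} push every $\M$-model below a Magidor $N\in B$ strictly underneath the meet $N\land M$, which is precisely what prevents the two chains from interleaving.
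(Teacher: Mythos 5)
Your first half is sound and corresponds to the paper's opening paragraph: the paper also decomposes $(\emptyset,M)_p^\delta$ via \cref{gapmeet} into $(\M_p\rest M)^\delta$ and the gap intervals $[N\land M,N)_p^\delta$, and hooks the last $\M$-model below a Magidor $N\in(\M_p\rest M)^\delta$ directly into $N\land M$ by exactly your separation computation ($P\in_\delta N$, $P\in_\delta M$, hence $P\in_\delta N\land M$). The genuine gap is in your final step, which you dismiss as routine but which is where the paper spends essentially all of its effort. Knowing, via \cref{meetactiveness}, for which $T$ one has $T\in_\delta R\land S$ does not ``insert $R\land S$ consistently into the linear order'': it gives only downward comparability, for models that are directly $\in_\delta$ both factors, and says nothing about (i) whether $R\land S$ is $\in_\delta$-related, directly or through chain elements, to the models above it, (ii) whether two distinct new meets are comparable with each other, or (iii) whether the direct $\in_\delta$-links needed for the increasing enumeration of \cref{finite-chain} exist. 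Since a $\delta$-chain requires comparability witnessed by at most one intermediate model of the set, these are exactly the points that must be proved, and your appeal to \cref{doublemeet} and \cref{meetin} as ``guaranteeing'' them is an assertion, not an argument.

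What the paper actually does at this step is the following. First, the only genuinely new meets are $Q\land P$ with $Q\in\M$ Magidor and $P$ a countable gap model: if the countable factor were in $\M$, then \cref{intermediate} puts the Magidor factor $\delta$-in $M$, hence in $(\M_p\rest M)^\delta\subseteq\M$, and $\M$ is meet-closed. Second, by \cref{doublemeet}, if some Magidor $R\in\M_p^\delta$ lies between $Q$ and $P$ then $Q\land P=Q\land(R\land P)$ with $R\land P\in\M_p^\delta$, so by induction on the number of intervening Magidor models one may assume there are none; thus $P$ is among the countable models $P_0=N\land M,P_1,\dots,P_{k-1}$ at the bottom of a single gap block, with $Q\in_\delta P_0$. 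Third---and this is the heart of the argument, absent from your proposal---these new meets form an ascending block sitting immediately below $Q$: \cref{meetin} gives $Q\land P_i\in_\delta P_{i+1}$, the Magidor property of $Q$ gives $Q\land P_i\in_\delta Q$, and then \cref{meetactiveness} yields $Q\land P_i\in_\delta Q\land P_{i+1}$, so that $S\in_\delta Q\land P_0\in_\delta\cdots\in_\delta Q\land P_{k-1}\in_\delta Q$ for the appropriate predecessor $S$. This block structure is what makes all new meets pairwise comparable and comparable with the old models, and supplies the consecutive direct links for \cref{finite-chain}. So your closing sentence misidentifies the main obstacle: it is not the separation step (which is the easy half, and which you and the paper handle identically) but precisely this insertion of the new meets.
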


\begin{proof} 
 Let us first show that $\M_p^\delta \cup \M$ is a $\delta$-chain. Indeed, by \cref{gapmeet} 
 it is obtained by adding  to $\M$ the intervals $[N\land M, N)_p^\delta$, where $N\in (\M_p\rest M)^\delta$ is a Magidor 
 model, and the interval $[M,V_\lambda)_p^\delta$. Consider one such interval, say $[N\land M,N)_p^\delta$. 
 If $P$ is the last model of $\M$ before $N$ then $P\in_\delta M$ by the assumption that $\M\in_\delta M$,  
 and $P\in_\delta N$ by \cref{intermediate}. Hence by \cref{MPI} we have that $P\in_\delta N\land M$. 
 It follows that $\M_p^\delta \cup \M$ is a $\delta$-chain. 
 
 Let us now consider what happens when we close $\M_p^\delta \cup \M$ under meets that are active at $\delta$. 
Suppose $Q, P\in \M_p^\delta \cup \M$, $Q$ is a Magidor model, $P$ is countable, and $Q\in_\delta P$.
If $P\in \M$ then $Q\in_\delta P\in_\delta M$, and hence by \cref{intermediate} $Q\in M$, and so $Q\in \M$ as well. 
Since $\M$ is closed under meets, we have that $Q\land P\in \M$. 
Now suppose $P\in \M_p^\delta \setminus \M$. By \cref{gapmeet} we have that $P\in [M,V_\lambda)_p^\delta$
or $P\in [N\land M,N)_p^\delta$, for some Magidor model $N\in (\M_p\rest M)^\delta$.
The two cases are only notationally different, so let us assume that there is  a Magidor model  $N\in (\M_p\rest M)^\delta$ 
such that $ P\in [N\land M,N)_p^\delta$. 
We may assume that $Q\in \M$.
Note that  $Q\in_\delta N$ and $Q\in_\delta M$, hence by \cref{meetactiveness} $Q\in_\delta N\land M$.
If there is a Magidor model $R\in \M_p$ such that $Q\in_\delta^* R\in_\delta^* P$, 
let $R$ be the $\in_\delta^*$-largest such model. By \cref{intermediate} we have that $R\in_\delta P$ and
hence $R\land P$ is defined and is below $P$ on the $\delta$-chain $\M_p^\delta$. 
Moreover, since $\M_p\cup \M$ is a $\delta$-chain, also by \cref{intermediate}, we have $Q\in_\delta R$. 
Now, by \cref{meetactiveness} we have that $Q\in_\delta R \land P$, and by \cref{doublemeet} we have  $Q\land P = Q\land (R\land P)$. 
Therefore, we may assume that there are no Magidor models $R\in \M_p^\delta$ with $Q\in_\delta R \in_\delta P$. 
Now, let $\{ P_i : i < k\}$ list all countable models on the chain $[N\land M,N)_p^\delta$ below the first Magidor model, if it exists. 
Then $P_0= N\land M$ and $P= P_j$, for some $j$. Note that $Q\in_\delta P_i$, for all $i<k$, again by \cref{intermediate}.
Now let $S$ be the $\in_\delta^*$-predecessor of $N$  on the $\delta$-chain $\M_p^\delta \cup \M$, if it exists, otherwise let $S$ be $\emptyset$. 
Note that $S\in_\delta N\land M$. Indeed, if $S\in \M_p$ this follows from \cref{intermediate}, and the fact that 
there are no Magidor models in $(S,N\land M)_p^\delta$.  If $S\in \M$ then $S\in_\delta M$ and thus $S\in_\delta N\land M$. 
Now, by \cref{intermediate} we have that $S\in_\delta P_i$, for all $i< k$. 
Hence, by \cref{meetactiveness} we have $S\in_\delta Q\land P_i$, for all $i$. 
By \cref{meetin} we have $Q\land P_i\in_\delta P_{i+1}$, for all $i < k-1$. 
Since $Q$ is a Magidor model, we also have that $Q\land P_i \in_\delta Q$, for all $i<k$.
By \cref{meetactiveness} again, we have that $Q\land P_i \in_\delta Q\land P_{i+1}$, for all $i< k-1$. 
Therefore, $S\in_\delta Q\land P_0\in_\delta \ldots \in_\delta Q\land P_{k-1}\in_\delta Q$,
and $Q\land P$ appears on this chain. If $S=\emptyset$ then an initial segment of this chain may be nonactive
at $\delta$, but the remainder is still a $\delta$-chain. 
 \end{proof}

Now, suppose $p\in \mathbb P^\kappa_\lambda$ and $M\in \M_p$ is a countable $\beta$-model, for some $\beta \in E$. 
Let $A={\rm Hull}(M,V_{\beta})$. Then $A\in \mathscr A_\beta$. 
Note that $E_A\cap \beta= E\cap \beta$, and if $\beta\in A$ then $\beta\in E_A$. 
Also, note that the definitions of $\mathbb P^\kappa_\alpha$ and the order  relation is $\Sigma_1$ with parameter $V_\alpha$. 
For $\alpha \in E_A$, let $(\mathbb P^\kappa_\alpha)^A$ be the version of $\mathbb P^\kappa_\alpha$ as defined in $A$.  
Then  $(\mathbb P^\kappa_\alpha)^A= \mathbb P^\kappa_\alpha$ if $\alpha < \beta$, and 
$(\mathbb P^\kappa_\beta)^A\subseteq \mathbb P^\kappa_\beta$. 
We will let $\mathscr V^M_\alpha= \mathscr V^A_\alpha \cap M$, and
$(\mathbb P^\kappa_\alpha)^M= (\mathbb P^ \kappa_\alpha)^A\cap M$, if $\alpha \in E_A\cap M$. 
Suppose $N\in \M_p$ and  $N\in_\delta M$, for some $\delta \in a(M)\cap a(N)$,
Then by \cref{full-countable}, $N\in_\alpha M$, where $\alpha = \alpha (M,N)$. 
Note that if $M$ is a standard $\beta$-model then $\alpha < \beta$. 
It may be that $\alpha \notin M$, but then, if we let $\alpha^*= \min (M\cap {\rm ORD}\setminus \alpha)$, we have
that $\alpha^*\in E_A\cap M$, and $\alpha^*$ is of uncountable cofinality in $A$. 
By the previous remarks, if $M$ is a standard $\beta$-model or $\beta\in M$ then $\alpha^*\in E\cap (\beta +1)$,
otherwise $\alpha^*$ may be in the nonstandard part of $M$.
Since $N\in_\alpha M$, there is a model $N^*\in M$ with $N^*\in \mathscr V^A$
which is $\alpha$-isomorphic to $N$. Now, $M$ can compute $N^*\restriction \alpha^*$, hence we may assume
$N^*\in \mathscr V^A_{\alpha^*}$. Moreover, such $N^*$ is unique. Indeed, if there is another model $N^{**}\in M$ with the same
property, since $\alpha^*$ is the least ordinal in $M$ above $\alpha$ and $N^*\cong_\alpha N^{**}$
we would have that $N^*\cong_\delta N^{**}$, for all $\delta \in E_A \cap \alpha^*\cap M$.
Hence, by \cref{isocont} applied in $M$,  we would have that $N^*=N^{**}$.
This justifies the following definition.

\begin{definition}\label{restriction-countable} 
Suppose $p\in \mathbb P^\kappa_\lambda$ and let $M\in \mathcal L(\M_p)$ be a countable $\beta$-model, for some $\beta\in E$. 
Let $N\in \M_p$ and let $\alpha = \alpha (M,N)$. If $N\in_\alpha M$ we let $\alpha^*= \min (M\cap {\rm ORD}\setminus \alpha)$.
We define $N\restriction M$ to be the unique  $N^*\in \mathscr V^M_{\alpha^*}$ such that $N^* \cong_\alpha N$.  
Otherwise we leave $N\restriction M$ undefined. Let
 \[
 \M_ {p\restriction M} = \{ N\restriction M : N\in \M_p\}, \text{ and}
 \]
\[
\dom(d_{p \restriction M})= \{ N\restriction M: N\in \dom(d_p) \text{ and } N\in_{\eta (N)}M \}.
\]
If $N\in \dom(d_p)$ and $N\in_{\eta(N)} M$, let $d_{p\restriction M}(N\rest M)= d_p(N)$.
Let $p\rest M = (\M_{p\restriction M},d_{p \restriction M})$. 
\end{definition}

\begin{remark} Suppose $N\in \dom (d_p)$ and let $\eta= \eta(N)$. If $N\in_\eta M$ then $M$ is strongly active at $\eta$ since $N$ is $\M_p$-free. 
If $\eta \in M$ then we put $N$ in $\dom(d_{p\restriction M})$ and keep the same decoration at $N$. 
If $\eta  \notin M$ we lift $N$ to the least level $\eta^*$ of $M$ above $\eta$, we put the resulting model $N^*$ in $\dom(d_{p\restriction M})$
and copy the decoration of $N$ to $N^*$.  If $P\in \M_p$ is such that $P\rest \eta =N$ then $(P\rest M)\rest \eta^* = N^*$.
Moreover, from $N^*$ we can recover $N$ as $N^*\rest \sup (\eta^*\cap M)$. Thus, the function $d_{p\restriction M}$ is well defined. 
Note also that $p\rest M \in M$. 
\end{remark}

\begin{proposition}\label{countable-condition}
Suppose $p\in\mathbb P^\kappa_\lambda$ and $M\in \mathcal L(\M_p)$ is a countable $\beta$-model, for some $\beta \in E$.
Let 
\[\alpha= \max \{ \alpha(N,M): N\in_{\alpha(N,M)}M \text{ and } N\in \M_p\}.
\]
Let $\alpha^*=\min (M\cap {\rm ORD}\setminus \alpha)$. Then $p\restriction M\in (\mathbb P^\kappa_{\alpha^*})^M$.
\end{proposition}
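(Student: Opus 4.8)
The plan is to reduce everything to a verification inside the model $A={\rm Hull}(M,V_\beta)\in\mathscr A_\beta$. By the remark following \cref{restriction-countable} we already know $p\restriction M\in M$, so since $(\mathbb P^\kappa_{\alpha^*})^M=(\mathbb P^\kappa_{\alpha^*})^A\cap M$ it suffices to show that $A\models\textrm{``}p\restriction M$ is a condition of $\mathbb P^\kappa_{\alpha^*}\textrm{''}$. Here $\alpha$ is the largest active meet-level $\alpha(M,N)$ occurring, $\alpha^*=\min(M\cap{\rm ORD}\setminus\alpha)$, and $\alpha^*\in E_A\cap M$ with ${\rm cof}^A(\alpha^*)>\omega$ by \cref{alphasharpinE} applied inside $A$. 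Everything below is read inside $A$.

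First I would treat the model part $\mathcal M_{p\restriction M}\in(\mathbb M^\kappa_{\alpha^*})^A$. Finiteness is immediate, and each $N\restriction M$ is countable or Magidor exactly when $N$ is, since the $\alpha(M,N)$-isomorphism preserves the type of a model and fixes $\kappa_{N\restriction M}=\kappa_N$. Because $\alpha$ is the maximum of the meet-levels and $\gamma\mapsto\min(M\cap{\rm ORD}\setminus\gamma)$ is monotone, every $N\restriction M$ lives at a level $\leq\alpha^*$, so $\mathcal M_{p\restriction M}\subseteq(\mathscr C_{\leq\alpha^*}\cup\mathscr U^\kappa_{\leq\alpha^*})^A$. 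For the chain condition I would split the levels $\delta\in E_A\cap(\alpha^*+1)$ into the standard ones $\delta\leq\alpha$ and the top level $\delta=\alpha^*$. For a standard $\delta\in E$ at which $M$ is active, \cref{full-countable} gives $N\in_\delta M$ for every $N$ whose restriction is active at $\delta$, and one checks $(N\restriction M)\restriction\delta=N\restriction\delta$; thus $(\mathcal M_{p\restriction M})^\delta$ is, under this identification, the $\delta$-chain closed under meets furnished by \cref{gapmeet}. Closure of $\mathcal M_{p\restriction M}$ under meets in $A$ then transfers from closure of $\mathcal M_p$ under meets via \cref{MPI} and \cref{meetin}, since the restriction operation commutes with meets up to $\cong_\alpha$.

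The main obstacle is the top level $\delta=\alpha^*$, which is nonstandard and carries precisely the models $N\restriction M$ that were lifted all the way up, namely those with $\alpha(M,N)^*=\alpha^*$. Here I would use that $M$ is countable while $\alpha^*$ has uncountable cofinality in $A$: the comparabilities $N\in_\delta M$ and the corresponding meet relations hold for unboundedly many standard $\delta<\alpha$ below $\alpha^*$, so \cref{isocont} and \cref{incontcount}, applied inside $A$, lift both the chain structure and the meet-closure from those cofinally-many levels up to $\alpha^*$ itself. This is exactly the point at which the finiteness of $M$ and the uncountable cofinality of $\alpha^*$ are indispensable, and I expect it to be the delicate part of the argument.

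Finally, for the decoration I would verify that each model in $\dom(d_{p\restriction M})$ is $\mathcal M_{p\restriction M}$-free in $A$ and that clause $(*)$ of \cref{PF} holds. Freeness transfers because $N\in_{\eta(N)}M$ forces $M$ to be strongly active at $\eta(N)$ (as $N$ is $\mathcal M_p$-free), so the lift $N\mapsto N^*$ described in the preceding remark keeps every model lying $\in$-above strongly active. Clause $(*)$ holds because $d_p(N)\in\mathcal P_\omega(V_\kappa)$ lies in the standard part $V_{\alpha(M,P)}$, which is fixed pointwise by the relevant $\alpha$-isomorphisms; hence $d_p(N)\in P$ yields $d_{p\restriction M}(N\restriction M)=d_p(N)\in P\restriction M$. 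Assembling these verifications shows $A\models\textrm{``}p\restriction M\in\mathbb P^\kappa_{\alpha^*}\textrm{''}$, and therefore $p\restriction M\in(\mathbb P^\kappa_{\alpha^*})^M$, as required.
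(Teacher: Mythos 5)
Your overall strategy---internalizing everything to $A={\rm Hull}(M,V_\beta)$ and verifying the clauses of \cref{PF} there---is the same as the paper's, and your treatment of the decoration (freeness and clause $(*)$) is essentially right. The problems are in the verification of the chain/meet-closure clause. First, your split of the levels into ``standard $\delta\le\alpha$'' and ``the top level $\alpha^*$'' is not exhaustive: clause (2) of \cref{MF} must be checked at \emph{every} $\gamma\in E_A\cap(\alpha^*+1)$, and in the nontrivial case $\alpha\notin M$ the set $E_A\cap(\alpha,\alpha^*)$ is nonempty, indeed cofinal in $\alpha^*$, since $\alpha^*$ is a limit point of $E_A$. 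These intermediate levels are not vacuous: if a lifted $\alpha^*$-model is active at any level above $\alpha$, then its activity set (an element of $M$, while $M\cap[\alpha,\alpha^*)=\emptyset$) must be cofinal in $\alpha^*$, so lifted models can be active throughout $(\alpha,\alpha^*)$. The same difficulty already occurs at standard $\gamma\le\alpha$ with $\gamma\notin M$, where your clean identification of $\M_{p\restriction M}^\gamma$ with the \cref{gapmeet} chain is not available. The paper's uniform device---given $\gamma$, set $\delta=\min(M\cap{\rm ORD}\setminus\gamma)$ and $\bar\delta=\sup(M\cap\delta)$, and reduce level $\gamma$ to the trace at level $\bar\delta$, namely $\M_{p\restriction M}^\gamma\restriction\bar\delta=\M_p^{\bar\delta}\restriction M$---is what handles all of these cases at once.

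Second, and more seriously, the step you yourself flag as delicate does not go through as you describe it. To apply \cref{isocont} and \cref{incontcount} (or \cref{incont}) at the limit point $\alpha^*$, you need the relations $N\in_\xi P$ among the lifted models to hold for unboundedly many $\xi\in E_A\cap\alpha^*$. What you establish is that they hold at standard levels $\le\alpha$; but these are bounded below $\alpha^*$ by $\alpha<\alpha^*$, so the hypothesis of the continuity lemmas simply fails for the family of levels you produce. The missing idea---the crux of the paper's proof---is an elementarity transfer: the lifted models and $\alpha^*$ all belong to $M$; every $\xi\in M\cap E_A\cap\alpha^*$ satisfies $\xi\le\bar\delta=\sup(M\cap\alpha^*)$, where the relations hold by \cref{gapmeet} together with \cref{projection-membership}; hence $M$ satisfies ``$N\in_\xi P$ for all $\xi\in E_A\cap\alpha^*$'', and by $M\prec A$ so does $A$. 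Only after this transfer do you have relations at unboundedly many levels below $\alpha^*$ (a limit point of $E_A$ of uncountable cofinality in $A$), so that \cref{incontcount}/\cref{incont} can be invoked inside $A$, and the same transfer is needed for meets via \cref{MPI}. Countability of $M$ plus the uncountable cofinality of $\alpha^*$ do not by themselves substitute for this step.
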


\begin{proof} Let $A={\rm Hull}(M,V_\beta)$ and work in $A$.  It is clear that $\M_{p\restriction M}$ is a finite subset of 
$\mathscr C_{\leq \alpha^*}^A \cup \mathscr U_{\leq \alpha^*}^A$. 
We first show that  $\M_{p\restriction M}^\gamma$ is a $\gamma$-chain closed under meets, for all $\gamma \in E_A\cap (\alpha^*+1)$. 
Fix such $\gamma$ and let $\delta=\min (M\cap {\rm ORD}\setminus \gamma)$ and $\bar\delta = \sup (M\cap \delta)$. 
If $\bar\delta = \delta$ then $\gamma = \delta$, and hence $\gamma \in M$ and the conclusion follows from the fact that $p$ is a condition
and \cref{gapmeet}.
Let us assume now that $\bar\delta < \delta$. Note that then $\bar\delta,\delta \in E$, $\delta$ is of uncountable cofinality in $M$,
and is a limit point of $E$. 
Note that if $P\in \M_{p\restriction M}$ is a $\delta$-model that is active at $\gamma$ then $a(P)$ is cofinal in $\delta$.
Moreover, $a(P)\in M$ and since $\bar\delta = \sup (M\cap \delta)$ we have that $\bar\delta \in a(P)$.
This implies that $\M_{p\restriction M}^\gamma \rest \bar\delta = \M_p^{\bar\delta}\rest M$. 
Therefore, by \cref{gapmeet} it is a $\bar\delta$-chain closed under active meets. 
Now, suppose $N,P\in \M_{p\restriction M}^\delta$ and $N\in_{\bar\delta} P$.
Since $\bar\delta = \sup (M\cap \delta)$ we have that $N\in_\xi P$, for unboundedly many $\xi \in E\cap \delta$. 
We conclude that $N\in_\delta P$. Indeed, if $P$ is countable this follows from \cref{incontcount} applied in $A$, and
if $P$ is a Magidor model this follows from \cref{incont}, again applied in $A$.
Moreover, assuming $N$ is a Magidor model and $P$ is countable, and $N\rest \gamma \land P\rest \gamma$ is defined and active at $\gamma$
then, by \cref{MPI}, $N\land P$ is defined and active at unboundedly many $\xi \in E\cap \delta$, and hence it is also active at $\delta$ and $\bar\delta$. 
It follows that $\M_{p\restriction M}^\delta$ is a $\delta$-chain closed under meets, and hence $\M_{p\restriction M}^\gamma$
is  a $\gamma$-chain closed under meets as well. 

Let us check that every $P^*\in \dom(d_{p\restriction M})$ is $\M_{p\restriction M}$-free. 
If $P^*  \in \dom(d_p)$ this is immediate. 
Otherwise, $P^*$ is of the form $P\rest M$, for some $P\in \dom(d_p)$
such that $\eta(P)\notin M$. Let $\eta=\eta(P)$ and $\eta^*=\eta(P^*)$. Note that $M$ is strongly active at $\eta$
and $\eta^*$ is the least ordinal of $M$ above $\eta$. Suppose $N\in \M_{p\restriction M}$ is such that $P^*\in_{\eta^*} N$. 
Then $N\rest \eta\in \mathcal L(\M_p)$ and $P\in_\eta N$. Since $P$ is $\M_p$-free, $N$ must be strongly active at $\eta$. 
Since $\eta= \sup (M\cap \eta^*)$ and $N\in M$ we must have that $N$ is strongly active at $\eta^*$ as well. 
This also establishes $(*)$ from \cref{PF}. Indeed, if $P^*\in_{\eta^*} N$ then $P\in_\eta N\rest \eta$, and
hence $d_p(P)\subseteq N$, since $N\rest \eta \in \mathcal L(\M_p)$, and $p$ is a condition. 
This completes the proof that $p\rest M= (\mathbb P^\kappa_{\alpha^*})^A$.
\end{proof}

\begin{remark} Note that if $p,q\in \mathbb P^\kappa_\lambda$ are such that $q\leq p$ and $M\in \M_p$ then 
$q\rest M \leq p\rest M$. 
\end{remark}

We are planning to show that if $p$ is a condition and $M\in \M_p$ is a countable $\beta$-model
then, for  any $q\leq p\rest M$ with $q\in M$, $p$ and $q\rest \beta$  are compatible, and in fact
the meet $p\land q\rest \beta$ exists. 
Before that we show the following special case of this statement.

\begin{lemma}\label{partial-countable-merging} Suppose $p\in \mathbb P^\kappa_\lambda$ and $\delta \in E$. 
Suppose $M\in \M_p^\delta$ is a countable model,  $\M\in_\delta M, \M\in \mathbb M^\kappa_\delta$, 
and  $(\M_p\rest M)^\gamma \subseteq \M^\gamma$, for all $\gamma \in E\cap (\delta +1)$. 
Suppose further that $P\notin_{\eta(P)}M$, for all $P\in \dom(d_p)$. 
Let $\M_q$ be the closure of $\M_p \cup \M$ under meets and let $d_q=d_p$. Finally, let $q=(\M_q,d_q)$.
Then $q\in \mathbb P^\kappa_\lambda$. 
\end{lemma}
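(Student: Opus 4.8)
The plan is to verify directly that $q=(\M_q,d_q)$ meets the three requirements of \cref{PF}: that $\M_q\in\mathbb M^\kappa_\lambda$, that $\dom(d_q)=\dom(d_p)$ consists of $\M_q$-free nodes, and that the coherence condition $(*)$ holds. The overall strategy is to reduce the first requirement to \cref{level-merging}, applied one level at a time, and then to use the three extra hypotheses ($\M\in_\delta M$, $(\M_p\rest M)^\gamma\subseteq\M^\gamma$ for all $\gamma$, and $P\notin_{\eta(P)}M$ for $P\in\dom(d_p)$) to control the decoration.

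For $\M_q\in\mathbb M^\kappa_\lambda$, I would fix $\gamma\in E$ and show that $\M_q^\gamma$ is a $\gamma$-chain closed under meets. If $\gamma>\delta$, then no model of $\M$ and no newly formed cross-meet is active above $\delta$, so $\M_q^\gamma=\M_p^\gamma$ and there is nothing to prove. If $\gamma\le\delta$, I first check, using that the meet commutes with projection (\cref{MPI}) and \cref{projection-membership}, that $\M_q^\gamma$ coincides with the closure of $\M_p^\gamma\cup\M^\gamma$ under meets active at $\gamma$; the hypotheses $\M\in_\delta M$ and $(\M_p\rest M)^\gamma\subseteq\M^\gamma$ then supply precisely the hypotheses of \cref{level-merging} at level $\gamma$, applied to $M\rest\gamma$, and when $M$ is not active at $\gamma$ I pass to $\bar\gamma=\sup(M\cap\gamma)$ and argue as in the proof of \cref{countable-condition}.

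Next I would show every $N\in\dom(d_p)$ is $\M_q$-free; write $\eta=\eta(N)$. Since $N$ is $\M_p$-free, it suffices to treat $P\in\M_q\setminus\M_p$ with $N\in_\eta P$. If $P\in\M$ is countable then $P\in_\eta M$ (as $\M\in_\delta M$), so $N\in_\eta P\in_\eta M$ and \cref{intermediate} forces $N\in_\eta M$, contradicting the hypothesis $N\notin_\eta M$; hence this cannot occur. If $P$ is a Magidor model it is strongly active at $\eta$ automatically. If $P=Q\land P_0$ is a new meet, then by \cref{meetactiveness} $N\in_\eta Q$ and $N\in_\eta P_0$; the subcase $P_0\in\M$ is excluded exactly as above, while otherwise $Q$ (Magidor) and $P_0$ (by $\M_p$-freeness of $N$) are each strongly active at $\eta$, so \cref{meet-strongly-active} gives that $Q\land P_0$ is strongly active at $\eta$. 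This also identifies the only new models lying $\eta$-above $N$: Magidor models, and meets $Q\land P_0$ with $P_0\in\M_p$.

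Finally, for $(*)$ I must show $d_p(N)\in P$ whenever $N\in\dom(d_p)$, $P\in\M_q$ and $N\in_\eta P$; the case $P\in\M_p$ is $(*)$ for $p$, so by the previous paragraph it remains to handle a Magidor $Q$ with $N\in_\eta Q$, the meet case reducing to this via \cref{meet-trace} together with $d_p(N)\in P_0$ for $P_0\in\M_p$. This is the heart of the argument. Since $N\notin_\eta M$, \cref{gapmeet} places $N$ in a gap $[R\land M,R)^\eta_p$, where $R$ is the $\in_\eta^*$-least Magidor model of $\M_p$ with $N\in_\eta R\in_\eta M$; then $d_p(N)\in R$ by $(*)$ for $p$, so $d_p(N)\subseteq V_{\kappa_R}$. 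I then claim every Magidor $Q$ of $\M_q$ with $N\in_\eta Q$ satisfies $\kappa_R\le\kappa_Q$: a Magidor $Q\in\M$ with $N\in_\eta Q$ strictly $\in_\eta^*$-below $R$ would have $Q\in_\eta M$ and $Q\in_\eta R$, whence $Q\in_\eta R\land M$ by \cref{meetactiveness}; but $R\land M$ is not $\in_\eta^*$-above $N$ and $N\in_\eta^* Q$, so $Q$ would be both $\in_\eta^*$-above and $\in_\eta^*$-below $R\land M$, contradicting that $\in_\eta^*$ is a strict order on the chain. Hence $d_p(N)\subseteq V_{\kappa_R}\subseteq V_{\kappa_Q}$, and since $Q$ is a Magidor model $V_\kappa\cap Q=V_{\kappa_Q}$ yields $d_p(N)\in Q$, as required. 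I expect the main obstacle to be exactly this last step, namely coordinating the gap description of \cref{gapmeet} with the Magidor rank bound, together with the bookkeeping needed when $M$ is not active at $\eta$, where one first projects to $\bar\eta=\sup(M\cap\eta)$.
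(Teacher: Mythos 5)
Your proposal is correct and follows essentially the same route as the paper's proof: the chain condition is reduced level by level to \cref{level-merging}; the hypothesis $P\notin_{\eta(P)}M$ is used exactly as in the paper to rule out countable models of $\M$ (and hence one type of cross-meet) sitting $\eta$-above a decorated node; and condition $(*)$ is verified via \cref{gapmeet} together with the Magidor rank comparison ($\kappa_R\le\kappa_Q$, i.e.\ the paper's $N\cap V_\kappa\subseteq Q\cap V_\kappa$), with the remaining cross-meets handled by \cref{meet-trace} and \cref{meet-strongly-active}. The only (harmless) deviation is your detour through $\bar\gamma=\sup(M\cap\gamma)$ at levels where $M$ is inactive: there the paper simply observes that $\M_q^\gamma=\M_p^\gamma$, since by $\M\in_\delta M$ no model of $\M$, and hence no new meet, can be active at a level at which $M$ is not.
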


\begin{proof} Let us first check that $\M_q^\gamma$ is a $\gamma$-chain closed under active meets, for all $\gamma \in E$.
Fix $\gamma \in E$.  If $M$ is not active at $\gamma$ then $\M_q^\gamma =\M_p^\gamma$, so this follows from the fact that $p$ is a condition. 
 If $M$ is active at $\gamma$ then  this follows from \cref{level-merging}. 

Thus, it remains to check that every $P\in \dom (d_p)$ is $\M_q$-free
and $d_p(P)\in Q$, for all $Q\in \M_q$ such that $P\in_{\eta(P)}Q$. 
Now, fix one such $P\in \dom(d_p)$ and let $\eta = \eta(P)$.
If $M$ is not active at $\eta$, then no model of $\M$ is active at $\eta$, and hence $Q\in \M_p$, for all $Q\in \M_q$ such that $P\in_\eta Q$. 
The conclusion then follows from the fact that $p$ is a condition and $d_p$ is its decoration. 
Suppose now that $M$ is active at $\eta$, but $P$ is either equal to $M\rest \eta$ or is above $M\rest \eta$ on the $\eta$-chain $\M_p^\eta$.
Then, again any $Q\in \M_q$ such that $P\in_\eta Q$ is in $\M_p$, and the conclusion follows as above. 
Suppose now that $M$ is active at $\eta$ and $P\in_\eta^*M$. Note that $\M_q^\eta$ is obtained by
closing $\M_p^\eta \cup \M^\eta$ under meets that are active at $\eta$. 
Suppose $P$ is below $M\rest \eta$ on $\M_p^\eta$. By the assumption, $P\notin_\eta M$, hence by \cref{gapmeet}, 
there must be a Magidor model $N\in (\M_p\rest M)^\eta$ such that $P$ is in the interval $[N\land M\rest \eta ,N)_p^\eta$.
By \cref{intermediate}, we have that $P\in_\eta N$ and thus $d_q(P)\in N$. 
Note that $P$ also belongs to the interval $[N\land M\rest \eta,N)_q^\eta$.
Suppose $Q\in \M_q$ and $P\in_\eta Q$. By replacing $Q$ with $Q\rest \eta$, we may assume that $Q\in \M^\eta$.
Note that $Q$ cannot be a countable model since then we would have $P\in_{\eta}M$. 
If $Q$ is a Magidor model in $\M^\eta$ then $Q$ cannot be below $N$ since then it would be below $N\land M\rest \eta$ on $\M_q^\eta$.
Therefore, $Q$ must be either equal to $N$ or above $N$ on the $\eta$-chain  $\M^\eta$. 
Then we would have $N\cap V_\kappa \subseteq Q \cap V_\kappa$, and hence $d_p(P)\in Q$.
If $Q\in \M_p$ then $Q$ is strongly active at $\eta$ and $d_p(P)\in Q$, since $p$ is a condition. 
It remains to consider the case when $Q$ is of the form $R\land S$, for some Magidor model $R\in \M^\eta$
and countable $S\in \M_p^\eta \setminus \M^ \eta$.
Now, we must have $R=N$ or $N\in_\eta R$ since otherwise $R$, and hence also  $R\land S$, would be below $N\land M\rest \eta$.
Since $d_p(P)\in N$, we must have $d_p(P)\in R$. 
Moreover, since $S \in \M_p^\eta$, and $d_p$ is the decoration of $p$, 
$S$ must be strongly active at $\eta$ and $d_p(P)\in S$. 
By \cref{meet-strongly-active}, $R\land S$ is strongly active at $\eta$. 
By \cref{meet-trace}, we have $R\land S \cap V_\eta = R\cap S\cap V_\eta$, and hence $d_p(P)\in R\land S=Q$. 
\end{proof}

\begin{lemma}\label{SP} 
Suppose $p\in\mathbb P^\kappa_\lambda$ and $M\in {\mathcal L}(\M_p)$ is a countable $\beta$-model, for some $\beta \in E$.
Let $\alpha^*\in M$ be such that $p\rest M\in (\mathbb P^\kappa_{\alpha^*})^M$.
Then for any $q\in (\mathbb P^\kappa_{\alpha^*})^M$ with $q\leq p\rest M$, $p$ and $q\rest \beta$ are compatible,
and the meet $p\land q\rest \beta$ exists. 
\end{lemma}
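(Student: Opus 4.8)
The plan is to mimic the proof of \cref{Magidor-compatible}, producing an explicit greatest lower bound $r=p\land(q\rest\beta)$ and checking it is a condition. Since $q\in (\mathbb P^\kappa_{\alpha^*})^M$ with $\alpha^*\in M$, all models of $q$ are virtual models computed inside $A={\rm Hull}(M,V_\beta)$ and lying in $M$; transferring them to level $\beta$ turns them into genuine members of $\mathscr C\cup\mathscr U$, all of which are $\in_\delta$-below $M$ at every $\delta\in E$ at which $M$ is active. I would set $\M_r$ to be the closure of $\M_p\cup\M_{q\rest\beta}$ under meets that are active, and define $d_r=d_{q\rest\beta}\cup\big(d_p\rest\{N\in\dom(d_p):N\notin_{\eta(N)}M\}\big)$, so that the decorations coming from $q$ (which already dominate the decorations of $p\rest M$, i.e. those on the decorated models $\in_\eta$-below $M$) replace the corresponding decorations of $p$, while the decorations of $p$ on models not below $M$ are kept unchanged.

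First I would verify the chain condition level by level. Fix $\delta\in E$. If $M$ is not active at $\delta$ then $\M_r^\delta=\M_p^\delta$ and there is nothing to check. If $M$ is active at $\delta$, the set $\M=\M_{q\rest\beta}^\delta$ is a finite $\delta$-chain closed under meets with $\M\in_\delta M$, and $q\leq p\rest M$ gives $(\M_p\rest M)^\gamma\subseteq\M^\gamma$ for all $\gamma\in E\cap(\delta+1)$; hence \cref{level-merging} shows that the closure of $\M_p^\delta\cup\M$ under active meets is a $\delta$-chain. The delicate point here is that $q$ is a condition only as computed in $A$, so at a level $\delta$ of uncountable cofinality in $M$, where $\bar\delta=\sup(M\cap\delta)<\delta$, the internal chain structure of $q$ at the relevant (possibly nonstandard) level must be matched with the genuine $\delta$-chain of $V_\lambda$; this is done exactly as in the proof of \cref{countable-condition}, using the continuity of $\alpha$-membership from \cref{incont} and \cref{incontcount} applied inside $A$.

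Next comes the decoration check, which I expect to be the main work. For $N\in\dom(d_p)$ with $N\notin_{\eta(N)}M$, every model of $\M_r$ that is $\in_{\eta(N)}$-above $N$ already belongs to $\M_p$ (the new models from $q\rest\beta$ are all $\in$-below $M$), so $N$ remains $\M_r$-free and $(*)$ survives from the fact that $p$ is a condition. For the decorations inherited from $q$, I would argue as in \cref{partial-countable-merging}: such an $N$ is $\M_{q\rest\beta}$-free and satisfies $(*)$ because $q$ is a condition in $A$, and when we amalgamate the models of $\M_p$ sitting $\in_{\eta(N)}$-above $M$, the freeness of $N$ in $q$ forces $M$, and hence every such model, to be strongly active at $\eta(N)$; the strong activeness is then pushed through the active meets by \cref{meet-strongly-active} and \cref{meet-trace}, yielding $(*)$ in $\M_r$. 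The bookkeeping for decorated models $N$ with $\eta(N)\notin M$, which get lifted to the level $\eta^*=\min(M\cap\ORD\setminus\eta(N))$, is identical to that in the definition of $p\rest M$ and in \cref{countable-condition}.

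Finally, $\M_p\subseteq\M_r$ together with the preservation (or lifting) of the decorations of $p$ gives $r\leq p$, while each model of $q\rest\beta$ is carried by $\M_r$ and each of its decorations is kept, giving $r\leq q\rest\beta$; that $r$ is the weakest common extension follows directly from the definitions, as in \cref{Magidor-compatible}. The principal obstacle throughout is the interaction between the two transfers: aligning the internal $\delta$-chain of $q$, computed in $A$ at a possibly nonstandard level, with the genuine $\delta$-chain of $V_\lambda$, while simultaneously guaranteeing that the freeness of the decorated models of $q$ is not destroyed once the higher models of $\M_p$ are amalgamated above $M$.
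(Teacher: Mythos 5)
Your chain construction ($\M_r$ = closure of $\M_p\cup\M_{q\rest\beta}$ under meets, justified by \cref{level-merging}) and your treatment of the decorations of $p$ on models not $\in_{\eta}$-below $M$ (via \cref{partial-countable-merging}) coincide with the paper's proof. The gap is in your treatment of the decorations inherited from $q$: you keep each $P\in\dom(d_q)$ decorated at its own level, and you justify this with the claim that ``the freeness of $P$ in $q$ forces $M$, and hence every such model, to be strongly active at $\eta(P)$.'' That claim is false. Freeness of $P$ in $\M_q$ quantifies only over models of $\M_q$, and $M\notin\M_q$ (indeed $\M_q\in M$), so it imposes no constraint on $M$ at all. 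Concretely, let $\gamma\in M\cap E\cap\beta$ be a limit point of $E$ of uncountable cofinality. Since $M$ is countable, $M\cap E\cap\gamma$ is bounded in $E\cap\gamma$, so the model $M'\in\M_p$ with $M'\rest\beta=M$ is \emph{not} strongly active at $\gamma$. Nothing prevents $q$ from adding a brand new countable model $P$ at the top of its $\gamma$-chain (e.g.\ by applying \cref{topM} inside $M$) and decorating it; $P$ is then vacuously $\M_q$-free. But $P\in\M_q\subseteq M$ gives $P\in_\gamma M$, hence $P\in_\gamma M'$, and since $M'$ is not strongly active at $\gamma$, $P$ is not $\M_r$-free. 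So your $d_r$ is not a legitimate decoration: its domain is not contained in $\mathcal F(\M_r)$, as \cref{PF} requires, and no freeness argument can rescue it without moving $P$.

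The paper's proof contains exactly the device you are missing, and it goes in the opposite direction from the lifting you invoke. For each $P\in\dom(d_q)$ it sets $\delta(P)$ to be the \emph{largest} $\gamma\in E\cap(\eta(P)+1)$ at which $M$ is strongly active, puts $D_q=\{P\rest\delta(P):P\in\dom(d_q)\}$ into $\dom(d_r)$, and transfers the decoration $d_q(P)$ to $P\rest\delta(P)$. With the decorated models pushed down to such levels, freeness becomes provable: the least model $R$ of $\M_r^{\delta(P)}$ above $P\rest\delta(P)$ is either of the form $N\land M\rest\delta(P)$ for a Magidor $N\in(\M_p\rest M)^{\delta(P)}$, which is strongly active by \cref{meet-strongly-active}, or is itself $\in_{\delta(P)} M$, in which case one argues through $R\rest M$ inside $q$. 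Projecting decorated models downward is harmless for the ordering: $r\leq q\rest\beta$ only requires that each $P\in\dom(d_{q\rest\beta})$ have some projection $P\rest\gamma\in\dom(d_r)$ with $d_{q\rest\beta}(P)\subseteq d_r(P\rest\gamma)$, which is precisely what the order in \cref{PF} was designed to allow.
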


\begin{proof} Let $\M_r$ be the closure of $\M_p \cup \M_{q\restriction \beta}$ under meets. By \cref{level-merging} we already know
that $\M_r^\delta$ is a $\delta$-chain, for all $\delta \in E$. Hence $\M_r\in \mathbb M^\kappa_\lambda$. 
It remains to define the decoration $d_r$, and check that it satisfies $(*)$ from \cref{PF}.
Let 
\[
D_p= \{ P\in \dom(d_p) : P\notin_{\eta(P)} M\}.
\]
 Now, suppose $P\in \dom(d_q)$. 
 Let $\delta (P)$ be the largest ordinal $\gamma\in E\cap (\eta(P)+1)$ such that $M$ is strongly active at $\gamma$. 
 Let 
 \[
 D_q= \{ P\rest \delta(P) : P\in \dom (d_q)\}.
 \]
Note that, for every $P\in \dom(d_q)$, we have $(P \rest \delta(P))\rest M = P$, and $P$ is active at $\delta(P)$. 
Observe that $D_p$ and $D_q$ are disjoint. 
Let $\dom(d_r)= D_p \cup D_q$ and define $d_r$ by: 
\[
d_r(P) = \begin{cases} 
             d_p(P) &\mbox{if } P\in D_p \\
             d_q(P\rest M) & \mbox{if } P\in D_q \text{ and } \eta(P)\leq \beta 
             \end{cases}
\]
We have to check that every $P\in \dom(d_r)$ is $\M_r$-free and condition $(*)$  holds. 
By \cref{partial-countable-merging} we have that $(\M_r,d_p\rest D_p)$ is already a condition,
so we may assume $P\in D_q$. 
Fix one such $P\in D_q$, and  let $\eta = \eta(P)$.  
Note that it suffices to show that the least model, say $R$,  on the $\eta$-chain $\M_r^\eta$ above $P$
is strongly active at $\eta$, and $d_r(P)\in R$. 
By  \cref{gapmeet} either $R\in_\eta M$ or $R=N\land M\rest \eta$, for some Magidor model $N\in (\M_p\rest M)^\eta$. 
Now, if $R$ is of the form $N\land M$, then, since $N$ and  $M$ are strongly active at $\eta$, by \cref{meet-strongly-active}, so is $R$. 
Moreover, $N\rest M\in \mathcal L(\M_q)$ and $d_q(P\rest M)\in M\cap N$. It follows that $d_r(P)\in R$.
Suppose now that  $R\in_\eta M$. Let $\rho= \min (E\cap M\setminus \eta)$. 
Then $P\rest M$ and $R\rest M$ are $\rho$-models, $R\rest M\in \mathcal L(\M_q)$, and $P\rest M \in_\rho R\rest M$. 
Therefore, $R\rest M$ is strongly active at $\rho$, and $d_q(P\rest M)\in R\rest M$. 
Since $(R\rest M) \cap V_\kappa = R\cap V_\kappa$, we get that $d_q(P\rest M)\in R$, and hence $d_r(P)\in R$. 
Moreover, since $R\rest M$ is strongly active at $\rho$, it follows that $R$ is strongly active at $\eta$. 
This shows that all the models in $\dom(d_r)$ are $\M_r$-free and condition $(*)$ holds for $r$. 
The fact that $r\leq p,q\rest \beta$ and is in fact the weakest such condition follows from the definition.  
\end{proof}

\begin{remark}\label{remark-standard} Suppose $p\in \mathbb P^\kappa_\lambda$ and $M\in \M_p$ is a countable $\beta$-model,
for some $\beta \in E$. If either $M$ is standard or $\beta\in M$ we have that $p \rest M\in \mathbb P^\kappa_\lambda$. 
 In particular, \cref{SP} shows that if $p\in \mathbb P^\kappa_\lambda$ then $p$ and $p\rest M$ 
are compatible. Now, we have already observed that, if  $q\in \mathbb P^\kappa_\lambda$ and $q\leq p$,
then $q\rest M \leq p\rest M$. Therefore, even though  it may not be the case that $p\leq p\rest M$,
 every $p$ forces $p\rest M$ to belong to the generic filter, and hence $p$ is stronger than $p\rest M$. 
\end{remark}

Now, by \cref{topM} and \cref{SP} we immediately get the following.

\begin{theorem}\label{countable-sp}
$\mathbb P^\kappa_\lambda$ is $\mathscr C_{\rm st}$-strongly proper.
\end{theorem}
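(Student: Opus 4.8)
The plan is to unwind the definition of $\mathscr C_{\rm st}$-strong properness and reduce it directly to the two lemmas \cref{topM} and \cref{SP}. Fix a standard countable model $M \in \mathscr C_{\rm st}$ and a condition $p \in M \cap \mathbb P^\kappa_\lambda$; I must produce $q \leq p$ that is $(M, \mathbb P^\kappa_\lambda)$-strongly generic in the sense of \cref{strong-generic}. Since $p \in M$, the model $M$ is not yet on the chain of $p$, so the first step is to apply \cref{topM} to obtain the weakest condition $q = p^M \leq p$ with $M \in \M_q$. I then claim that this $q$ is the desired strongly generic condition.

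To verify strong genericity, fix an arbitrary $r \leq q$ and produce the restriction $r \rest M$ demanded by \cref{strong-generic}. Since $M \in \M_q$ is a standard $\beta$-model, we have $\eta(M) = \beta$ and $M$ is strongly active at $\beta$; as $r \leq q$, some $N \in \M_r$ satisfies $N \rest \beta = M$, and because $N \cong_\beta M$ this $N$ is active at $\beta$ (activity at $\beta$ depends only on the $\cong_\beta$-class), so $M = N \rest \beta \in \mathcal L(\M_r)$. Thus $r \rest M$ is defined via \cref{restriction-countable}, and by \cref{countable-condition} together with \cref{remark-standard}, using that $M$ is standard, we get $r \rest M \in \mathbb P^\kappa_\lambda \cap M$. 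This $r \rest M$ serves as the required restriction lying in $M$.

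It remains to check the compatibility clause: any $s \leq r \rest M$ with $s \in M$ is compatible with $r$. Here I invoke \cref{SP} with $r$ in the role of $p$. Since $M$ is standard, every element of $M$ lies in $V_\beta$, so $s$ only mentions models below $\beta$; hence $s \rest \beta = s$, and I may choose $\alpha^* \in E \cap M$ large enough that both $r \rest M$ and $s$ belong to $(\mathbb P^\kappa_{\alpha^*})^M$ (possible because $M \cap E$ is cofinal in $\beta$ and both $s$ and $r \rest M$ lie in $M$, hence are bounded below $\beta$). Applying \cref{SP} yields that $r$ and $s \rest \beta$ are compatible, and since $s \rest \beta = s$ this is exactly the compatibility of $r$ with $s$.

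The point I want to stress is that essentially all of the combinatorial content has already been absorbed into \cref{SP}; the only genuine things to watch are that the canonical restriction $r \rest M$ really lands inside $M$ (which is precisely where standardness of $M$ is used, via \cref{remark-standard}) and that for a standard model the projection $s \rest \beta$ is literally $s$, so that the compatibility of $r$ with $s \rest \beta$ provided by \cref{SP} is compatibility with $s$ itself. Given these observations the theorem follows immediately from \cref{topM} and \cref{SP}, as indicated.
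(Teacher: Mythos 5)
Your proof is correct and follows essentially the same route as the paper's: build $p^M$ with \cref{topM}, use \cref{countable-condition} together with \cref{remark-standard} (standardness of $M$) to see that the canonical restriction $r\rest M$ lands in $M\cap\mathbb P^\kappa_\lambda$, and derive the compatibility clause from \cref{SP}, with standardness also giving $s\rest\beta=s$. The only slip is your justification for the existence of a common level $\alpha^*$: the claim ``$M\cap E$ is cofinal in $\beta$'' can fail when $\beta$ is a successor point of $E$ (such $\beta$ have cofinality $\omega$, so they do carry standard countable models), but the required $\alpha^*$ still exists, because the finitely many levels $\eta(N)$ for $N\in\M_s$ are themselves elements of $M\cap E\cap\beta$, so one may take the maximum of these together with the $\alpha^*$ supplied by \cref{countable-condition}.
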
 
\begin{proof}
Suppose $M\in\mathscr C$ and $p\in M\cap\mathbb P^\kappa_\lambda$. Let $p^M$ be the condition defined
in \cref{topM}. If $q\leq p^M$ then $M\in {\mathcal L}(\M_q)$ and $q\rest M\in M$, and by \cref{remark-standard}, $q\restriction M \in \mathbb P^\kappa_\lambda$. 
Then, by \cref{SP} any extension $r$ of $q\rest M$ with $r\in M$ 
is compatible with $q$, and moreover $q\land r$ exists. 
Thus, $p^M$ is a $(M,\mathbb P^\kappa_\lambda)$-strongly generic condition extending $p$.
\end{proof}

\begin{remark}\label{alpha-strongly-proper}
A  similar proof shows that the forcing $\mathbb P^\kappa_\alpha$ is strongly proper for the collection of
all  $M\in\mathscr C$ such that $\alpha\in M$. 
\end{remark}

\begin{notation}
Let $F$ be a filter in $\mathbb P^\kappa_\lambda$. Then we let $\mathcal M_{F}$ denotes $\bigcup\{\mathcal M_p:~p\in F\}$.
\end{notation}

Let $G$ be a $\mathbb P^\kappa_\lambda$-generic filter over $V$. 
We let $G_\alpha=G\cap\mathbb P^\kappa_\alpha$, for all $\alpha\in E$. 
The following is straightforward. 

\begin{proposition}\label{chainfilter}
 Let $\delta \in E$ with ${\rm cof}(\delta)<\kappa$. Then $\mathcal M_G^\delta$ is a $\delta$-chain.
\end{proposition}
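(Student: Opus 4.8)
The plan is to exhibit $\mathcal M_G^\delta$ as a directed union of the $\delta$-chains $\mathcal M_r^\delta$ ($r\in G$) and to combine this with a monotonicity property of the map $r\mapsto\mathcal M_r^\delta$. First I would record the essentially definitional identity $\mathcal M_G^\delta=\bigcup\{\mathcal M_r^\delta:r\in G\}$: if $M\in\mathcal M_G$ is active at $\delta$ then $M\in\mathcal M_r$ for some $r\in G$, whence $M\restriction\delta\in\mathcal M_r^\delta$, and conversely each $\mathcal M_r^\delta$ is contained in $\mathcal M_G^\delta$ since $\mathcal M_r\subseteq\mathcal M_G$. Thus the problem reduces to showing that this union of $\delta$-chains is again a $\delta$-chain.

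The key point, and the step I expect to require the most care, is the monotonicity claim: \emph{if $r\le p$ in $\mathbb P^\kappa_\lambda$ then $\mathcal M_p^\delta\subseteq\mathcal M_r^\delta$}. Fix $M'\in\mathcal M_p$ active at $\delta$. By the definition of the ordering in \cref{MF} there is $M''\in\mathcal M_r$ with $M''\restriction\eta(M')=M'$, so in particular $\eta(M'')\ge\eta(M')\ge\delta$. Since $\delta\le\eta(M')$ are both in $E$, \cref{transitivity-projections} gives $M''\restriction\delta=(M''\restriction\eta(M'))\restriction\delta=M'\restriction\delta$. It remains to check that $M''$ is itself active at $\delta$, which is the delicate part. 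Writing $\gamma=\eta(M')$ and letting $\pi$ be the collapsing map of ${\rm Hull}(M'',V_\gamma)$ (so $M'=\pi[M'']$), one uses that $\gamma\in E$ forces $\kappa<\gamma$; hence $\pi$ is the identity on $V_\gamma$, in particular on $V_{\kappa_{M''}}$ and on every ordinal below $\delta$. Therefore $\kappa_{M'}=\kappa_{M''}$, and $\pi$ restricts to an isomorphism of ${\rm Hull}(M'',V_{\kappa_{M''}})$ onto ${\rm Hull}(M',V_{\kappa_{M'}})$ that fixes each ordinal ${}<\delta$. Consequently these two hulls have the same intersection with $E\cap\delta$, so one is unbounded in $E\cap\delta$ exactly when the other is. Since $M'$ is active at $\delta$ (recalling \cref{def-active}), so is $M''$, and hence $M'\restriction\delta=M''\restriction\delta\in\mathcal M_r^\delta$, proving the claim.

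Granting the identity and the monotonicity claim, I would conclude as follows. Let $M,N$ be distinct elements of $\mathcal M_G^\delta$; choose $p,q\in G$ with $M\in\mathcal M_p^\delta$ and $N\in\mathcal M_q^\delta$, and, using that $G$ is a filter, pick $r\in G$ with $r\le p,q$. By monotonicity $M,N\in\mathcal M_r^\delta$, and since this set is nonempty the unique $\alpha$ with $r\in\mathbb P^\kappa_\alpha$ satisfies $\delta\le\alpha$, so condition (2) of \cref{MF} applies and $\mathcal M_r^\delta$ is a $\delta$-chain. Thus either $M\in_\delta N$, or $N\in_\delta M$, or there is a witness $P\in\mathcal M_r^\delta$ with $M\in_\delta P\in_\delta N$ (or symmetrically); as $\mathcal M_r^\delta\subseteq\mathcal M_G^\delta$, any such $P$ already lies in $\mathcal M_G^\delta$, which is precisely what the definition of a $\delta$-chain demands. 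I note that only the directedness of $G$ enters, not full genericity, and that the proof is uniform; the hypothesis ${\rm cof}(\delta)<\kappa$ is the case of interest later but is not needed for this particular argument.
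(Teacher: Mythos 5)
Your argument is correct, and since the paper records this proposition as straightforward and leaves its proof empty, the directed-union-plus-monotonicity argument you give is exactly the routine verification the authors intend: the value of the projection $N\restriction\delta$ and activity at $\delta$ are invariant in passing from $M'\in\mathcal M_p$ to any $M''\in\mathcal M_r$ with $M''\restriction\eta(M')=M'$ (your transitive-collapse computation, or equivalently invariance of activity under the $\eta(M')$-isomorphism $\pi$), so $\mathcal M_p^\delta\subseteq\mathcal M_r^\delta$ whenever $r\le p$, and directedness of $G$ finishes the proof. One trivial slip: the $\alpha\in E$ with $r\in\mathbb P^\kappa_\alpha$ is not unique (the posets $\mathbb P^\kappa_\alpha$ increase with $\alpha$), but any such $\alpha$ bounds $\eta(N)$ for every $N\in\mathcal M_r$, hence $\delta\le\alpha$ and clause (2) of \cref{MF} still applies, so nothing is affected; likewise your observation that ${\rm cof}(\delta)<\kappa$ is never used is accurate.
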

\begin{proof}[\nopunct] 
 \end{proof}

\begin{proposition}\label{continuity-Magidor} Let $\delta\in E$  with ${\rm cof}(\delta)<\kappa$. 
Suppose $M\in \M_G^\delta$ is a Magidor model and is not the least model in $\M_G^\delta$.
Then  $M\cap V_\delta = \bigcup \{ Q\cap V_\delta : Q\in_\delta M \mbox{ and } Q\in \M_G^\delta\}$. 
\end{proposition}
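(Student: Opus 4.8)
The plan is to prove the two inclusions separately. The inclusion $\supseteq$ is routine: suppose $Q \in \M_G^\delta$ with $Q \in_\delta M$, and pick $Q' \in M$ with $Q' \in \mathscr V^M$ and $Q' \cong_\delta Q$ as in \cref{membership-alpha-def}. The $\delta$-isomorphism $\sigma \colon Q \to Q'$ extends to an isomorphism of ${\rm Hull}(Q, V_\delta)$ onto ${\rm Hull}(Q', V_\delta)$, and this extension is the identity on the common standard part $V_\delta$ (the same observation used in \cref{meet-trace}); hence $\sigma$ fixes $Q \cap V_\delta$ pointwise and $Q \cap V_\delta = Q' \cap V_\delta$. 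Since $Q' \in M$ has size $< \kappa$ and $M$ is a Magidor model, $V_\kappa \cap M$ is transitive, so $\overline{Q'} \subseteq M$ and therefore $Q' \subseteq M$, exactly as in the proof of \cref{intermediate}. Thus $Q \cap V_\delta = Q' \cap V_\delta \subseteq M \cap V_\delta$.

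For the inclusion $\subseteq$, I would argue by genericity. Fix $x \in M \cap V_\delta$ and set
\[
D_x = \{ q \in \mathbb P^\kappa_\lambda : M \in \M_q \text{ and } \exists Q \in \M_q^\delta \ (Q \in_\delta M \text{ and } x \in Q)\}.
\]
Since $M \in \M_G^\delta$, there is $p_0 \in G$ with $M \in \M_{p_0}^\delta$, and by the definition of the ordering $M \in \M_p^\delta$ for every $p \leq p_0$. I would show that $D_x$ is dense below $p_0$; then any $q \in G \cap D_x$ supplies a model $Q \in \M_q^\delta \subseteq \M_G^\delta$ with $Q \in_\delta M$ and $x \in Q$, so $x$ lies in the right-hand side. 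Letting $x$ range over $M \cap V_\delta$ yields the inclusion.

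To prove density, fix $p \leq p_0$, so $M$ is a Magidor model of $\mathcal L(\M_p)$. By \cref{rest-magidor-prop} the restriction $p \restriction M$ belongs to $M \cap \mathbb P^\kappa_\lambda$ and $p \leq p \restriction M$. Working inside $M$, I would choose a countable model $Q \in M$ that is active at $\delta$ in the sense of \cref{def-active} and satisfies $x, p\restriction M \in Q$. Applying \cref{topM} inside $M$ to $p \restriction M$ and $Q$ produces a condition $(p\restriction M)^Q \in M$ with $Q \in \M_{(p\restriction M)^Q}$ and $(p\restriction M)^Q \leq p \restriction M$. As $(p\restriction M)^Q \in M$ extends $p \restriction M$, \cref{Magidor-compatible} guarantees it is compatible with $p$ and that the meet $q := p \land (p\restriction M)^Q$ exists; then $q \leq p$ and $Q \in \M_q$. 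Finally $Q \in_\delta M$, since $Q \in M$ with $Q \in \mathscr V^M$ and $Q \cong_\delta Q$ witnesses $\delta$-membership via \cref{membership-alpha-def}; and because the collapse of ${\rm Hull}(Q, V_\delta)$ fixes $V_\delta$ pointwise, $x \in Q \restriction \delta$. Hence $Q \restriction \delta \in \M_q^\delta$ lies $\delta$-below $M$ and contains $x$, so $q \in D_x$.

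The main obstacle is the choice, inside $M$, of a countable model active at $\delta$ passing through a prescribed point $x$. When ${\rm cof}(\delta) = \omega$ one takes a standard countable $\delta$-model containing $x$, as in \cref{cstationaryinV}; such models are abundant and visible in $M$. When $\omega < {\rm cof}(\delta) < \kappa$ no countable model can be standard, and one must instead force ${\rm Hull}(Q, V_{\kappa_Q}) \cap E \cap \delta$ to be cofinal in $E \cap \delta$: this is arranged by taking $\kappa_Q = \sup(Q \cap \kappa)$ large enough that $|V_{\kappa_Q}| \geq {\rm cof}(\delta)$ and placing into $Q$ a surjection of $V_{\kappa_Q}$ onto a cofinal subset of $E \cap \delta$, all of which is available inside the Magidor model $M$ since $V_{\kappa_M} \subseteq M$ and $M$ is strongly active at $\delta$. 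The remaining bookkeeping — that $(p\restriction M)^Q$ respects the decorations and that the added model genuinely enters the $\delta$-chain below $M$ — is precisely what \cref{topM} and \cref{Magidor-compatible} deliver; and the hypothesis that $M$ is not the least model of $\M_G^\delta$ is what keeps the right-hand side from being empty, so that the statement is not vacuously violated.
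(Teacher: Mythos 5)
Your $\supseteq$ direction and the outer reduction to a density statement are fine, and they match the paper's first move. The genuine gap is inside the density proof, and it sits exactly where you dismiss the hypothesis that $M$ is not the least model of $\M_G^\delta$ as mere vacuity-prevention: that hypothesis is the engine of the proof, and your argument never uses it. The step that fails is the construction, inside $M$, of a countable $Q\in M$ active at $\delta$. You justify it by saying that a function with range cofinal in $E\cap\delta$ and domain inside $V_{\kappa_Q}$ is ``available inside $M$ since $V_{\kappa_M}\subseteq M$ and $M$ is strongly active at $\delta$''. This is false in general. A countable element $Q$ of a Magidor model is a subset of $M$ (Magidor models are closed under sequences of length $<\kappa_M$), one has $\kappa_Q<\kappa_M$, and every value $f(\bar x)$ with $f\in Q$ and $\bar x\in V_{\kappa_Q}^{<\omega}$ lies in $M$; so if some $f\in Q$ had range cofinal in $E\cap\delta$, then $M$ would contain a cofinal subset of $\delta$ of size $|V_{\kappa_Q}|<\kappa_M$, forcing $\cof(\delta)<\kappa_M$. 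But strong activity of $M$ at $\delta$ only gives $\cof(\delta)\leq |M|<\kappa$. Concretely, let $j:V_{\bar\gamma}\rightarrow V_\gamma$ be as in \cref{factMagidor} with $\gamma\in E$, critical point $\bar\kappa$, $j(\bar\kappa)=\kappa$, and let $\bar\mu$ be a limit point of $E_{V_{\bar\gamma}}$ lying in $E_{V_{\bar\gamma}}$ with $\cof(\bar\mu)\geq\bar\kappa$; then $M=j[V_{\bar\mu}]$ is a standard Magidor $\delta$-model for $\delta=\sup j[\bar\mu]\in E$, strongly active at $\delta$, with $\kappa_M=\bar\kappa$ and $\cof(\delta)=\cof(\bar\mu)\geq\kappa_M$, while still $\cof(\delta)\leq\bar\mu<\kappa$ (see \cref{Magidor models}). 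For such an $M$ no countable model active at $\delta$ can be $\in_\delta$-below $M$ at all; indeed nothing active at $\delta$ can, since any such model would give $\delta\in M$ by \cref{active-Magidor} and then, via its copy inside $M$ and elementarity, $\cof(\delta)<\kappa_M$. So below your $p_0$ (which only witnesses $M\in\M_G^\delta$) the set $D_x$ is empty, and your density claim is false as stated; such an $M$ is necessarily the least model of $\M_G^\delta$, which is precisely the case the proposition excludes.

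The paper uses the hypothesis at the very first step: it works below a condition in which $M$ is not the least model of $\M_p^\delta$, fixes $N\in\M_p$ active at $\delta$ with $N\in_\delta M$ (non-leastness plus \cref{intermediate}), chooses the auxiliary countable model $Q^*$ in $V$ --- not inside $M$ --- containing $N$, $M$ and $x$, adds it on top by \cref{topM}, and takes as witness the meet $Q=M\land Q^*$; the activity of $Q$ at $\delta$ is certified by $N\in_\delta Q$ via \cref{meetactiveness}, and $x\in Q$ follows from \cref{meet-trace}. In other words, the model $N$ supplied by non-leastness is what certifies activity of the witness, and your proof has no substitute for it. Your route through $p\rest M$, \cref{topM}-inside-$M$ and \cref{Magidor-compatible} is not hopeless: it can be repaired by first extracting from the non-leastness hypothesis, exactly as above, that $\delta\in M$ and $\cof(\delta)<\kappa_M$, and only then building $Q$ inside $M$ around a cofinal function that $M$ is now guaranteed to contain. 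But as written, the key existence claim is unproved and, without the hypothesis you set aside, false.
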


\begin{proof} It suffices to show that if $p\in \mathbb P^\kappa_\lambda$ and $M\in \M_p^\delta$ is a Magidor model
that is not the least model of $\M_p^\delta$, and $x\in M\cap V_\delta$, then there is $q\leq p$
and $Q\in \M_q$ which is active at $\delta$ such that  $Q\in_\delta M$ and $x\in Q$. 
Let $N\in \M_p$ be active at $\delta$ such that $N\in_\delta M$. 
Fix some $Q^*\in \mathscr C$ such that $N,M,x\in Q^*$. By Lemma \cref{topM} there
is a condition $q\leq p$ such that $Q^*\in \M_q$. Since $N\in_\delta M$ and $N\in_\delta Q^*$,
if we let $Q= M\wedge Q^*$, by \cref{MPI} $N\in_\delta Q$, and hence $Q$ is active at $\delta$. 
Moreover, $Q\in_\delta M$ and by \cref{meet-trace} $Q\cap V_\delta = N\cap Q^* \cap V_\delta$ and hence
$x\in Q$. It follows that the condition $q$ and the model $Q$ are as required. 
\end{proof}

\begin{theorem} Assume $\kappa$ is supercompact. 
Then $\mathbb P^{\kappa}_\lambda$ preserves $\omega_1$ and $\kappa$, and collapses all cardinals 
between $\omega_1$ and $\kappa$ to $\omega_1$. 
\end{theorem}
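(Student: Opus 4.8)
The plan is to establish the three assertions separately, leaning on the two strong-properness theorems already proved.

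\emph{Preservation of $\omega_1$.} By \cref{countable-sp} the poset $\mathbb P^\kappa_\lambda$ is $\mathscr C_{\rm st}$-strongly proper, and by \cref{cstationaryinV} the family $\mathscr C_{\rm st}$ contains a club in $\mathcal P_{\omega_1}(V_\lambda)$. Since a strongly generic condition is in particular generic, this says that for club-many countable $M$ (after coding the relevant names into a large enough structure) every $p\in M\cap\mathbb P^\kappa_\lambda$ has an $(M,\mathbb P^\kappa_\lambda)$-generic extension. Hence $\mathbb P^\kappa_\lambda$ is proper and $\omega_1$ is preserved.

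\emph{Preservation of $\kappa$.} Here I would use $\mathscr U$-strong properness (\cref{Magidor-strongly-proper}) together with the stationarity of standard Magidor models (\cref{ustationaryinV}, \cref{Magidor-stationary}), which is where supercompactness of $\kappa$ enters. Suppose toward a contradiction that some $p$ forces $\dot f\colon\check\alpha\to\check\kappa$ to be cofinal, with $\alpha<\kappa$. Using \cref{Magidor-stationary} choose a standard Magidor model $M$ (elementary in a sufficiently large $V_\mu$ with $\cof(\mu)\ge\kappa$) containing $p,\dot f,\alpha$ and with $\kappa_M=M\cap\kappa$ an ordinal $<\kappa$; then $\alpha<\kappa_M$. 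Let $q\le p$ be $(M,\mathbb P^\kappa_\lambda)$-strongly generic. I claim $q\Vdash\ran(\dot f)\subseteq\kappa_M$. Indeed, if some $r\le q$ forced $\dot f(\xi)=\eta$ with $\xi<\alpha$ and $\eta\ge\kappa_M$, then using the strong restriction $r\restriction M\in M$ and elementarity of $M$ we could find $s\in M$ with $s\le r\restriction M$ deciding $\dot f(\xi)=\eta'$ for some $\eta'\in M\cap\kappa=\kappa_M$; by strong genericity $s$ is compatible with $r$, and a common extension would force both $\dot f(\xi)=\eta'<\kappa_M$ and $\dot f(\xi)=\eta\ge\kappa_M$, a contradiction. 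Thus $q$ bounds $\ran(\dot f)$ below $\kappa_M<\kappa$, contradicting cofinality. As any collapse of $\kappa$ yields such a cofinal map, $\kappa$ is preserved.

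\emph{Collapsing the interval $(\omega_1,\kappa)$.} Since $\kappa$ is a limit of inaccessibles and, for $\delta\in E$ with $\cof(\delta)<\kappa$, the Magidor values $\{\kappa_M:M\in\mathcal M_G^\delta\text{ a Magidor model}\}$ are cofinal in $\kappa$ (by \cref{chainfilter} and density of adding Magidor models via \cref{topM}), it suffices to show that every Magidor model $M$ occurring in the generic chain $\mathcal M_G^\delta$ satisfies $|\kappa_M|^{V[G]}=\omega_1$; I would prove this by induction on the inaccessible $\kappa_M<\kappa$. Fixing such an $M$, the strong genericity of the condition that placed $M$ on the chain makes $G\cap M$ generic over $V$ for $(\mathbb P^\kappa_\lambda)^M$, which under the transitive collapse $\pi_M$ is a forcing of the same form $\mathbb P^{\kappa_M}_{\bar\gamma}$, now with $\kappa_M$ inaccessible but no longer supercompact in $\overline M=V_{\bar\gamma}$. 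The covering property \cref{continuity-Magidor} gives $M\cap V_\delta=\bigcup\{Q\cap V_\delta:Q\in_\delta M,\ Q\in\mathcal M_G^\delta\}$, so $\kappa_M=\sup\{\kappa_Q:Q\in_\delta M\}$ with $Q\mapsto\kappa_Q$ strictly increasing along the chain; the task is to produce, from the countable side conditions lying below $M$, a cofinal family of countable models enumerable in order type $\omega_1$ whose union covers $\kappa_M$, yielding a surjection of $\omega_1$ onto $\kappa_M$.

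\emph{Main obstacle.} The genuine difficulty is this last step: showing that the countable side conditions below a Magidor model collapse its inaccessible critical value to $\omega_1$. Preservation of $\omega_1$ and of $\kappa$ fall out directly from the two strong-properness theorems, but the collapse requires a local analysis of the generic chain — combining the density of adding countable models (\cref{topM}), the continuity of the Magidor subchain (\cref{continuity-Magidor}), and the self-similarity of $\mathbb P^\kappa_\lambda$ under the transitive collapse of a Magidor model — to exhibit inside each $M$ a cofinal $\omega_1$-chain of countable models covering $\kappa_M$. This is exactly where only inaccessibility, rather than supercompactness, of the collapsed cardinals is used, and where the decorations, engineered to make $\{\kappa_M:M\in\mathcal M_G^\delta\}$ club in $\kappa$, do their work.
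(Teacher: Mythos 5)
Your handling of the first two clauses is correct and coincides with the paper's: $\omega_1$ is preserved because $\mathbb P^\kappa_\lambda$ is $\mathscr C_{\rm st}$-strongly proper and $\mathscr C_{\rm st}$ is club in ${\mathcal P}_{\omega_1}(V_\lambda)$ (\cref{countable-sp}, \cref{cstationaryinV}), and $\kappa$ is preserved because $\mathbb P^\kappa_\lambda$ is $\mathscr U$-strongly proper and Magidor models are stationary (\cref{Magidor-strongly-proper}, \cref{Magidor-stationary}); your explicit argument that a strongly generic condition for a Magidor model $M$ bounds any $\dot f\colon \alpha\to\kappa$ below $\kappa_M=M\cap\kappa$ is exactly the standard reasoning the paper leaves implicit. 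The collapsing clause is where there is a genuine gap, and you flag it yourself as the ``main obstacle'' without resolving it; moreover, the route you sketch would not close it. You propose to cover $\kappa_M$ by countable models lying \emph{below} $M$ on the generic chain (via \cref{continuity-Magidor}), aided by an induction on $\kappa_M$ through the collapsed copy $\mathbb P^{\kappa_M}_{\bar\gamma}$ of the forcing. But the family of countable models $\in_\delta$-below $M$ does not have $\subseteq$-linearly ordered traces: $\in_\delta$ fails to be transitive across an intervening Magidor model (\cref{intermediate} needs the middle model countable or the outer one Magidor), and this family can have size $\kappa_M$, so the covering in \cref{continuity-Magidor} gives no bound on $|M\cap V_\delta|$. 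The induction does not repair this: the inner forcing $\mathbb P^{\kappa_M}_{\bar\gamma}$ \emph{preserves} its own top cardinal $\kappa_M$, so the collapse of $\kappa_M$ to $\omega_1$ must come from models not in $M$; and to use the induction hypothesis on the Magidor models below $M$ one would need the chain of those models to have cofinality at most $\omega_1$ in $V[G]$, which is not established and is essentially the statement being proved.

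The paper's argument goes in the opposite direction and avoids all of this: it covers a Magidor model by the countable models sitting \emph{above} it, in the interval up to the next Magidor model. Fix a cardinal $\mu<\kappa$ and $\alpha\in E$ of cofinality $<\kappa$; by density there is a Magidor model $N\in\M_G^\alpha$ with $\mu\in N$, hence $\mu\subseteq N\cap V_\alpha$. Let $N^*$ be the least Magidor model above $N$ in $\M_G^\alpha$ and let $I=(N,N^*)^\alpha_{\M_G}$. Every model in $I$ is countable, so by \cref{intermediate} the relation $\in_\alpha$ is transitive on $I$ and the traces $P\cap V_\alpha$, for $P\in I$, form a $\subseteq$-increasing chain of countable sets. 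A density argument using meets shows these traces cover $N\cap V_\alpha$: given $x\in N\cap V_\alpha$, add via \cref{topM} a countable $Q^*$ containing $x,N,N^*$; then $N^*\land Q^*$ lies in $I$ by \cref{meetactiveness} and contains $x$ by \cref{meet-trace}. Since an increasing chain of countable sets has union of size at most $\omega_1$, we get $|N\cap V_\alpha|\leq\omega_1$ and hence $|\mu|\leq\omega_1$ in $V[G]$. This interval structure between \emph{consecutive} Magidor models is precisely the local analysis missing from your proposal.
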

\begin{proof}
By  \cref{cstationaryinV}, $\mathscr C_{\rm st}$ is stationary  in ${\mathcal P}_{\omega_1}(V_\lambda)$, and 
by \cref{SP}, $\mathbb P^\kappa_\lambda$ is $\mathscr C_{\rm st}$-strongly proper. Hence $\omega_1$ is preserved. 
By \cref{ustationaryinV}, $\mathscr U$ is stationary in ${\mathcal P}_{\kappa}(V_\lambda)$, and by \cref{Magidor-strongly-proper},
$\mathbb P^\kappa_\lambda$ is $\mathscr U$-strongly proper. Hence $\kappa$ is preserved. 
Now,  fix  a  cardinal $\mu<\kappa$. Let $G$ be a $\mathbb P^\kappa_\lambda$-generic filter over $V$. 
Fix  $\alpha\in E$ of cofinality less than $\kappa$. 
A standard density argument shows that there exists a Magidor  model $N\in \M_G^\alpha$  such that $\mu\in N$. 
By \cref{chainfilter}  $\mathcal M_G^\alpha$ is an $\in_\alpha$-chain. 
Let $N^*$ be the least Magidor model above $N$ in $\mathcal M_G^\alpha$, and let  $I=(N,N^*)^\alpha_{\M_G}$.
Note that every model in $I$ is countable and $\in_\alpha$ is transitive on $I$. 
Hence if $P,Q\in I$ and $P\in_\alpha Q$ then $P\cap V_\alpha \subseteq Q\cap V_\alpha$. 
Another standard density argument shows that, for every $x\in N\cap V_\alpha$, there is $P\in I$ such that 
$x\in P$. Thus, $\{ P\cap V_\alpha: P\in I\}$ is an increasing chain of countable sets whose union covers $N\cap V_\alpha$. 
It follows that $N\cap V_\alpha$ is of cardinality at most $\omega_1$. Since $\mu$ belongs to the transitive part of $N$,
we also get that $| \mu | \leq \omega_1$. 
\end{proof}

\begin{theorem}
$\mathbb P^{\kappa}_\lambda$ collapses cardinals of the interval between $\kappa$ and $\lambda$ to $\kappa$.
\end{theorem}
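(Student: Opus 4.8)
The plan is to mirror the proof of the previous theorem, using Magidor models to cover the ordinals below a given cardinal, just as countable models were used there to cover $N\cap V_\alpha$. Fix a cardinal $\mu$ with $\kappa<\mu<\lambda$. Since $\kappa$ is preserved and $\mu>\kappa$, it suffices to write $\mu$ in $V[G]$ as a union of at most $\kappa$ sets each of size less than $\kappa$, for then $|\mu|^{V[G]}\leq\kappa$, and hence $|\mu|^{V[G]}=\kappa$.

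First I would fix the level at which to work. Since $\cof(\lambda)\geq\kappa$, the set $E$ is club in $\lambda$ (as in the proof of \cref{cstationaryinV}), so it contains ordinals of countable cofinality cofinally; choose $\delta\in E$ with $\delta>\mu$ and $\cof(\delta)<\kappa$. By \cref{chainfilter}, $\mathcal M_G^\delta$ is a $\delta$-chain. Let $\langle N_i:i<\theta\rangle$ enumerate the Magidor models appearing in $\mathcal M_G^\delta$ in $\in_\delta^*$-increasing order. As in the proof of \cref{finite-chain}, distinct Magidor models on a $\delta$-chain are strictly ordered by the values $\kappa_{N_i}=\sup(N_i\cap\kappa)<\kappa$, so $i\mapsto\kappa_{N_i}$ is injective into $\kappa$ and therefore $\theta\leq\kappa$. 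Each $N_i$ is a virtual model, so $|N_i|^V<\kappa$, and consequently $|N_i\cap\mu|^{V[G]}\leq\omega_1$ by the previous theorem.

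The heart of the argument is the covering claim: every $\xi<\mu$ belongs to some $N_i$. This I would establish by a density argument. Given a condition $p$ and $\xi<\mu$, I would build a Magidor model $N^*\in\mathscr U$ with $p,\xi\in N^*$ that is active at $\delta$: fixing a cofinal set $c\subseteq E\cap\delta$ of order type $\cof(\delta)<\kappa$, I would use the stationarity of the Magidor models (\cref{Magidor-stationary}, \cref{ustationaryinV}) to find such an $N^*$ containing $c$ as an element and with $\cof(\delta)<\kappa_{N^*}$. Since $N^*$ is closed under sequences of length $<\kappa_{N^*}$, we get $c\subseteq N^*$, so $N^*\cap E\cap\delta$ is cofinal in $E\cap\delta$ and $\eta(N^*)\geq\delta$. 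By \cref{topM} there is $q\leq p$ with $N^*\in\mathcal M_q$, and then $N^*\restriction\delta\in\mathcal M_q^\delta$ is a Magidor model active at $\delta$ with $\xi\in N^*\restriction\delta$, since the collapse defining the projection fixes $V_\delta$ pointwise and $\xi<\delta$. By genericity it follows that $\mu\subseteq\bigcup_{i<\theta}N_i$.

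Combining these observations, $\mu=\bigcup_{i<\theta}(N_i\cap\mu)$ exhibits $\mu$ in $V[G]$ as a union of at most $\kappa$ sets each of size at most $\omega_1$, whence $|\mu|^{V[G]}\leq\kappa$; as $\mu>\kappa$ and $\kappa$ is preserved, $|\mu|^{V[G]}=\kappa$. Since $\mu$ was an arbitrary cardinal in the interval $(\kappa,\lambda)$, this shows that every such cardinal is collapsed to $\kappa$, so that $\lambda$ becomes $\kappa^+$ in $V[G]$. The main obstacle I anticipate is the density step, namely verifying that a Magidor model active at $\delta$ and containing a prescribed ordinal can be constructed and inserted into the $\delta$-chain via \cref{topM} while remaining active at $\delta$; the closure of Magidor models under $<\kappa_{N^*}$-sequences together with $\cof(\delta)<\kappa$ is precisely what makes this possible.
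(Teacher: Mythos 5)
Your proposal is correct and takes essentially the same route as the paper's proof: the paper also fixes a level $\alpha\in E$ of cofinality $<\kappa$, uses a standard density argument from the stationarity of $\mathscr U$ (your step via \cref{topM}) to show that the Magidor models on the $\alpha$-chain $\M_G^\alpha$ cover $V_\alpha$, and concludes from the fact that these models form an increasing chain of at most $\kappa$ many sets, each of $V$-size $<\kappa$, that $|V_\alpha|\leq\kappa$ in $V[G]$. The only adjustment needed is to take $\delta$ to be a \emph{limit point} of $E$ of cofinality $<\kappa$ (such $\delta$ exist cofinally in $\lambda$ since $E$ is club), so that the cofinal set $c\subseteq E\cap\delta$ used in your density step actually exists.
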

\begin{proof}
Let $\alpha \in E$ be of cofinality less than $\kappa$, and let $G$ be a $V$-generic filter over $\mathbb P^\kappa_\lambda$.. 
Let $U_\alpha$ be the set of Magidor models in $\M_G^\alpha$. By \cref{intermediate}, we have that 
$\in_\alpha$ is transitive on $U_\alpha$. Note that if $P,Q \in U_\alpha$ then $P\cap V_\alpha  \subseteq Q\cap V_\alpha$. 
Now, a standard density argument using the stationarity of $\mathscr U$ shows that, for every $x\in V_\alpha$, there is $P\in U_\alpha$ such that $x\in P$. 
It follows that $\{ P\cap V_\alpha : P\in U_\alpha\}$ is an increasing family of sets of size $<\kappa$ whose union is $V_\alpha$.
Therefore, $V_\alpha$ has cardinality $\leq \kappa$ in $V[G]$.   
\end{proof}

\begin{theorem}
Suppose $\lambda$ is an inaccessible cardinal. Then $\mathbb P^{\kappa}_\lambda$ is $\lambda$-c.c.
\end{theorem}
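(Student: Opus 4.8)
The plan is to combine a cardinality bound coming from the inaccessibility of $\lambda$ with the projection Lemma~\cref{CS}, and then to derive a contradiction from a maximal antichain of size $\geq\lambda$ by an elementary submodel reflection argument.

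First I would record the basic size estimate. Since $\lambda$ is inaccessible, for every $\alpha\in E$ with $\alpha<\lambda$ we have $|V_\alpha|<\lambda$ and $2^{|V_\alpha|}<\lambda$. Every $M\in\mathscr V_{\leq\alpha}$ is an elementary submodel, of size $<\kappa$, of a transitive structure ${\rm Hull}(M,V_{\eta(M)})\in\mathscr A_{\eta(M)}$ of cardinality $|V_{\eta(M)}|\leq|V_\alpha|$; hence $|\mathscr V_{\leq\alpha}|\leq 2^{|V_\alpha|}\cdot|V_\alpha|^{<\kappa}<\lambda$, and all these models, together with the conditions built from them, lie in $V_\lambda$. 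As a condition is a finite set of such models together with a finite partial decoration into $\mathcal P_\omega(V_\kappa)$, it follows that $|\mathbb P^\kappa_\alpha|<\lambda$ and $\mathbb P^\kappa_\alpha\subseteq V_\lambda$. Since $\mathcal M_p$ is finite, each $p\in\mathbb P^\kappa_\lambda$ lies in $\mathbb P^\kappa_{\alpha_p}$ for a least $\alpha_p\in E$, $\alpha_p<\lambda$.

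Next, assuming towards a contradiction that the $\lambda$-c.c.\ fails, fix a maximal antichain $A$ with $|A|\geq\lambda$. Fix a large regular $\theta$ and build $X\prec H_\theta$ with $A,\mathbb P^\kappa_\lambda,\kappa,\lambda\in X$, $|X|<\lambda$, and such that, writing $\mu=X\cap\lambda$, we have $V_\mu\subseteq X$ and $\cof(\mu)=\kappa$ (take $X=\bigcup_{i<\kappa}X_i$ for an increasing continuous chain of elementary submodels with $V_{\sup(X_i\cap\lambda)}\subseteq X_{i+1}$). Then $\mu$ is a limit cardinal, $V_\mu=X\cap V_\lambda\prec V_\lambda$, so $\mu\in E$, and for every $\beta<\mu$ the structures in $\mathscr A_\beta$ have rank $<\mu$, whence $\mathbb P^\kappa_\beta\subseteq V_\mu\subseteq X$. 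Since $|A|\geq\lambda>|X|$, choose $p^*\in A\setminus X$; by the previous sentence $\alpha_{p^*}\geq\mu$. The crucial point is that $p^*\restriction\mu$ reflects below $\mu$: because $\cof(\mu)\geq\kappa$ and every $M\in\mathscr C\cup\mathscr U$ has $\kappa_M<\kappa$ (as $|M|<\kappa$ and $\kappa$ is regular), the witnessing set ${\rm Hull}(M,V_{\kappa_M})\cap E\cap\mu$ of \cref{def-active} has size $<\kappa$ and so cannot be cofinal in $E\cap\mu$; thus no virtual model is active at $\mu$. Consequently every model occurring in $p^*\restriction\mu$ may be replaced by its projection to its top active level, which is $<\mu$, and by finiteness $p^*\restriction\mu$ is equivalent to a condition $\bar r\in\mathbb P^\kappa_\gamma$ for some $\gamma\in E\cap\mu$; in particular $\bar r\in V_\mu\subseteq X$. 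Now maximality of $A$ yields some $a\in A$ compatible with $\bar r$, and since $\bar r,A\in X$, elementarity gives such an $a=p_\xi\in A\cap X$. Finally, fixing $s\in X$ with $s\leq p_\xi,\bar r$ and $s\in\mathbb P^\kappa_{\gamma'}$ for some $\gamma'\in E\cap\mu$, we have $s\leq p^*\restriction\gamma'$, so by \cref{CS} there is $t\leq p^*,s$; then $t\leq p^*$ and $t\leq s\leq p_\xi$, so $p^*$ and $p_\xi$ are compatible, contradicting that $A$ is an antichain (note $p^*\neq p_\xi$ since $p^*\notin X$).

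I expect the delicate step to be the reflection of $p^*\restriction\mu$ into $X$: one must choose $\mu$ with $\cof(\mu)\geq\kappa$ precisely so that no model is active at level $\mu$, which collapses $p^*\restriction\mu$ to a genuine condition already present in $X$. Without this, the projected condition could involve a level-$\mu$ model with a nonstandard part lying outside $X$, and the reflection—hence the whole argument—would break down. The remaining verifications (the size computation and the application of \cref{CS}) are routine.
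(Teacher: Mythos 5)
Your strategy (the cardinality bounds from inaccessibility, reflection into $X\prec H_\theta$ with $\mu=X\cap\lambda$ of cofinality $\kappa$, the observation that no virtual model is active at $\mu$, and amalgamation via \cref{CS}) is genuinely different from the paper's argument and is sound up to the very last step; but that last step contains a real gap, namely the claim that $s\leq p^*\restriction\gamma'$. What you actually have is $s\leq\bar r$, and $\bar r$ is only \emph{equivalent} to $p^*\restriction\mu$ in the paper's sense of $\sim$ (each forces the other into the generic filter); $\sim$-equivalence does not license substituting one condition for the other on the right-hand side of $\leq$. Concretely, the ordering of \cref{PF} demands exact matching of projections: $s\leq p^*\restriction\gamma'$ requires, for each $M\in\mathcal M_{p^*}$ with $\eta(M)\geq\mu$, some $N\in\mathcal M_s$ with $N\restriction\gamma'=M\restriction\gamma'$, where $M\restriction\gamma'$ is a $\gamma'$-model recording the full $\gamma'$-isomorphism type of $M$. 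But $s\leq\bar r$ only supplies $N$ with $N\restriction\alpha_M=M\restriction\alpha_M$, where $\alpha_M\leq\gamma\leq\gamma'$ is the top active level of $M\restriction\mu$, and the $\gamma'$-model $M\restriction\gamma'$ is not recoverable from the $\alpha_M$-model $M\restriction\alpha_M$. Nothing in the choice of $s$ ties its models to $M\restriction\gamma'$: for instance, if $s=\bar r$ and some $M\in\mathcal M_{p^*}$ with $\eta(M)\geq\mu$ has $\alpha_M<\gamma'$, then the candidates in $\mathcal M_s$ are $\alpha$-models with $\alpha\leq\gamma'$ that equal their own $\gamma'$-projections, and an $\alpha$-model with $\alpha<\gamma'$ is never equal to the $\gamma'$-model $M\restriction\gamma'$. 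So \cref{CS} cannot be invoked as written.

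The gap is repairable with tools already in the paper, at the cost of one more move. From $s\leq\bar r\sim p^*\restriction\mu$ you get that $s$ is compatible with $p^*\restriction\mu$ in $\mathbb P^\kappa_\lambda$; since both $s$ and $p^*\restriction\mu$ lie in $\mathbb P^\kappa_\mu$, which is a complete suborder of $\mathbb P^\kappa_\lambda$ by \cref{alpha-complete-suborder}, they have a common extension $w\in\mathbb P^\kappa_\mu$. Now \cref{CS}, applied to $p^*$ and $w\leq p^*\restriction\mu$, yields $t\leq p^*,w$, and then $t\leq w\leq s\leq p_\xi$, so $p^*$ and $p_\xi$ are compatible, giving the desired contradiction. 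With that repair your proof goes through, but it is worth comparing it with the paper's argument, which is considerably shorter and needs neither maximal antichains nor elementary submodels: given $A$ of size $\lambda$, apply a $\Delta$-system argument to the sets $a(p)=\bigcup\{a(M):M\in\mathcal M_p\}$ (closed, of size $<\kappa$), use inaccessibility to stabilize $\mathcal M_p\restriction\gamma$ on the root $\gamma=\max(a)$ by counting, and then verify directly that for any two such conditions $p,q$ the pair $(\mathcal M_p\cup\mathcal M_q,\,d_p\cup d_q)$ is a common extension. The paper produces compatible pairs by explicit amalgamation exactly where your argument delegates the amalgamation to \cref{CS}; that delegation is legitimate, but only once the distinction between $\sim$ and $\leq$ is handled correctly.
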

\begin{proof}
For each $p\in \mathbb P^\kappa_\lambda$, let $a(p)=\bigcup \{ a(M): M\in \M_p\}$. 
Note that $a(p)$ is a closed subset of $E$ of size $<\kappa$, for all $p$. 
Suppose $A$ is a subset of $P^{\kappa}_\lambda$ of cardinality $\lambda$. 
Since $\lambda$ is inaccessible, by a standard $\Delta$-system argument, we can find a subset $B$ of $A$ of size $\lambda$ 
and a subset $a$ of $E$ such that $a(p)\cap a(q)= a$, for all distinct $p,q\in B$. 
Note that $a$ is closed, and if we let $\gamma = \max(a)$ then $\gamma \in E$.
Since $B$ has size $\lambda$, by a simple counting argument, we may assume there is $\M\in \mathbb M^\kappa_\gamma$
such that $\M_p\rest \gamma = \M$, for all $p\in B$. 
Now, pick distinct $p,q\in B$, and define $\M_r= \M_p \cup \M_q$ and $d_r= d_p \cup d_q$. 
Let $r=(\M_r,d_r)$. It is straightforward to check that $r\in \mathbb P^\kappa_\lambda$ and $r\leq p,q$. 
\end{proof}

\begin{definition}\label{def-clubs} Suppose $G$ is $V$-generic over $\mathbb P^\kappa_\lambda$ and $\alpha \in E$ is of cofinality less than $\kappa$. 
Let $C_\alpha(G)= \{ \kappa_M : M\in \M_G^\alpha\}$. 
\end{definition}

\begin{lemma}\label{addingclub}
Let $G$ be a $V$-generic filter over $\mathbb P^\kappa_\lambda$. Then $C_\alpha(G)$ is a club in $\kappa$, for all $\alpha \in E$
of cofinality $<\kappa$.  Moreover, if $\alpha < \beta$ then $C_\beta(G)\setminus C_\alpha(G)$ is bounded in $\kappa$. 
\end{lemma}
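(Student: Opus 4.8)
The plan is to establish unboundedness and closure of $C_\alpha(G)$ separately, and then to prove the ``moreover'' clause by producing a single Magidor model that sees both $\alpha$ and $\beta$. For \emph{unboundedness}, fix $\xi<\kappa$ and $p\in\mathbb P^\kappa_\lambda$. Using \cref{ustationaryinV} I would choose a standard Magidor model $N$ with $p,\alpha,\xi\in N$ and $\sup(N\cap\alpha)=\alpha$, the latter arranged by requiring $N$ to be closed under a fixed cofinal map $\cof(\alpha)\to\alpha$, which is legitimate since $\cof(\alpha)<\kappa\le\kappa_N$. Then $N$ is active at $\alpha$ and $\xi<\kappa_N$, so \cref{topM} gives $p^N\le p$ with $N\in\M_{p^N}$, whence $N\restriction\alpha\in\M^\alpha_{p^N}$ and $\kappa_{N\restriction\alpha}=\kappa_N>\xi$. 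Thus the conditions forcing an element of $\dot C_\alpha(G)$ above $\xi$ are dense, so $C_\alpha(G)$ is unbounded; the same argument shows the Magidor values are cofinal in $\kappa$.

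For \emph{closure}, let $\nu<\kappa$ be a limit point of $C_\alpha(G)$. Since $(\M_G^\alpha,\in_\alpha^*)$ is a $\delta$-chain (\cref{chainfilter}), well-ordered by $\kappa_M$, let $M^*$ be $\in_\alpha^*$-least with $\kappa_{M^*}\ge\nu$; if $\kappa_{M^*}=\nu$ we are done, so assume $\kappa_{M^*}>\nu$. By minimality every $Q\in\M_G^\alpha$ with $Q\in_\alpha^* M^*$ has $\kappa_Q<\nu$, and since the models realizing $C_\alpha(G)\cap\nu$ lie $\in_\alpha^*$-below $M^*$ we get $\sup\{\kappa_Q:Q\in_\alpha^* M^*\}=\nu$; note that $M^*$ is not the $\in_\alpha^*$-least model. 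The crux is to show $\kappa_{M^*}=\sup\{\kappa_Q:Q\in_\alpha^* M^*\}$, which contradicts $\kappa_{M^*}>\nu$. If $M^*$ is Magidor this is \cref{continuity-Magidor} intersected with $\kappa$. If $M^*$ is countable I would establish the analogous covering $M^*\cap\kappa=\bigcup\{Q\cap\kappa:Q\in_\alpha^* M^*,\ Q\in\M_G^\alpha\}$ by density: given $x\in M^*\cap\kappa$ and $p$ with $M^*\in\M_p$, I work inside $M^*$ with $p\restriction M^*\in(\mathbb P^\kappa_{\alpha^*})^{M^*}$ (\cref{restriction-countable}) to find, via the internal form of \cref{topM}, a countable $\alpha$-active model $Q\ni x$ with $Q\in_\alpha M^*$; then \cref{SP} makes $p$ compatible with this extension, placing such a $Q$ into $\M_G^\alpha$. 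Hence every point of $M^*\cap\kappa$ is captured strictly below $M^*$, giving the covering and $\kappa_{M^*}=\sup_Q\kappa_Q=\nu$, a contradiction. Therefore $\nu=\kappa_{M^*}\in C_\alpha(G)$.

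For the \emph{bounded difference}, given $\alpha<\beta$ I would use \cref{ustationaryinV} and \cref{topM}, exactly as above, to put in $\M_G$ a Magidor model $N^*$ with $\alpha,\beta\in N^*$ that is active at both $\alpha$ and $\beta$; set $\mu_0=\kappa_{N^*}$. I claim $C_\beta(G)\cap(\mu_0,\kappa)\subseteq C_\alpha(G)$, which bounds $C_\beta(G)\setminus C_\alpha(G)$ by $\mu_0$. Fix $\rho=\kappa_M\in C_\beta(G)$ with $\rho>\mu_0$ and $M\in\M_G^\beta$ active at $\beta$. Since $\kappa_{N^*}<\kappa_M$ and $\M_G^\beta$ is a $\beta$-chain, $N^*\in_\beta^* M$. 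Collapsing the transitive closure by repeated use of \cref{intermediate}, either $N^*\in_\beta M$ directly, or the chain reduces to $N^*\in_\beta P\in_\beta M$ with $P$ Magidor and $M$ countable; in the latter case the remark following \cref{def-active} (activeness at a level below $\beta$ is inherited upward along $\in_\beta$), applied with $\gamma=\alpha$, first gives that $P$ is active at $\alpha$ because $N^*$ is. In either case that same remark then yields that $M$ is active at $\alpha$, so $M\restriction\alpha\in\M_G^\alpha$ and $\rho=\kappa_M=\kappa_{M\restriction\alpha}\in C_\alpha(G)$.

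The main obstacle is the closure argument, and within it the countable case, since the excerpt supplies a continuity lemma only for Magidor models (\cref{continuity-Magidor}). I must therefore prove the covering $M^*\cap\kappa=\bigcup_{Q\in_\alpha^* M^*}Q\cap\kappa$ by hand, and this is exactly where the decorations and the strong properness of $\mathbb P^\kappa_\lambda$ for countable models (\cref{SP}) are essential: they guarantee that below any countable model on the $\alpha$-chain one can always, by a density argument carried out inside the model, insert further $\alpha$-active models capturing any prescribed point, so that countable models are never ``jumped over'' and their $\kappa$-value is forced to be the supremum of the values strictly below them.
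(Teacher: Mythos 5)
Your unboundedness argument, your ``moreover'' clause (which is essentially the paper's own argument), and your Magidor case of the closure step (via \cref{continuity-Magidor}) are all fine. But the proof collapses exactly at what you yourself call the crux: the covering claim for countable models, $M^*\cap\kappa=\bigcup\{Q\cap\kappa : Q\in_\alpha^* M^*,\ Q\in\M_G^\alpha\}$, is \emph{false} in general, so no density argument can establish it. Concretely, decorations create permanent gaps in $C_\alpha(G)$. Take countable models $P\in_\alpha M$ with $M$ strongly active at $\alpha$ and $P$ active at $\alpha$, form the condition with $\M_p=\{P,M\}$, and decorate: $d_p(P\restriction\alpha)=\{\delta_0\}$ for some ordinal $\delta_0\in M\cap\kappa$ with $\delta_0>\kappa_P$; this is a legal condition by \cref{PF}. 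By the remark following \cref{PF}, in any $q\leq p$ every model placed $\in_\alpha$-above $P\restriction\alpha$ must contain $\{\delta_0\}$, hence has $\kappa$-value above $\delta_0$ (and models only $\in_\alpha^*$-above sit above an intermediate Magidor model containing $\delta_0$, so the same bound holds). Thus $C_\alpha(G)\cap(\kappa_P,\delta_0]=\varnothing$ permanently. Now let $R$ realize the least element of $C_\alpha(G)$ above $\delta_0$ (it exists, since $\kappa_{M\restriction\alpha}>\delta_0$). Every model chain-below $R$ has $\kappa$-value $\leq\kappa_P$, yet $\delta_0\in R\cap\kappa$, so the covering fails at $R$. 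Note that $R$ must be countable: were it Magidor, \cref{continuity-Magidor} would force $\delta_0$ into some earlier model, which is impossible. So countable models genuinely can be ``jumped over''; your closing claim that the decorations \emph{guarantee} one can always insert $\alpha$-active models below a countable model inverts their purpose --- they are precisely a device for \emph{forbidding} such insertions.

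Since your density argument uses no special feature of the particular $M^*$ (least above the limit point $\nu$), it cannot distinguish that model from the counterexample model $R$; the statement you need is true there only because the lemma is true, and proving it requires different machinery. The obstruction your sketch never confronts is visible in \cref{gapmeet}: anything inserted chain-below a countable $M^*$ is either $\in_\alpha M^*$ --- hence must be ($\alpha$-isomorphic to) a virtual model living \emph{inside} $M^*$, and whether any such internal model is genuinely active at $\alpha$ is a delicate matter, since $\alpha$ itself need not be visible in $M^*$ --- or it lies in a Magidor interval $[N\land M^*,N)^\alpha$, where its $\kappa$-value is capped by $\kappa_N$. The paper's proof is built around exactly this: it proceeds by \emph{induction on} $\alpha$, and for the least offending model $M$ it splits according to strong activity. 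When $M$ is strongly active it proves no covering at all; instead it enlarges the decoration of the predecessor $P$ by an ordinal $\delta\in M\cap[\gamma,\kappa_M)$, obtaining a stronger condition that forces $\gamma$ not to be a limit point --- a forcing contradiction, not a continuity fact. When $M$ is not strongly active it invokes the induction hypothesis that $C_{\bar\alpha}(G)$ is club at $\bar\alpha=\sup(M\cap\alpha)$, and then \emph{lifts} the relevant model from the $\bar\alpha$-chain into $M$'s nonstandard part (the passage $Q\mapsto Q^*\restriction\alpha$, merged via \cref{level-merging} and \cref{partial-countable-merging}). Both the induction and the lifting are missing from your proposal, and the example above shows they cannot be replaced by a soft density argument.
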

\begin{proof} 
Let us check the second statement first. Fix $\alpha, \beta \in E$ such that $\cof(\alpha), \cof(\beta) <\kappa$, and $\alpha <\beta$.
By a standard density argument using the stationarity of $\mathscr U$ there is $p\in G$ and a Magidor model $M\in \M_G$ which is active at both $\alpha$ and $\beta$. 
Therefore, any model $N$ above $M\rest \beta$ on the $\beta$-chain $\M_G^\beta$ is also active at $\alpha$. 
It follows that $C_\beta(G)  \setminus C_\alpha(G)  \subseteq \kappa_M$. 

We work in $V$ and prove the first statement by induction on $\alpha$. 
Let $\dot{\M}^\alpha$ and $\dot{C}_\alpha$ be canonical $\mathbb P^\kappa_\lambda$-names for $\M_G^\alpha$ and $C_\alpha(G)$, for  $\alpha \in E$.
Now, fix  $\alpha \in E$ of cofinality less than $\kappa$ and suppose
the statement has been proved for all $\bar\alpha \in E\cap \alpha$ of cofinality $<\kappa$. 
Suppose $\gamma <\kappa$ and $p\in \mathbb P^\kappa_\lambda$ forces that $\gamma$ is a limit point but not a member of $\dot{C}_\alpha$ 
We may assume that there is a model $M\in \M_p^\alpha$ such that $p$ forces that $M$ is the least model on the $\alpha$-chain $\dot{\M}^\alpha$
such that $\gamma \leq   \kappa_M$. Then we must have $\gamma < \kappa_M$. Let $P$ be the previous model on $\M_p^\alpha$ before $M$. 
We may assume that such a model exists since $p$ forces that $\gamma$ is a limit point of $\dot{C}_\alpha$.  
Note that $\kappa_P <\gamma$ since $p$ forces that $\gamma\notin \dot{C}_\alpha$.

\noindent {\bf Case 1.} Suppose $M$ is strongly active at $\alpha$.
Since  $P$ is $\M_p$-free and we may assume that $P\in \dom(d_p)$, by defining $d_p(P)=\emptyset$ if necessary. 
Since $\gamma < \kappa_M$, we can find $\delta \in M$ such that $\gamma \leq \delta < \kappa_M$. 
Define a condition $q$ as follows. Let $\M_q=\M_p$, and let $\dom (d_q)=\dom(d_p)$. 
Let $d_q(P)= d_p(P)\cup \{ \delta\}$, and $d_q(Q)=d_p(Q)$, for any other $Q\in \dom (d_p)$. 
Let $q= (\M_q,d_q)$. Then $q$ is a condition and forces that the next model of $\dot{\M}^\alpha$ above $P$ contains $\delta$.
Hence, it  forces that there is no element of $\dot{C}_{\alpha}$ between $\kappa_P$ and $\gamma$, 
and so it forces that $\gamma$ is not a limit point of $\dot{C}_\alpha$, a contradiction. 

\noindent {\bf Case 2.} Suppose now that $M$ is not strongly active at $\alpha$. Then $M$ is countable.
Let $A={\rm Hull}(M,V_\alpha)$, let $\alpha^*$
be the least ordinal of $M$ above $\alpha$, and let $\bar\alpha= \sup (M\cap \alpha)$. 
Note that $\alpha^*\in E_A$, $\bar\alpha$ is a limit point of $E$ of cofinality $\omega$, and that $P$ is also active at $\bar\alpha$.
Now, by the proof of the second part of the lemma,  $p$  forces that $\dot{C}_\alpha \setminus \dot{C}_{\bar\alpha} \subseteq \kappa_P$,
and so it also forces that $\gamma$ is a limit point of $\dot{C}_{\bar\alpha}$.
By the inductive assumption $\dot{C}_{\bar\alpha}$ is forced to be a club, so there is $q\leq p$ and some $N\in \M_q^{\bar\alpha}$ such that $\kappa_N= \gamma$. 
Now,  for each $Q\in (\M_q\rest M)^{ \bar\alpha}$, we can find a unique model $Q^*\in M$ with $Q^*\in \mathscr V_{\alpha^*}^A$
such that $Q^*\rest \bar\alpha = Q$. Let $\M^*= \{ Q^*: Q \in (\M_p\rest M)^{\bar\alpha}\}$. 
Working in $A$, $\M^*$ is an $\alpha^*$-chain closed under meets that are active at $\alpha^*$.
Let $\M= \{ Q^*\rest \alpha : Q^*\in \M^*\}$. Then $\M\in_\alpha M$, is an $\alpha$-chain closed under meets
that are active at $\alpha$, and $(\M_q\rest M)^\alpha \subseteq \M$. 
We now define a condition $r$. Let $\M_r$ be the closure of $\M_p$ and $\M$ under meets. 
By applying \cref{level-merging}, for all levels $\delta \in E\cap (\bar\alpha,\alpha]$, we have that $\M_r\in \mathbb M^\kappa_\lambda$. 
Let $d_r=d_q$ and $r=(\M_r,d_r)$. Observe that $\M_r^\eta = \M_q^\eta$, 
for all  $\eta \in E \setminus (\bar\alpha,\alpha]$.
Also, if $R\in \dom (d_q)$ and $\eta(R)\in (\bar\alpha, \alpha]$ then $R\notin_{\eta(R)}M$, since $M$ is not strongly active at $\eta (R)$.
By \cref{partial-countable-merging}, we conclude that $r$ is a condition. Also, we have that $r\leq q$. 
Recall that $N\in \M_q^{\bar\alpha}$ and $\kappa_N=\gamma$.
Let $Q$ be the model on the $\bar\alpha$-chain $\M_r^{\bar\alpha}$ immediately before $M\rest \bar\alpha$.  
Then $Q^* \in \M^*$, and hence $Q^*\rest \alpha \in \M_r$. Let $R=Q^*\rest \alpha$. 
In other words, we lifted the model $Q$ to level $\alpha$ and called this model $R$. 
Note that $\kappa_R= \kappa_Q$. 
Then $r$ forces that $R\in \dot{\M}_\alpha$ and $\gamma \leq \kappa_R < \kappa_M$, which contradicts the fact
that $p$ forces that $\gamma \notin \dot{C}_\alpha$ and $M$ is the least model on $\dot{\M}_\alpha$ with $\gamma \leq  \kappa_M$. 
This completes the proof of the lemma. 
\end{proof}

\section{Guessing Models in $V[G]$}

We assume $\kappa$ is supercompact and $\lambda$ is inaccessible and analyze $\omega_1$-guessing models 
in the the generic extension by $\mathbb P^\kappa_\lambda$. 
Suppose $\alpha \in E$. We have already established in \cref{alpha-complete-suborder}  that $\mathbb P^\kappa_\alpha$
is a complete suborder of $\mathbb P^\kappa_\lambda$. 
Let us fix  a $V$-generic filter $G_\alpha$ over $\mathbb P^\kappa_\alpha$, and let $\mathbb Q_\alpha$  denote the quotient forcing. 
Recall that  $\mathbb Q_\alpha$ consists of all $p\in \mathbb P^\kappa_\lambda$ such that  $p\rest \alpha \in G_\alpha$,
with the induced ordering. Forcing with this poset over $V[G_\alpha]$ produces a $V$-generic filter $G$ for $\mathbb P^\kappa_\lambda$
such that $G\cap \mathbb P^\kappa_\alpha= G_\alpha$. 
We first show that the pair $(V[G_\alpha],V[G])$ has the $\omega_1$-approximation property. 
We will need the following definition. 

\begin{definition}Let $\mathscr C^\alpha_{\rm st}$ denote the set of all $M\in \mathscr C_{\rm st}$ such  that $\eta(M)> \alpha$, $\alpha \in M$,
and  $M\rest \alpha \in \M_{G_\alpha}$. 
\end{definition}

\begin{lemma}\label{C-stationary} $\mathscr C^\alpha_{\rm st}$ is stationary subset of ${\mathcal P}_{\omega_1}(V_\lambda)$
in the model $V[G_\alpha]$. 
\end{lemma}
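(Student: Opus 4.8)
The plan is to prove the lemma by a density argument in $\mathbb P^\kappa_\alpha$, exhibiting, below an arbitrary condition and for an arbitrary name for a club‑generating function, a ground model model $M\in\mathscr C^\alpha_{\rm st}$ closed under that function. So, working in $V$, fix a $\mathbb P^\kappa_\alpha$‑name $\dot F$ for a function $F\colon [V_\lambda]^{<\omega}\to V_\lambda$ and a condition $p\in\mathbb P^\kappa_\alpha$. It suffices to produce $q\le p$ and a set $M\in V$ such that $q$ forces $M\in\mathscr C^\alpha_{\rm st}$ and that $M$ is closed under $\dot F$; since $\dot F$ and $p$ are arbitrary, this shows that $\mathscr C^\alpha_{\rm st}$ meets every club of $\mathcal P_{\omega_1}(V_\lambda)$ in $V[G_\alpha]$. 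To build $M$, fix a sufficiently large regular $\theta$ and a countable $M^*\prec H_\theta$ with $\dot F, p, \mathbb P^\kappa_\alpha, \alpha, V_\lambda, E\in M^*$, and set $M=M^*\cap V_\lambda$.

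First I would check that $M$ witnesses the structural clauses of $\mathscr C^\alpha_{\rm st}$. Since $V_\lambda, E\in M^*$ we have $M\prec(V_\lambda,\in,E)$, so by the argument of \cref{cstationaryinV} the model $M$ is a standard countable model with $\eta(M)=\sup(M\cap E)$, i.e. $M\in\mathscr C_{\rm st}$. As $\alpha\in M^*\cap V_\lambda=M$ we get $\alpha\in M$, and since $E$ is unbounded in $\lambda$ and $M\prec(V_\lambda,\in,E)$, the set $M\cap E$ contains ordinals above $\alpha$, so $\eta(M)>\alpha$. Because $\lambda$ is inaccessible, every condition of $\mathbb P^\kappa_\alpha$ lies in $V_\lambda$, whence $M\cap\mathbb P^\kappa_\alpha=M^*\cap\mathbb P^\kappa_\alpha$; in particular $p\in M$. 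All these properties are absolute, and as $\omega_1$ is preserved, $M$ remains countable in $V[G_\alpha]$.

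Next I would produce the strongly generic condition. By \cref{topM} together with \cref{alpha-strongly-proper} (the strong properness of $\mathbb P^\kappa_\alpha$ for countable models containing $\alpha$), there is $q\le p$ that is $(M,\mathbb P^\kappa_\alpha)$‑strongly generic and has the projected side condition $M\restriction\alpha$ among its models, i.e. $M\restriction\alpha\in\M_q$. Since $q\Vdash \check q\in\dot G_\alpha$, this already gives $q\Vdash M\restriction\alpha\in\M_{\dot G_\alpha}$, which is the remaining clause in the definition of $\mathscr C^\alpha_{\rm st}$. It then remains to show that $q$ forces $M$ to be closed under $\dot F$, and this is the standard reflection afforded by strong genericity. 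Fix $\bar x\in[M]^{<\omega}$; then $\bar x\in M^*$ and $\dot F(\check{\bar x})$ is an $M^*$‑name for an element of $V_\lambda$. Given any $r\le q$ deciding $\dot F(\check{\bar x})=\check z$, let $r\restriction M\in M$ be the restriction provided by strong genericity; by elementarity the set, dense below $r\restriction M$, of conditions deciding $\dot F(\check{\bar x})$ meets $M^*$, yielding $s\in M$ with $s\le r\restriction M$ and $s\Vdash\dot F(\check{\bar x})=\check{z'}$ for some $z'\in M^*\cap V_\lambda=M$. As $s$ is compatible with $r$, a common extension forces $z=z'\in M$. Hence $q\Vdash\dot F(\check{\bar x})\in\check M$ for every such $\bar x$, so $q$ forces $M$ closed under $\dot F$, completing the density argument.

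I expect the main obstacle to be the bookkeeping in the third paragraph: reconciling the internal forcing $M^*\cap\mathbb P^\kappa_\alpha$ with the side condition $M\restriction\alpha$. The full model $M$, which has $\eta(M)>\alpha$, is what must be standard and closed under $\dot F$, while only its projection $M\restriction\alpha$ can appear in $\M_{G_\alpha}$; the delicate point is invoking \cref{alpha-strongly-proper} so that the single condition $q$ simultaneously reflects the values of $\dot F$ into $M$ and places $M\restriction\alpha$ into the generic filter. Once strong genericity is correctly set up with $M\restriction\alpha\in\M_q$, the remaining verifications are routine absoluteness and the standard properness reflection.
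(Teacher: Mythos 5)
Your proposal follows the same route as the paper's own proof: choose a countable $M^*\prec H_\theta$ containing the relevant parameters, set $M=M^*\cap V_\lambda\in\mathscr C_{\rm st}$, and produce a single condition $q\leq p$ that is strongly generic for $M$ and contains $M\rest\alpha$ as a side condition, so that $q$ simultaneously forces $M\rest\alpha\in\M_{\dot G_\alpha}$ and forces $M$ to be closed under $\dot F$. Your third paragraph (the deduction of closure under $\dot F$ from strong genericity, via dense sets of deciding conditions in $M^*$ and compatibility of $s\leq r\rest M$ with $r$) is the standard argument and is carried out correctly.

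The gap is exactly at the point you flag as ``delicate'', and it is not resolved by the citations you give. To conclude $M\rest\alpha\in\M_q$ from \cref{topM}, you must apply that lemma to the model $M\rest\alpha$, which requires $p\in M\rest\alpha$; you only verified $p\in M$, and since $\eta(M)>\alpha$ the projection $M\rest\alpha=\pi[M]$ (where $\pi$ collapses ${\rm Hull}(M,V_\alpha)$) is in general a different set from $M$, so $p\in M$ does not formally yield $p\in M\rest\alpha$. Likewise, \cref{alpha-strongly-proper} hands you some $(M,\mathbb P^\kappa_\alpha)$-strongly generic $q\leq p$ but says nothing about which models appear in $\M_q$, and its own (omitted) proof turns on the same point. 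The missing observation — the one the paper's proof supplies — is that the $\alpha$-isomorphism $\sigma:M\to M\rest\alpha$ fixes every condition of $\mathbb P^\kappa_\alpha$ lying in $M$: every model occurring in such a condition has $\eta\leq\alpha$, hence lies inside a transitive member of ${\rm Hull}(M,V_\alpha)$ of cardinality at most $|V_\alpha|$, and such transitive sets are contained in the hull and fixed pointwise by $\pi$ (as is the decoration, which lives in $V_\kappa$). Consequently $p=\sigma(p)\in M\rest\alpha$ and in fact $M\cap\mathbb P^\kappa_\alpha=(M\rest\alpha)\cap\mathbb P^\kappa_\alpha$. With this in hand your argument closes: one forms $p^{M\rest\alpha}\leq p$ as in \cref{topM}, and strong genericity for $M\rest\alpha$ is literally the same property as strong genericity for $M$, which is what both your side-condition claim and your closure argument require.
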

\begin{proof}
We work in $V$. Let $\dot{\mathscr C}^\alpha_{\rm st}$ and $\dot{\M}_{\alpha}$ be the canonical $\mathbb P^\kappa_\alpha$-names
for $\mathscr C^\alpha_{\rm st}$ and $\M_{G_\alpha}$.
Suppose $p\in \mathbb P^\kappa_\alpha$ forces that  $\dot F: [V_\lambda]^{<\omega}\rightarrow V_\lambda$ is a function.
Let $\theta$ be a sufficiently large regular cardinal. By \cref{cstationaryinV}, $\mathscr C_{\rm st}$ is club in $\mathcal P_{\omega_1}(V_\lambda)$,
hence we can find a countable $M^*\prec H(\theta)$  containing all the relevant objects such that
letting $M=M^*\cap V_\lambda$ we have that $M \in \mathscr C_{\rm st}$. 
Let $M'= M\rest \alpha$.  Note that $p\in M'$, so we can form the condition $p^{M'}$.
Then  $p^{M'}$  is $(M',\mathbb P^\kappa_\alpha)$-strongly generic and $p^{M'}\leq p$. 
 Let $\sigma$ be the $\alpha$-isomorphism between $M$ and $M'$. 
Note that $\sigma(q)=q$, for all $q\in M\cap \mathbb P^\kappa_\alpha$. 
Hence, $M \cap \mathbb P^\kappa_\alpha=  M' \cap \mathbb P^\kappa_\alpha$.
Therefore, $p^{M'}$ is also $(M,\mathbb P^\kappa_\alpha)$-strongly generic, and thus it is $(M^*,\mathbb P^\kappa_\alpha)$-generic.
Since $\dot F\in M^*$, it follows that $p^{M'}$ forces that $M$ is closed under $\dot F$.
It also forces that $M'$ belongs to $\dot{\M}_{\alpha}$, hence it forces that $M$ belongs to $\dot{\mathscr C}^\alpha_{\rm st}$. 
\end{proof}

\begin{lemma}\label{quotient-alpha-sp} Suppose $\alpha \in E$ and let $G_\alpha$ be $V$-generic over $\mathbb P^\kappa_\alpha$. 
Then $\mathbb Q_\alpha$ is $\mathscr C^\alpha_{\rm st}$-strongly proper.
\end{lemma}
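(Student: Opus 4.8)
The plan is to mirror the proof of \cref{countable-sp}, the only genuinely new ingredient being that every condition produced must be checked to lie in the quotient $\mathbb Q_\alpha$, i.e. that its restriction to $\alpha$ belongs to $G_\alpha$. Fix $M\in\mathscr C^\alpha_{\rm st}$ and $p\in M\cap\mathbb Q_\alpha$; thus $p\rest\alpha\in G_\alpha$ and, by definition of $\mathscr C^\alpha_{\rm st}$, $M\rest\alpha\in\M_{G_\alpha}$. As in \cref{topM} form the weakest condition $p^M\le p$ with $M\in\M_{p^M}$. I first claim $p^M\in\mathbb Q_\alpha$. The point is that the operation $p\mapsto p^M$ commutes with restriction to $\alpha$: pushing the closure under meets through the projection by means of \cref{MPI}, one checks that $\M_{p^M}\rest\alpha$ is the closure of $\M_p\rest\alpha\cup\{M\rest\alpha\}$ under active meets and that the decorations agree, so that $p^M\rest\alpha=(p\rest\alpha)^{M\rest\alpha}$, the weakest condition below $p\rest\alpha$ containing $M\rest\alpha$. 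Since $M\rest\alpha\in\M_{G_\alpha}$ there is $r\in G_\alpha$ with $M\rest\alpha\in\M_r$, and as $G_\alpha$ is a filter we may take $r\le p\rest\alpha$. Being a condition below $p\rest\alpha$ that contains $M\rest\alpha$, minimality gives $r\le(p\rest\alpha)^{M\rest\alpha}=p^M\rest\alpha$, whence $p^M\rest\alpha\in G_\alpha$ and $p^M\in\mathbb Q_\alpha$.

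Next I argue $p^M$ is $(M,\mathbb Q_\alpha)$-strongly generic. Fix $q\le p^M$ in $\mathbb Q_\alpha$. Exactly as in \cref{countable-sp} one has $M\in\mathcal L(\M_q)$, and since $M$ is standard, \cref{remark-standard} together with \cref{restriction-countable} yields that $q\rest M$ is a genuine condition of $\mathbb P^\kappa_\lambda$ lying in $M$. To use $q\rest M$ as the strong-genericity witness I must see $q\rest M\in\mathbb Q_\alpha$, i.e. $(q\rest M)\rest\alpha\in G_\alpha$; note $(q\rest M)\rest\alpha\in M$ since $q\rest M\in M$ and $\alpha\in M$. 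For membership in $G_\alpha$, observe that $x\mapsto x\rest\alpha$ is the projection onto the complete suborder $\mathbb P^\kappa_\alpha$ (\cref{alpha-complete-suborder}) and that this projection preserves the \emph{stronger than} relation: if $w$ is stronger than $w'$ in $\mathbb P^\kappa_\lambda$ then $w\rest\alpha$ is stronger than $w'\rest\alpha$. Indeed, given $t\le w\rest\alpha$ in $\mathbb P^\kappa_\alpha$, \cref{CS} produces $v\le w,t$; as $v\le w$ is compatible with $w'$, a common extension $u\le v,w'$ satisfies $u\rest\alpha\le t$ and $u\rest\alpha\le w'\rest\alpha$, so $t$ is compatible with $w'\rest\alpha$. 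Now by \cref{remark-standard} $q$ is stronger than $q\rest M$, so $q\rest\alpha$ is stronger than $(q\rest M)\rest\alpha$; since $q\rest\alpha\in G_\alpha$ this forces $(q\rest M)\rest\alpha\in G_\alpha$, and thus $q\rest M\in M\cap\mathbb Q_\alpha$.

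Finally, let $s\le q\rest M$ with $s\in M\cap\mathbb Q_\alpha$; I must show $s$ and $q$ are compatible in $\mathbb Q_\alpha$. Since $s\in M\subseteq V_{\eta(M)}$ we have $s\rest\eta(M)=s$, so \cref{SP} applies and produces the meet $q\land s$ in $\mathbb P^\kappa_\lambda$, the weakest common extension of $q$ and $s$. It remains to see $q\land s\in\mathbb Q_\alpha$. Because the meet is formed by taking unions of the model parts and of the decorations, restriction to $\alpha$ commutes with it, giving $(q\land s)\rest\alpha=(q\rest\alpha)\land(s\rest\alpha)$, the greatest lower bound of $q\rest\alpha$ and $s\rest\alpha$. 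Both of these lie in the filter $G_\alpha$, hence are compatible there, so any common extension in $G_\alpha$ lies below their greatest lower bound, forcing $(q\land s)\rest\alpha\in G_\alpha$. Thus $q\land s\in\mathbb Q_\alpha$ witnesses the compatibility, and $p^M$ is the required $(M,\mathbb Q_\alpha)$-strongly generic condition below $p$. The main technical burden, discharged by \cref{MPI}, \cref{CS} and \cref{SP}, is exactly the verification that the three operations $p\mapsto p^M$, $p\mapsto p\rest M$ and $(p,q)\mapsto p\land q$ all commute with restriction to $\alpha$; granting these commutations, membership in $G_\alpha$ follows each time from the filter properties of $G_\alpha$ and the minimality clauses of \cref{topM} and \cref{SP}.
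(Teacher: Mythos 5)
Your proof is correct and takes essentially the same route as the paper's own: form $p^M$ via \cref{topM}, check $p^M\rest \alpha = (p\rest \alpha)^{M\rest \alpha}\in G_\alpha$, pass to $q\rest M$ for $q\leq p^M$ in $\mathbb Q_\alpha$, and use \cref{SP} together with the commutation of meets with restriction to $\alpha$ to turn compatibility in $\mathbb P^\kappa_\lambda$ into compatibility in $\mathbb Q_\alpha$. The only (harmless) deviation is in verifying $q\rest M\in \mathbb Q_\alpha$: you prove that the projection $x\mapsto x\rest\alpha$ preserves the \emph{stronger than} relation and combine this with \cref{remark-standard}, whereas the paper uses the identity $(q\rest M)\rest \alpha = (q\rest \alpha)\rest (M\rest \alpha)$ and leaves the membership in $G_\alpha$ implicit; both arguments discharge the same obligation.
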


\begin{proof} 
Work in $V[G_\alpha]$. Let $p\in \mathbb Q_\alpha$, and $M\in \mathscr C^\alpha_{\rm st}$ be such that $p\in M$. 
Let $p^M$ be the condition defined in \cref{topM}. Since $\alpha \in M$ we have $p\rest \alpha \in M$, and also $p\rest \alpha \in M\rest \alpha$.
Note that $p^M\rest \alpha = (p\rest \alpha)^{M\restriction \alpha}$. 
Since $p\rest \alpha \in G_\alpha$ and $M\rest \alpha \in \M_{G_\alpha}$, we have that $p^M\rest \alpha \in G_\alpha$,
thus $p^M\in \mathbb Q_\alpha$. 
Let us show that $p^M$ is $(M,\mathbb Q_\alpha)$-strongly generic. 
Suppose $q\leq p^M$ and $q\rest \alpha \in G_\alpha$. 
Since $\alpha \in M\rest \alpha$ we have  $(q\rest M)\rest \alpha= (q\rest \alpha)\rest (M\rest \alpha)$, 
and hence $q\rest M \in M \cap \mathbb Q_\alpha$.
Let $r\leq q\rest M$ be such that $r\in M \cap \mathbb Q_\alpha$.
By \cref{SP}, $r$ and $q$ are compatible in $\mathbb P^\kappa_\lambda$ and the meet $r\land q$ exists. 
Now, observe that the meet of $r\rest \alpha$ and $q\rest \alpha$ exists, and 
$r\rest \alpha \land q\rest \alpha =(r\land q)\rest \alpha$. Since $r\rest \alpha, q\rest\alpha \in G_\alpha$,
we conclude that $r\rest \alpha \land q\rest \alpha  \in \mathbb Q_\alpha$. 
It follows that $q$ and $r$ are compatible  in $\mathbb Q_\alpha$.
\end{proof}

Now, by \cref{C-stationary}, \cref{quotient-alpha-sp}, and \cref{guessingbystronglyproper}, we get the following. 

\begin{corollary}\label{approx-alpha} The pair $(V[G_\alpha],V[G])$ has the $\omega_1$-approximation property. 
\end{corollary}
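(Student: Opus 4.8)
The plan is to apply \cref{guessingbystronglyproper} with the ground model taken to be $V[G_\alpha]$ rather than $V$. Recall that forcing with the quotient $\mathbb Q_\alpha$ over $V[G_\alpha]$ produces precisely a $V$-generic filter $G$ for $\mathbb P^\kappa_\lambda$ with $G\cap \mathbb P^\kappa_\alpha = G_\alpha$; in other words $V[G]$ is a $\mathbb Q_\alpha$-generic extension of $V[G_\alpha]$. Thus it suffices to verify, working in $V[G_\alpha]$, the two hypotheses of \cref{guessingbystronglyproper} for the forcing $\mathbb Q_\alpha$ and the regular cardinal $\omega_1$, taking the stationary collection to be $\mathscr C^\alpha_{\rm st}$.

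First, \cref{quotient-alpha-sp} shows that $\mathbb Q_\alpha$ is $\mathscr C^\alpha_{\rm st}$-strongly proper, which is exactly the strong-properness hypothesis. Second, \cref{C-stationary} gives that $\mathscr C^\alpha_{\rm st}$ is stationary in $\mathcal P_{\omega_1}(V_\lambda)$ in $V[G_\alpha]$. Since $\lambda$ is inaccessible and $\mathbb P^\kappa_\lambda$ is $\lambda$-c.c., the poset $\mathbb Q_\alpha$ lies in $V_\lambda^{V[G_\alpha]}$, so projecting along $M \mapsto M\cap \mathbb Q_\alpha$ turns this into a stationary subset of $\mathcal P_{\omega_1}(\mathbb Q_\alpha)$, as required by the proposition. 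With both hypotheses in place, \cref{guessingbystronglyproper}, applied over $V[G_\alpha]$, yields that the pair $(V[G_\alpha], V[G])$ has the $\omega_1$-approximation property, which is the assertion of the corollary.

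The only point requiring care is the matching of conventions. \cref{guessingbystronglyproper} is phrased in terms of models $M \prec H_\theta$ with $M\cap \mathbb P\in \mathcal S$, whereas the strong properness of \cref{quotient-alpha-sp} and the stationarity of \cref{C-stationary} are stated for elementary submodels $M$ of $V_\lambda$, with strong genericity expressed via $q\rest M$. These formulations agree: exactly as in the proof of \cref{C-stationary}, every $M \in \mathscr C^\alpha_{\rm st}$ arises as $M^*\cap V_\lambda$ for some countable $M^*\prec H_\theta$ containing the relevant parameters, and $M\cap \mathbb Q_\alpha = M^*\cap \mathbb Q_\alpha$ is precisely the set used in the Tarski--Vaught reflection step of \cref{guessingbystronglyproper}. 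Hence the two inputs feed directly into the argument of that proposition.

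I expect no substantive obstacle here, since the corollary is purely a matter of assembling three already-established facts; the subtleties have all been absorbed into \cref{C-stationary} and \cref{quotient-alpha-sp}. If anything needs elaboration, it is only the bookkeeping of the previous paragraph, confirming that the model-theoretic genericity used throughout this section is literally the genericity invoked in the abstract statement of \cref{guessingbystronglyproper}.
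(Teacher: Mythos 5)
Your proposal is correct and is exactly the paper's argument: the corollary is stated there as an immediate consequence of \cref{C-stationary}, \cref{quotient-alpha-sp}, and \cref{guessingbystronglyproper}, applied over $V[G_\alpha]$ to the quotient $\mathbb Q_\alpha$ with the stationary collection $\mathscr C^\alpha_{\rm st}$. Your extra paragraph on transferring stationarity from ${\mathcal P}_{\omega_1}(V_\lambda)$ to ${\mathcal P}_{\omega_1}(\mathbb Q_\alpha)$ is just the bookkeeping the paper leaves implicit.
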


\begin{proof}[\nopunct] 
 \end{proof}

Suppose now $N\in \mathscr U$.
Let ${\mathbf1}^N=(\{ N\}, \emptyset)$. By \cref{Magidor-compatible}, ${\mathbf 1}^N$ is
$(N,\mathbb P^\kappa_\lambda)$-strongly generic. Moreover, for every $q\leq {\mathbf 1}^N$ and $r\leq q\rest N$,
$q$ and $r$ are compatible, and the meet $q\land r$ exists. Let $\mathbb P_N= \mathbb P^\kappa_\lambda \cap N$
and let 
\[
\mathbb P^\kappa_\lambda \rest N = \{ q\in \mathbb P^\kappa_\lambda : N\in \M_q\}.
\] 
Then the map $p\mapsto p^N$ is a complete embedding from $\mathbb P_N$ to $\mathbb P^\kappa_\lambda \rest N$. 
Now, fix a $V$-generic filter $G_N$ over $\mathbb P_N$. 

\begin{definition} Let $G_N$ be a $V$-generic filter over $\mathbb P_N$. 
Let $\mathscr C^N_{\rm st}$ denote the set of all $M\in \mathscr C_{\rm st}$ such  that $N\in M$ and $N\land M\in \M_{G_N}$.
\end{definition}

\begin{lemma}\label{stationaryquotient}
The collection $\mathscr C^N_{\rm st}$ is stationary in $\mathcal P_{\omega_1}(V_\lambda)$ in the model $V[G_N]$.
\end{lemma}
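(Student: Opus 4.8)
The plan is to mimic the proof of \cref{C-stationary}, with the meet $N\land M$ playing the role that the projection $M\rest\alpha$ played there. Working in $V$, let $\dot{\mathscr C}^N_{\rm st}$ and $\dot{\M}_N$ be canonical $\mathbb P_N$-names for $\mathscr C^N_{\rm st}$ and $\M_{G_N}$, and suppose $p\in\mathbb P_N$ forces that $\dot F:[V_\lambda]^{<\omega}\to V_\lambda$ is a function. It suffices to produce $q\leq p$ and a model $M$ such that $q$ forces that $M\in\dot{\mathscr C}^N_{\rm st}$ and $M$ is closed under $\dot F$. Fix a large regular $\theta$; by \cref{cstationaryinV} the set $\mathscr C_{\rm st}$ is club in $\mathcal P_{\omega_1}(V_\lambda)$, so I can find a countable $M^*\prec H_\theta$ containing $N,p,\dot F,\mathbb P_N$ and the relevant names, with $N\in M^*$, such that $M=M^*\cap V_\lambda\in\mathscr C_{\rm st}$. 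Note that $N\in M$ and that $M^*\cap\mathbb P_N=M\cap\mathbb P_N$.

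Next I would form the meet $Q=N\land M$. Since $N\in M$, one checks that $N\in_\gamma M$ for every $\gamma\in a(N)\cap a(M)$, so in particular $N\in_\alpha M$ for $\alpha=\alpha(N,M)$ and the meet is defined; moreover $Q\subseteq N$ and $Q\in\mathscr C$. As $p\in N\cap M$, \cref{meet-trace} gives $p\in Q$, so by \cref{topM} the weakest condition $p^Q\leq p$ with $Q\in\M_{p^Q}$ exists. Because $Q\in N$ and $p\in N$, we have $p^Q\in\mathbb P_N$, and since $Q\in\M_{p^Q}$, the condition $p^Q$ forces $N\land M=Q\in\dot{\M}_N$. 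Together with $N\in M$ and $M\in\mathscr C_{\rm st}$, this is exactly what is needed for $p^Q$ to force $M\in\dot{\mathscr C}^N_{\rm st}$, once we also know that $M$ is closed under $\dot F$.

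The heart of the argument is to show that $p^Q$ is $(M,\mathbb P_N)$-strongly generic. The key identity is
\[
M\cap\mathbb P_N \;=\; Q\cap\mathbb P^\kappa_\lambda \;=\; Q\cap\mathbb P_N,
\]
which I would derive from \cref{meet-trace}: since $Q\cap V_\alpha=N\cap M\cap V_\alpha$ and $Q\subseteq N$, the conditions lying in $M\cap N$ are precisely the conditions captured by $Q$. Now $Q$ is a countable model with $Q\in\mathcal L(\M_{p^Q})$, so by \cref{topM} and \cref{SP} (arguing as in \cref{countable-sp}) the condition $p^Q$ is $(Q,\mathbb P_N)$-strongly generic; by the displayed identity the restriction operation $r\mapsto r\rest Q$ coincides with restriction to $M\cap\mathbb P_N$, and hence $p^Q$ is $(M,\mathbb P_N)$-strongly generic as well. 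Consequently $p^Q$ is $(M^*,\mathbb P_N)$-generic, and since $\dot F\in M^*$, any value $\dot F(\bar x)$ with $\bar x\in M^{<\omega}$ is forced into $M$; thus $p^Q$ forces $M$ to be closed under $\dot F$. This completes the density argument and establishes stationarity.

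The main obstacle I anticipate is the displayed identity $M\cap\mathbb P_N=Q\cap\mathbb P^\kappa_\lambda$ and the resulting transfer of strong genericity: one must verify that every condition in $M\cap N$ indeed has the rank needed to be an element of $Q$ (so that \cref{meet-trace} applies to it), and conversely that passing through the transitive collapse $\pi$ of $N$ matches the conditions of $Q$ with the conditions of $M\cap N$. Checking that the two restriction maps genuinely agree — so that $(Q,\mathbb P_N)$-strong genericity yields $(M,\mathbb P_N)$-strong genericity rather than merely some weaker form of genericity — is the delicate point, but it is governed entirely by \cref{meet-trace} and by the fact that the $\alpha$-isomorphism between $N$ and its copy inside $M$ fixes $N\cap V_\alpha$ pointwise.
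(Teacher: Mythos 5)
Your proposal reproduces the paper's architecture almost exactly: the same countable $M^*\prec H_\theta$ with $M=M^*\cap V_\lambda\in\mathscr C_{\rm st}$, the same meet $Q=N\land M$, the same condition $p^{N\land M}$ from \cref{topM} (which indeed lies in $\mathbb P_N$ and forces $Q\in\dot{\M}_N$), and the same transfer of strong genericity from $Q$ to $M^*$ via an identification of $M\cap\mathbb P_N$ with $Q\cap\mathbb P_N$. The one place where you diverge is in how that identification is justified, and there your argument has a genuine gap. You want to derive both ``$p\in Q$'' and the identity $M\cap\mathbb P_N=Q\cap\mathbb P^\kappa_\lambda$ from \cref{meet-trace}, and you propose to close the remaining hole by verifying ``that every condition in $M\cap N$ indeed has the rank needed to be an element of $Q$.'' But \cref{meet-trace} only asserts $(N\land M)\cap V_\alpha=N\cap M\cap V_\alpha$, i.e.\ it says nothing about sets of rank $\geq\alpha$, and a condition $q\in\mathbb P_N\cap M$ typically is such a set: the virtual models in $\M_q$ are arbitrary members of $\mathscr C\cup\mathscr U$ lying in $N\cap M$, and already any $\alpha$-model, or any nonstandard model of level $\leq\alpha$, has rank $\geq\alpha$. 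So the rank verification you anticipate cannot succeed --- the needed rank bound is simply false --- and \cref{meet-trace} never applies to the conditions you need it for.

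What the paper uses instead is a different fact: letting $\eta=\eta(N\land M)$ and $\sigma$ be the $\eta$-isomorphism between $N\cap M$ and $N\land M$, one has $\sigma(q)=q$ for every $q\in\mathbb P_N$ in its domain. This is not a rank statement; it holds because $\sigma$ is the restriction of the collapsing map of ${\rm Hull}(N\cap M,V_\eta)$, and a collapsing map is the identity on any transitive subset of its domain. One applies this to the transitive hulls ${\rm Hull}(R,V_{\eta(R)})$ of the models $R\in\M_q$ (which sit inside ${\rm Hull}(N\cap M,V_\eta)$), while the decoration $d_q$ takes values in $\mathcal P_\omega(V_\kappa)\subseteq V_\eta$ and is fixed outright; the point is that the models of $\M_q$ are fixed \emph{as sets}, even though their rank may exceed $\alpha$. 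With $\sigma(q)=q$ in hand you get both $p=\sigma(p)\in N\land M$, so that $p^{N\land M}$ can be formed by \cref{topM}, and $N\cap M\cap\mathbb P_N=(N\land M)\cap\mathbb P_N$, so that $(N\land M,\mathbb P_N)$-strong genericity of $p^{N\land M}$ (from \cref{SP}) becomes $(M^*,\mathbb P_N)$-genericity. After that substitution the rest of your argument is exactly the paper's proof.
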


\begin{proof}
This is very similar to the proof of \cref{C-stationary}. 
We work in $V$. Let $\dot{\mathscr C}^N_{\rm st}$ be the canonical $\mathbb P_N$-name for $\mathscr C^N_{\rm st}$.
Suppose $p\in \mathbb P_N$ forces that  $\dot F: [V_\lambda]^{<\omega}\rightarrow V_\lambda$ is a function.
Let $\theta$ be a sufficiently large regular cardinal. By \cref{cstationaryinV}, $\mathscr C_{\rm st}$ is stationary in $\mathcal P_{\omega_1}(V_\lambda)$,
hence we can find a countable $M^*\prec H(\theta)$  containing all the relevant objects.
Let $M=M^*\cap V_\lambda$, and note that $M \in \mathscr C_{\rm st}$. Since $N\in_{\eta(N)}M$, the meet $N\land M$ is defined.
Let $\eta = \eta (N\land M)$ and let $\sigma$ be the $\eta$-isomorphism between $N\cap M$ and $N\land M$.
Note that $\sigma (q)=q$, for all $q\in \mathbb P_N$. 
Now, $p^{N\land M}$ is $(N\land M, \mathbb P_N)$-strongly generic, hence also $(N\cap M, \mathbb P_N)$-strongly generic, 
and therefore it is $(M^*,\mathbb P_N)$-generic. It follows that $p^{N\land M}$ forces that $M\in {\dot{\mathscr C}}^N_{\rm st}$ and
is closed under $\dot F$. 
\end{proof}

Let $\mathbb Q_N$ denotes the quotient forcing $(\mathbb P^\kappa_\lambda\rest N)/G_N$.

\begin{lemma}\label{stronglyproperquotient}
$\mathbb Q_N$ is $\mathscr C^N_{\rm st}$-strongly proper.
\end{lemma}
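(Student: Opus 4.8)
The plan is to mirror the proof of \cref{quotient-alpha-sp}, replacing the complete suborder $\mathbb P^\kappa_\alpha$ by $\mathbb P_N$, the truncation $p \mapsto p \rest \alpha$ by the Magidor restriction $p \mapsto p \rest N$ of \cref{restriction-magidor}, and the projected model $M \rest \alpha$ by the meet $N \land M$. Recall that $\mathbb Q_N$ consists of those $q \in \mathbb P^\kappa_\lambda \rest N$ with $q \rest N \in G_N$. Working in $V[G_N]$, I fix $p \in \mathbb Q_N$ and $M \in \mathscr C^N_{\rm st}$ with $p \in M$, so that $N \in M$ and $N \land M \in \M_{G_N}$.

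First I would form the condition $p^M$ of \cref{topM}. Since $N \in \M_p$ and $N \in M$, closing $\M_p \cup \{M\}$ under meets adjoins in particular the countable model $N \land M$, which lies $\in$-below $N$, together with meets $N' \land M$ for the Magidor models $N' \in \M_p$ below $N$. I would check that $p^M \rest N = (p \rest N)^{N \land M}$: under $\rest N$ the model $M$ itself is discarded, as $\kappa_M > \kappa_N$, while $N \land M$ survives and, by \cref{doublemeet}, each surviving meet $N' \land M$ equals $N' \land (N \land M)$, i.e.\ is exactly one of the meets created by adjoining $N \land M$ to $p \rest N$; here \cref{MPI} is used to commute meets with the projection implicit in $\rest N$. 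Since $p \rest N \in G_N$ and $N \land M \in \M_{G_N}$, genericity gives $(p \rest N)^{N \land M} \in G_N$, so $p^M \in \mathbb Q_N$.

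Next I would show $p^M$ is $(M,\mathbb Q_N)$-strongly generic. Let $q \leq p^M$ with $q \in \mathbb Q_N$. Then $M \in \mathcal L(\M_q)$, so $q \rest M$ is defined (\cref{restriction-countable}) and, $M$ being standard, lies in $M \cap \mathbb P^\kappa_\lambda$ by \cref{remark-standard}. To see $q \rest M \in \mathbb Q_N$ I would verify the identity $(q \rest M) \rest N = q \rest N$: every model of $\M_q$ with $\kappa$-parameter below $\kappa_N$ is automatically $\in$-below $M$ (as $N \in_{\eta(N)} M$ and \cref{intermediate} give transitivity), so both sides keep exactly the models of $\M_q$ lying $\in$-below $N$, the double projection collapsing by \cref{transitivity-projections}. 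As $q \rest N \in G_N$, this gives $q \rest M \in M \cap \mathbb Q_N$. Now take any $r \leq q \rest M$ with $r \in M \cap \mathbb Q_N$. Applying \cref{SP} with $q$ in the role of $p$ and $r$ in that of the lower condition, $q$ and $r$ are compatible in $\mathbb P^\kappa_\lambda$ and the meet $q \land r$ exists.

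Finally I would place $q \land r$ in $\mathbb Q_N$. For this I would invoke the commutation $(q \land r) \rest N = (q \rest N) \land (r \rest N)$ of the Magidor restriction with meets. Since $q \rest N, r \rest N \in G_N$ and $G_N$ is a filter, their greatest lower bound belongs to $G_N$; hence $(q \land r) \rest N \in G_N$ and $q \land r \in \mathbb Q_N$, witnessing the compatibility of $q$ and $r$ in $\mathbb Q_N$. This establishes the strong genericity of $p^M$. I expect the main obstacle to be the three commutation identities for $\rest N$ --- with the adjunction $p \mapsto p^M$, with the countable restriction $\rest M$, and with meets --- since, unlike the clean level truncation $\rest \alpha$ of \cref{quotient-alpha-sp}, the Magidor restriction is defined model by model through the projections $\alpha(\cdot, N)$ and must be reconciled with the meet operation; verifying them will rely on \cref{MPI}, \cref{meetactiveness}, \cref{doublemeet} and \cref{rest-magidor-prop}.
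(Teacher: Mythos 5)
Your overall skeleton is the same as the paper's: form $p^M$, show $p^M\rest N=(p\rest N)^{N\land M}\in G_N$, pass to $q\rest M$ for $q\leq p^M$ in $\mathbb Q_N$, and use \cref{SP} to get the meet $q\land r$ and push its restriction to $N$ into $G_N$. However, there is a genuine error in your second step. The identity you propose, $(q\rest M)\rest N=q\rest N$, is false, and your justification misapplies \cref{intermediate}: that proposition yields transitivity of $\in_\alpha$ only when the \emph{middle} model is countable or the \emph{outermost} model is a Magidor model. In your configuration the chain is $P\in_\alpha N\in_\alpha M$ with the middle model $N$ Magidor and the outer model $M$ countable, which is precisely the case \cref{intermediate} excludes --- and precisely the case where transitivity genuinely fails. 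Indeed, by \cref{gapmeet} the models of $\M_q$ lying $\in_\delta$-below $N$ but in an interval $[N\land M, N)^\delta_q$ have $\kappa$-parameter below $\kappa_N$, hence survive into $q\rest N$, yet they are \emph{not} $\in_\delta$-below $M$, hence are discarded by $q\rest M$. Since extensions $q\leq p^M$ in $\mathbb Q_N$ are free to add such gap models, the two sides of your identity differ in general; if your claimed transitivity were correct, the gaps in \cref{gapmeet} would always be empty and the entire meet apparatus of the paper would be superfluous.

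The correct identity, which is what the paper proves, is $(q\rest M)\rest N=(q\rest N)\rest(N\land M)$, and membership of the right-hand side in $G_N$ then uses both $q\rest N\in G_N$ and the hypothesis $N\land M\in\M_{G_N}$ built into the definition of $\mathscr C^N_{\rm st}$ --- note that under your identity this hypothesis would never be needed past the first step, which is a warning sign. The error also propagates to your last step: the paper does not merely assert the commutation $(q\land r)\rest N=(q\rest N)\land(r\rest N)$, it \emph{derives} the existence of the meet $(q\rest N)\land(r\rest N)$ by applying \cref{SP} inside $\mathbb P_N$, using that $r\rest N\in N\cap M\subseteq N\land M$ extends $(q\rest N)\rest(N\land M)$ --- a fact that is available only through the corrected identity. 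So while your decomposition of the problem is the right one, the pivot on which both remaining steps turn is the relation between $\rest N$, $\rest M$, and the meet $N\land M$, and as stated your version of that relation is wrong.
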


\begin{proof}
Work in $V[G_N]$. Let $p\in \mathbb Q_N$ and  $M\in \mathscr C^N_{\rm st}$ be such that $p\in M$. 
Let $p^M$ be the condition defined in \cref{topM}. Since $p,N\in M$, we have $p\rest N \in M$. 
Thus, $p\rest N \in N\cap M$. 
Observe that $p^M\rest N= (p\rest N)^{N\land M}$.  
Since $p\rest N \in G_N$ and $N\land M \in \M_{G_N}$, we have that $p^M\rest N \in G_N$,
thus $p^M\in \mathbb Q_N$. 
Let us show that $p^M$ is $(M,\mathbb Q_N)$-strongly generic. 
Suppose $q\leq p^M$ and $q\in \mathbb Q_N$.  
Observe that $(q\rest M)\rest N = (q\rest N)\rest (N\land M)$, and hence $q\rest M \in \mathbb Q_N$.
Let $r\leq q\rest M$ be such that $r\in M\cap \mathbb Q_N$.  
By \cref{SP}, $r$ and $q$ are compatible in $\mathbb P^\kappa_\lambda$ and the meet $r\land q$ exists. 
Note that  $r\rest N\in N\cap M \subseteq N\land M$,  and $r\rest N$ extends $(q\rest N)\rest (N\land M)$. 
Hence, again by \cref{SP}, the meet of $r\rest N$ and $q\rest N$ exists,
and  $r\rest N \land q\rest N = (r\land q)\rest N$. 
Since $r\rest N, q\rest N \in G_N$,  we have that $r\rest N \land q\rest N \in G_N$. 
It follows that $r$ and $q$ are compatible in $\mathbb Q_N$. 
\end{proof}

Suppose $G$ is a $V$-generic filter over  $\mathbb P^\kappa_\lambda$. 
As before, for $\alpha \in E$,  let $G_\alpha= G\cap \mathbb P^\kappa_\alpha$.

\begin{lemma}\label{Magidor-guessing}
Let $\alpha \in E$. Suppose $N\in \M_G$ is a Magidor model with $\alpha \in N$.
 Then $N[G_\alpha]$  is an $\omega_1$-guessing model in $V[G]$. 
\end{lemma}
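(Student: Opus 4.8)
The plan is to exploit the quotient analysis already set up for $N$. Since $N\in\M_G$ is a Magidor model, the generic $G$ induces a $V$-generic filter $G_N$ over $\mathbb P_N=\mathbb P^\kappa_\lambda\cap N$, and $G$ is $V[G_N]$-generic for the quotient $\mathbb Q_N=(\mathbb P^\kappa_\lambda\rest N)/G_N$. Because $\alpha\in N$, both $\mathbb P^\kappa_\alpha$ and the filter $G_\alpha\cap N$ are computed inside $V[G_N]$, so $N[G_\alpha]\in V[G_N]$; its transitive collapse is $V_{\bar\gamma}[\bar G_\alpha]$, where $\overline N=V_{\bar\gamma}$, $\pi=\pi_N$, and $\bar G_\alpha=\pi[G_\alpha\cap N]$. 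By \cref{stationaryquotient} and \cref{stronglyproperquotient}, $\mathbb Q_N$ is $\mathscr C^N_{\rm st}$-strongly proper with $\mathscr C^N_{\rm st}$ stationary, so \cref{guessingbystronglyproper} yields that $(V[G_N],V[G])$ has the $\omega_1$-approximation property. Via the remark following \cref{gamma-approximation-property}, I would phrase the goal as: the pair $(\overline{N[G_\alpha]},V[G])$ has the $\omega_1$-approximation property.

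First I would show that any $x\in V[G]$ that is a bounded subset of $N[G_\alpha]$, say $x\subseteq z\in N[G_\alpha]$, and is $\omega_1$-approximated by $N[G_\alpha]$, already lies in $V[G_N]$. This is the Neeman--Veli\v{c}kovi\'c strongly-generic argument of \cref{guessingbystronglyproper}, run for $\mathbb Q_N$ over $V[G_N]$: fixing a name $\dot x$ and a condition forcing the hypotheses, I choose by stationarity a countable $M\in\mathscr C^N_{\rm st}$ containing all parameters and $z$, and take $q$ that is $(M,\mathbb Q_N)$-strongly generic. To strengthen $q$ so that it decides $\dot x\cap M$ as a fixed element of $V[G_N]$, I need the approximating index $M\cap z$ to belong to $N[G_\alpha]$; then the approximation hypothesis gives $\dot x\cap(M\cap z)\in N[G_\alpha]\subseteq V[G_N]$. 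Granting this, the compatibility (by strong genericity) of $q$ with conditions of $M$ below $q\rest M$ forces $q\rest M$ to decide $\dot x$ completely, exactly as in \cref{guessingbystronglyproper}, whence $x$ is computed from $q\rest M\in V[G_N]$ and so $x\in V[G_N]$.

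The point $M\cap z\in N[G_\alpha]$ is where the definition of $\mathscr C^N_{\rm st}$ enters: for such $M$ one has $N\in M$ and the meet $N\land M\in\M_{G_N}$, and by \cref{meet-trace} the trace $N\cap M$ on the relevant $V_\delta$ coincides with $(N\land M)\cap V_\delta$, a countable object lying in $N$ (indeed in $\M_{G_N}$). Reading this through the evaluation by $G_\alpha$ shows that $M\cap N[G_\alpha]$, and hence $M\cap z$, is an element of $N[G_\alpha]$. I expect this identification --- tracking the meet $N\land M$ and the action of $G_\alpha$ on it --- to be the main obstacle, since it is precisely the step linking the abstract strong-properness machinery to the combinatorics of virtual models and decorations.

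Finally, once $x\in V[G_N]$ it remains to produce the guess. As $\omega_1$ is preserved between $V[G_N]$ and $V[G]$, $x$ is still $\omega_1$-approximated by $N[G_\alpha]$ as computed in $V[G_N]$, so it suffices that $N[G_\alpha]$ be an $\omega_1$-guessing model already in $V[G_N]$. This is the self-similar instance of the construction inside $\overline N=V_{\bar\gamma}$: writing $\bar G_N=\pi[G_N]$ one has $V[G_N]=V[\bar G_N]$ with $\bar G_N$ generic for $\mathbb P^{\bar\kappa}_{\bar\gamma}$, the image of $\mathbb P_N$ under $\pi$ (here $\bar\kappa=\pi(\kappa)$), and $\bar G_\alpha$ is its restriction to $\mathbb P^{\bar\kappa}_{\bar\alpha}$ with $\bar\alpha=\pi(\alpha)$. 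The analogue of \cref{approx-alpha} for this forcing gives that $(V[\bar G_\alpha],V[\bar G_N])$ has the $\omega_1$-approximation property; since $V_{\bar\gamma}[\bar G_\alpha]$ and $V[\bar G_\alpha]$ agree on subsets of sets of rank below $\bar\gamma$, the pair $(V_{\bar\gamma}[\bar G_\alpha],V[G_N])$ has the $\omega_1$-approximation property on bounded sets, that is, $N[G_\alpha]$ is $\omega_1$-guessing in $V[G_N]$. Applying this to $x$ yields $g\in N[G_\alpha]$ with $x=g\cap N[G_\alpha]$, completing the proof.
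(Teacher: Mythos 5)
Your proposal is workable in outline, but it takes a genuinely different --- and considerably heavier --- route than the paper, and its pivotal step is not actually proved. The paper's trick is in the choice of the intermediate model: it restricts $G_\alpha$ (not $G$) to $N$, setting $G_N=G_\alpha\cap N$, so that $\pi[G_N]$ is generic for $\mathbb P^{\bar\kappa}_{\bar\alpha}=\pi[\mathbb P^\kappa_\alpha\cap N]$ and the transitive collapse of $N[G_\alpha]$ is \emph{exactly} $V_{\bar\gamma}[\pi[G_N]]=V_{\bar\gamma}^{V[G_N]}$, a rank-initial segment of the intermediate universe. Being closed under subsets there, $N[G_\alpha]$ is trivially $\omega_1$-guessing in $V[G_N]$, and a single application of \cref{stationaryquotient}, \cref{stronglyproperquotient} and \cref{guessingbystronglyproper} (giving the $\omega_1$-approximation property of the pair $(V[G_N],V[G])$) finishes the proof. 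You instead take $G_N=G\cap\mathbb P_N$ with $\mathbb P_N=\mathbb P^\kappa_\lambda\cap N$, so your intermediate model $V[G_N]=V[\pi[G_N]]$ is an extension by the \emph{full} collapsed forcing $\mathbb P^{\bar\kappa}_{\bar\gamma}$. Then $\overline{N[G_\alpha]}=V_{\bar\gamma}[\bar G_\alpha]$ is no longer powerful in $V[G_N]$ (the tail forcing adds new countable subsets of it), which is precisely why you are forced into two extra pieces of work that the paper never needs: re-running the strongly generic argument of \cref{guessingbystronglyproper} with the weaker hypothesis ``approximated by $N[G_\alpha]$'' rather than ``approximated by the ground model'', and invoking an internalized analogue of \cref{approx-alpha} for the collapsed forcings $\mathbb P^{\bar\kappa}_{\bar\alpha}\subseteq\mathbb P^{\bar\kappa}_{\bar\gamma}$ to get guessing inside $V[G_N]$. (Your usage does match the quotient lemmas as literally stated, whereas the paper implicitly re-instantiates them for $G_\alpha\cap N$; but that is a small price for making the rest of the argument three lines.)

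The soft spot is the step you yourself flag as the main obstacle: that for the chosen countable model the trace $M^*\cap z\cap N[G_\alpha]$ lies in $N[G_\alpha]$. The claim is true, but your justification does not establish it. First, a member $M$ of $\mathscr C^N_{\rm st}$ is a \emph{ground-model} subset of $V_\lambda$, so it cannot ``contain all parameters and $z$''; you need the two-tier setup of \cref{C-stationary} and \cref{stationaryquotient}: a countable $M^*\prec H_\theta^{V[G_N]}$ containing $\dot x$, $z$, $N$, $N[G_\alpha]$, $G_\alpha\cap N$, whose trace $M=M^*\cap V_\lambda$ lies in $\mathscr C^N_{\rm st}$. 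Second, the operative fact is not \cref{meet-trace}. What one actually needs is: by elementarity $M^*\cap N[G_\alpha]=(N\cap M^*)[G_\alpha\cap N]=(N\cap M)[G_\alpha\cap N]$; then $N\cap M\in N$, which holds because $M$ is a $V$-countable set with $N\in M$ and $\overline N=V_{\bar\gamma}$ is closed under $\omega$-sequences in $V$ (this is exactly where the ground-model-ness provided by $\mathscr C^N_{\rm st}\subseteq\mathscr C_{\rm st}\subseteq V$ is essential --- for a countable submodel built in $V[G_N]$ the claim fails); finally, the collapse of $(N\cap M)[G_\alpha\cap N]$ is $(\overline N\cap M)[\bar G_\alpha]$, which is an element of $V_{\bar\gamma}[\bar G_\alpha]=\overline{N[G_\alpha]}$ simply because $V_{\bar\gamma}[\bar G_\alpha]$ is a genuine forcing extension of $V_{\bar\gamma}$ containing both $\overline N\cap M$ and the filter $\bar G_\alpha$, and it is countable there since $V_{\bar\gamma}$ already sees $\overline N\cap M$ as countable. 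With this repair your argument goes through, but note how much of it the paper's choice of $G_N$ renders unnecessary.
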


\begin{proof} 
 Let $\overline{N}$ be the transitive collapse of $N$, and let $\pi$ be the collapsing map. 
 For convenience, let us write $\bar\kappa$ for $\kappa_N$. 
Then $\overline{N}=V_{\bar\gamma}$, for some  $\bar\gamma$ with $\cof(\bar\gamma)\geq \bar\kappa$ 
and $\pi(\kappa)= \bar\kappa$. Let $\bar\alpha= \pi(\alpha)$. 
Note that $\pi (\mathbb P_N)= \mathbb P^{\bar\kappa}_{\bar\alpha}$. 
Let $G_N= G_\alpha \cap N$ and  $G^{\bar\kappa}_{\bar\alpha}= \pi [G_N]$. 
Then the transitive collapse $\overline{N[G_N]}$ of $N[G_N]$ is equal to $V_{\bar\gamma}[G^{\bar\kappa}_{\bar\alpha}]= V_{\bar\gamma}^{V[G_N]}$.
Hence $N[G_N]$ is an $\omega_1$-guessing model in $V[G_N]$. 
On the other hand, by \cref{stronglyproperquotient},  the quotient forcing $\mathbb Q_N$ is $\mathscr C^N_{\rm st}$-strongly proper,
 and $\mathscr C^N_{\rm st}$ is stationary in ${\mathcal P}_{\omega_1}(V_\lambda)$.
It follows by \cref{guessingbystronglyproper} that the pair $(V[G_N], V[G])$ has the $\omega_1$-approximation property.
Thus,  $N[G_N]$ remains an $\omega_1$-guessing model in $V[G]$. 
\end{proof}

A similar argument shows the following. 

\begin{lemma}\label{standard-Magidor-guessing} Suppose $\mu > \lambda$ and $N\prec V_\mu$ is a $\kappa$-Magidor model
containing all the relevant parameters. Then $N[G]$ is an $\omega_1$-guessing model in $V[G]$. 
\end{lemma}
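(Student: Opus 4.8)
The plan is to run the argument of \cref{Magidor-guessing} almost verbatim, the only structural change being that $N$ is now a genuine elementary submodel of $V_\mu$ with $\mu > \lambda$, so that $\mathbb P^\kappa_\lambda$ together with all its defining parameters belongs to $N$, rather than $N$ being a virtual model carried by the generic chain.

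First I would pass to the virtual model $N_0 = N \cap V_\lambda$. Writing $\pi$ for the collapsing map of $N$, $\bar\kappa = \pi(\kappa)$, $\bar\lambda = \pi(\lambda)$ and $\overline{N} = V_{\bar\gamma}$, elementarity of $N \prec V_\mu$ gives that $N_0$ is a standard elementary submodel of $V_\lambda$ with $\overline{N_0} = V_{\bar\lambda}$; and since $N$ is $\kappa$-Magidor we have $V_{\bar\kappa} \subseteq N_0$ and $\cof(\bar\gamma) \geq \bar\kappa$. Thus $N_0 \in \mathscr U_{\rm st}$, and the apparatus of \S3 applies to it: $\mathbf 1^{N_0}$ is $(N_0,\mathbb P^\kappa_\lambda)$-strongly generic, the map $p \mapsto p^{N_0}$ is a complete embedding of $\mathbb P_{N_0} = \mathbb P^\kappa_\lambda \cap N_0 = \mathbb P^\kappa_\lambda \cap N$ into $\mathbb P^\kappa_\lambda \rest N_0$, and I set $G_{N_0} = G \cap \mathbb P_{N_0}$, a $V$-generic filter for $\mathbb P_{N_0}$. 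As in \cref{Magidor-guessing}, strong genericity lets me identify $N[G]$ with $N[G_{N_0}]$, so it suffices to show that $N[G_{N_0}]$ is $\omega_1$-guessing in $V[G]$.

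For the local step, $\pi$ lifts to an isomorphism of $N[G_{N_0}]$ onto $V_{\bar\gamma}[\bar G]$, where $\bar G = \pi[G_{N_0}]$ is $V_{\bar\gamma}$-generic for $\pi(\mathbb P^\kappa_\lambda) = \mathbb P^{\bar\kappa}_{\bar\lambda}$, the identity $\pi(\mathbb P^\kappa_\lambda) = \mathbb P^{\bar\kappa}_{\bar\lambda}$ being forced by elementarity together with the $\Sigma_1$-definability of the forcing. Since $V_{\bar\gamma} \models \ZFC$ and $\mathbb P^{\bar\kappa}_{\bar\lambda}$ has rank below $\bar\gamma$, a routine small-forcing computation gives $V_{\bar\gamma}[\bar G] = V_{\bar\gamma}^{V[G_{N_0}]}$, a rank-initial segment of $V[G_{N_0}]$ and hence a powerful model. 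Any subset of a powerful model that is bounded in it has bounded rank, so it is automatically a member; thus the pair $(V_{\bar\gamma}^{V[G_{N_0}]}, V[G_{N_0}])$ trivially satisfies the $\omega_1$-approximation property, and by the characterization of guessing models through \cref{gamma-approximation-property}, $N[G_{N_0}]$ is $\omega_1$-guessing in $V[G_{N_0}]$.

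The lifting step is then identical to \cref{Magidor-guessing}: by \cref{stationaryquotient} the family $\mathscr C^{N_0}_{\rm st}$ is stationary in ${\mathcal P}_{\omega_1}(V_\lambda)$ in $V[G_{N_0}]$, and by \cref{stronglyproperquotient} the quotient $\mathbb Q_{N_0} = (\mathbb P^\kappa_\lambda \rest N_0)/G_{N_0}$ is $\mathscr C^{N_0}_{\rm st}$-strongly proper, so \cref{guessingbystronglyproper} shows that $(V[G_{N_0}], V[G])$ has the $\omega_1$-approximation property. Composing this with the approximation property of $(\overline{N[G_{N_0}]}, V[G_{N_0}])$ from the previous paragraph yields that $(\overline{N[G_{N_0}]}, V[G])$ has the $\omega_1$-approximation property, i.e.\ $N[G] = N[G_{N_0}]$ is $\omega_1$-guessing in $V[G]$. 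The only point that genuinely needs care, as opposed to being a transcription of \cref{Magidor-guessing}, is the verification that $N_0 \in \mathscr U_{\rm st}$ and that the collapse of the local extension is exactly the powerful model $V_{\bar\gamma}^{V[G_{N_0}]}$; once these are in place, the rest follows formally.
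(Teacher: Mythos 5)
Your route is the one the paper intends (pass to the trace $N_0=N\cap V_\lambda\in\mathscr U_{\rm st}$, compute the collapse of the local extension, then lift through the quotient exactly as in \cref{Magidor-guessing}), but there is a genuine gap at precisely the point where this lemma differs from \cref{Magidor-guessing}: your claim that $G_{N_0}=G\cap\mathbb P_{N_0}$ is $V$-generic, and equally your appeal to \cref{stronglyproperquotient} and \cref{guessingbystronglyproper}, which presupposes that $V[G]$ is a $\mathbb Q_{N_0}$-generic extension of $V[G_{N_0}]$. The strong genericity of $\mathbf 1^{N_0}$ and the complete embedding $p\mapsto p^{N_0}$ land in $\mathbb P^\kappa_\lambda\rest N_0$, not in $\mathbb P^\kappa_\lambda$, and they transfer genericity to $G\cap\mathbb P_{N_0}$ only when $G$ meets $\mathbb P^\kappa_\lambda\rest N_0$, i.e.\ only when some condition of $G$ carries a model projecting to $N_0$. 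In \cref{Magidor-guessing} this is exactly the hypothesis $N\in\M_G$; here it has no counterpart, and it cannot be dispensed with, because $\mathbb P^\kappa_\lambda\rest N_0$ is not dense in $\mathbb P^\kappa_\lambda$. Indeed, if $N'$ is another standard Magidor model, active at a common level $\delta$, with $\kappa_{N'}=\kappa_{N_0}$ but $N'\rest\delta\neq N_0\rest\delta$, then $\mathbf 1^{N'}\perp\mathbf 1^{N_0}$: on a $\delta$-chain any two Magidor models are $\in_\delta$-comparable by \cref{intermediate}, while for Magidor models $P\in_\delta Q$ forces $\kappa_P<\kappa_Q$ (a copy of $P\rest\delta$ inside $Q$ has size $<\kappa$, hence is a subset of $Q$, and its trace on $\kappa=Q\cap\kappa$ is bounded strictly below $\kappa_Q$). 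Such a pair exists: fix $\delta\in E$ of countable cofinality; by \cref{Magidor-stationary} the Magidor $N'\prec V_\mu$ containing $\delta,\lambda$ and the parameters are stationary, all of their traces are strongly active at $\delta$, these traces take more than $\kappa$ distinct values at $\delta$, so two of them share the same $\kappa_{(\cdot)}$. Consequently a $V$-generic $G$ may contain $\mathbf 1^{N'}$ and then avoids the cone below $\mathbf 1^{N_0}$ entirely; for such $G$ nothing you wrote yields the genericity of $G\cap\mathbb P_{N_0}$, the identity $\overline{N[G_{N_0}]}=V_{\bar\gamma}^{V[G_{N_0}]}$, or the $\omega_1$-approximation property of the pair $(V[G_{N_0}],V[G])$, and the argument breaks down.

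The repair is to add the hypothesis that $N\cap V_\lambda\in\M_G$ (equivalently, that $G$ contains an $(N_0,\mathbb P^\kappa_\lambda)$-strongly generic condition); with it, your proof is correct and is the intended one. This costs nothing for the theorem the lemma serves: given $p$ and a name $\dot F$, choose by \cref{Magidor-stationary} a Magidor $N\prec V_\mu$ with $p,\dot F$ and the parameters in $N$; since $p\in N\cap V_\lambda$, \cref{topM} gives $p^{N\cap V_\lambda}\leq p$ forcing $N\cap V_\lambda\in\M_{\dot G}$, so densely many conditions realize the hypothesis. (To be fair, the imprecision is already in the paper's statement, whose ``proof'' is only a pointer to \cref{Magidor-guessing}, where $N\in\M_G$ is assumed; but a complete write-up must address it.) Two smaller points. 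First, the verification of $N_0\in\mathscr U_{\rm st}$ requires $\cof(\pi(\lambda))\geq\bar\kappa$, not the $\cof(\bar\gamma)\geq\bar\kappa$ you quote; this does hold, since $\pi(\lambda)$ is inaccessible in $\overline N=V_{\bar\gamma}$ and $V_{\bar\gamma}$ computes cofinalities of its ordinals correctly, but it needs saying. Second, the identification $N[G]=N[G_{N_0}]$ is not a formal consequence of strong genericity: one uses the elementarity of $N$ in $V_\mu$, with the given $\mathbb P^\kappa_\lambda$-name as a parameter, to see that the conditions of $\mathbb P_{N_0}$ deciding any statement about names in $N$ are dense in $\mathbb P_{N_0}$, and then the genericity of $G_{N_0}$; this is where ``containing all the relevant parameters'' is actually used.
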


\begin{proof}[\nopunct] 
 \end{proof}
  
Now, by \cref{Magidor-stationary} we have the following. 

 \begin{theorem} The principle ${\rm GM}(\omega_2,\omega_1)$ holds in $V[G]$.
 \end{theorem}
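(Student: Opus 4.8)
The plan is to verify the stationarity requirement of \cref{GM} directly, using the characterization of stationarity through closure under a function $F\colon [H_\theta]^{<\omega}\to H_\theta$, and to manufacture the required guessing models as traces $N[G]\cap H_\theta^{V[G]}$ of ground-model Magidor models $N$. Fix a sufficiently large regular cardinal $\theta$; since $\mathbb P^\kappa_\lambda$ is $\lambda$-c.c.\ I may take $\theta$ regular in $V$ as well. Given a $\mathbb P^\kappa_\lambda$-name $\dot F$ and a condition $p$ forcing that $\dot F\colon [H_\theta]^{<\omega}\to H_\theta$ is a function, I would work in $V$ and fix $\mu>\theta$ with $\cof(\mu)\geq \kappa$, large enough that $\dot F, p, \mathbb P^\kappa_\lambda,\theta\in V_\mu$ and $V_\mu$ reflects the relevant facts. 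By \cref{Magidor-stationary} the $\kappa$-Magidor submodels of $V_\mu$ are stationary in $\mathcal P_\kappa(V_\mu)$, so I can choose a $\kappa$-Magidor model $N\prec V_\mu$ containing $\dot F, p, \mathbb P^\kappa_\lambda,\theta$ and all other relevant parameters. Exactly as in \cref{topM} and \cref{Magidor-compatible} (the mechanism underlying \cref{Magidor-strongly-proper} and \cref{standard-Magidor-guessing}), there is a condition $q\leq p$ that is $(N,\mathbb P^\kappa_\lambda)$-strongly generic.

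Next I would read off the properties of $M:=N[G]\cap H_\theta^{V[G]}$ that $q$ forces. Strong genericity forces $N[\dot G]\cap V=N$, hence $N[G]\cap \ORD=N\cap \ORD$, so $M\cap \kappa=N\cap \kappa=\kappa_N\in\kappa=\omega_2$. Since $|N|<\kappa$ and $|\mathbb P^\kappa_\lambda\cap N|<\kappa$ in $V$, while the forcing collapses every cardinal of $(\omega_1,\kappa)$ to $\omega_1$ and preserves $\omega_1$, we get $|M|\leq\omega_1<\omega_2$, so $M\in \mathcal P_{\omega_2}(H_\theta^{V[G]})$. As $q$ forces $N[\dot G]\prec V_\mu[\dot G]$ and $H_\theta^{V[G]}$ is definable in $V_\mu[G]$ from $\theta\in N$, we get $M\prec H_\theta^{V[G]}$; moreover, since $\dot F\in N$ and $q$ is $(N,\mathbb P^\kappa_\lambda)$-generic, $N[G]$, and hence $M$, is closed under $F=\dot F^G$. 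By \cref{standard-Magidor-guessing}, $q$ forces $N[G]$ to be $\omega_1$-guessing in $V[G]$, and a routine projection argument then shows that its trace $M$ is $\omega_1$-guessing as well: if $x\subseteq M$ is bounded, say $x\subseteq y\in M$, and $\omega_1$-approximated by $M$, then $x$ is bounded in $N[G]$ and $\omega_1$-approximated by $N[G]$ (for countable $a\in N[G]$ one has $a\cap y\in M$ and $x\cap a=x\cap(a\cap y)\in M$), so a guess $g\in N[G]$ for $x$ yields the guess $g\cap y\in M$. Thus $q$ forces $M\in \mathfrak G_{\omega_2,\omega_1}(H_\theta^{V[G]})$.

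Consequently, below every $p$ there is $q$ forcing that $\mathfrak G_{\omega_2,\omega_1}(H_\theta^{V[G]})$ contains a model closed under $\dot F$ meeting $\kappa$ in an ordinal; by genericity this holds in $V[G]$ for the actual function $F=\dot F^G$, and as $\dot F$ was an arbitrary name it follows that $\mathfrak G_{\omega_2,\omega_1}(H_\theta^{V[G]})$ is stationary in $\mathcal P_{\omega_2}(H_\theta^{V[G]})$ for every sufficiently large regular $\theta$, which is precisely ${\rm GM}(\omega_2,\omega_1)$. The main point requiring care is the passage from the guessing of the full model $N[G]\prec V_\mu[G]$ supplied by \cref{standard-Magidor-guessing} to the guessing of its trace $M=N[G]\cap H_\theta^{V[G]}$, together with confirming that this intersection is genuinely elementary in $H_\theta^{V[G]}$ and of size at most $\omega_1$; the remaining steps are the standard strong-genericity bookkeeping already developed earlier in the paper.
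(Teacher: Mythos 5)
Your proposal is correct and follows essentially the same route as the paper, whose (implicit) proof is exactly the combination of \cref{Magidor-stationary} (stationarity of $\kappa$-Magidor models in $\mathcal P_\kappa(V_\mu)$ for $\cof(\mu)\geq\kappa$) with \cref{standard-Magidor-guessing} ($N[G]$ is $\omega_1$-guessing for such $N$ containing the relevant parameters). The additional details you supply --- the strongly generic condition giving $N[G]\cap V=N$ (hence $M\cap\kappa\in\kappa$ and the closure under $\dot F^G$), and the trace argument transferring guessing from $N[G]\prec V_\mu[G]$ to $N[G]\cap H_\theta^{V[G]}$ --- are exactly the bookkeeping the paper leaves implicit, and they are carried out correctly.
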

 
\begin{proof}[\nopunct] 
 \end{proof}

\begin{theorem} The principle ${\rm FS}(\omega_2,\omega_1)$ holds in $V[G]$. 
\end{theorem}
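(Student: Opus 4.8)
The plan is to exhibit, for each $X \in H_{\omega_3}$, the family $\mathcal G = \{V_\alpha[G_\alpha] : \alpha \in E,\ \alpha_0 \le \alpha < \lambda\}$ for a suitable threshold $\alpha_0$, where $G_\alpha = G \cap \mathbb P^\kappa_\alpha$. Recall that in $V[G]$ we have $\kappa = \omega_2$ and $\lambda = \omega_3 = \kappa^+$, so this is exactly the setting of ${\rm FS}(\omega_2,\omega_1)$. Four things must be checked: that each member has cardinality $\kappa$, that each contains $X$, that the traces $\{M \cap \omega_3 : M \in \mathcal G\}$ form a $\kappa$-closed unbounded subset of $\omega_3$, and --- the heart of the matter --- that each $V_\alpha[G_\alpha]$ is an $\omega_1$-guessing model.

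First I would localize $X$. Since $\lambda$ is inaccessible, $\mathbb P^\kappa_\lambda$ is $\lambda$-c.c., and $\mathbb P^\kappa_\lambda = \bigcup\{\mathbb P^\kappa_\alpha : \alpha \in E\}$ is the increasing union of the complete suborders $\mathbb P^\kappa_\alpha$ (\cref{alpha-complete-suborder}). Hence any subset of $\kappa$ in $V[G]$ has a nice name all of whose antichains lie in some $\mathbb P^\kappa_\alpha$, and as there are only $\kappa < \cof(\lambda)$ such antichains the name may be taken in a single $V_{\alpha_0}$ with $\alpha_0 \in E$, $\kappa < \alpha_0 < \lambda$. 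As $X \in H_{\omega_3}$ is coded by such a subset, $X \in V_{\alpha_0}[G_{\alpha_0}] \subseteq V_\alpha[G_\alpha]$ for all $\alpha \ge \alpha_0$. For the cardinality, note $|V_\alpha|^V \ge 2^\kappa > \kappa$ while $|V_\alpha|^V < \lambda$, so by the collapsing theorems each such $V_\alpha[G_\alpha]$ has cardinality exactly $\kappa$ in $V[G]$. For the traces, the key observation is that a $\mathbb P^\kappa_\alpha$-name in $V_\alpha$ has rank $< \alpha$ and so can only be forced to equal an ordinal $< \alpha$; thus $V_\alpha[G_\alpha] \cap \ORD = \alpha$ and $V_\alpha[G_\alpha] \cap \lambda = \alpha$. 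Consequently the trace set is exactly $E \cap [\alpha_0, \lambda)$. Since $E$ is closed in $\lambda$ in $V$, it remains closed in $V[G]$ (a closed set of ordinals stays closed), and it is unbounded because $\lambda$ is preserved; hence $E \cap [\alpha_0,\lambda)$ is $\kappa$-closed and unbounded in $\omega_3$, as required.

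It remains to prove that each $V_\alpha[G_\alpha]$, which is transitive, is $\omega_1$-guessing; equivalently, that $(V_\alpha[G_\alpha], V[G])$ has the $\omega_1$-approximation property. Here I would fix an auxiliary $\delta \in E$ with $\delta > \alpha$ and $\cof(\delta) < \kappa$, so that $\M_G^\delta$ is a $\delta$-chain (\cref{chainfilter}). Let $U$ be the set of Magidor models $M \in \M_G^\delta$ that are active at $\alpha$ with $\alpha \in M$; by the stationarity of $\mathscr U$ these are cofinal in the chain and, by \cref{addingclub}, the ordinals $\kappa_M$ $(M \in U)$ form a club in $\kappa$. For each $M \in U$, \cref{Magidor-guessing} (applied to $M$, or to its projection $M\restriction\alpha$) shows that the piece $(M \cap V_\alpha)[G_\alpha]$ is an $\omega_1$-guessing model of size $< \kappa$. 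By the stationarity of $\mathscr U$ every element of $V_\alpha$ lies in some $M \in U$, so $\bigcup_{M \in U}(M \cap V_\alpha)[G_\alpha] = V_\alpha[G_\alpha]$; and by \cref{continuity-Magidor} the models $M \cap V_\alpha$, and therefore the $(M\cap V_\alpha)[G_\alpha]$, increase continuously at limit Magidor models of uncountable cofinality. Moreover, since distinct Magidor models in the chain satisfy $M \in M'$ whenever $M \in_\delta M'$ (\cref{active-Magidor}), this is an $\in$-increasing continuous chain.

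The main obstacle is the last step: concluding that $V_\alpha[G_\alpha]$ is $\omega_1$-guessing from the fact that it is the union of an $\in$-increasing, continuous (at uncountable cofinalities) chain of $\omega_1$-guessing models. Given a bounded $x \subseteq V_\alpha[G_\alpha]$ that is $\omega_1$-approximated by $V_\alpha[G_\alpha]$, I would argue that $x$ is already contained in one piece $W_0 = (M_0 \cap V_\alpha)[G_\alpha]$, that along a club of $M \in U$ the approximations $x \cap W_M$ are guessed coherently inside $W_M$ using that each $W_M$ is guessing and that $W_M \in W_{M'}$, and that continuity at an uncountable-cofinality limit forces these coherent guesses to stabilize to a single $g \in V_\alpha[G_\alpha]$ with $x = g \cap V_\alpha[G_\alpha]$. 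The delicate point --- exactly as in the passage from ``$N[G_N]$ guessing in $V[G_N]$'' to ``guessing in $V[G]$'' in the proof of \cref{Magidor-guessing} --- is to show that the local approximation $x \cap W_M$ actually belongs to $W_M$, and not merely to $V_\alpha[G_\alpha]$; this is precisely what the $\in$-increasing structure, together with the guessing of the $W_M$ and the covering supplied by \cref{continuity-Magidor}, is designed to yield.
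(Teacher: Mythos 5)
Your proposal does not prove the stated theorem: you have misread which cardinal plays the role of $\kappa$ in Definition~\ref{FS}. The principle there is indexed by the \emph{successor}: ${\rm FS}(\kappa^+,\gamma)$ asks, for every $X\in H_{\kappa^+}$, for $\gamma$-guessing models \emph{of cardinality $\kappa$} all containing $X$, whose traces $M\cap\kappa^+$ form a $\kappa$-closed unbounded subset of $\kappa^+$. Thus ${\rm FS}(\omega_2,\omega_1)$ has $\kappa=\omega_1$: it demands $\omega_1$-guessing models of cardinality $\omega_1$ whose intersections with $\omega_2$ form an $\omega_1$-club in $\omega_2$. This is exactly the form needed for Mitchell's result on $I[\omega_2]\rest S_{\omega_2}^{\omega_1}$, and it is why the paper can say the principle ``refers only to $H_{\omega_3}$''. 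Your family $\{V_\alpha[G_\alpha]:\alpha\in E,\ \alpha\geq\alpha_0\}$ consists of models of cardinality $\omega_2$ whose traces are club in $\omega_3$; in the paper's notation that is (a variant of) ${\rm FS}(\omega_3,\omega_1)$, a different statement which neither implies nor follows from the target. The paper's own proof stays at the level of $\omega_2$: it writes $X=\dot X[G_\alpha]$ for some $\alpha\in E$, fixes $\beta\in E$ above $\alpha$ of cofinality $<\kappa$, finds a Magidor model $M\in\M_{G_\beta}$ with $\alpha,\dot X\in M$, and takes $\mathcal G$ to be the models $P[G_\alpha]$ for Magidor $P\in\M_{G_\beta}$ with $M\in_\beta P$: each is $\omega_1$-guessing by \cref{Magidor-guessing}, has size $\omega_1$ in $V[G]$, contains $X$, and the traces $\kappa_P$ form a tail of the club $C_\beta(G)$ of \cref{addingclub}.

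There is also a genuine gap in the proof of the statement you do target. Your last step reduces everything to the claim that a $\subseteq$-increasing chain of $\omega_1$-guessing models, continuous at limits of uncountable cofinality, has $\omega_1$-guessing union. That is not a theorem, and your sketch concedes the missing point: for $x$ approximated by the union, the local trace $x\cap W_M$ need not even be $\omega_1$-\emph{approximated} by $W_M$, since for $a\in W_M$ countable you only know $x\cap a$ lies in the union, not in $W_M$. This is precisely why the paper never argues this way: in \cref{lambda-Magidor-guessing} the transitive model $V_{\bar\gamma}[G_{\bar\lambda}]$ is shown to be $\omega_1$-guessing \emph{first}, by combining the $\omega_1$-approximation property of the pair $(V[G_{\bar\lambda}],V[G])$ (\cref{approx-alpha}, coming from strong properness of the quotient, \cref{quotient-alpha-sp}) with the fact that its internal countable subsets are cofinal; the chain of Magidor models from \cref{continuity-Magidor} is used only afterwards, to upgrade guessing to \emph{strong} guessing. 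To make your argument work for $V_\alpha[G_\alpha]$ you would need that same approximation-property argument, at which point the chain no longer carries the burden you place on it.
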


\begin{proof}
Fix  $X\in H(\omega_3)^{V[G]}$. We have to find a collection $\mathcal G$ of $\omega_1$-guessing models 
containing $X$ such that $\{ M\cap \omega_2: M \in \mathcal G\}$ is an $\omega_1$-closed unbounded subset of $\omega_2$. 
Back in $V$ we can find $\alpha \in E$, and a canonical $\mathbb P^\kappa_\lambda$-name $\dot X$, such that $\dot X[G_\alpha]=X$.
Fix some  $\beta \in E\setminus (\alpha +1)$ with $\cof(\beta) <\kappa$. 
By a standard density argument, we can find a Magidor model $M\in \M_{G_\beta}$ such that $\alpha, \dot X \in M$. 
Suppose $P\in \M_{G_\beta}$ is also a Magidor model and $M\in_\beta P$. Notice that $M\cap V_\beta \subseteq P\cap V_\beta$, 
so $\dot{X}\in P$, and hence $X\in P[G_\alpha]$. By \cref{Magidor-guessing}, $P[G_\alpha]$ is an $\omega_1$-guessing model, for all such $P$.
Now, by \cref{addingclub}, the set $C_\beta(G)$ is club in $\omega_2$,
and hence the family $\mathcal G= \{ P\in \M_{G_\beta}\cap \mathscr U: M \in_\beta P\}$ is as required. 
\end{proof}

Finally, we observe that if $\lambda$ is also supercompact, then  ${\rm GM}^+(\omega_3,\omega_1)$ holds in $V[G]$ as well. 
In fact, we show that for all $\mu >\lambda$ the set of strong $\omega_1$-guessing models is stationary in ${\mathcal P}_{\omega_3}(V_\mu[G])$. 
 
 \begin{lemma}\label{lambda-Magidor-guessing} Suppose $\mu > \lambda$ and $N\prec V_\mu$ is a $\lambda$-Magidor model
 containing all the relevant parameters. Then $N[G]$ is a strong $\omega_1$-guessing  model. 
\end{lemma}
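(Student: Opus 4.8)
The plan is to transitively collapse $N$, read off the cardinal structure, and then present $N[G]$ as a continuous union of the $\omega_1$-guessing models attached to an increasing chain of $\kappa$-Magidor submodels of $N$.

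First I would set up the collapse. Let $\pi\colon N\to \overline N$ be the transitive collapse, $j=\pi^{-1}$, and $\bar\lambda=\pi(\lambda)$. Since $N$ is a $\lambda$-Magidor model, $\overline N=V_{\bar\mu}$ for some $\bar\mu$ with ${\rm cof}(\bar\mu)\geq\bar\lambda$, $V_{\bar\lambda}\subseteq N$, and $\bar\lambda={\rm crit}(j)$ is inaccessible; because $\kappa$ is among the relevant parameters and $\kappa<\lambda$ we have $\kappa<\bar\lambda<\bar\mu<\lambda$ and $\pi(\kappa)=\kappa$. I would first check that $N\cap V_\lambda=V_{\bar\lambda}$ and, using $N\prec V_\mu$, that $V_{\bar\lambda}\prec V_\lambda$, so that $\bar\lambda\in E$. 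It follows that $\mathbb P^\kappa_\lambda\cap N=\mathbb P^\kappa_{\bar\lambda}$, which is a complete suborder of $\mathbb P^\kappa_\lambda$ by \cref{alpha-complete-suborder}, that $G_{\bar\lambda}=G\cap N\subseteq N$, and hence that $N[G]=N[G_{\bar\lambda}]$, with $\pi$ lifting to an isomorphism $\hat\pi\colon N[G]\to V_{\bar\mu}[G_{\bar\lambda}]$ which is the identity on $V_{\bar\lambda}[G_{\bar\lambda}]$. Since $V_{\bar\lambda}\subseteq N$ and every $V$-cardinal in $(\kappa,\lambda)$ is collapsed to $\kappa=\omega_2$, this gives $|N[G]|=\omega_2$ in $V[G]$, the cardinality required of a strong $\omega_1$-guessing model.

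Next I would build the witnessing chain directly inside $N[G]$. Working in $V$, I would fix an increasing chain $\langle W_\zeta:\zeta<\chi\rangle$ of $\kappa$-Magidor elementary submodels $W_\zeta\prec N$, continuous at stages of cofinality at least $\kappa$, each containing all the relevant parameters (so that $W_\zeta\prec V_\mu$), with $\bigcup_{\zeta<\chi}W_\zeta=N$; such a chain exists by \cref{Magidor-stationary} together with the fact that Magidor models are closed under ${<}\kappa$-sequences, and it has length $\chi=|N|\in[\bar\lambda,\lambda)$, which $G$ collapses to $\omega_2$. By \cref{standard-Magidor-guessing} each $W_\zeta[G]$ is an $\omega_1$-guessing model, and it has cardinality $\omega_1$ since $|W_\zeta|<\kappa$ in $V$; moreover $\bigcup_{\zeta<\chi}W_\zeta[G]=N[G]$, because any $\dot a\in N$ lies in some $W_\zeta$ and taking realizations commutes with the union. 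Reindexing by the order type $\omega_2$ produces an increasing chain $\langle M_\xi:\xi<\omega_2\rangle$ of $\omega_1$-guessing models of size $\omega_1$ whose union is $N[G]$.

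The remaining, and I expect main, obstacle is continuity at points of cofinality $\omega_1$, that is $M_\xi=\bigcup_{\eta<\xi}M_\eta$ whenever ${\rm cof}^{V[G]}(\xi)=\omega_1$. The difficulty is structural: such an $\xi$ arises from a stage $W_\zeta$ that is a limit of earlier $\kappa$-Magidor models of $V$-cofinality $<\kappa$, and at exactly these stages $\bigcup_{\eta<\zeta}W_\eta$ need not be $\kappa$-Magidor in $V$ (the cofinality of its collapse is too small), so the ground-model chain is genuinely discontinuous there; the $V$-continuous stages, having cofinality $\geq\kappa$, become stages of cofinality $\omega_2$ in $V[G]$, where continuity is not demanded. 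Thus continuity at the relevant points is purely a forcing phenomenon, to be established exactly as in \cref{continuity-Magidor} and the club analysis of \cref{addingclub}: a Magidor model that is a limit of Magidor models is, in $V[G]$, covered by the union of the earlier ones. I would prove the corresponding covering statement for the external chain, showing via the $\omega_1$-approximation property that every element of $W_\zeta[G]$ already belongs to some $W_\eta[G]$ with $\eta<\zeta$ at these stages. Granting this, $\langle M_\xi:\xi<\omega_2\rangle$ witnesses that $N[G]$ is strongly $\omega_1$-guessing; since the $\lambda$-Magidor models are stationary in $\mathcal P_{\omega_3}(V_\mu)$ by \cref{Magidor-stationary} applied to $\lambda$, this also yields the stationarity of the strong $\omega_1$-guessing models in $\mathcal P_{\omega_3}(V_\mu[G])$, and hence ${\rm GM}^+(\omega_3,\omega_1)$.
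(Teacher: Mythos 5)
Your first half (the collapse $N[G]\cong V_{\bar\gamma}[G_{\bar\lambda}]$ with $\bar\lambda=N\cap\lambda$ of cofinality $\geq\kappa$, and the cardinality computation) matches the paper's set-up, and your chain of $\kappa$-Magidor submodels $W_\zeta\prec N$ with each $W_\zeta[G]$ an $\omega_1$-guessing model of size $\omega_1$ is constructible as you say. But the step you yourself flag as the main obstacle --- continuity at $\omega_1$-limits --- is a genuine gap, and the tools you propose for it cannot close it. \cref{continuity-Magidor} is a statement about Magidor models that occur \emph{as side conditions on the generic chain} $\M_G^\delta$; its proof is a density argument: given $x\in M\cap V_\delta$ and a condition $p$ with $M\in\M_p^\delta$, one adds a countable model $Q^*$ containing $x$ via \cref{topM} and inserts the meet $M\wedge Q^*$ below $M$ on the chain. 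No analogue exists for your models $W_\zeta$: they are fixed in $V$, the forcing never sees them, and no density argument can interpolate anything ``inside'' $\bigcup_{\eta<\zeta}W_\eta$. The $\omega_1$-approximation property is also of no use here, since it recognizes \emph{subsets} of a model from their countable traces, whereas what you need is \emph{membership} of elements in the smaller models. And the needed covering really is in jeopardy at every relevant stage: if $\zeta$ has $V$-cofinality $\omega_1$ (which persists to $V[G]$ by properness), then $\kappa_{W_\zeta}=W_\zeta\cap\kappa$ is inaccessible in $V$, hence strictly above $\sup_{\eta<\zeta}\kappa_{W_\eta}$, so $W_\zeta$ contains ground-model ordinals (for instance $\sup_{\eta<\zeta}\kappa_{W_\eta}$ itself) lying in no $W_\eta$ with $\eta<\zeta$; nothing in your argument places such an ordinal in any $W_\eta[G]$. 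Since this covering claim \emph{is} the content of ``strong'' guessing, the proof is incomplete at its core.

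The paper avoids this problem by not using an external chain at all: it fixes $\delta\in E$ with $\delta>\bar\gamma$ and $\cof(\delta)<\kappa$ and takes the chain to be $\mathcal G=\{(M\cap V_{\bar\gamma})[G_{\bar\lambda}]: M\in\M_G^\delta \text{ Magidor with } \bar\lambda\in M\}$, i.e.\ the generic chain itself. Each member is $\omega_1$-guessing by \cref{Magidor-guessing}, the chain has length $\omega_2$ by \cref{addingclub}, and continuity at $\omega_1$-limits is exactly \cref{continuity-Magidor} (countable models below a limit of Magidor models being absorbed into intermediate Magidor ones). In other words, the continuity is a property the forcing was engineered to deliver on its own chain of side conditions; it cannot be imposed after the fact on an arbitrary $V$-chain of Magidor models. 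To repair your argument you would have to arrange that your $W_\zeta\cap V_\lambda$ actually appear in $\M_G$ --- which is precisely the paper's proof. (A minor further point: $\mathbb P^\kappa_\lambda\cap N$ is not literally $\mathbb P^\kappa_{\bar\lambda}$, since conditions in $\mathbb P^\kappa_{\bar\lambda}$ may contain nonstandard virtual models not lying in $V_{\bar\lambda}$; one needs the forcing-equivalence remark following \cref{PF}, using $\cof(\bar\lambda)\geq\kappa$, to identify the two up to equivalence.)
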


\begin{proof} Since $N$ is a $\lambda$-Magidor model, its transitive collapse $\overline{N}$  equals $V_{\bar\gamma}$, for some $\bar\gamma < \lambda$.
Let $\bar\lambda = N\cap \lambda$.  Note that $\cof(\bar\lambda)\geq \kappa$, and hence 
the transitive collapse $\overline{N[G]}$  of $N[G]$ equals $V_{\bar\gamma}[G_{\bar\lambda}]$.
On the other hand, by \cref{approx-alpha}, the pair $(V[G_{\bar\lambda}],V)$ has the $\omega_1$-approximation property. 
Moreover, by \cref{quotient-alpha-sp},  ${\mathcal P}_{\omega_1} ^{V_{\bar\gamma}[G_{\bar\lambda}]}(V_{\bar\gamma}[G_{\bar\lambda}])$
is cofinal in  ${\mathcal P}_{\omega_1}^{V[G]}(V_{\bar\gamma}[G_{\bar\lambda}])$. 
Therefore, $V_{\bar\gamma}[G_{\bar\lambda}]$ and hence also $N[G]$ remains an $\omega_1$-guessing model in $V[G]$. 
To see that $V_{\bar\gamma}[G_{\bar\lambda}]$   is a strong $\omega_1$-guessing model, 
fix some $\delta \in E$ with $\delta > \bar\gamma$ and ${\rm cof}(\delta)<\kappa$. 
Note that if $M\in \M_G^\delta$ is a Magidor model with $\bar\lambda \in M$ then by \cref{Magidor-guessing} $M[G_{\bar\lambda}]$ is
an $\omega_1$-guessing model. Moreover, if $M\in \M_G^\delta$ is a limit of such Magidor models
then by \cref{continuity-Magidor},
\[
M\cap V_{\bar\gamma} = \bigcup \{ Q\cap V_\delta : Q\in_\delta M \mbox{ and } Q\in \M_G^\delta\}.
\]
Hence if we let $\mathcal G$ be the collection of the models $(M\cap V_{\bar\gamma})[G_{\bar\lambda}]$,
for Magidor models $M\in \M_G^\delta$ with $\bar\lambda \in M$, then $\mathcal G$ is an increasing $\subseteq$-chain of length $\omega_2$ which is continuous
at $\omega_1$-limits and whose union is $V_{\bar\gamma}[G_{\bar\lambda}]$.
Therefore, $V_{\bar\gamma}[G_{\bar\lambda}]$ and hence also $N[G]$ s a strong $\omega_1$-guessing model in $V[G]$, as required. 
\end{proof}

Now, by \cref{Magidor-stationary} and \cref{lambda-Magidor-guessing} we conclude the following.

\begin{theorem} Suppose $\kappa < \lambda$ are supercompact cardinals. Let $G$ be $V$-generic over $\mathbb P^\kappa_\lambda$. 
Then in $V[G]$ the principle ${\rm GM}^+(\omega_3,\omega_1)$ holds. 
\end{theorem}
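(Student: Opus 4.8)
The plan is to deduce ${\rm GM}^+(\omega_3,\omega_1)$ by combining the two results the paper has just isolated: the ground-model stationarity of $\lambda$-Magidor models (\cref{Magidor-stationary}) and the fact that each such model becomes, after forcing, a strong $\omega_1$-guessing model (\cref{lambda-Magidor-guessing}). By \cref{GM+} I must show that for all sufficiently large regular $\theta$ the family $\mathfrak G^+_{\omega_3,\omega_1}(H_\theta)$ is stationary in $\mathcal P_{\omega_3}(H_\theta)$ in $V[G]$. First I would record a reduction: choosing $\mu>\lambda$ inaccessible in $V$, the fact that $\mathbb P^\kappa_\lambda$ is $\lambda$-c.c.\ of size $\lambda<\mu$ keeps $\mu$ inaccessible in $V[G]$, so $H_\mu^{V[G]}=V_\mu^{V[G]}=V_\mu[G]$. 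It then suffices to prove, for every such $\mu$, that the strongly $\omega_1$-guessing elementary submodels of $V_\mu[G]$ of size $\omega_2$ are stationary in $\mathcal P_{\omega_3}(V_\mu[G])$ (the case of an arbitrary large regular $\theta$ following by the standard projection argument for guessing models).

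For the main step, fix such a $\mu$ and an algebra $F\colon [V_\mu[G]]^{<\omega}\to V_\mu[G]$ in $V[G]$; I look for a strongly $\omega_1$-guessing $M\prec V_\mu[G]$ closed under $F$. I would pick a $\mathbb P^\kappa_\lambda$-name $\dot F$ for $F$ and a regular $\theta>\mu$, so that $\cof(\theta)=\theta\geq\lambda$ and $\dot F\in V_\theta$. Working in $V$, since $\lambda$ is supercompact, \cref{Magidor-stationary} (applied with $\lambda$ in place of $\kappa$ and $\theta$ in place of $\mu$) yields a $\lambda$-Magidor model $N^*\prec V_\theta$ with $\{\mathbb P^\kappa_\lambda,\dot F,V_\mu,\mu\}\subseteq N^*$. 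Set $N=N^*\cap V_\mu$. Standard arguments give $N\prec V_\mu$, and, because $\mu$ is inaccessible with $\lambda<\mu$, the transitive collapse of $N$ is $V_{\pi(\mu)}$ with $\pi(\mu)$ inaccessible and $\pi(\mu)>\pi(\lambda)$; moreover $N\cap\lambda=N^*\cap\lambda$ still has cofinality $\geq\kappa$. Hence $N$ is again a $\lambda$-Magidor model, now an elementary submodel of $V_\mu$ containing $\mathbb P^\kappa_\lambda$, so \cref{lambda-Magidor-guessing} applies to it.

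To conclude, \cref{lambda-Magidor-guessing} gives that $N[G]$ is a strong $\omega_1$-guessing model; the presentation there of $N[G]$ as an increasing union, continuous at cofinality-$\omega_1$ points, of $\omega_2$-many $\omega_1$-guessing models of size $\omega_1$ shows $|N[G]|=\omega_2$, so $N[G]\in\mathfrak G^+_{\omega_3,\omega_1}(V_\mu[G])$. Separately, since $\mathbb P^\kappa_\lambda,\dot F\in N^*\prec V_\theta$, the usual facts about elementary submodels and forcing give $N^*[G]\prec V_\theta[G]$ with $F=\dot F[G]\in N^*[G]$, together with the identity $N^*[G]\cap V_\mu[G]=(N^*\cap V_\mu)[G]=N[G]$; as $V_\mu[G]\in N^*[G]$ and $F$ maps into $V_\mu[G]$, this forces $N[G]\prec V_\mu[G]$ to be closed under $F$. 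Since $F$ was arbitrary, $\mathfrak G^+_{\omega_3,\omega_1}(V_\mu[G])$ is stationary in $\mathcal P_{\omega_3}(V_\mu[G])$, and by the reduction ${\rm GM}^+(\omega_3,\omega_1)$ holds in $V[G]$.

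The substantial content — that $N[G]$ really is strongly $\omega_1$-guessing — is already carried by \cref{lambda-Magidor-guessing}, so within this argument the main obstacle is the transfer from $V$ to $V[G]$: ensuring that the ground-model stationary family of $\lambda$-Magidor models yields, via $N\mapsto N[G]$, a family of strong $\omega_1$-guessing models that absorbs an arbitrary algebra $F\in V[G]$. Concretely, the two delicate points to verify are that $N=N^*\cap V_\mu$ remains a genuine $\lambda$-Magidor submodel of $V_\mu$ (so the lemma is applicable) and that $N^*[G]\cap V_\mu[G]=N[G]$, which is exactly what pushes closure under $F$ from $N^*[G]$ down to $N[G]$.
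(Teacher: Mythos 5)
Your route is the paper's route: the theorem is meant to follow by combining \cref{Magidor-stationary} (applied to the supercompact $\lambda$) with \cref{lambda-Magidor-guessing}, and what remains is the standard transfer of stationarity from $V$ to $V[G]$. Your filling-in of that transfer is essentially correct: catching a name $\dot F$ for the algebra inside a $\lambda$-Magidor $N^*\prec V_\theta$, passing to $N=N^*\cap V_\mu$, checking that $N$ is again a $\lambda$-Magidor elementary submodel of $V_\mu$ (the collapse of $N$ is $V_{\pi(\mu)}$ because $\mu\in N^*$, and $V_{\pi(\lambda)}\subseteq N^*\cap V_\mu$), invoking \cref{lambda-Magidor-guessing} for $N$, and using $N^*[G]\cap V_\mu[G]=N[G]$ to push closure under $F$ down to $N[G]$. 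These verifications go through, and they are exactly what the paper leaves implicit.

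There is, however, one step that can genuinely fail: the reduction that begins by ``choosing $\mu>\lambda$ inaccessible in $V$.'' Since ${\rm GM}^+(\omega_3,\omega_1)$ (\cref{GM+}) quantifies over \emph{all} sufficiently large regular $\theta$, and your projection argument only passes guessing models downward from the structures $V_\mu[G]$ you actually treat, you need inaccessible cardinals above $\lambda$ cofinally in the ordinals. The hypotheses do not supply even one: if $\iota$ is the least inaccessible above $\lambda$, then $V_\iota$ is a model of $\ZFC$ in which $\kappa$ and $\lambda$ remain supercompact (all the relevant normal fine measures have rank below $\iota$) but no inaccessible exceeds $\lambda$; in such a model your argument establishes nothing. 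The repair is to run the same argument at every $\mu>\lambda$ with $\cof(\mu)\geq\lambda$ --- for instance every regular cardinal above $\lambda$, and these exist cofinally --- which is precisely the class of $\mu$ to which \cref{Magidor-stationary} applies, and which the paper implicitly intends when it claims stationarity of strong $\omega_1$-guessing models in $\mathcal P_{\omega_3}(V_\mu[G])$ ``for all $\mu>\lambda$.'' For cardinals $\mu$ one still has $V_\mu[G]=V_\mu^{V[G]}$, so the name-catching step survives verbatim; what is lost are the conveniences that $H_\mu^{V[G]}=V_\mu[G]$ and that $V_\mu$, $V_\theta$ are $\ZFC$ models, so appeals such as $N^*[G]\prec V_\theta[G]$ should be replaced either by working with $H_\theta$ (a model of $\ZFC$ minus power set, over which the forcing theorem holds) or by a direct maximal-antichain argument using the $\lambda$-c.c.\ and the fact that a Magidor model contains, as subsets, all of its elements of size $<\lambda$. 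These are adjustments your projection step would have to make in any case.
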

\begin{proof}[\nopunct] 
 \end{proof}

 \bibliographystyle{plain}

\bibliography{mybib}

\end{document}